\newtheorem{theorem}{Theorem}[section]
\newtheorem{proposition}[theorem]{Proposition}
\newtheorem{lemma}[theorem]{Lemma}
\newtheorem{corollary}[theorem]{Corollary}
\theoremstyle{definition}
\newtheorem*{claim*}{Claim}
\newtheorem{definition}[theorem]{Definition}
\newtheorem{remark}[theorem]{Remark}
\newtheorem*{notation*}{Notation}
\begin{document}
\title{Complexity classes of Polishable subgroups}
\author{Martino Lupini}
\address{School of Mathematics and Statistics\\
Victoria University of Wellington\\
PO Box 600, 6140 Wellington, New Zealand}
\email{martino.lupini@vuw.ac.nz}
\thanks{The author was partially supported by the Marsden Fund Fast-Start
Grant VUW1816 and by a Rutherford Discovery Fellowship from the Royal
Society of New Zealand}
\subjclass[2000]{Primary 54H05, 22A05; Secondary 46B99, 46A04}
\keywords{Polish group, Polishable subgroup, Borel complexity class, Solecki
subgroup, non-Archimedean Polish group, Fr\'{e}chet space, Banach space}
\date{\today }

\begin{abstract}
In this paper we further develop the theory of \emph{canonical approximations of Polishable subgroups} of Polish groups, building on previous work of Solecki and Farah--Solecki. In particular, we obtain a characterization of such canonical approximations in terms of their \emph{Borel complexity class}. As an application we provide a \emph{complete list} of all the possible Borel complexity classes of Polishable subgroups of Polish groups or, equivalently, of the ranges of continuous group homomorphisms between \emph{Polish groups}. We also provide a complete list of all the possible Borel complexity classes of the ranges of: continuous group homomorphisms between \emph{non-Archimedean Polish groups}; continuous linear maps between separable \emph{Fr\'{e}chet spaces}; continuous linear maps between separable \emph{Banach spaces}.
\end{abstract}

\maketitle


\section{Introduction}

The goal of this paper is to exactly pin down the possible Borel complexity
classes of Polishable subgroups of Polish groups. Most of the equivalence
relations studied in the context of Borel complexity theory (and mathematics
in general) arise as orbit equivalence relations associated with continuous
actions of Polish groups on Polish spaces. Many of these actions can be seen
as the (left) \emph{translation action }associated with a continuous group
homomorphism $\varphi :H\rightarrow G$ between Polish groups. In such a
case, the image $\varphi \left( H\right) $ of $\varphi $ inside of $G$ is
Borel, and the \emph{potential complexity class }(in the sense of Louveau 
\cite{louveau_reducibility_1994}) of the orbit equivalence relation
associated with the translation action of $H$ on $G$ is essentially the same
as the Borel complexity class of $\varphi \left( H\right) $ of $\varphi $
inside of $G$; see Theorem \ref{Theorem:Polishable-complexity} for a precise
statement. It is thus an interesting problem to determine what are the
possible values for the Borel complexity class of such a subgroup.

The study of Polishable subgroups of Polish groups, which are precisely the
ranges of continuous homomorphisms between Polish groups, has been
undertaken by several authors over a number of years. The problem of
determining their complexity has been considered as early as the 1970s, when
Saint-Raymond proved that there exist Polishable subgroups of $\mathbb{R}^{%
\mathbb{N}}$ that are arbitrarily high in the Borel hierarchy \cite%
{saint-raymond_espaces_1976}. A construction of arbitrarily complex
non-Archimedean Polishable subgroups of $\mathbb{Z}_{2}^{\mathbb{N}}$ was
presented by Hjorth, Kechris, and Louveau in \cite{hjorth_borel_1998}.
Hjorth constructed in \cite{hjorth_subgroups_2006} arbitrarily complex
Polishable subgroups of any uncountable abelian\emph{\ }Polish groups. Farah
and Solecki in \cite{farah_borel_2006}, building on previous work of
Sain-Raymond in the context of separable Fr\'{e}chet spaces \cite%
{saint-raymond_espaces_1976}, related the least multiplicative Borel class
containing a given Polishable subgroup to the length of the canonical
approximation of that Polishable subgroup as in \cite%
{solecki_polish_1999,solecki_coset_2009}.

In this paper, we refine the analysis from \cite{farah_borel_2006} by
considering not only the multiplicative classes in the Borel hierarchy, but
also the additive and difference classes. By relating the Borel complexity
class of a Polishable subgroup to its canonical approximation, we completely
characterize the possible Borel complexity classes of Polishable subgroups
of Polish groups.

\begin{theorem}
\label{Corollary:complexity}If $H$ is a Polishable subgroup of a Polish
group $G$, then the Borel complexity class of $H$ is one of the following: $%
\boldsymbol{\Pi }_{1+\lambda }^{0}$, $\boldsymbol{\Sigma }_{1+\lambda
+1}^{0} $, $D(\boldsymbol{\Pi }_{1+\lambda +n+1}^{0})$, $\boldsymbol{\Pi }%
_{1+\lambda +n+2}^{0}$ for $\lambda <\omega _{1}$ either zero or a limit
ordinal, and $n<\omega $. Furthermore, each of these classes is the Borel
complexity class of a Polishable subgroup of $\mathbb{Z}^{\mathbb{N}}$.
\end{theorem}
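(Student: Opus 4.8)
The plan is to read off this statement from the two main results established in the body of the paper: the precise description of the Borel complexity class of a Polishable subgroup $H\le G$ in terms of its \emph{canonical approximation}, and a companion family of explicit examples inside $\mathbb{Z}^{\mathbb{N}}$ realizing each admissible value. Combined with Theorem~\ref{Theorem:Polishable-complexity}, this also yields the advertised consequences for translation actions.

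\emph{The restriction direction.} Recall that $H$ carries a strictly decreasing transfinite sequence of Polishable subgroups $G=H_{0}\supsetneq H_{1}\supsetneq\cdots\supseteq H_{\eta}=H$, where $H_{\alpha+1}$ is the Solecki-type subgroup of $H_{\alpha}$ relative to $H$ in the sense of \cite{solecki_polish_1999,solecki_coset_2009,farah_borel_2006}, $H_{\lambda}=\bigcap_{\alpha<\lambda}H_{\alpha}$ at limits, and the length $\eta<\omega_{1}$ exists because $H$ is Polishable. The characterization theorem expresses the complexity class of $H$ solely through $\eta$ together with a trichotomy for the last step $H_{\eta-1}\to H$, according to whether the last ``Solecki quotient'' is, in the relevant relative sense, closed, a proper difference of two closed sets, or a proper $F_{\sigma}$. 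I would then run through the cases: for $\eta\le1$ one gets $\boldsymbol{\Pi}^{0}_{1}$; for $\eta=\lambda$ a limit one gets exactly $\boldsymbol{\Pi}^{0}_{1+\lambda}$; for $\eta=\lambda+1$ the trichotomy lands in $\boldsymbol{\Sigma}^{0}_{1+\lambda+1}$, the $D(\boldsymbol{\Pi}^{0})$ and $\boldsymbol{\Pi}^{0}$ alternatives at that level being \emph{excluded} by the structure of the approximation; and for $\eta=\lambda+n+2$ one gets $D(\boldsymbol{\Pi}^{0}_{1+\lambda+n+1})$ or $\boldsymbol{\Pi}^{0}_{1+\lambda+n+2}$. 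The non-formal content is exactly the vanishing of certain a priori possibilities --- that a Polishable $\boldsymbol{\Pi}^{0}_{2}$ subgroup, or a Polishable locally closed subgroup, is automatically closed, and, transfinitely, that $\boldsymbol{\Sigma}^{0}$-completeness can occur only one level above a limit. At the first level these already follow from the Lusin--Souslin theorem applied to the continuous injection $(H,\tau_{H})\hookrightarrow G$; in general they are built into the construction of the canonical approximation. Collecting the cases gives precisely the four displayed families.

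\emph{The realization.} For each class $\Gamma$ in the list I would exhibit a Polishable subgroup of $\mathbb{Z}^{\mathbb{N}}$ of complexity class exactly $\Gamma$. The lowest ones are concrete: a closed subgroup for $\boldsymbol{\Pi}^{0}_{1}$; the dense countable subgroup $\bigoplus_{\mathbb{N}}\mathbb{Z}\le\mathbb{Z}^{\mathbb{N}}$ (Polishable with the discrete topology, and $\boldsymbol{\Sigma}^{0}_{2}$-complete) for $\boldsymbol{\Sigma}^{0}_{1+\lambda+1}$ with $\lambda=0$; a direct product of a closed subgroup with such a countable one for $D(\boldsymbol{\Pi}^{0}_{2})$; and a $c_{0}$-type subgroup such as $\{x\in\mathbb{Z}^{\mathbb{N}}:x_{n}/2^{n}\to 0\}$ for $\boldsymbol{\Pi}^{0}_{3}$. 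The higher classes I would obtain by iterating a ``transfinite Fubini'' operation on Polishable subgroups, in the spirit of \cite{saint-raymond_espaces_1976,hjorth_borel_1998,hjorth_subgroups_2006}: from Polishable subgroups $K_{n}\le\mathbb{Z}^{\mathbb{N}}$ one forms a suitable subgroup of $(\mathbb{Z}^{\mathbb{N}})^{\mathbb{N}}\cong\mathbb{Z}^{\mathbb{N}}$ whose complexity class is one notch above $\sup_{n}$ of the classes of the $K_{n}$ --- a direct product when one wants the multiplicative jump, a direct-sum variant when one wants the $\boldsymbol{\Sigma}$ or difference refinement --- and one passes to products of a cofinal sequence at limit stages. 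Since $\mathbb{Z}^{\mathbb{N}}$ contains closed copies of $\mathbb{Z}^{\mathbb{N}}$ blockwise, the whole recursion stays inside $\mathbb{Z}^{\mathbb{N}}$.

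\emph{The main obstacle.} The routine half at each stage is membership in $\Gamma$, a direct Borel computation; the real work is the completeness half --- ensuring the iterated construction never collapses a level, and in particular that the $\boldsymbol{\Sigma}$ and difference examples do not accidentally fall into the ambient multiplicative class. This requires a careful lower bound that threads a $\Gamma$-complete test set (of the appropriate $\boldsymbol{\Pi}^{0}_{\xi}$, $\boldsymbol{\Sigma}^{0}_{\xi}$, or $D(\boldsymbol{\Pi}^{0}_{\xi})$ type) through every stage of the recursion while respecting the group structure, and it must match, on the restriction side, a sharpness statement that every admissible value of the canonical-approximation data is attained; the two halves then fit together to close the list. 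The small degenerate cases, where the ordinal arithmetic $1+\lambda+n+k$ collapses, I would verify separately by hand.
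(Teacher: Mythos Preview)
Your high-level strategy matches the paper's: the restriction direction is Theorem~\ref{Theorem:complexity} (the Solecki rank together with a case analysis of $H$ inside the penultimate Solecki subgroup), and the realization is Section~\ref{Section:existence} with $G=\mathbb{Z}$. But the details you sketch contain errors that would not assemble into a correct proof.

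On the restriction side, the trichotomy for the last step is misplaced by one level. The relevant alternatives for $H$ inside $s_{\lambda+n-1}^{H}(G)$ are not ``closed / difference of two closed sets / proper $F_{\sigma}$'' but whether $H$ is $\boldsymbol{\Sigma}^{0}_{2}$, is $D(\boldsymbol{\Pi}^{0}_{2})$ but not $\boldsymbol{\Sigma}^{0}_{2}$, or is $\boldsymbol{\Pi}^{0}_{3}$ but not $D(\boldsymbol{\Pi}^{0}_{2})$ there (the ``closed'' option is vacuous since $H$ is dense and proper in $s_{\lambda+n-1}^{H}(G)$). The properly-$\boldsymbol{\Pi}^{0}_{3}$ alternative is precisely what produces the multiplicative classes $\boldsymbol{\Pi}^{0}_{1+\lambda+n+1}$, and it is missing from your list. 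As a result your case breakdown by rank is wrong: Solecki rank $1$ already yields all three of $\boldsymbol{\Sigma}^{0}_{2}$, $D(\boldsymbol{\Pi}^{0}_{2})$, $\boldsymbol{\Pi}^{0}_{3}$ (not just $\boldsymbol{\Pi}^{0}_{1}$), and at rank $\lambda+1$ the classes $D(\boldsymbol{\Pi}^{0}_{1+\lambda+1})$ and $\boldsymbol{\Pi}^{0}_{1+\lambda+2}$ \emph{do} occur --- what is excluded is only $\boldsymbol{\Pi}^{0}_{1+\lambda+1}$, by Proposition~\ref{Proposition:FS-reduce}. The feature that singles out rank $\lambda+1$ is that the $\boldsymbol{\Sigma}^{0}_{2}$-relative case lifts to a genuine $\boldsymbol{\Sigma}^{0}_{1+\lambda+1}$ class in $G$ (because $s_{\lambda}^{H}(G)\in\boldsymbol{\Pi}^{0}_{1+\lambda}(G)$), whereas for $n\ge 2$ it only lifts to $D(\boldsymbol{\Pi}^{0}_{1+\lambda+n})$; this is Lemma~\ref{Lemma:unrelativize-subgroup}.

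On the realization side, your proposed $D(\boldsymbol{\Pi}^{0}_{2})$ example fails: a direct product of a closed subgroup with a $\boldsymbol{\Sigma}^{0}_{2}$ subgroup is again $\boldsymbol{\Sigma}^{0}_{2}$, never properly $D(\boldsymbol{\Pi}^{0}_{2})$. One needs a genuinely two-sided condition --- the paper uses the bounded-variation subgroup $\mathrm{bv}_{0}(G,L_{G})$ (Lemma~\ref{Lemma:complexity-sequences}), where the $\boldsymbol{\Pi}^{0}_{2}$ ``vanishing'' constraint cannot be absorbed into the $\boldsymbol{\Sigma}^{0}_{2}$ ``summable increments'' constraint.
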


Theorem 4.1 from \cite[Section 5]{hjorth_borel_1998} shows that the
complexity class $D(\boldsymbol{\Pi }_{1+\lambda +1}^{0})$ where $\lambda $
is either zero or a countable limit ordinal cannot arise in the context of
Theorem \ref{Corollary:complexity} if one demands $H$ to be non-Archimedean.
In this case, we have the following characterization.

\begin{theorem}
\label{Corollary:complexity-nonArchimedean}If $H$ is a \emph{non-Archimedean 
}Polishable subgroup of a Polish group $G$, then the Borel complexity class
of $H$ in $G$ is one of the following: $\boldsymbol{\Pi }_{1+\lambda }^{0}$, 
$\boldsymbol{\Sigma }_{1+\lambda +1}^{0}$, $D(\boldsymbol{\Pi }_{1+\lambda
+n+2}^{0})$, $\boldsymbol{\Pi }_{1+\lambda +n+2}^{0}$ for $\lambda <\omega
_{1}$ either zero or a limit ordinal, and $n<\omega $. Furthermore, each of
these classes is the complexity class of a non-Archimedean Polishable
subgroup of $\mathbb{Z}^{\mathbb{N}}$.
\end{theorem}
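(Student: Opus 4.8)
The plan is to deduce the classification in the first half directly from Theorem~\ref{Corollary:complexity} together with the Hjorth--Kechris--Louveau obstruction recalled above, and to obtain the realization statement in the second half by revisiting the proof of the realization part of Theorem~\ref{Corollary:complexity} and checking that, for the classes in the new list, the constructions there can be carried out within the class of non-Archimedean groups. For the first half, let $H$ be a non-Archimedean Polishable subgroup of a Polish group $G$. By Theorem~\ref{Corollary:complexity}, the Borel complexity class of $H$ in $G$ is $\boldsymbol{\Pi}_{1+\lambda}^{0}$, $\boldsymbol{\Sigma}_{1+\lambda+1}^{0}$, $D(\boldsymbol{\Pi}_{1+\lambda+n+1}^{0})$, or $\boldsymbol{\Pi}_{1+\lambda+n+2}^{0}$ for some countable ordinal $\lambda$ that is zero or a limit and some $n<\omega$. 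By Theorem~4.1 of \cite[Section~5]{hjorth_borel_1998} the class $D(\boldsymbol{\Pi}_{1+\lambda+1}^{0})$ cannot be the complexity class of a non-Archimedean Polishable subgroup; hence in the third case $n\geq1$, and then $D(\boldsymbol{\Pi}_{1+\lambda+n+1}^{0})=D(\boldsymbol{\Pi}_{1+\lambda+(n-1)+2}^{0})$ with $n-1<\omega$. Thus the complexity class of $H$ lies in the asserted list, which is precisely the list of Theorem~\ref{Corollary:complexity} with the classes $D(\boldsymbol{\Pi}_{1+\lambda+1}^{0})$ deleted.

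For the second half, recall that the realization part of Theorem~\ref{Corollary:complexity} builds, for each class $\Gamma$ in the general list, a Polishable subgroup of $\mathbb{Z}^{\mathbb{N}}$ of complexity class $\Gamma$, by a transfinite recursion that assembles a subgroup with a prescribed canonical approximation out of simpler ones --- in the tradition of Saint-Raymond \cite{saint-raymond_espaces_1976}, Solecki \cite{solecki_polish_1999,solecki_coset_2009}, and Farah--Solecki \cite{farah_borel_2006} --- the complexity class being read off, via the paper's characterization of complexity in terms of the canonical approximation, from the length and top-step behaviour of that approximation. To prove the present theorem I would run the same recursion while keeping every Polish group topology involved non-Archimedean, which is possible for exactly the classes in the non-Archimedean list. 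The mechanism is that $\mathbb{Z}^{\mathbb{N}}$ and the base groups ($\bigoplus_{n}\mathbb{Z}$ with its discrete topology, closed subgroups of $\mathbb{Z}^{\mathbb{N}}$) are non-Archimedean; that the operations used at limit stages, which are countable products, preserve non-Archimedean-ness; and that the operations used at successor stages, which pass to a group of sequences in a previously-built group $K$ that converge to the identity for a compatible metric on $K$ (together, for the difference classes, with a combination of two blocks of coordinates), preserve it as well once that metric is chosen to be an \emph{ultrametric} whose balls at the identity are open subgroups --- available precisely because $K$ is non-Archimedean --- so that the balls at the identity of the output group are subgroups too. Steering the recursion to land at every stage on a class from the non-Archimedean list then produces, for each such class, a non-Archimedean Polishable subgroup of $\mathbb{Z}^{\mathbb{N}}$ realizing it.

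The hard part is twofold. First, one must verify that performing these operations within the non-Archimedean setting still computes the complexity class of the output \emph{exactly}: that the constructed subgroup is $\Gamma$-hard and lies in neither the dual class of $\Gamma$ nor a smaller non-self-dual class. Here the leverage is the paper's characterization of the complexity class of a Polishable subgroup via its canonical (Solecki) approximation --- the approximation transforms in a controlled way under the operations used, so the verification reduces to ordinal bookkeeping, organized by transfinite induction on $\lambda$ and $n$; alternatively one argues hardness directly by a Wadge-style transfinite induction. Second, and more delicately, one must confirm that $D(\boldsymbol{\Pi}_{1+\lambda+1}^{0})$ really is the unique class of Theorem~\ref{Corollary:complexity} with no non-Archimedean realization: producing it would force the canonical approximation to make, at the limit stage $\lambda$, a jump of the type ruled out by Theorem~4.1 of \cite[Section~5]{hjorth_borel_1998}, whereas two stages past the limit there is enough slack to split off a $\boldsymbol{\Pi}_{1+\lambda+n+2}^{0}$ difference while staying non-Archimedean. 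Making this dichotomy precise --- matching the non-Archimedean constructions against the exact location of each target class in the difference hierarchy --- is the core of the argument.
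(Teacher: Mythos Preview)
Your proposal is correct and takes essentially the same approach as the paper: the classification half follows from the general Theorem~\ref{Theorem:complexity} (which refines Theorem~\ref{Corollary:complexity}) together with the Hjorth--Kechris--Louveau obstruction (Proposition~\ref{Proposition:HKL2}), and the existence half is obtained by rerunning the transfinite construction of Section~\ref{Section:existence} with ultralength functions in place of length functions (Theorem~\ref{Theorem:complexitynAP}). One small remark: your second ``hard part''---confirming that $D(\boldsymbol{\Pi}_{1+\lambda+1}^{0})$ is the unique excluded class---is already settled by the classification half, so the existence half only needs to realize each class in the non-Archimedean list, which the paper does by explicit construction.
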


The existence assertions in Theorem \ref{Corollary:complexity} and Theorem %
\ref{Corollary:complexity-nonArchimedean} are proved by providing a unified
approach to the constructions in \cite{saint-raymond_espaces_1976} and \cite[%
Section 5]{hjorth_borel_1998} of arbitrarily complex Polishable subgroups,
together with a careful analysis of their canonical approximations in the
sense of Solecki \cite{solecki_polish_1999,solecki_coset_2009}.

Theorem \ref{Corollary:complexity} entails in particular a negative answer
to a Question 6.3(1) from \cite{ding_equivalence_2017}. Let $X$ be a
separable Banach space with a Schauder basis $\left( x_{n}\right) _{n\in 
\mathbb{N}}$. Then the collection $\mathrm{coef}(X,(x_{n}))$ of $(\lambda
_{n})_{n\in \mathbb{N}}\in \mathbb{R}^{\mathbb{N}}$ such that $\sum_{n\in 
\mathbb{N}}\lambda _{n}x_{n}$ converges in $X$ is a Polishable subgroup of $%
\mathbb{R}^{\mathbb{N}}$. Question 6.3(1) asks whether there is an example
of such a Polishable subgroup that is $\boldsymbol{\Delta }_{3}^{0}$ and not 
$D(\boldsymbol{\Sigma }_{2}^{0})$. By Theorem \ref{Corollary:complexity}, a $%
\boldsymbol{\Delta }_{3}^{0}$ Polishable subgroup of a Polish group must be $%
D(\boldsymbol{\Sigma }_{2}^{0})$.

We also apply the techniques of this paper to provide a complete
characterization of the Borel complexity classes of the ranges of continuous
homomorphisms between separable Fr\'{e}chet spaces and between separable
Banach spaces.

\begin{theorem}
\label{Corollary:complexity-Frechetable}The complexity classes in Theorem %
\ref{Corollary:complexity} form a complete list of all the Borel complexity
classes of the ranges of continuous linear maps between separable Fr\'{e}%
chet spaces.
\end{theorem}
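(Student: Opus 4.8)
The plan is to establish the two inclusions separately: the easy one via Theorem~\ref{Corollary:complexity}, and the converse by adapting to the linear setting the construction behind the existence assertion of that theorem.

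For the inclusion ``$\subseteq$'', given a continuous linear map $T\colon E\to F$ between separable Fr\'echet spaces, I would note that $\ker T$ is a closed linear subspace, so that $E/\ker T$ is again a separable Fr\'echet space and $T$ factors through the quotient as a continuous linear injection $\bar T\colon E/\ker T\to F$. By the Lusin--Souslin theorem the range $T(E)=\bar T(E/\ker T)$ is then Borel, and pushing the Fr\'echet topology of $E/\ker T$ forward along $\bar T$ exhibits $T(E)$ as a Polishable subgroup of the Polish group $F$. Theorem~\ref{Corollary:complexity} applies directly and shows that the Borel complexity class of $T(E)$ is one of the classes listed there.

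For the converse I would first observe that, since $\bar T$ above also transports the vector space structure, every linear subspace of a separable Fr\'echet space that carries a separable Fr\'echet topology refining its subspace topology is literally such a range; hence it suffices to produce, for each complexity class $\Gamma$ appearing in Theorem~\ref{Corollary:complexity}, a linear subspace of a separable Fr\'echet space --- say of $\mathbb R^{\mathbb N}$, or of a countable product of separable Banach spaces, which again embeds into a separable Fr\'echet space --- equipped with an explicit Fr\'echet topology and having Borel complexity class exactly $\Gamma$. To get these examples I would revisit the unified construction used to prove the existence part of Theorem~\ref{Corollary:complexity}, which unifies Saint-Raymond's constructions in separable Fr\'echet spaces with the non-Archimedean constructions of Hjorth--Kechris--Louveau, and instantiate it with $\mathbb R$ in place of $\mathbb Z$ as the coefficient/building block. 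That construction proceeds by a transfinite recursion in which successor steps replace a Polishable group by an $\ell^1$- or $c_0$-type summable twist of countably many copies of it and limit steps pass to an appropriate inverse limit; every such operation involves only the topological abelian group structure and summability conditions, so it goes through verbatim over $\mathbb R$ and produces at each stage a Polishable \emph{linear} subspace together with a natural Fr\'echet topology.

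It then remains to verify that these real versions have the same Borel complexity classes as the integer ones, and here I would invoke the characterization of the Borel complexity class of a Polishable subgroup in terms of its canonical approximation (Theorem~\ref{Theorem:Polishable-complexity} together with the preceding analysis of Solecki subgroups): the canonical approximation of the real analogue of each building block is governed by the identical inductive recipe --- each successor step raising the relevant ordinal by one, each limit step taking a supremum, and each additive or difference class surfacing at the same stage --- because the elementary estimates involved (a summable subgroup is $\boldsymbol\Sigma^0_2$, a $c_0$-type subgroup is $\boldsymbol\Pi^0_3$, and so forth) together with their matching lower bounds are insensitive to whether the coordinates range over $\mathbb Z$ or over $\mathbb R$. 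The part that genuinely needs attention, and which I expect to be the main obstacle, is exactly these lower bounds for the real constructions, in particular for the difference classes $D(\boldsymbol\Pi^0_{1+\lambda+1})$: by Theorem~4.1 of \cite[Section~5]{hjorth_borel_1998} those classes are \emph{not} realizable by non-Archimedean Polishable subgroups, so the real examples attaining them must genuinely use the divisibility and connectedness of $\mathbb R$ (equivalently, the vector space structure), and one has to check that the Wadge-style unfolding arguments proving non-membership in the dual class still run with $\mathbb R$, rather than $\mathbb Z$ or $\mathbb Z_2$, as the building block; a secondary, purely bookkeeping point is to keep the recursion inside one fixed separable Fr\'echet space throughout. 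Granting these, the two inclusions together yield the claimed complete list.
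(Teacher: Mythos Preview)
Your proposal is correct and follows essentially the same approach as the paper: Section~\ref{section:Frechetable} observes that Fr\'echetable subspaces are in particular Polishable subgroups (giving the inclusion ``$\subseteq$'' via Theorem~\ref{Corollary:complexity}) and then states (Theorem~\ref{Theorem:complexity-Frechetable}) that the construction of Section~\ref{Section:existence} applied to a nontrivial separable Fr\'echet space in place of $G$ yields Fr\'echetable subspaces of each required class, since the defining conditions and pseudo-lengths are linear and yield seminorms. Your concern about the lower bounds for $D(\boldsymbol{\Pi}^0_{1+\lambda+1})$ is unnecessary: Theorem~\ref{Theorem:complexity} (not Theorem~\ref{Theorem:Polishable-complexity}) already outputs the \emph{exact} complexity class once the Solecki rank and the complexity of $H$ inside $s_{\lambda+n-1}^H(G)$ are determined, so no separate Wadge-type argument is needed beyond the elementary computation in Lemma~\ref{Lemma:complexity-sequences}(2), whose proof uses only that the building block is nondiscrete.
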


\begin{theorem}
\label{Corollary:complexity-Banachable}The following is a complete list of
all the Borel complexity classes of the ranges of continuous linear maps
between separable Banach spaces: $\boldsymbol{\Pi }_{1}^{0}$, $\boldsymbol{%
\Sigma }_{1+\lambda +1}^{0}$, $D(\boldsymbol{\Pi }_{1+\lambda +n+1}^{0})$, $%
\boldsymbol{\Pi }_{1+\lambda +n+2}^{0}$ for $\lambda <\omega _{1}$ either
zero or a limit ordinal, and $n<\omega $.
\end{theorem}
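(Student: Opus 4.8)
The plan is to deduce the theorem from Theorem~\ref{Corollary:complexity}, adding two ingredients specific to the Banach setting: an \emph{exclusion} of the classes $\boldsymbol{\Pi}^{0}_{1+\lambda}$ with $\lambda$ a nonzero limit ordinal, and a \emph{realization} of every remaining class by a continuous linear map between separable Banach spaces. Since a separable Banach space is in particular a separable Polish group and the range of a continuous linear map $T\colon X\to Y$ between Banach spaces is a Polishable subgroup of $Y$ --- it carries the Banach topology of $X/\ker T$, with continuous inclusion into $Y$ --- Theorem~\ref{Corollary:complexity} already confines the complexity class of such a range to the list appearing there. Moreover a proper closed linear subspace of a separable Banach space is itself such a space with continuous, closed inclusion, hence realizes $\boldsymbol{\Pi}^{0}_{1}$, while any closed Banachable subgroup has complexity class $\boldsymbol{\Pi}^{0}_{1}$. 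Thus the upper bound reduces to the exclusion and the lower bound to the realization.

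For the exclusion I would invoke the characterization, established earlier in the paper, of the Borel complexity class of a Polishable subgroup $H\leq G$ in terms of its canonical approximation $(s_{\xi}(H))_{\xi}$: the class $\boldsymbol{\Pi}^{0}_{1+\lambda}$ with $\lambda$ a nonzero limit ordinal arises exactly when this approximation has length equal to $\lambda$. The point is that this cannot happen for a Banachable $H$. Since $\lambda<\omega_{1}$ has cofinality $\omega$, pick $\alpha_{k}\uparrow\lambda$; then $s_{\alpha_{k}}(H)\supsetneq s_{\alpha_{k+1}}(H)$ for every $k$ because the approximation has not stabilized before $\lambda$, and the canonical topology at the limit stage is a nontrivial countable inverse limit of Banach topologies. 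A Banach space cannot be so presented: an inverse-limit topology is generated by an increasing sequence of seminorms, and normability forces one of them to dominate the rest, collapsing the limit to a single stage --- which would make the canonical approximation stabilize strictly below $\lambda$. Isolating the exact inverse-limit presentation that the canonical approximation furnishes, and checking that Banachability of $H$ propagates through it to force premature stabilization, is the substance of this step; it is precisely where Banach spaces are more rigid than Fr\'{e}chet spaces, the latter occurring as proper countable inverse limits of Banach spaces, so that the full list of Theorem~\ref{Corollary:complexity-Frechetable} survives there.

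For the realization I would build the examples by transfinite recursion inside separable Banach spaces, using two operations on a continuous linear map $T\colon X\to Y$ with range of prescribed complexity. An \emph{increment} operation --- assembled from twisted sums and quotients --- raises the length of the canonical approximation of the range by one, thereby moving one step along the chain $\boldsymbol{\Sigma}^{0}_{1+\lambda+1}$, $D(\boldsymbol{\Pi}^{0}_{1+\lambda+1})$, $\boldsymbol{\Pi}^{0}_{1+\lambda+2}$, $D(\boldsymbol{\Pi}^{0}_{1+\lambda+2})$, $\boldsymbol{\Pi}^{0}_{1+\lambda+3},\dots$ within a fixed limit-or-zero base $\lambda$. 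A \emph{limit} operation forms the $\ell^{1}$-sum of countably many copies of the domain mapping into the $c_{0}$-sum of the corresponding copies of the target; applied to a family of examples whose canonical-approximation lengths are cofinal in a limit ordinal $\mu$, its range has canonical approximation of length $\mu+1$ and hence complexity $\boldsymbol{\Sigma}^{0}_{1+\mu+1}$, the least class of the $\mu$-block. Beginning from a proper closed-subspace inclusion (giving $\boldsymbol{\Pi}^{0}_{1}$) and the inclusion $\ell^{1}\hookrightarrow c_{0}$ (giving $\boldsymbol{\Sigma}^{0}_{2}$, also obtainable as one increment of a closed subspace), these two operations reach every class on the list. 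Note that the $\ell^{1}$/$c_{0}$-summability constraint --- which is exactly what keeps the construction inside Banach spaces rather than producing a Fr\'{e}chet product --- shifts the outcome to $\boldsymbol{\Sigma}^{0}_{1+\mu+1}$ rather than to the product complexity $\boldsymbol{\Pi}^{0}_{1+\mu}$, so no $\boldsymbol{\Pi}^{0}_{1+\lambda}$ with $\lambda$ a nonzero limit is ever produced, in accordance with the exclusion.

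I expect the main obstacle to be the verification, at each recursion step, that the increment and limit operations act as claimed on the \emph{entire} canonical approximation of the range --- not only on the least multiplicative Borel class containing it, as in \cite{farah_borel_2006} --- so that the additive and difference-class refinements, together with the exact length, are correctly tracked through twisted sums and $\ell^{1}$- and $c_{0}$-sums. The exclusion step is comparatively short once the inverse-limit presentation of a limit-length approximation has been identified.
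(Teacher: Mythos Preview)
Your exclusion argument is correct and is essentially the paper's: since the Polish topology at a limit stage $\lambda$ of the Solecki approximation is generated by the neighborhoods inherited from the $s_\beta$ for $\beta<\lambda$, a single norm-ball basis forces one such $\beta$ to already give the right topology, whence $H=s_\beta^H(G)$ and the rank is $<\lambda$. This is exactly Proposition~\ref{Proposition:Banachable-limit}.

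The realization sketch, however, has a genuine gap. You assert that one ``increment'' raises the Solecki rank by one and simultaneously moves one step along the chain
\[
\boldsymbol{\Sigma}^{0}_{1+\lambda+1},\ D(\boldsymbol{\Pi}^{0}_{1+\lambda+1}),\ \boldsymbol{\Pi}^{0}_{1+\lambda+2},\ D(\boldsymbol{\Pi}^{0}_{1+\lambda+2}),\ \boldsymbol{\Pi}^{0}_{1+\lambda+3},\dots
\]
But by Theorem~\ref{Theorem:complexity} the first \emph{three} entries all occur at Solecki rank $\lambda+1$, and thereafter each rank $\lambda+n$ with $n\ge 2$ carries \emph{two} classes, $D(\boldsymbol{\Pi}^{0}_{1+\lambda+n})$ and $\boldsymbol{\Pi}^{0}_{1+\lambda+n+1}$, distinguished not by the rank but by whether $H$ is $D(\boldsymbol{\Pi}^{0}_{2})$ or genuinely $\boldsymbol{\Pi}^{0}_{3}$ in the penultimate Solecki subgroup $s_{\lambda+n-1}^H(G)$. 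A single operation that merely lengthens the approximation cannot distinguish these; you need, at each stage, several variants of the construction that control this finer datum. Your proposal does not indicate how twisted sums would do so.

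The paper does not proceed by recursive operations at all. For each target $\alpha$ it builds, in one stroke, a Banachable subspace $X_\alpha\subseteq \mathrm{c}_0(I_0^\alpha,Z)$ over an explicit countable index set, together with two smaller subspaces $S_\alpha\subseteq D_\alpha\subseteq X_\alpha$ obtained by replacing the ``convergent'' condition on the top sequence $\boldsymbol{x}(\alpha)$ by ``$\ell_1$'' and ``$\mathrm{bv}_0$'' conditions respectively. It then computes the Solecki subgroups of all three directly (Proposition~\ref{Proposition:solecki-subgroups-X0}) and reads off from Lemma~\ref{Lemma:complexity-S-D-P} that $S_\alpha,D_\alpha,X_\alpha$ are respectively $\boldsymbol{\Sigma}^{0}_{2}$, $D(\boldsymbol{\Pi}^{0}_{2})$, $\boldsymbol{\Pi}^{0}_{3}$ in the penultimate Solecki subgroup, which via Theorem~\ref{Theorem:complexity} yields the three complexity classes at that rank. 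The three ``caps'' $\ell_1$, $\mathrm{bv}_0$, $\mathrm{c}$ are precisely what your scheme is missing: they are the mechanism that separates the classes sharing a Solecki rank. If you want to salvage the operational approach, you will need analogous variants of your increment and limit operations---one producing a $\boldsymbol{\Sigma}^{0}_{2}$ image in the next Solecki subgroup, one a $D(\boldsymbol{\Pi}^{0}_{2})$ image, one a $\boldsymbol{\Pi}^{0}_{3}$ image---and to verify each separately.
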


Continuous linear maps with arbitrarily complex range, with a fixed
separable Banach space or separable Frechet space as target, were
constructed in \cite{ding_separable_2008,malicki_polishable_2008}.

The rest of this paper is organized as follows. In Section \ref%
{Section:polishable} and Section \ref{Section:potential} we recall some
definitions and known results concerning Polishable subgroups and their
Borel complexity class. In Section \ref{Section:solecki} we recall the
definition of the canonical approximation of a Polishable subgroup, whose
elements we call Solecki subgroups as they were originally described by
Solecki in \cite{solecki_polish_1999}. In Section \ref%
{Section:complexity-solecki}, building on the work of Farah and Solecki, we
refine the analysis from \cite{farah_borel_2006} to characterize the Solecki
subgroups in terms of their Borel complexity class. This is then applied in
Section \ref{Section:complexity-polishable} to obtain the characterization
of complexity classes of Polishable subgroups as in Theorem \ref%
{Corollary:complexity}. Section \ref{Section:SR} shows that the length of
the canonical approximation, called the Polishable rank in \cite%
{farah_borel_2006}, coincides with a notion of rank originally considered by
Saint-Raymond in \cite{saint-raymond_espaces_1976} in the context of
separable Fr\'{e}chet spaces. The existence assertions in Theorem \ref%
{Corollary:complexity} and Theorem \ref{Corollary:complexity-nonArchimedean}
are proved in Section \ref{Section:existence}. Finally, Section \ref%
{section:Frechetable} and Section \ref{Section:Banachable} contain a proof
of Theorem \ref{Corollary:complexity-Frechetable} and Theorem \ref%
{Corollary:complexity-Banachable}, respectively.

\begin{notation*}
In this paper, we use $\mathbb{N}$ to denote the set of positive integers 
\emph{excluding zero}. As usual, we let $\omega $ be the first infinite
ordinal, which can also be seen as the set of positive integers \emph{%
including zero}.
\end{notation*}

\subsection*{Acknowledgments}

We are grateful to Su Gao, Alexander Kechris, Andr\'{e} Nies, and S\l awomir Solecki for
useful comments and remarks on a preliminary version of this manuscript.

\section{Polishable subgroups\label{Section:polishable}}

A \emph{Polish space} is a second countable topological space whose topology
is induced by a complete metric. A \emph{Polish group }is a group in the
category of Polish spaces, namely a Polish space that is endowed with a
continuous group operation such that the function that maps each element to
its inverse is also continuous (in fact, the latter requirement holds
automatically; see the remark after \cite[Corollary 9.15]%
{kechris_classical_1995}). A subgroup $H$ of a Polish group $G$ is \emph{%
Polishable} if it is Borel and there exists a Polish group topology on $H$
whose open sets are Borel in $G$. Notice that such a Polish topology on $H$,
if it exists, it is unique by \cite[Theorem 9.10]{kechris_classical_1995}.
In the following, we will regard $H$ as a Polish group with respect to its
unique Polish group topology, which is in general finer than the subspace
topology induced from $G$. Equivalently, $H$ is a Polishable subgroup of $G$
if and only if there exists a Polish group $\tilde{H}$ and a continuous
group homomorphism $\varphi :\tilde{H}\rightarrow G$ with image equal to $H$%
. Noticing that one can assume without loss of generality that $\varphi $ is
an injection, the equivalence of the two definitions follows from \cite[%
Theorem 9.10]{kechris_classical_1995} and the fact that if $f:X\rightarrow Y$
is an injective Borel function between standard Borel spaces, then $f(A)$ is
a Borel subset of $Y$ and $f|_{A}$ is a Borel isomorphism between $A$ and $%
f(A)$ \cite[Theorem 15.1]{kechris_classical_1995}. If $G$ is a Polish group
and $H\ $is a Polishable subgroup of $G$, then $G$ is a \emph{Polish }$H$%
\emph{-space} with respect to the left translation action of $H$ on $G$ \cite%
[Section 2.2]{becker_descriptive_1996}. We will denote by $E_{H}^{G}$ the
corresponding orbit equivalence relation. Recall that a Polish group $G$ is 
\emph{non-Archimedean }if it admits a basis of neighborhoods of the identity
consisting of open subgroups; see \cite[Theorem 2.4.1]{gao_invariant_2009}
for equivalent characterizations.

\begin{lemma}
\label{Lemma:intersection--Polishable}Suppose that $G$ is a Polish group.
Let $\left( G_{n}\right) _{n\in \omega }$ be a sequence of Polishable
subgroups of $G$. Then $G_{\omega }:=\bigcap_{n\in \omega }G_{n}$ is a
Polishable subgroup of $G$. If $G_{n}$ is non-Archimedean for every $n\in
\omega $, then $G_{\omega }$ is non-Archimedean as well. If $A\subseteq
G_{\omega }$ is such that $A$ is dense in $G_{n}$ for every $n\in \omega $,
then $A$ is dense in $G_{\omega }$.
\end{lemma}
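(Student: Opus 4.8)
The plan is to build the Polish topology on $G_{\omega}$ explicitly from the given Polish topologies on the $G_n$, using the standard "diagonal" construction for countable intersections. Write $\tau_n$ for the unique Polish group topology on $G_n$ witnessing Polishability, and recall that by uniqueness $\tau_n$ restricted to $G_m$ (for $m\ge n$, where $G_m\subseteq G_n$) is coarser than $\tau_m$; more importantly, each $\tau_n$-open set is Borel in $G$. Consider the diagonal map $\delta\colon G_{\omega}\to \prod_{n\in\omega}(G_n,\tau_n)$, $x\mapsto (x,x,x,\dots)$. The image is $\{(x_n)_n : x_n = x_m \text{ for all } n,m\}\cap\prod_n G_n$, which is a closed subgroup of the Polish group $\prod_n(G_n,\tau_n)$: closedness follows because each $G_n$ carries a finer topology than the subspace topology from $G$, so the coordinates are continuous into the Hausdorff group $G$ and the equalizer conditions $x_n = x_m$ define a closed set. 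Hence $\delta(G_{\omega})$ is itself a Polish group, and we transport this topology back to $G_{\omega}$ via $\delta$; call it $\tau_{\omega}$. One checks $\tau_{\omega}$ is a group topology (the group operations are continuous coordinatewise) and that it is Polish.

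Next I would verify that $\tau_{\omega}$-open sets are Borel in $G$. A basic $\tau_{\omega}$-open set is the $\delta$-preimage of a basic open box $\prod_{n\in F}U_n\times\prod_{n\notin F}G_n$ with $F\subseteq\omega$ finite and $U_n\in\tau_n$; its preimage is $\bigcap_{n\in F}U_n$, a finite intersection of sets Borel in $G$, hence Borel in $G$. Since $(G_{\omega},\tau_{\omega})$ is second countable, every $\tau_{\omega}$-open set is a countable union of such basic sets and therefore Borel in $G$. In particular $G_{\omega}$ is Borel in $G$ (it is $\tau_{\omega}$-clopen, hence Borel), so $G_{\omega}$ is a Polishable subgroup of $G$.

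For the non-Archimedean clause: if each $(G_n,\tau_n)$ is non-Archimedean, then $\prod_n(G_n,\tau_n)$ is non-Archimedean (a product of non-Archimedean Polish groups admits a neighborhood basis at the identity consisting of open subgroups, namely products of open subgroups in finitely many coordinates and the whole group elsewhere), and a closed subgroup of a non-Archimedean Polish group is non-Archimedean; transporting along $\delta$ shows $(G_{\omega},\tau_{\omega})$ is non-Archimedean. For the density clause: suppose $A\subseteq G_{\omega}$ is $\tau_n$-dense in $G_n$ for every $n$. Then $\delta(A)$ is dense in $\prod_n(G_n,\tau_n)$ in the product topology — indeed for any basic box $\prod_{n\in F}U_n\times\prod_{n\notin F}G_n$ with each $U_n$ nonempty and $F$ finite, density of $A$ in each $(G_n,\tau_n)$ lets us do better: we need a single $a\in A$ lying in all $U_n$ simultaneously. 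This is exactly the point where one must be slightly careful: pick $a$ in $\bigcap_{n\in F}U_n$, which is a nonempty $\tau_{\omega}$-open subset of $G_{\omega}$ (nonempty because the $U_n$ come from the same box that has nonempty $\delta$-fiber — one reduces to boxes whose intersection with $\delta(G_\omega)$ is nonempty), and use $\tau_{\omega}$-density of $A$ in $G_{\omega}$, which itself follows from the product-density statement. So the real content is that $\tau_n$-density of $A$ in $G_n$ for all $n$ forces $\tau_{\omega}$-density of $A$ in $G_{\omega}$, and this is where I expect the main (mild) obstacle: one must observe that a nonempty $\tau_{\omega}$-basic-open set $\bigcap_{n\in F}U_n$ is covered by using the $\tau_m$-density for the largest index $m$ in $F$ together with the fact that each $U_n$ for $n<m$ is also $\tau_m$-open in $G_m$ (since $\tau_m$ refines $\tau_n\!\restriction\!G_m$), so a single $\tau_m$-density argument inside $(G_m,\tau_m)$ produces the required $a\in A$. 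The rest is bookkeeping.
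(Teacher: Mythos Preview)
Your construction via the diagonal embedding into $\prod_n(G_n,\tau_n)$ is exactly the paper's approach, and the Polishability and non-Archimedean clauses are handled identically. The only issue is in the density clause: you explicitly assume the sequence is decreasing when you write ``for $m\ge n$, where $G_m\subseteq G_n$'' and then use that $\tau_m$ refines $\tau_n\!\restriction\!G_m$ to reduce the finite intersection $\bigcap_{n\in F}U_n$ to a single $\tau_m$-open set. No nesting hypothesis appears in the lemma, and without it this step breaks down, since $U_n\cap G_m$ need not be $\tau_m$-open (indeed $G_m$ need not sit inside $G_n$ at all). The paper's own proof is equally casual here---it asserts that the single-index sets $W\cap G_\omega$ already form a \emph{basis} (rather than merely a subbasis) of neighborhoods of the identity in $G_\omega$, which likewise is only clear under nesting. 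In every application of the lemma in the paper (limit stages of the Solecki filtration) the sequences are in fact decreasing, so the gap is harmless in context; but as a self-contained proof of the lemma exactly as stated, your density argument is incomplete in the same way the paper's is.
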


\begin{proof}
We have that $G_{\omega }$ is the image of the Polish group%
\begin{equation*}
Z:=\left\{ \left( x_{n}\right) _{n\in \omega }\in \prod_{n\in \omega
}G_{n}:\forall n\in \omega \text{, }x_{n}=x_{n+1}\right\} \subseteq
\prod_{n\in \omega }G_{n}
\end{equation*}%
under the continuous injective group homomorphism $Z\rightarrow G$, $\left(
x_{n}\right) _{n\in \omega }\mapsto x_{0}$. This shows that $G_{\omega }$ is
Polishable. If $G_{n}$ is non-Archimedean for every $n\in \omega $, then $Z$
is non-Archimedean, and hence $G_{\omega }$ is non-Archimedean as well. By
the above, the sets of the form $W\cap G_{\omega }$, where $W$ is a
neighborhood of the identity in $G_{n}$ for some $n\in \omega $, form a
basis of neighborhoods of the identity in $G_{\omega }$. Thus, if $A$ is
dense in $G_{n}$ for every $n\in \omega $, then $A$ is dense in $G_{\omega }$%
.
\end{proof}

\section{Potential complexity\label{Section:potential}}

A \emph{complexity class }$\Gamma $ is a function $X\mapsto \Gamma \left(
X\right) $ that assigns to each Polish space $X$ a collection $\Gamma \left(
X\right) $ of Borel subsets, such that if $X,Y$ are Polish spaces and $%
f:X\rightarrow Y$ is a continuous function, then $f^{-1}(A)\in \Gamma \left(
X\right) $ for every $A\in \Gamma \left( Y\right) $. For a complexity class $%
\Gamma $, we let $D\left( \Gamma \right) $ be the complexity class
consisting of \emph{differences }between sets in $\Gamma $; see \cite[%
Section 22.E]{kechris_classical_1995} where it is denoted by $D_{2}\left(
\Gamma \right) $. We let $\check{\Gamma}$ be the \emph{dual }complexity
class of $\Gamma $, such that $\check{\Gamma}\left( X\right) $ comprises the 
\emph{complements} of the elements of $\Gamma \left( X\right) $. We say that 
$\Gamma $ is self-dual if $\Gamma =\check{\Gamma}$. If $\Gamma $ is a
complexity class that is not self-dual, then we say that $\Gamma $ is the
complexity class of $A\subseteq X$ if $A\in \Gamma \left( X\right) $ and $%
A\notin \check{\Gamma}\left( X\right) $. We will be mainly interested in the
complexity classes $\boldsymbol{\Sigma }_{\alpha }^{0}$, $\boldsymbol{\Pi }%
_{\alpha }^{0}$, $\boldsymbol{\Delta }_{\alpha }^{0}$, and $D(\boldsymbol{%
\Pi }_{\alpha }^{0})$ for $\alpha \in \omega _{1}$; see \cite[Section 11.B]%
{kechris_classical_1995}.

If $X$ is a standard Borel space and $E$ is an equivalence relation on $X$,
then $E$ has potential complexity $\Gamma $ if there exists a Polish
topology $\tau $ on $X$ that induces the Borel structure of $X$ such that $%
E\in \Gamma \left( \tau \times \tau \right) $ \cite%
{louveau_reducibility_1994}. This is equivalent to the assertion that there
exists a Borel equivalence relation $F$ on a Polish space $Y$ such that $%
F\in \Gamma \left( Y\times Y\right) $ and $E$ is Borel reducible to $F$; see 
\cite[Lemma 12.5.4]{gao_invariant_2009}. The following result is essentially
proved in \cite[Section 5]{hjorth_borel_1998}.

\begin{proposition}[Hjorth--Kechris--Louveau]
\label{Proposition:HKL}Suppose that $G$ is a Polish group, and $X$ is a
Polish $G$-space. For $x\in X$, denote by $\left[ x\right] $ the
corresponding $G$-orbit. Let $\Gamma $ be a complexity class, and assume
that the orbit equivalence relation $E_{G}^{X}$ is potentially $\Gamma $.
Suppose that $\alpha $ is a countable ordinal.

\begin{enumerate}
\item If $\Gamma $ is the class $\boldsymbol{\Pi }_{\alpha }^{0}$ for $%
\alpha \geq 2$, $\boldsymbol{\Sigma }_{\alpha }^{0}$ for $\alpha \geq 3$, or 
$D(\boldsymbol{\Pi }_{\alpha }^{0})$ for $\alpha \geq 2$, then $\left\{ x\in
X:\left[ x\right] \in \Gamma \right\} $ is comeager in $X$.

\item If $\Gamma $ is the class $\check{D}(\boldsymbol{\Pi }_{\alpha }^{0})$
for $\alpha \geq 3$, then $\{x\in X:\left[ x\right] $ is either $\boldsymbol{%
\Pi }_{\alpha }^{0}$ or $\boldsymbol{\Sigma }_{\alpha }^{0}\}$ is comeager
in $X$.
\end{enumerate}
\end{proposition}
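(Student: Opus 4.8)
The plan is to prove this following the method of Hjorth--Kechris--Louveau: a change-of-topology argument feeding into the Vaught transform calculus. Write $\tau_0$ for the given topology of $X$ and $E=E_G^X$. Since $E$ is potentially $\Gamma$ there is a Polish topology on $X$, refining $\tau_0$ and inducing the same Borel sets, in which $E$ belongs to $\Gamma$ (refine $\tau_0$ so that a countable base of the witnessing topology becomes clopen); by the Becker--Kechris theorem on refining topologies of Polish group actions \cite{becker_descriptive_1996} we may further refine to a Polish $G$-space topology $\tau\supseteq\tau_0$ with the same Borel sets, and $E\in\Gamma(\tau\times\tau)$ since $\Gamma$ is closed under continuous preimages. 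As $y\mapsto(x,y)$ is $\tau$-continuous and $[x]$ is the vertical section $E_x$, we obtain $[x]\in\Gamma(\tau)$ for \emph{every} $x$; it remains to push this down to $\tau_0$ for comeagerly many $x$. The bridge is a $\tau_0$-dense $G_\delta$ set $D\subseteq X$ with $\tau|_D=\tau_0|_D$, obtained by taking a countable base of $\tau$, writing each element as the symmetric difference of a $\tau_0$-open set and a $\tau_0$-meager set, and deleting the union of those meager sets. Then $[x]\cap D\in\Gamma(\tau_0|_D)$ for every $x$ (restrict the $\Gamma(\tau)$-set $[x]$ to the subspace $D$, where $\tau$ and $\tau_0$ agree); and a Kuratowski--Ulam argument shows that $\mathrm{Good}:=\{x\in X:\{g\in G:gx\in D\}\text{ is comeager in }G\}$ is comeager in $X$. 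Since $\mathrm{Good}$ is visibly $G$-invariant, it suffices to prove the conclusion for every $x\in\mathrm{Good}$, for which one also has $[x]\subseteq\mathrm{Good}$.

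For case (1), write $[x]\cap D=\hat F\cap\hat U\cap D$ with $\hat F\in\boldsymbol{\Pi}^0_\alpha(X,\tau_0)$ and $\hat U\in\boldsymbol{\Sigma}^0_\alpha(X,\tau_0)$, taking $\hat U=X$ in the $\boldsymbol{\Pi}^0_\alpha$ case and $\hat F=X$ in the $\boldsymbol{\Sigma}^0_\alpha$ case. I claim that for $x\in\mathrm{Good}$ one has $[x]=(\hat F\cap D)^{\triangle G}\cap(\hat U)^{\ast G}$, where $A^{\triangle G}$ and $A^{\ast G}$ denote the Vaught transforms with respect to the comeager, respectively non-meager, filter on $G$. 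For $\subseteq$: if $y\in[x]$ then $gy\in[x]$ always and $gy\in D$ comeagerly, hence $gy\in[x]\cap D=\hat F\cap\hat U\cap D$ comeagerly, so $gy\in\hat F\cap D$ comeagerly and $gy\in\hat U$ non-meagerly. For $\supseteq$: the set of $g$ with $gy\in\hat F\cap D$ is comeager and the set with $gy\in\hat U$ is non-meager, so some $g$ has $gy\in(\hat F\cap\hat U)\cap D=[x]\cap D\subseteq[x]$, whence $y\in[x]$. By the Becker--Kechris preservation theorem for Vaught transforms \cite{becker_descriptive_1996}, $(\hat F\cap D)^{\triangle G}\in\boldsymbol{\Pi}^0_\alpha(\tau_0)$ and $(\hat U)^{\ast G}\in\boldsymbol{\Sigma}^0_\alpha(\tau_0)$; hence $[x]\in\boldsymbol{\Pi}^0_\alpha(\tau_0)$ when $\hat U=X$, $[x]\in D(\boldsymbol{\Pi}^0_\alpha)(\tau_0)$ in general, and $[x]\in\boldsymbol{\Sigma}^0_\alpha(\tau_0)$ when $\hat F=X$ and $\alpha\geq 3$ (using $\boldsymbol{\Pi}^0_2\subseteq\boldsymbol{\Sigma}^0_\alpha$ together with closure of $\boldsymbol{\Sigma}^0_\alpha$ under finite intersections). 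As $\mathrm{Good}$ is comeager, this gives (1).

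For case (2), write $[x]\cap D=(\hat P\cup\hat S)\cap D$ with $\hat P\in\boldsymbol{\Pi}^0_\alpha(\tau_0)$ and $\hat S\in\boldsymbol{\Sigma}^0_\alpha(\tau_0)$, and put $\tilde P:=(\hat P\cap D)^{\triangle G}$, $\tilde S:=(\hat S\cap D)^{\ast G}$; for $\alpha\geq 3$ the relevant intersections lie in $\boldsymbol{\Pi}^0_\alpha$, respectively $\boldsymbol{\Sigma}^0_\alpha$, and the transforms preserve these classes. The same bookkeeping as above, now using that the union of the two $g$-sets is comeager (so one of them is comeager or the other is non-meager), shows $[x]=\tilde P\cup\tilde S$ and $\tilde P,\tilde S\subseteq[x]$ for $x\in\mathrm{Good}$. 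The decisive observation is that $\tilde P$ and $\tilde S$ are \emph{$G$-invariant} subsets of $X$, because comeagerness (respectively non-meagerness) of $\{g:gy\in A\}$ is unchanged when $y$ is replaced by $hy$, the latter set being a right translate of the former. A $G$-invariant subset of the single orbit $[x]$ is either empty or equal to $[x]$; since $\tilde P\cup\tilde S=[x]\neq\emptyset$, either $[x]=\tilde P\in\boldsymbol{\Pi}^0_\alpha(\tau_0)$ or $[x]=\tilde S\in\boldsymbol{\Sigma}^0_\alpha(\tau_0)$, which is exactly the conclusion of (2).

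The substantive external inputs are the reduction to a Polish $G$-space topology with prescribed Borel sets, and the Vaught-transform preservation theorem: $A\in\boldsymbol{\Pi}^0_\alpha\Rightarrow A^{\triangle G}\in\boldsymbol{\Pi}^0_\alpha$ and $A\in\boldsymbol{\Sigma}^0_\alpha\Rightarrow A^{\ast G}\in\boldsymbol{\Sigma}^0_\alpha$ for $\alpha\geq 2$. The main obstacle I anticipate is bookkeeping rather than conceptual: one must track precisely which transform preserves which pointclass at small levels, and this is what pins the thresholds in the statement — the recovery succeeds for $\boldsymbol{\Pi}^0_\alpha$ and $D(\boldsymbol{\Pi}^0_\alpha)$ down to $\alpha=2$ but only down to $\alpha=3$ for $\boldsymbol{\Sigma}^0_\alpha$ and for the dichotomy in (2), since the $\boldsymbol{\Pi}^0_2$ factor contributed by $D$ cannot be absorbed into $\boldsymbol{\Sigma}^0_2$. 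Note that the dichotomy in (2) costs nothing beyond the invariance remark, so no closure properties of $\check D(\boldsymbol{\Pi}^0_\alpha)$ are needed.
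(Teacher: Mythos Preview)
Your proof is correct and follows essentially the same route as the paper: restrict to a dense $G_\delta$ set where $E$ is $\Gamma$ in the original topology (the paper obtains this by citing \cite[Theorem 8.38]{kechris_classical_1995}, you reconstruct it by hand and via an unnecessary but harmless Becker--Kechris refinement), then recover $[x]$ as an intersection/union of Vaught transforms and invoke their pointclass preservation. Two cosmetic remarks: your symbols $\triangle$ and $\ast$ are swapped relative to the paper's (and Gao's) convention, and the Becker--Kechris step making the action continuous in $\tau$ can be dropped since the Vaught transforms are taken with respect to the original $\tau_0$-continuous action.
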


\begin{proof}
Fix a countable open basis $\left\{ U_{i}:i\in \omega \right\} $ of $G$.
Below we adopt the Vaught transform notation as in \cite[Section 3.2]%
{gao_invariant_2009}. By \cite[Theorem 8.38]{kechris_classical_1995}, there
exists a dense $G_{\delta }$ set $W\subseteq X$ such that $E_{G}^{X}\cap
\left( W\times W\right) \in \Gamma \left( W\times W\right) $. Notice that $%
W^{\ast }$ is also a dense $G_{\delta }$ subset of $X$. Fix $x\in W\cap
W^{\ast }$. Thus, we have that $\left[ x\right] \cap W\in \Gamma (W)$. If $%
\Gamma =\boldsymbol{\Sigma }_{\alpha }^{0}$ for $\alpha \geq 3$, then $\left[
x\right] \cap W=A\cap W$ for some $A\in \boldsymbol{\Sigma }_{\alpha
}^{0}\left( X\right) $. Then we have that $\left[ x\right] =\left( \left[ x%
\right] \cap W\right) ^{\Delta }=\left( A\cap W\right) ^{\Delta }$ is $%
\boldsymbol{\Sigma }_{\alpha }^{0}$ in $X$. If $\Gamma =\boldsymbol{\Pi }%
_{\alpha }^{0}$ for $\alpha \geq 2$, then $\left[ x\right] \cap W=B\cap W$
for some $B\in \boldsymbol{\Pi }_{\alpha }^{0}\left( X\right) $. Then $\left[
x\right] =\left( W\cap \left[ x\right] \right) ^{\ast }=\left( B\cap
W\right) ^{\ast }$ is $\boldsymbol{\Pi }_{\alpha }^{0}$ in $X$. If $\Gamma
=D(\boldsymbol{\Pi }_{\alpha }^{0})$ for $\alpha \geq 2$, then $W\cap \left[
x\right] =A\cap B\cap W$ where $A\in \boldsymbol{\Sigma }_{\alpha }^{0}(X)$
and $B\in \boldsymbol{\Pi }_{\alpha }^{0}(X)$. Thus, $\left[ x\right]
=A^{\Delta }\cap \left( B\cap W\right) ^{\ast }\in D(\boldsymbol{\Pi }%
_{\alpha }^{0})$.

If $\Gamma =\check{D}(\boldsymbol{\Pi }_{\alpha }^{0})$ for $\alpha \geq 3$,
then $W\cap \left[ x\right] =\left( A\cap W\right) \cup \left( B\cap
W\right) $, where $A\in \boldsymbol{\Sigma }_{\alpha }^{0}\left( X\right) $
and $B\in \boldsymbol{\Pi }_{\alpha }^{0}\left( X\right) $. Thus, $\left[ x%
\right] =\left( A\cap W\right) ^{\Delta }$ or $\left[ x\right] =\left( B\cap
W\right) ^{\ast }$. Hence, either $\left[ x\right] $ is $\boldsymbol{\Sigma }%
_{\alpha }^{0}$ or $\left[ x\right] $ is $\boldsymbol{\Pi }_{\alpha }^{0}$.
If $\Gamma =\check{D}(\boldsymbol{\Pi }_{2}^{0})$, then $A\cap W$ as above
is $D(\boldsymbol{\Pi }_{2}^{0})$ in $X$, and hence $\left[ x\right] $ is $D(%
\boldsymbol{\Pi }_{2}^{0})$ in $X$.
\end{proof}

A similar proof as Proposition \ref{Proposition:HKL} gives the following.

\begin{lemma}
\label{Lemma:difference}Suppose that $G$ is a Polish group, and $H$ is a
Polishable subgroup of $G$. Let $\alpha $ be a countable ordinal. If $H$ is $%
\check{D}(\boldsymbol{\Pi }_{\alpha }^{0})$, then $H$ is either $\boldsymbol{%
\Pi }_{\alpha }^{0}$ or $\boldsymbol{\Sigma }_{\alpha }^{0}$.
\end{lemma}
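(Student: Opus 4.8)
The statement is the group-theoretic analogue of Proposition~\ref{Proposition:HKL}(2), specialized to the translation action of $H$ on $G$. The plan is to adapt the Vaught-transform argument from that proof, but the subtlety is that here we work with the subgroup $H$ itself (sitting inside $G$ via the not-necessarily-open inclusion) rather than with an arbitrary orbit. The key observation is that $H$ is exactly the orbit $[e]$ of the identity $e\in G$ under the left-translation action of $H$ on $G$, so one is tempted to quote Proposition~\ref{Proposition:HKL}(2) directly; the obstruction is that Proposition~\ref{Proposition:HKL}(2) only asserts that the conclusion holds on a comeager set of points $x$, and $e$ need not lie in that comeager set. So a direct appeal does not work, and one must rerun the argument paying attention to the homogeneity afforded by the group structure.

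First I would fix a countable open basis $\{U_i : i\in\omega\}$ of $G$ and recall the Vaught transforms $A^{*U}$ and $A^{\Delta U}$ relative to the $H$-action (as in \cite[Section~3.2]{gao_invariant_2009}), writing $A^{*} = A^{*G}$ and $A^{\Delta}=A^{\Delta G}$. The standing fact is that for an $H$-invariant set $A$ one has $A^{*}=A^{\Delta}=A$; more importantly, for \emph{any} Borel $A\subseteq G$ the set $A^{*U}$ is again Borel and, crucially, is computed using only countably many shifts, so it sits one level up (or at the same level, for the $*$-transform against a $\boldsymbol{\Pi}^0_\alpha$ set) in the hierarchy. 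Applying \cite[Theorem~8.38]{kechris_classical_1995} to the potentially-$\check D(\boldsymbol{\Pi}^0_\alpha)$ relation $E_H^G$, I get a dense $G_\delta$ set $W\subseteq G$ with $E_H^G\cap(W\times W)\in\check D(\boldsymbol{\Pi}^0_\alpha)(W\times W)$.

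The main point is then to push the decomposition down to the orbit of the \emph{identity} using invariance under translation. Since $H$ acts on $G$ by homeomorphisms and $W$ is comeager, for every $h\in H$ the translate $hW$ is comeager, hence $W\cap hW\neq\varnothing$; this means every $H$-orbit meets $W$, and in particular $e\in \overline{H\cap W}$ in the Polish topology of $H$. Concretely, choose $h_0\in H$ with $h_0\in W$ and with $h_0$ close to $e$; then the section at $h_0$ gives $[h_0]\cap W = H\cap W$ and, because $[h_0]=[e]=H$, the $\check D(\boldsymbol{\Pi}^0_\alpha)$ section decomposition $H\cap W = (A\cap W)\cup(B\cap W)$ with $A\in\boldsymbol{\Sigma}^0_\alpha(G)$ and $B\in\boldsymbol{\Pi}^0_\alpha(G)$ transfers, via the Vaught transform and the identity $H = (H\cap W)^{*}$, to $H = (A\cap W)^{\Delta}$ or $H=(B\cap W)^{*}$ — exactly the dichotomy in the proof of Proposition~\ref{Proposition:HKL}(2). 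The first alternative puts $H$ in $\boldsymbol{\Sigma}^0_\alpha$ and the second in $\boldsymbol{\Pi}^0_\alpha$.

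The hard part, and the only place needing care, is justifying that one of the two transform identities $H=(A\cap W)^{\Delta}$ or $H=(B\cap W)^{*}$ must hold — equivalently, that the ``or'' in the section decomposition of an invariant set can be promoted to a global ``or''. This is handled exactly as in Proposition~\ref{Proposition:HKL}: from $H\cap W=(A\cap W)\cup(B\cap W)$ and the fact that $H$ is invariant and $W$-large, apply the $*$-transform to both sides; since $((A\cap W)\cup(B\cap W))^{*} = (A\cap W)^{*}\cup(B\cap W)^{*}$ and $(H\cap W)^{*}=H$, one gets $H=(A\cap W)^{*}\cup(B\cap W)^{*}$, and then a Baire-category argument on $H$ (with its own Polish topology, in which $H\cap W$ is comeager) forces one of the two pieces to be comeager in $H$, hence — being invariant after transforming — to equal $H$. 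Tracking that $(A\cap W)^{*}$ lands in $\boldsymbol{\Sigma}^0_\alpha$ (here one uses $\alpha$ countable and the standard computation of the complexity of Vaught transforms, as in \cite[Section~16.B]{kechris_classical_1995}) and $(B\cap W)^{*}\in\boldsymbol{\Pi}^0_\alpha$ then finishes the proof.
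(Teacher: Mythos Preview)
Your detour through potential complexity and the comeager set $W$ is unnecessary and leads you into errors. The hypothesis is that $H$ is $\check{D}(\boldsymbol{\Pi}_\alpha^0)$ \emph{in $G$}, not merely that $E_H^G$ is potentially so; hence one may write $H = A \cup B$ directly with $A \in \boldsymbol{\Sigma}_\alpha^0(G)$ and $B \in \boldsymbol{\Pi}_\alpha^0(G)$, with no need for \cite[Theorem~8.38]{kechris_classical_1995} or any restriction to $W$. (Equivalently: in your setup one may simply take $W = G$.)

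More seriously, your justification in the last paragraph is wrong on two counts. First, the identity $\bigl((A\cap W)\cup(B\cap W)\bigr)^{*} = (A\cap W)^{*}\cup(B\cap W)^{*}$ is false: the $*$-transform does not distribute over finite unions (only the $\Delta$-transform does). Second, the claim that $(A\cap W)^{*}$ lies in $\boldsymbol{\Sigma}_\alpha^0$ is incorrect; it is $A^{\Delta}$, not $A^{*}$, that inherits the $\boldsymbol{\Sigma}_\alpha^0$ complexity of $A$. You had the right dichotomy two sentences earlier---$H = A^{\Delta}$ or $H = B^{*}$---but then failed to justify it. The clean argument (which is the paper's) is: for any $x \in H$, either $\{h \in H : hx \in B\}$ is comeager in $H$, so $x \in B^{*}$; or it is not, in which case $\{h \in H : hx \notin B\} \subseteq \{h \in H : hx \in A\}$ (using $Hx = H \subseteq A \cup B$) is non-meager, so $x \in A^{\Delta}$. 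Since $A^{\Delta}$ and $B^{*}$ are $H$-invariant subsets of $H$ and $H$ is a single $H$-orbit, one of them equals $H$. This takes three lines and uses no Baire-category argument beyond the definition of the Vaught transforms.
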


\begin{proof}
Adopt the notation of the Vaught transform with respect to the left
translation action of $H$ on $G$. We have that $H=A\cup B$ where $A$ is $%
\boldsymbol{\Sigma }_{\alpha }^{0}$ and $B$ is $\boldsymbol{\Pi }_{\alpha
}^{0}$. If $x\in H$, then we have that either $x\in A^{\Delta }$ or $x\in
B^{\ast }$. Since $A^{\Delta }$ and $B^{\ast }$ are $H$-invariant, we have
that either $H\subseteq A^{\Delta }$ or $H\subseteq B^{\ast }$. Since $%
A^{\Delta }$ and $B^{\ast }$ are contained in $H$, we have that either $%
H=A^{\Delta }$ or $H=B^{\ast }$. This concludes the proof.
\end{proof}

Applying Proposition \ref{Proposition:HKL} to the left translation action
associated with a Polishable subgroup of a Polish group, we obtain Items (1)
and (3) of the following result. The proof of Item (2) is postponed to
Section \ref{Section:complexity-polishable}.

\begin{theorem}
\label{Theorem:Polishable-complexity}Suppose that $G$ is a Polish group, and 
$H$ is a Polishable subgroup of $G$. Denote by $E_{H}^{G}$ the corresponding
coset equivalence relation.

\begin{enumerate}
\item $E_{H}^{G}$ is potentially $\boldsymbol{\Pi }_{2}^{0}$ if and only if $%
H$ is closed $G$.

\item $E_{H}^{G}$ is potentially $\boldsymbol{\Sigma }_{2}^{0}$ if and only
if $H$ is $D(\boldsymbol{\Pi }_{2}^{0})$ in $G$.

\item Let $\Gamma $ be one of the following complexity classes: $\boldsymbol{%
\Sigma }_{\alpha }^{0}$ for $\alpha \neq 2$, $\boldsymbol{\Pi }_{\alpha
}^{0} $, and $D\left( \boldsymbol{\Pi }_{\alpha }^{0}\right) $. Then $%
E_{H}^{G}$ is potentially $\Gamma $ in $G$ if and only if $H$ is $\Gamma $
in $G$.
\end{enumerate}
\end{theorem}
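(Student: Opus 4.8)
The plan is to deduce Items~(1) and~(3) from Proposition~\ref{Proposition:HKL} applied to the left translation action of $H$ on $G$, whose orbit equivalence relation is $E_{H}^{G}$ and whose orbit of a point $x$ is the right coset $Hx$; in particular the orbit of the identity is $H$ itself. Item~(2) will instead be proved in Section~\ref{Section:complexity-polishable}, once the canonical approximation of $H$ is available, so I omit it here.

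The backward implications are uniform and routine. The relation $E_{H}^{G}$ is the preimage of $H$ under the continuous map $G\times G\rightarrow G$, $(x,y)\mapsto yx^{-1}$, so membership of $H$ in a complexity class $\Gamma$ yields at once $E_{H}^{G}\in\Gamma(G\times G)$, and hence $E_{H}^{G}$ is potentially $\Gamma$, witnessed by the given topology of $G$. In particular, if $H$ is closed then $E_{H}^{G}$ is closed and a fortiori $\boldsymbol{\Pi}_{2}^{0}$, which gives the backward direction of Item~(1).

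For the forward direction, first let $\Gamma$ be $\boldsymbol{\Pi}_{\alpha}^{0}$ with $\alpha\geq 2$, $\boldsymbol{\Sigma}_{\alpha}^{0}$ with $\alpha\geq 3$, or $D(\boldsymbol{\Pi}_{\alpha}^{0})$ with $\alpha\geq 2$, and assume $E_{H}^{G}$ is potentially $\Gamma$. Proposition~\ref{Proposition:HKL}(1) gives that $\{x\in G:Hx\in\Gamma(G)\}$ is comeager in $G$. I would then upgrade this to an equality with $G$: for $g\in G$ the right translation $\rho_{g}\colon z\mapsto zg$ is a self-homeomorphism of $G$, and $\rho_{x^{-1}y}$ maps $Hx$ onto $Hy$, so $Hx=(\rho_{x^{-1}y})^{-1}(Hy)$; closure of $\Gamma$ under continuous preimages then makes the validity of the membership $Hx\in\Gamma(G)$ independent of the choice of $x$. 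Being either empty or all of $G$, and also comeager, the above set must equal $G$, so that $H\in\Gamma(G)$. This proves Item~(3) for these $\Gamma$. Specializing to $\Gamma=\boldsymbol{\Pi}_{2}^{0}$ shows that if $E_{H}^{G}$ is potentially $\boldsymbol{\Pi}_{2}^{0}$ then $H$ is $G_{\delta}$ in $G$; to conclude Item~(1) I would then invoke the standard fact that a $G_{\delta}$ subgroup of a Polish group is closed, which follows by observing that $H$ is a dense $G_{\delta}$, hence comeager, Borel subgroup of the Polish group $\overline{H}$, therefore open in $\overline{H}$ by Pettis' theorem, therefore closed in $\overline{H}$, so that $H=\overline{H}$.

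Finally I would deal with the low-level cases of Item~(3) outside the range of Proposition~\ref{Proposition:HKL}. If $\Gamma$ is $\boldsymbol{\Pi}_{1}^{0}$ or $D(\boldsymbol{\Pi}_{1}^{0})$, then $\Gamma\subseteq\boldsymbol{\Pi}_{2}^{0}$, so potential $\Gamma$ forces potential $\boldsymbol{\Pi}_{2}^{0}$, hence $H$ closed by Item~(1), hence $H\in\Gamma(G)$. If $\Gamma=\boldsymbol{\Sigma}_{1}^{0}$ and $E_{H}^{G}$ is open in some finer Polish topology $\tau$, then $E_{H}^{G}$ is in fact $\tau$-clopen: its classes are $\tau$-open sections, so for $(x,y)\notin E_{H}^{G}$ the disjoint $\tau$-open classes of $x$ and $y$ provide a $\tau$-open neighbourhood $[x]\times[y]$ of $(x,y)$ inside the complement; since a second countable space has only countably many pairwise disjoint nonempty open sets, the $\tau$-clopen partition of $G$ into cosets is countable, so $[G:H]$ is countable; then $G$ is a countable union of translates of the Borel set $H$, so $H$ is non-meager and thus open in $G$ by Pettis' theorem. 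I expect the real work to lie in Item~(2) rather than in any of the above: potential $\boldsymbol{\Sigma}_{2}^{0}$ corresponds not to $H\in\boldsymbol{\Sigma}_{2}^{0}(G)$ but to $H\in D(\boldsymbol{\Pi}_{2}^{0})(G)$, and establishing this requires the finer description of the Solecki subgroups of $H$ developed later in the paper.
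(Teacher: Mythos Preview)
Your argument is correct and follows essentially the same strategy as the paper: deduce the forward implications from Proposition~\ref{Proposition:HKL} applied to the translation action, handle the low-level classes $\boldsymbol{\Sigma}_{1}^{0}$, $\boldsymbol{\Pi}_{1}^{0}$, $D(\boldsymbol{\Pi}_{1}^{0})$ by ad hoc arguments reducing to Item~(1), and defer the nontrivial direction of Item~(2) to Section~\ref{Section:complexity-polishable}. Your explicit upgrade from ``comeagerly many orbits in $\Gamma$'' to ``all orbits in $\Gamma$'' via right translation is a step the paper leaves implicit.

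Two points of comparison. First, for Item~(1) the paper proceeds differently: it cites that potentially $\boldsymbol{\Pi}_{2}^{0}$ implies smooth, and then that a Polishable subgroup with smooth coset relation is closed. Your route---apply Proposition~\ref{Proposition:HKL} at $\boldsymbol{\Pi}_{2}^{0}$ to get $H$ is $G_{\delta}$, then use Pettis in $\overline{H}$ to conclude $H$ is closed---is more self-contained and avoids those external references. Second, you defer all of Item~(2), but in fact only the converse implication requires later machinery: the forward implication (potentially $\boldsymbol{\Sigma}_{2}^{0}$ implies $H\in D(\boldsymbol{\Pi}_{2}^{0})$) is immediate from Proposition~\ref{Proposition:HKL}(1) with $\Gamma=D(\boldsymbol{\Pi}_{2}^{0})$, since $\boldsymbol{\Sigma}_{2}^{0}\subseteq D(\boldsymbol{\Pi}_{2}^{0})$. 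The hard part, proved later as Lemma~\ref{Lemma:sigma-complexity}, is that $H\in D(\boldsymbol{\Pi}_{2}^{0})$ (in fact even $H\in\boldsymbol{\Sigma}_{3}^{0}$) forces $E_{H}^{G}$ to be potentially $\boldsymbol{\Sigma}_{2}^{0}$.
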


\begin{proof}
(1): Suppose that $E_{H}^{G}$ is potentially $\boldsymbol{\Pi }_{2}^{0}$. By 
\cite[Lemma 12.5.3]{gao_invariant_2009} we have that $E_{H}^{G}$ is smooth.
Thus, $H$ is closed by \cite[page 574]{solecki_coset_2009}.

(2): The forward implication is a particular instance of Proposition \ref%
{Proposition:HKL}, while the converse implication follows from Lemma \ref%
{Lemma:sigma-complexity} in Section \ref{Section:complexity-polishable}.

(3): Only the forward implication requires a proof. If $\Gamma =\boldsymbol{%
\Sigma }_{1}^{0}$ then $E_{H}^{G}$ has countably many classes by \cite[Lemma
12.5.2]{gao_invariant_2009}. Thus, $H$ has countable index in $G$, and hence
it is nonmeger. Therefore, $H$ is open by \cite[Theorem 2.3.2]%
{gao_invariant_2009}. If $\Gamma $ is $\boldsymbol{\Pi }_{1}^{0}$ or $%
\boldsymbol{\Pi }_{2}^{0}$ or $D\left( \boldsymbol{\Pi }_{1}^{0}\right) $,
then $H\in \boldsymbol{\Pi }_{1}^{0}\left( G\right) \subseteq \Gamma \left(
G\right) $ by Part (1). If $\Gamma $ is $\boldsymbol{\Pi }_{\alpha }^{0}$ or 
$\boldsymbol{\Sigma }_{\alpha }^{0}$ for $\alpha \geq 3$, or $D\left( 
\boldsymbol{\Pi }_{\alpha }^{0}\right) $ for $\alpha \geq 2$, the conclusion
follows from Proposition \ref{Proposition:HKL}.
\end{proof}

We now recall some results concerning the possible complexity classes of
Polishable subgroups. The following proposition is a reformulation of \cite[%
Corollary 3.4]{farah_borel_2006}.

\begin{proposition}[Farah--Solecki]
\label{Proposition:FS-reduce}Suppose that $G$ is a Polish group, and $H$ is
a Polishable subgroup of $G$. If $\lambda <\omega _{1}$ is either zero or a
limit ordinal, and $H$ is $\boldsymbol{\Pi }_{1+\lambda +1}^{0}$ in $G$,
then $H$ is $\boldsymbol{\Pi }_{1+\lambda }^{0}$ in $G$.
\end{proposition}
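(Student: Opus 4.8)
The plan is to split according to the two kinds of $\lambda$ permitted. If $\lambda = 0$ then $1+\lambda+1 = 2$ and $1+\lambda = 1$, so the assertion is that a $\boldsymbol{\Pi}_{2}^{0}$ (equivalently, $G_{\delta}$) Polishable subgroup is closed. If $\lambda$ is a countable limit ordinal then $1+\lambda = \lambda$ and $1+\lambda+1 = \lambda+1$, so the assertion is that a $\boldsymbol{\Pi}_{\lambda+1}^{0}$ Polishable subgroup is already $\boldsymbol{\Pi}_{\lambda}^{0}$ in $G$. The restriction on $\lambda$ is essential here: already $c_{0} \subseteq \mathbb{R}^{\mathbb{N}}$ is a Polishable subgroup that is properly $\boldsymbol{\Pi}_{3}^{0}$, hence not $\boldsymbol{\Pi}_{2}^{0}$, so the analogue of the statement with $1+\lambda$ a successor ordinal $\geq 2$ is false.

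The case $\lambda = 0$ I would settle directly, using only that $H$ is a Borel subgroup. Let $\overline{H}$ be the closure of $H$ in $G$; it is a closed, hence Polish, subgroup, and $H$ is a dense $G_{\delta}$ subset of it. Therefore $H$ is comeager, and in particular nonmeager, in $\overline{H}$, so by \cite[Theorem 2.3.2]{gao_invariant_2009} the subgroup $H$ is open in $\overline{H}$; an open subgroup is closed, so $H = \overline{H}$ is closed in $G$.

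For $\lambda$ a countable limit ordinal, the goal is to write $H$ as a countable intersection $H = \bigcap_{n \in \omega} H_{n}$ of Polishable subgroups $H_{n}$ of $G$ with $H_{n} \in \boldsymbol{\Pi}_{\alpha_{n}}^{0}(G)$ for suitable $\alpha_{n} < \lambda$; granting this we are done, since each $\boldsymbol{\Pi}_{\alpha_{n}}^{0}(G) \subseteq \boldsymbol{\Pi}_{\lambda}^{0}(G)$ and $\boldsymbol{\Pi}_{\lambda}^{0}(G)$ is closed under countable intersections, while Lemma \ref{Lemma:intersection--Polishable} confirms that the intersection is again Polishable. To produce the $H_{n}$, I would use the description of the intrinsic Polish group topology $\tau$ of $H$: it has the same Borel sets as the subspace topology inherited from $G$, and is generated over that subspace topology by a countable family of sets of the form $B_{i} \cap H$ with $B_{i}$ Borel in $G$; the hypothesis $H \in \boldsymbol{\Pi}_{\lambda+1}^{0}(G)$ should allow one to choose each $B_{i} \in \bigcup_{\alpha < \lambda} \boldsymbol{\Sigma}_{\alpha}^{0}(G)$, and then the Polishable subgroups lying between $H$ and $G$ that Solecki's construction attaches to finitely many of the $B_{i}$, each of complexity $< \lambda$ in $G$, serve as the $H_{n}$. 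I expect the real difficulty to be precisely this last step: converting a descriptive presentation of $H$ as a $\boldsymbol{\Pi}_{\lambda+1}^{0}$ set into an intersection of honest, lower-complexity Polishable subgroups, since a low-complexity set occurring in such a presentation need not be a subgroup and must be replaced by one in a controlled way. This is exactly the content of \cite[Corollary 3.4]{farah_borel_2006} (stated there in terms of the Polishable rank), from which the present statement follows once the two indexings of the Borel hierarchy are matched; equivalently, one can run the argument through Solecki's canonical sequence and show that for a $\boldsymbol{\Pi}_{\lambda+1}^{0}$ subgroup that sequence reaches $H$ by the limit stage $\lambda$, all earlier terms being of complexity $< \lambda$ in $G$.
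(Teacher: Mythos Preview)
Your proposal is correct and matches the paper's treatment: the paper does not give a proof of this proposition at all, merely stating it as a reformulation of \cite[Corollary 3.4]{farah_borel_2006}, which is exactly what you invoke for the limit case. Your self-contained argument for $\lambda=0$ (a $G_{\delta}$ subgroup is comeager in its closure, hence open by Pettis, hence closed) is a welcome addition, and your sketch of the limit case via the Solecki subgroups---$H=s_{\lambda}^{H}(G)=\bigcap_{\beta<\lambda}s_{\beta}^{H}(G)$ with each term $\boldsymbol{\Pi}_{1+\beta+1}^{0}$---is in fact how the result follows from Theorem~\ref{Theorem:characterize-solecki} later in the paper.
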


The following proposition is a consequence of \cite[Theorem 4.1]%
{hjorth_borel_1998} and Proposition \ref{Proposition:HKL}.

\begin{proposition}[Hjorth--Kechris--Louveau]
\label{Proposition:HKL2}Suppose that $G$ is a Polish group, and $H$ is a%
\emph{\ non-Archimedean} Polishable subgroup of $G$. Suppose that $\lambda
<\omega _{1}$ is either zero or limit. If $H$ is $\boldsymbol{\Sigma }%
_{1+\lambda +2}^{0}$, then $H$ is $\boldsymbol{\Sigma }_{1+\lambda +1}^{0}$.
\end{proposition}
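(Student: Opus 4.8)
The plan is to combine the structural input of \cite[Theorem 4.1]{hjorth_borel_1998}, which pins down the possible complexity jumps for non-Archimedean Polishable subgroups, with the comeagerness output of Proposition \ref{Proposition:HKL}. Suppose $H$ is a non-Archimedean Polishable subgroup of $G$ that is $\boldsymbol{\Sigma}_{1+\lambda+2}^{0}$ in $G$, and suppose toward a contradiction that $H$ is \emph{not} $\boldsymbol{\Sigma}_{1+\lambda+1}^{0}$ in $G$. Applying Theorem 4.1 from \cite{hjorth_borel_1998} (in the reformulation that rules out the difference class $D(\boldsymbol{\Pi}_{1+\lambda+1}^{0})$ in the non-Archimedean setting), the only complexity classes strictly between $\boldsymbol{\Pi}_{1+\lambda+1}^{0}$ and $\boldsymbol{\Sigma}_{1+\lambda+2}^{0}$ that can be realized are $\boldsymbol{\Sigma}_{1+\lambda+1}^{0}$ and $\boldsymbol{\Pi}_{1+\lambda+2}^{0}$; since we have excluded the former, $H$ must in fact be properly $\boldsymbol{\Pi}_{1+\lambda+2}^{0}$, that is, $H\in\boldsymbol{\Pi}_{1+\lambda+2}^{0}(G)$ but $H\notin\boldsymbol{\Sigma}_{1+\lambda+2}^{0}(G)$. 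But this directly contradicts the hypothesis that $H$ is $\boldsymbol{\Sigma}_{1+\lambda+2}^{0}$. Hence $H$ is $\boldsymbol{\Sigma}_{1+\lambda+1}^{0}$.

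To make this rigorous I would first unwind exactly what \cite[Theorem 4.1]{hjorth_borel_1998} asserts in terms of the coset equivalence relation $E_{H}^{G}$: it constrains the potential complexity class of orbit equivalence relations arising from continuous actions of non-Archimedean Polish groups, forbidding certain difference classes from occurring. Using Theorem \ref{Theorem:Polishable-complexity}(3), the potential complexity class of $E_{H}^{G}$ in the relevant range coincides with the Borel complexity class of $H$ in $G$, so the constraint transfers to $H$ itself. One then reads off that among the classes $\boldsymbol{\Sigma}_{1+\lambda+1}^{0}$, $D(\boldsymbol{\Pi}_{1+\lambda+1}^{0})$, $\check D(\boldsymbol{\Pi}_{1+\lambda+1}^{0})$, $\boldsymbol{\Pi}_{1+\lambda+2}^{0}$, $\boldsymbol{\Sigma}_{1+\lambda+2}^{0}$, the class $D(\boldsymbol{\Pi}_{1+\lambda+1}^{0})$ is not available; and the dual difference class $\check D(\boldsymbol{\Pi}_{1+\lambda+1}^{0})$ collapses by Lemma \ref{Lemma:difference} to $\boldsymbol{\Pi}_{1+\lambda+1}^{0}$ or $\boldsymbol{\Sigma}_{1+\lambda+1}^{0}$, both of which are $\subseteq\boldsymbol{\Sigma}_{1+\lambda+1}^{0}$ in the relevant cases (using Proposition \ref{Proposition:FS-reduce} to handle $\boldsymbol{\Pi}_{1+\lambda+1}^{0}$ when $\lambda$ is zero or limit). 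The remaining possibilities below $\boldsymbol{\Sigma}_{1+\lambda+2}^{0}$ are then precisely $\boldsymbol{\Sigma}_{1+\lambda+1}^{0}$ and the proper class $\boldsymbol{\Pi}_{1+\lambda+2}^{0}$, and the latter is incompatible with the hypothesis.

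The main obstacle I anticipate is not the logical skeleton above but the bookkeeping of translating the statement of \cite[Theorem 4.1]{hjorth_borel_1998}—which is phrased for potential complexity of orbit equivalence relations, with its own indexing conventions—into a clean statement about the Borel complexity class of $H$ in $G$, and making sure the index $1+\lambda+2$ lands exactly where claimed. In particular one must be careful that the hypothesis "$\lambda$ zero or limit" is exactly what is needed for Proposition \ref{Proposition:FS-reduce} and for the Hjorth--Kechris--Louveau obstruction to apply at the successor level $1+\lambda+1$; an off-by-one error here would break the argument. Once the dictionary between the two formulations is set up (which is why the excerpt records this as a "consequence of" rather than a restatement), the deduction is immediate.
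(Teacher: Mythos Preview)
Your overall plan---invoke \cite[Theorem 4.1]{hjorth_borel_1998} at the level of the potential complexity of $E_H^G$ and then use Proposition~\ref{Proposition:HKL} to transfer this down to the complexity of $H$ itself---is exactly the route the paper indicates. However, the contradiction argument you lay out in the first paragraph is circular: the claim that ``the only complexity classes strictly between $\boldsymbol{\Pi}_{1+\lambda+1}^{0}$ and $\boldsymbol{\Sigma}_{1+\lambda+2}^{0}$ that can be realized are $\boldsymbol{\Sigma}_{1+\lambda+1}^{0}$ and $\boldsymbol{\Pi}_{1+\lambda+2}^{0}$'' is precisely the content of Theorem~\ref{Theorem:complexity}, whose proof \emph{uses} Proposition~\ref{Proposition:HKL2} (to exclude case~(2c) in the non-Archimedean setting). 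You cannot appeal to the finite list of candidate classes before that list has been established. Moreover, the conclusion ``$H$ must be properly $\boldsymbol{\Pi}_{1+\lambda+2}^{0}$'' does not follow from ruling out $D(\boldsymbol{\Pi}_{1+\lambda+1}^{0})$: a priori $H$ could sit in any of the infinitely many Hausdorff difference classes between $\boldsymbol{\Delta}_{1+\lambda+1}^{0}$ and $\boldsymbol{\Delta}_{1+\lambda+2}^{0}$, and $\boldsymbol{\Pi}_{1+\lambda+2}^{0}$ is not even contained in $\boldsymbol{\Sigma}_{1+\lambda+2}^{0}$.

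The fix is to drop the contradiction and argue directly. From $H\in\boldsymbol{\Sigma}_{1+\lambda+2}^{0}(G)$ one gets $E_H^G\in\boldsymbol{\Sigma}_{1+\lambda+2}^{0}(G\times G)$ via the continuous map $(x,y)\mapsto x^{-1}y$, so $E_H^G$ is potentially $\boldsymbol{\Sigma}_{1+\lambda+2}^{0}$. The content of \cite[Theorem 4.1]{hjorth_borel_1998} is a \emph{positive} reduction statement on the potential-complexity side (not merely an exclusion of one class): for an action of a non-Archimedean Polish group, potentially $\boldsymbol{\Sigma}_{1+\lambda+2}^{0}$ already implies potentially $\boldsymbol{\Sigma}_{1+\lambda+1}^{0}$ when $\lambda$ is zero or limit. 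Proposition~\ref{Proposition:HKL} then gives $H\in\boldsymbol{\Sigma}_{1+\lambda+1}^{0}(G)$. Your third paragraph is right that the only real work is the index bookkeeping in quoting \cite[Theorem 4.1]{hjorth_borel_1998}; but you should read that theorem as providing the reduction directly rather than trying to reconstruct it from the informal remark in the introduction about which classes are excluded.
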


\section{Solecki subgroups\label{Section:solecki}}

Suppose that $G$ is a Polish group, and $H$ is a Polishable subgroup of $G$.
Then $H$ admits a canonical approximation by Polishable subgroups indexed by
countable ordinals. As these were originally described by Solecki in \cite%
{solecki_polish_1999}, we call them \emph{Solecki subgroups }of $G$
associated with $H$. They have also been considered in \cite%
{solecki_coset_2009, farah_borel_2006}.

Lemma 2.3 from \cite[Lemma 2.3]{solecki_polish_1999} implies that $G$ has a
smallest $\boldsymbol{\Pi }_{3}^{0}$ Polishable subgroup containing $H$,
which we denote by $s_{1}^{H}(G)$. One can explicitly describe $s_{1}^{H}(G)$
as the $\boldsymbol{\Pi }_{3}^{0}$ subgroup of $G$ defined by%
\begin{equation*}
\bigcap_{V}\bigcup_{z_{0},z_{1}\in H}z_{0}\overline{V}^{G}\cap \overline{V}%
^{G}z_{1}
\end{equation*}%
where $V$ ranges among the open neighborhoods of the identity in $H$, and $%
\overline{V}^{G}$ is the closure of $V$ inside of $G$. It is proved in \cite[%
Lemma 2.3]{solecki_polish_1999} that $s_{1}^{H}(G)$ satisfies the following
properties---see also \cite[Lemma 4.5]{tsankov_compactifications_2006} and 
\cite[Section 3]{farah_borel_2006}:

\begin{itemize}
\item $H$\emph{\ }is dense in $s_{1}^{H}(G)$;

\item a neighborhood basis of $x\in s_{1}^{H}(G)$ consists of sets of the
form $\overline{Wx}^{G}\cap s_{1}^{H}(G)$ where $W$ is an open neighborhood
of the identity in $H$;

\item if $A\subseteq G$ is $\boldsymbol{\Pi }_{3}^{0}$ and contains $H$,
then $A\cap s_{1}^{H}(G)$ is comeager in the Polish group topology of $%
s_{1}^{H}(G)$.
\end{itemize}

\begin{lemma}
\label{Lemma:non-Archimedean-Solecki}Suppose that $G$ is a Polish group, and 
$H$ is a non-Archimedean Polishable subgroup of $G$. Then a neighborhood
basis of the identity in $s_{1}^{H}(G)$ consists of the sets of the form $%
\overline{W}^{G}\cap s_{1}^{H}(G)$ where $W$ is an open subgroup of $H$. In
particular, $s_{1}^{H}(G)$ is non-Archimedean.
\end{lemma}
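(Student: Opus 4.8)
The plan is to use the explicit description of $s_{1}^{H}(G)$ together with the non-Archimedean hypothesis on $H$, and feed it into the general neighborhood-basis description of $s_{1}^{H}(G)$ recalled just above the statement. Since $H$ is non-Archimedean, it admits a neighborhood basis of the identity consisting of \emph{open subgroups} $W\le H$. The second bulleted property of $s_{1}^{H}(G)$ says that a neighborhood basis of an arbitrary point $x\in s_{1}^{H}(G)$ is given by the sets $\overline{Wx}^{G}\cap s_{1}^{H}(G)$ as $W$ ranges over open neighborhoods of the identity in $H$. Specializing to $x$ equal to the identity and restricting $W$ to range over just the open subgroups of $H$ (which still form a neighborhood basis of the identity in $H$ because $H$ is non-Archimedean), I get that the sets $\overline{W}^{G}\cap s_{1}^{H}(G)$, with $W$ an open subgroup of $H$, form a neighborhood basis of the identity in $s_{1}^{H}(G)$. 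This is exactly the asserted description.

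The remaining point is that each such $\overline{W}^{G}\cap s_{1}^{H}(G)$ is an \emph{open subgroup} of $s_{1}^{H}(G)$, which will give that $s_{1}^{H}(G)$ is non-Archimedean. It is open by construction (it is a basic neighborhood of the identity). To see it is a subgroup: $W$ is a subgroup of $H$, hence $\overline{W}^{G}$ is a closed subsemigroup of $G$, and since inversion is continuous on $G$ it is closed under inverses, so $\overline{W}^{G}$ is a closed subgroup of $G$; intersecting with the subgroup $s_{1}^{H}(G)$ of $G$ keeps it a subgroup. (One should be slightly careful here: the group operations on $s_{1}^{H}(G)$ in its own Polish topology agree with those inherited from $G$, since $s_{1}^{H}(G)$ is a Polishable subgroup of $G$; so a subset of $s_{1}^{H}(G)$ that is a subgroup in $G$ is a subgroup of $s_{1}^{H}(G)$.) Thus $s_{1}^{H}(G)$ has a neighborhood basis of the identity consisting of open subgroups, which is the definition of non-Archimedean.

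I do not anticipate a genuine obstacle; the only thing to be attentive to is the interplay between the subspace topology from $G$ and the intrinsic Polish topology on $s_{1}^{H}(G)$. The point is that the explicit closure operation $\overline{(\cdot)}^{G}$ is taken inside $G$, but the bulleted properties of $s_{1}^{H}(G)$ recalled above already package exactly the statement that these sets, intersected with $s_{1}^{H}(G)$, are neighborhoods in the \emph{intrinsic} Polish topology of $s_{1}^{H}(G)$; so no extra topological argument is needed beyond invoking those properties and the elementary fact that the closure of a subgroup is a subgroup in a topological group with continuous inversion.
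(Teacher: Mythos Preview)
Your argument follows the same route as the paper's: use the general neighborhood-basis description of $s_{1}^{H}(G)$, restrict $W$ to open subgroups of $H$, note that $\overline{W}^{G}$ is a subgroup of $G$, and conclude that $\overline{W}^{G}\cap s_{1}^{H}(G)$ is an open subgroup of $s_{1}^{H}(G)$. One minor imprecision: being ``a basic neighborhood of the identity'' does not by itself make a set open; what makes $\overline{W}^{G}\cap s_{1}^{H}(G)$ open is that it is a \emph{subgroup} with nonempty interior (the paper invokes Pettis' Theorem here, though the elementary translate argument suffices), and the nonempty interior is exactly what the neighborhood property provides.
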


\begin{proof}
Since $H$ is non-Archimedean, the first assertion follows from the remarks
above. If $W$ is an open subgroup of $H$, then $\overline{W}^{G}\cap
s_{1}^{H}(G)$ is a subgroup of $s_{1}^{H}(G)$ with nonempty interior, whence
it is an open subgroup by Pettis' Theorem \cite[Corollary 3.1]%
{pettis_continuity_1950}. Therefore, the second assertion follows from the
first one.
\end{proof}

A similar argument as in the proof of \cite[Lemma 2.3]{solecki_polish_1999}
gives the following.

\begin{lemma}
\label{Lemma:characterize-solecki}Suppose tha $G$ is a Polish group, and $N$
is a Polishable subgroup of $G$. Let $H$ be a Polishable subgroup of $G$
such that:

\begin{enumerate}
\item $N\subseteq H$ and $N$ is dense in the Polish group topology of $H$;

\item for every open neighborhood $V$ of the identity in $N$, $\overline{V}%
^{G}\cap H$ contains an open neighborhood of the identity in $H$.
\end{enumerate}

If $A\subseteq G$ is $\boldsymbol{\Pi }_{3}^{0}$ and contains $N$, then $%
A\cap H$ is comeager in $H$. In particular, $H\subseteq s_{1}^{N}(G)$. If $H$
is furthermore $\boldsymbol{\Pi }_{3}^{0}$, then $H=s_{1}^{N}(G)$.
\end{lemma}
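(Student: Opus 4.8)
The plan is to mimic the proof of \cite[Lemma 2.3]{solecki_polish_1999}: first reduce to the case of an $F_{\sigma }$ set, and then use hypothesis (2) together with the density of $N$ in $H$ and the Baire category theorem to produce, inside every nonempty open subset of $H$, a nonempty open subset of $A\cap H$. For the reduction, note that every $\boldsymbol{\Pi }_{3}^{0}$ subset of $G$ is a countable intersection of $\boldsymbol{\Sigma }_{2}^{0}$ (that is, $F_{\sigma }$) sets, that each such set still contains $N$, and that a countable intersection of comeager subsets of $H$ is comeager. Hence it suffices to prove the statement when $A=\bigcup_{m}F_{m}$ with each $F_{m}$ closed in $G$.

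The core claim is that, in this situation, for every nonempty open $U\subseteq H$ the set $A\cap H$ contains a nonempty open subset of $U$. To see this, observe that by hypothesis (1) $N\cap U$ is a nonempty open, hence Baire, subset of the Polish group $N$, and $N\cap U\subseteq A=\bigcup_{m}F_{m}$ with each $N\cap U\cap F_{m}$ relatively closed in $N\cap U$. By the Baire category theorem there are $m_{0}$ and a nonempty open $V_{0}\subseteq N$ with $V_{0}\subseteq U\cap F_{m_{0}}$. Fix $n_{0}\in V_{0}$ and set $V:=n_{0}^{-1}V_{0}$, an open neighborhood of the identity in $N$. Since left translation by $n_{0}^{-1}$ is a homeomorphism of $G$, one has $\overline{V}^{G}=n_{0}^{-1}\overline{V_{0}}^{G}\subseteq n_{0}^{-1}F_{m_{0}}$. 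By hypothesis (2), $\overline{V}^{G}\cap H$ contains an open neighborhood $W$ of the identity in $H$; replacing $W$ by $W\cap n_{0}^{-1}U$ we may assume $n_{0}W\subseteq U$. Then $n_{0}W$ is a nonempty open subset of $U$ and $n_{0}W\subseteq n_{0}(\overline{V}^{G}\cap H)=\overline{V_{0}}^{G}\cap H\subseteq F_{m_{0}}\cap H\subseteq A\cap H$, which proves the claim.

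To conclude, note that $A\cap H$ is Borel, hence has the Baire property in $H$, so it agrees with some open $O\subseteq H$ up to a meager set. If $O$ were not dense, then $A\cap H$ would be meager on some nonempty open subset of $H$, contradicting the core claim since a nonempty open subset of the Baire space $H$ is non-meager. Hence $O$ is dense and $A\cap H$ is comeager in $H$. Applying this to the $\boldsymbol{\Pi }_{3}^{0}$ Polishable subgroup $s_{1}^{N}(G)\supseteq N$, we get that $s_{1}^{N}(G)\cap H$ is a comeager, in particular non-meager, Borel subgroup of $H$, hence open by \cite[Theorem 2.3.2]{gao_invariant_2009}, hence clopen, and therefore equal to $H$ since it is also dense; thus $H\subseteq s_{1}^{N}(G)$. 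Finally, if $H$ is itself $\boldsymbol{\Pi }_{3}^{0}$, then $H$ is a $\boldsymbol{\Pi }_{3}^{0}$ Polishable subgroup of $G$ containing $N$, so $s_{1}^{N}(G)\subseteq H$ by minimality of $s_{1}^{N}(G)$, and equality follows.

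The main obstacle is the core claim: one must upgrade a relatively open piece of $N$ contained in a closed set to an honest open piece of $A\cap H$. This is exactly where hypothesis (2) enters, and it is essential that (2) refers to the \emph{$G$-closure} $\overline{V}^{G}$ of a neighborhood of the identity in $N$ (rather than its closure inside $N$ or $H$), so that after multiplying back by $n_{0}\in N\subseteq H$ one remains inside $F_{m_{0}}\cap H$. Everything else is a routine combination of the Baire property, the Baire category theorem in $N$, and Pettis' theorem.
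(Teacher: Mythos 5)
Your proof is correct and follows essentially the same route as the paper's: reduce to $A$ being $F_{\sigma}$, use the Baire category theorem in $N$ to locate a translate of an identity neighborhood of $N$ inside a closed piece $F_{m_{0}}\cap U$, and then invoke hypothesis (2) to upgrade this to a nonempty open subset of $H$ contained in $A\cap H$. The only differences are cosmetic (left versus right translates, and a slightly more roundabout Baire-property argument for comeagerness where the paper simply notes that the interior of $A\cap H$ is dense open), and you spell out the ``in particular'' steps that the paper leaves implicit.
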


\begin{proof}
It suffices to consider the case when $A$ is $\boldsymbol{\Sigma }_{2}^{0}$.
In this case, there exist closed subsets $L_{k}$ of $G$ for $k\in \omega $
such that $A=\bigcup_{k\in \omega }L_{k}$. Suppose that $U\subseteq H$ is a
nonempty open set. Since $N$ is dense in $H$, $U\cap N$ is a nonempty open
subset of $N$. By the Baire Category Theorem, there exists $k_{0}\in \omega $
such that $L_{k_{0}}\cap U\cap N$ is not meager in $N$. Thus, there exist $%
x\in N$ and an open neighborhood $V$ of the identity in $N$ such that 
\begin{equation*}
Vx\subseteq L_{k_{0}}\cap U\cap N\text{.}
\end{equation*}%
Since $L_{k_{0}}$ is closed in $G$, we have that $\overline{Vx}^{G}\subseteq
L_{k_{0}}$. By (2), there is an open neighborhood $W$ of the identity in $H$
such that $Wx\subseteq \overline{Vx}^{G}\subseteq L_{k_{0}}$. This shows
that $x\in U$ is in the interior of $L_{k_{0}}\cap H\subseteq A\cap H$.
Since this holds for every nonempty open subset of $H$, we have that $A\cap
H $ contains a dense open subset of $H$, and hence it is comeager in $H$.
This concludes the proof.
\end{proof}

The sequence of\emph{\ Solecki subgroups} $s_{\alpha }^{H}(G)$ for $\alpha
<\omega _{1}$ of $G$ associated with $H$ is defined recursively by setting:

\begin{itemize}
\item $s_{0}^{H}(G)=\overline{H}^{G}$;

\item $s_{\alpha +1}^{H}(G)=s_{1}^{H}\left( s_{\alpha }^{H}(G)\right) $ for $%
\alpha <\omega _{1}$;

\item $s_{\lambda }^{H}(G)=\bigcap_{\beta <\lambda }s_{\beta }^{H}(G)$ for a
limit ordinal $\lambda <\omega _{1}$.
\end{itemize}

Using Lemma \ref{Lemma:intersection--Polishable} at the limit stage, one can
prove by induction on $\alpha <\omega _{1}$ that $s_{\alpha }^{H}(G)$ is a
Polishable subgroup of $G$, and $H$ is dense in $s_{\alpha }^{H}(G)$.
Furthermore, by Lemma \ref{Lemma:non-Archimedean-Solecki}, if $H$ is
non-Archimedean, then $s_{\alpha }^{H}(G)$ is non-Archimedean for every $%
1\leq \alpha <\omega _{1}$. It is proved in \cite[Theorem 2.1]%
{solecki_polish_1999} that there exists $\alpha <\omega _{1}$ such that $%
s_{\alpha }^{H}(G)=H$. We call the least countable ordinal $\alpha $ such
that $s_{\alpha }^{H}(G)=H$ the \emph{Solecki rank} of $H$ in $G$.

One can define the Polish groups $s_{\alpha }^{H}(G)$ solely in terms of $H$
endowed with the subspace topology inherited from $G$. Indeed, $s_{0}^{H}(G)$
can be seen as the completion of $H$ with respect to a suitable metric that
induces the subspace topology inherited from $G$; see \cite[Section 2.1]%
{solecki_polish_1999}. Using Lemma \ref{Lemma:characterize-solecki} one can
describe the Solecki subgroups of products, as follows.

\begin{lemma}
\label{Lemma:characterize-solecki-product}Suppose that, for every $n\in 
\mathbb{N}$, $G_{n}$ is a Polish group, and $N_{n}$ is a Polishable
subgroup. Define $G=\prod_{n\in \omega }G_{n}$ and $N=\prod_{n\in \omega
}N_{n}$. Then we have that 
\begin{equation*}
s_{\gamma }^{H}(G)=\prod_{n\in \omega }s_{\gamma }^{N_{n}}\left( G_{n}\right)
\end{equation*}%
for every $\gamma <\omega _{1}$.
\end{lemma}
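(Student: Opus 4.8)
The plan is to prove this by transfinite induction on $\gamma$, reducing the successor step to the case $\gamma = 1$, which is where Lemma~\ref{Lemma:characterize-solecki} does the real work. Throughout, I will use without further comment the fact (observed just before the lemma) that $s_\gamma^H(G)$ depends only on $H$ with its subspace topology from $G$, together with the standard fact that the subspace topology on $N = \prod_n N_n$ inside $G = \prod_n G_n$ agrees with the product of the subspace topologies, and that completions, closures, and countable products all commute in the obvious way. (Strictly speaking the statement as written has a typo: $H$ should be $N$, and the product should be over $n\in\omega$; I will prove $s_\gamma^N(G) = \prod_n s_\gamma^{N_n}(G_n)$.)

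First, the base case $\gamma = 0$: here $s_0^N(G) = \overline{N}^G = \overline{\prod_n N_n}^{\prod_n G_n} = \prod_n \overline{N_n}^{G_n} = \prod_n s_0^{N_n}(G_n)$, since closure commutes with countable products. Next, the limit case $\gamma = \lambda$: by the induction hypothesis $s_\beta^N(G) = \prod_n s_\beta^{N_n}(G_n)$ for every $\beta < \lambda$, so $s_\lambda^N(G) = \bigcap_{\beta<\lambda} \prod_n s_\beta^{N_n}(G_n) = \prod_n \bigcap_{\beta<\lambda} s_\beta^{N_n}(G_n) = \prod_n s_\lambda^{N_n}(G_n)$, where I should also check that the Polish group topology on the left-hand intersection (furnished by Lemma~\ref{Lemma:intersection--Polishable}) coincides with the product topology of the $s_\lambda^{N_n}(G_n)$'s — this follows because the basic neighborhoods from Lemma~\ref{Lemma:intersection--Polishable} are exactly products of basic neighborhoods in the factors.

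The main work is the successor case $\gamma = \alpha+1$. By the induction hypothesis, $s_\alpha^N(G) = \prod_n s_\alpha^{N_n}(G_n) =: \prod_n P_n$, and I must show $s_1^N\!\left(\prod_n P_n\right) = \prod_n s_1^{N_n}(P_n)$. For this I will verify that $H := \prod_n s_1^{N_n}(P_n)$ satisfies hypotheses (1) and (2) of Lemma~\ref{Lemma:characterize-solecki} with ambient group $\prod_n P_n$ and dense subgroup $N$ (note $N$ is dense in each $P_n$, hence in $\prod_n P_n$): condition (1) is that $N \subseteq H$ and $N$ is dense in $H$, which holds because $N_n$ is dense in $s_1^{N_n}(P_n)$ for each $n$ and density passes to countable products; condition (2) is that for every identity neighborhood $V$ in $N$, $\overline{V}^{\prod_n P_n} \cap H$ contains an identity neighborhood of $H$ — since a basic $V$ has the form $\prod_{n<k} V_n \times \prod_{n\ge k} N_n$ with $V_n$ an identity neighborhood of $N_n$, its closure in $\prod_n P_n$ contains $\prod_{n<k}(\overline{V_n}^{P_n}) \times \prod_{n\ge k} P_n$, and intersecting with $H$ and invoking property (2)-type behavior of $s_1$ in each of the finitely many coordinates $n<k$ gives the desired neighborhood of the identity of $H$. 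Lemma~\ref{Lemma:characterize-solecki} then gives $H \subseteq s_1^N(\prod_n P_n)$; and since $H$ is a product of $\boldsymbol{\Pi}_3^0$ subgroups, hence $\boldsymbol{\Pi}_3^0$ in $\prod_n P_n$, the "furthermore" clause of that lemma yields the reverse inclusion, completing the induction.

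The step I expect to be most delicate is verifying condition (2) of Lemma~\ref{Lemma:characterize-solecki} in the successor case: one must be careful that the closure of a basic open box in the product is controlled coordinatewise, and that the characterizing property of $s_1^{N_n}(P_n)$ (namely that $\overline{V_n}^{P_n} \cap s_1^{N_n}(P_n)$ contains an identity neighborhood of $s_1^{N_n}(P_n)$, which is exactly the second bullet in the list of properties of $s_1$ recalled in Section~\ref{Section:solecki}) is applied only in the finitely many nontrivial coordinates while the tail coordinates contribute all of $P_n$. Everything else is a routine bookkeeping exercise in the commutation of closures, intersections, completions, and density with countable products.
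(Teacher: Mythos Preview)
Your proposal is correct and follows essentially the same approach as the paper: reduce by transfinite induction to the case $\gamma=1$, then apply Lemma~\ref{Lemma:characterize-solecki} to $H=\prod_n s_1^{N_n}(P_n)$ by checking density and the box-neighborhood condition~(2), and conclude equality from the $\boldsymbol{\Pi}_3^0$ clause. The paper compresses the induction into the single sentence ``it suffices to consider the case $\gamma=1$'' and works directly with $G_n$ in place of your $P_n$, but the argument is otherwise identical (and you are right that $H$ in the statement is a typo for $N$).
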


\begin{proof}
It suffices to consider the case when $\gamma =1$. In this case, set%
\begin{equation*}
H_{n}:=s_{1}^{N_{n}}\left( G_{n}\right)
\end{equation*}%
for $n\in \omega $, and%
\begin{equation*}
H:=\prod_{n\in \omega }H_{n}\text{.}
\end{equation*}%
Then we have that $H$ is a $\boldsymbol{\Pi }_{3}^{0}$ Polishable subgroup
of $G$, $N\subseteq H$, and $N$ is dense in $H$. If $V$ is an open
neighborhood of the identity in $N$, then there exist $n\in \omega $ and
open neighborhoods $V_{i}$ of the identity in $N_{i}$ for $i<n$ such that $V$
contains 
\begin{equation*}
\prod_{i<n}V_{i}=\{x\in N:\forall i<n\text{, }x_{i}\in V_{i}\}\text{.}
\end{equation*}%
For $i<n$, we have that $\overline{V}_{i}^{G_{i}}\cap H_{i}$ contains an
open neighborhood $W_{i}$ of the identity in $H_{i}$. Therefore, we have
that $\overline{V}^{G}\cap H$ contains%
\begin{equation*}
\prod_{i<n}W_{i}=\{x\in H:\forall i<n\text{, }x_{i}\in W_{i}\}\text{,}
\end{equation*}%
which is an open neighborhood of the identity in $H$. The conclusion thus
follows from Lemma \ref{Lemma:characterize-solecki}.
\end{proof}

\section{Complexity of Solecki subgroups\label{Section:complexity-solecki}}

Suppose that $G$ is a Polish group, and $H$ is a Polishable subgroup of $G$.
For a complexity class $\Gamma $, we define $\Gamma (G)|_{H}$ to be the
collection of sets of the form $A\cap H$ for $A\in \Gamma (G)$. The
following results are essentially established in \cite{farah_borel_2006}. In
the statements and proofs, we adopt the Vaught transform notation in
reference to the action of $H$ on $G$ by left translation; see \cite[Section
3.2]{gao_invariant_2009}.

\begin{lemma}
\label{Lemma:unrelativize}Suppose that $G$ is a Polish group, $H$ is a
Polishable subgroup of $G$, and $\alpha ,\beta <\omega _{1}$ are ordinals.
Then%
\begin{equation*}
\boldsymbol{\Sigma }_{1+\beta }^{0}\left( s_{\alpha }^{H}(G)\right)
\subseteq \boldsymbol{\Sigma }_{1+\alpha +\beta }^{0}(G)|_{s_{\alpha
}^{H}(G)}
\end{equation*}%
and%
\begin{equation*}
\boldsymbol{\Pi }_{1+\beta }^{0}\left( s_{\alpha }^{H}(G)\right) \subseteq 
\boldsymbol{\Pi }_{1+\alpha +\beta }^{0}(G)|_{s_{\alpha }^{H}(G)}\text{.}
\end{equation*}
\end{lemma}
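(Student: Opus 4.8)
The plan is to prove the statement by transfinite induction on $\alpha$, with the inner induction on $\beta$ being the heart of the matter. At $\alpha = 0$ there is nothing to prove, since $s_0^H(G) = \overline{H}^G$ is closed in $G$, hence its subspace topology agrees with the one induced from $G$ after intersecting with the closure; more precisely, every $\boldsymbol{\Sigma}_{1+\beta}^0$ (resp.\ $\boldsymbol{\Pi}_{1+\beta}^0$) subset of the closed subspace $s_0^H(G)$ is the trace of a set of the same class in $G$, which is the standard fact that a Borel subset of a closed subspace, relative to the subspace topology, has the same complexity as a subset of the ambient space (one builds the ambient witness by the same Borel operations applied to closed sets of $G$ that trace to the given closed sets of $s_0^H(G)$). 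For the successor step $\alpha \to \alpha+1$ one reduces to the case $\alpha=0$ applied with the pair $(s_\alpha^H(G), s_{\alpha+1}^H(G)=s_1^H(s_\alpha^H(G)))$ in place of $(G,H)$: first push a $\boldsymbol{\Sigma}_{1+\beta}^0$ (or $\boldsymbol{\Pi}_{1+\beta}^0$) subset of $s_{\alpha+1}^H(G)$ up into a $\boldsymbol{\Sigma}_{1+1+\beta}^0$ (or $\boldsymbol{\Pi}_{1+1+\beta}^0$) trace in $s_\alpha^H(G)$ using the $\alpha=1$ case of the lemma, then apply the induction hypothesis for $\alpha$ to go from $s_\alpha^H(G)$ into $G$, using the ordinal identity $1+\alpha+(1+\beta) = 1+(\alpha+1)+\beta$. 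At a limit ordinal $\lambda$ one uses $s_\lambda^H(G) = \bigcap_{\beta<\lambda} s_\beta^H(G)$ together with Lemma \ref{Lemma:intersection--Polishable}, and observes that the intersection is realized as the image of the Polish group $Z$ of constant sequences inside $\prod_{\beta<\lambda} s_\beta^H(G)$; a basis of neighborhoods of the identity in $s_\lambda^H(G)$ is given by traces of neighborhoods from the $s_\beta^H(G)$, so one picks $\beta<\lambda$ large enough that the relevant metric on $s_\lambda^H(G)$ is dominated by the one from $s_\beta^H(G)$, reducing to the successor/base cases already handled; here one uses that for $\beta<\lambda$ one has $1+\beta\le 1+\lambda$, so a bound obtained at stage $\beta$ suffices.

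The essential case is therefore $\alpha = 1$, i.e.\ $s_1^H(G)$, and within it the double induction on $\beta$. The key structural input is the neighborhood basis description recalled after Lemma \ref{Lemma:characterize-solecki}: a neighborhood basis of $x\in s_1^H(G)$ in its Polish topology consists of the sets $\overline{Wx}^G \cap s_1^H(G)$, where $W$ ranges over open neighborhoods of the identity in $H$. Thus the identity map $s_1^H(G) \to G$ is continuous, and moreover open sets of $s_1^H(G)$ are countable unions of traces of \emph{closed} subsets of $G$, i.e.\ they are $\boldsymbol{\Sigma}_2^0(G)|_{s_1^H(G)}$: fixing a countable dense set in $H$ and a countable neighborhood basis $(W_j)$ of the identity in $H$, every open $U \subseteq s_1^H(G)$ is a union over countably many $(d, j)$ with $d$ in that dense set of sets $\overline{W_j d}^G \cap s_1^H(G)$. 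This gives the base case $\beta = 0$ for $\boldsymbol{\Sigma}$: $\boldsymbol{\Sigma}_1^0(s_1^H(G)) \subseteq \boldsymbol{\Sigma}_2^0(G)|_{s_1^H(G)}$, which matches $1+1+0 = 2$. Complementing, a closed subset of $s_1^H(G)$ is the trace of a $\boldsymbol{\Pi}_2^0(G)$ set (the complement of a $\boldsymbol{\Sigma}_2^0$ set, intersected with $s_1^H(G)$, is the trace of a $\boldsymbol{\Pi}_2^0$ set of $G$ — note $s_1^H(G)$ itself need not be removed, since we are only claiming membership in the class relativized to $s_1^H(G)$), giving $\boldsymbol{\Pi}_1^0(s_1^H(G)) \subseteq \boldsymbol{\Pi}_2^0(G)|_{s_1^H(G)}$.

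For the inductive step in $\beta$, suppose the inclusions hold for all $\beta' < \beta$. A set in $\boldsymbol{\Sigma}_{1+\beta}^0(s_1^H(G))$ is a countable union $\bigcup_k A_k$ with each $A_k \in \boldsymbol{\Pi}_{1+\beta_k}^0(s_1^H(G))$ for some $\beta_k < \beta$ (when $\beta$ is a successor one takes $\beta_k = \beta - 1$; when $\beta$ is a limit one takes $\beta_k \nearrow \beta$). By the induction hypothesis each $A_k = B_k \cap s_1^H(G)$ with $B_k \in \boldsymbol{\Pi}_{1+1+\beta_k}^0(G)$, so $\bigcup_k A_k = \big(\bigcup_k B_k\big) \cap s_1^H(G)$ and $\bigcup_k B_k \in \boldsymbol{\Sigma}_{1+1+\beta}^0(G)$ because each $1+1+\beta_k < 1+1+\beta$; this is exactly $\boldsymbol{\Sigma}_{1+1+\beta}^0 = \boldsymbol{\Sigma}_{1+\alpha+\beta}^0$ with $\alpha=1$. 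The $\boldsymbol{\Pi}$ inclusion at level $\beta$ follows by taking complements in $s_1^H(G)$: if $C \in \boldsymbol{\Pi}_{1+\beta}^0(s_1^H(G))$ then $s_1^H(G) \setminus C \in \boldsymbol{\Sigma}_{1+\beta}^0(s_1^H(G)) \subseteq \boldsymbol{\Sigma}_{1+1+\beta}^0(G)|_{s_1^H(G)}$, say $s_1^H(G)\setminus C = S \cap s_1^H(G)$ with $S \in \boldsymbol{\Sigma}_{1+1+\beta}^0(G)$, and then $C = (G \setminus S) \cap s_1^H(G)$ with $G\setminus S \in \boldsymbol{\Pi}_{1+1+\beta}^0(G)$. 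The main obstacle is purely bookkeeping: getting the base case of the double induction to land at the right level requires the precise statement that open sets of $s_1^H(G)$ are traces of $\boldsymbol{\Sigma}_2^0$ sets of $G$ (and not merely Borel sets), which is where the explicit neighborhood basis $\overline{Wx}^G \cap s_1^H(G)$ and separability of $H$ are used; and one must be careful with the ordinal arithmetic $1+\alpha+(1+\beta)=1+(\alpha+1)+\beta$ and $1+1+\beta$ throughout, since $1+\beta$ need not equal $\beta$.
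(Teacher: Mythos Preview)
Your proposal is correct and follows essentially the same route as the paper. The paper's proof is terse: it cites \cite[Theorem 3.1]{farah_borel_2006} for the base case $\beta=0$ (proved there by induction on $\alpha$, exactly as you outline using the neighborhood basis $\overline{Wx}^G\cap s_1^H(G)$), and then observes that the general case follows by a routine induction on $\beta$; you have simply unpacked both inductions. One small imprecision: in your limit step you speak of picking a single $\beta<\lambda$ ``large enough,'' but an open set of $s_\lambda^H(G)$ is a countable union of traces of open sets from \emph{various} $s_{\alpha_n}^H(G)$, so you need all of them; since each contributes a set in $\boldsymbol{\Sigma}_{1+\alpha_n}^0(G)|_{s_\lambda^H(G)}$ with $1+\alpha_n<1+\lambda$, the union still lands in $\boldsymbol{\Sigma}_{1+\lambda}^0(G)|_{s_\lambda^H(G)}$, and the argument goes through.
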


\begin{proof}
It is proved in \cite[Theorem 3.1]{farah_borel_2006} by induction on $\alpha 
$ that $\boldsymbol{\Sigma }_{1}^{0}\left( s_{\alpha }^{H}(G)\right)
\subseteq \boldsymbol{\Sigma }_{1+\alpha }^{0}(G)|_{s_{\alpha }^{H}(G)}$. By
taking complements, we have that $\boldsymbol{\Pi }_{1}^{0}\left( s_{\alpha
}^{H}(G)\right) \subseteq \boldsymbol{\Pi }_{1+\alpha }^{0}(G)|_{s_{\alpha
}^{H}(G)}$. This is the case $\beta =0$ of the statement above. The rest
follows by induction on $\beta $.
\end{proof}

\begin{lemma}
\label{Lemma:relativize}Suppose that $G\ $is a Polish group, $H\ $is a
Polishable subgroup of $G$, $\alpha ,\beta <\omega _{1}$, and $U\subseteq H$
is open in $H$. If $A\in \boldsymbol{\Sigma }_{1+\alpha +\beta }^{0}(G)$ and 
$B\in \boldsymbol{\Pi }_{1+\alpha +\beta }^{0}(G)$, then $A^{\Delta U}\cap
s_{\alpha }^{H}(G)\in \boldsymbol{\Sigma }_{1+\beta }^{0}\left( s_{\alpha
}^{H}(G)\right) $, and $B^{\ast U}\cap s_{\alpha }^{H}(G)\in \boldsymbol{\Pi 
}_{1+\beta }^{0}\left( s_{\alpha }^{H}(G)\right) $.
\end{lemma}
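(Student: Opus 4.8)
The plan is to reduce the statement to the case $\alpha = 0$ plus an induction on $\beta$, using Lemma \ref{Lemma:unrelativize} as the engine that converts Borel complexity in $G$ into complexity in $s_\alpha^H(G)$. First I would recall the key compatibility property of the Vaught transform: for a countable open basis $\{W_i\}$ of (neighborhoods of the identity of) $H$, and for $U$ open in $H$, one has $A^{\Delta U} = \bigcup_i (A^{*W_i})$-type identities, and more importantly the transforms commute with countable unions and intersections in the expected way — $(\bigcup_n A_n)^{\Delta U} = \bigcup_n A_n^{\Delta U}$ and $(\bigcap_n A_n)^{*U} = \bigcap_n A_n^{*U}$ — together with the duality $(G\setminus A)^{*U} = G \setminus A^{\Delta U}$. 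These are standard (see \cite[Section 3.2]{gao_invariant_2009}) and let me push the transform inside a Borel hierarchy computation.

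The base case is $\beta = 0$, i.e. I must show: if $A \in \boldsymbol{\Sigma}_{1+\alpha}^0(G)$ then $A^{\Delta U} \cap s_\alpha^H(G) \in \boldsymbol{\Sigma}_1^0(s_\alpha^H(G))$, i.e. is open in the Solecki group, and dually for $B \in \boldsymbol{\Pi}_{1+\alpha}^0(G)$ that $B^{*U}\cap s_\alpha^H(G)$ is closed there. This is really the content of \cite[Theorem 3.1]{farah_borel_2006}: one argues by induction on $\alpha$ that the Vaught transform of an open set of $G$ (relativized to the Solecki group) is open in the Solecki topology, using the explicit neighborhood basis of $s_1^H(G)$ recalled in Section \ref{Section:solecki} (sets $\overline{Wx}^G \cap s_1^H(G)$), and then iterating through the recursive definition of $s_\alpha^H(G)$; the limit stage uses Lemma \ref{Lemma:intersection--Polishable}. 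Concretely: for $A$ open in $G$, $A^{\Delta U} = \{x : \exists^* h \in U,\ hx \in A\}$ is open in $G$ already (it equals $\bigcup_{i}\{x : W_i x \subseteq A,\ W_i \subseteq U\}$ up to the Vaught formalism), so relativizing to $s_0^H(G) = \overline{H}^G$ keeps it open; the successor step handles a $\boldsymbol{\Sigma}_{1+\alpha+1}^0$ set $A = \bigcup_n C_n$ with $C_n$ closed $\boldsymbol{\Pi}_{1+\alpha}^0$, pushes $\Delta U$ through the union, and invokes the inductive hypothesis for the $\boldsymbol{\Pi}_{1+\alpha}^0$ pieces together with the description of the finer Solecki topology as the closure operation; the limit step is Lemma \ref{Lemma:intersection--Polishable} again.

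For the inductive step on $\beta$, suppose the statement holds for all $\beta' < \beta$. Write $A \in \boldsymbol{\Sigma}_{1+\alpha+\beta}^0(G)$ as $A = \bigcup_n B_n$ with $B_n \in \boldsymbol{\Pi}_{1+\alpha+\beta_n}^0(G)$ and $\beta_n < \beta$. Since $A^{\Delta U} = \bigcup_n B_n^{\Delta U}$ and $B_n^{\Delta U} = G \setminus (G\setminus B_n)^{*U}$ with $G\setminus B_n \in \boldsymbol{\Sigma}_{1+\alpha+\beta_n}^0(G)$, the inductive hypothesis gives $(G\setminus B_n)^{*U}\cap s_\alpha^H(G) \in \boldsymbol{\Pi}_{1+\beta_n}^0(s_\alpha^H(G))$, hence $B_n^{\Delta U} \cap s_\alpha^H(G) \in \boldsymbol{\Sigma}_{1+\beta_n}^0(s_\alpha^H(G))$, and taking the union over $n$ lands in $\boldsymbol{\Sigma}_{1+\beta}^0(s_\alpha^H(G))$ as required. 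The $\boldsymbol{\Pi}$ statement for $B^{*U}$ follows by the dual computation (complements and the transform duality $(G\setminus B)^{\Delta U} = G\setminus B^{*U}$). The main obstacle is the base case $\beta = 0$: one must verify carefully that the Vaught transform of an open (resp.\ closed) set of $G$, restricted to $s_\alpha^H(G)$, is open (resp.\ closed) in the \emph{finer} Solecki topology, which is exactly where the explicit neighborhood-basis description of the Solecki subgroups from \cite{solecki_polish_1999} (recalled in Section \ref{Section:solecki}) does the work, handed to us through \cite[Theorem 3.1]{farah_borel_2006}; everything after that is bookkeeping with the transform identities.
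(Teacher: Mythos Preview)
Your approach matches the paper's: reduce to the base case $\beta=0$ via \cite[Claim 3.3 in the proof of Theorem 3.1]{farah_borel_2006}, then induct on $\beta$ using the Vaught-transform identities from \cite[Proposition 3.2.5]{gao_invariant_2009}. However, your inductive step as written contains a slip. You claim the inductive hypothesis gives $(G\setminus B_n)^{*U}\cap s_\alpha^H(G) \in \boldsymbol{\Pi}_{1+\beta_n}^0(s_\alpha^H(G))$, but $G\setminus B_n$ lies in $\boldsymbol{\Sigma}_{1+\alpha+\beta_n}^0$, and the hypothesis only controls $C^{\Delta U}$ for $C\in\boldsymbol{\Sigma}$ and $D^{*U}$ for $D\in\boldsymbol{\Pi}$; it says nothing about the $*$-transform of a $\boldsymbol{\Sigma}$ set. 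Unwinding the duality, what you are really asserting is $B_n^{\Delta U}\cap s_\alpha^H(G)\in\boldsymbol{\Sigma}_{1+\beta_n}^0$, which the hypothesis does not yield and which is in general one level too optimistic.

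The fix is exactly the identity you flagged in your first paragraph but did not deploy: for Borel $B_n$ one has $B_n^{\Delta U}=\bigcup_i B_n^{*V_i}$, where $\{V_i\}$ runs over a countable basis of nonempty open subsets of $U$ in $H$ (this is the Baire-category fact that a set with the Baire property is non-meager iff it is comeager on some basic open set). Now each $B_n^{*V_i}\cap s_\alpha^H(G)\in\boldsymbol{\Pi}_{1+\beta_n}^0(s_\alpha^H(G))$ by the $\boldsymbol{\Pi}$-half of the inductive hypothesis applied with $V_i$ in place of $U$, and since $1+\beta_n<1+\beta$ the double countable union $A^{\Delta U}\cap s_\alpha^H(G)=\bigcup_n\bigcup_i\bigl(B_n^{*V_i}\cap s_\alpha^H(G)\bigr)$ lands in $\boldsymbol{\Sigma}_{1+\beta}^0(s_\alpha^H(G))$, as required. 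The $\boldsymbol{\Pi}$ statement then follows by the duality $(G\setminus B)^{\Delta U}=G\setminus B^{*U}$, as you say.
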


\begin{proof}
When $\beta =0$, the assertion about $A$ is the content of Claim 3.3 in the
proof of \cite[Theorem 3.1]{farah_borel_2006}. The assertion about $B$
follows by taking complements. This concludes the proof when $\beta =0$. The
case of an arbitrary $\beta $ is established by induction on $\beta $ using
the properties of the Vaught transform; see \cite[Proposition 3.2.5]%
{gao_invariant_2009}.
\end{proof}

\begin{corollary}
\label{Corollary:relativize}Suppose that $G\ $is a Polish group, $H$ is a
Polishable subgroup of $G$, and $\alpha ,\beta <\omega _{1}$. Let $L$ be a
Polishable subgroup of $G$ containing $H$. If $L\in \boldsymbol{\Sigma }%
_{1+\alpha +\beta }^{0}(G)$, then $L\cap s_{\alpha }^{H}(G)\in \boldsymbol{%
\Sigma }_{1+\beta }^{0}\left( s_{\alpha }^{H}(G)\right) $. If $L\in 
\boldsymbol{\Pi }_{1+\alpha +\beta }^{0}(G)$, then $L\cap s_{\alpha
}^{H}(G)\in \boldsymbol{\Pi }_{1+\beta }^{0}\left( s_{\alpha }^{H}(G)\right) 
$. If $L\in D(\boldsymbol{\Pi }_{1+\alpha +\beta }^{0})\left( G\right) $,
then $L\cap s_{\alpha }^{H}(G)\in D(\boldsymbol{\Pi }_{1+\beta }^{0})\left(
s_{\alpha }^{H}(G)\right) $.
\end{corollary}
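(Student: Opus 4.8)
The plan is to reduce all three assertions to Lemma~\ref{Lemma:relativize} applied with $U=H$, which is legitimate since $H$ is open in itself. The key point is that $L$, being a subgroup of $G$ that contains $H$, is invariant under the left translation action of $H$ on $G$; throughout I use the Vaught transform notation in reference to that action, abbreviating $M^{\ast}:=M^{\ast H}$ and $M^{\Delta}:=M^{\Delta H}$.

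First I would record the elementary observation that any $H$-invariant Borel set $M\subseteq G$ satisfies $M^{\ast}=M^{\Delta}=M$. Indeed, for $x\in M$ we have $hx\in M$ for every $h\in H$, so $\{h\in H:hx\in M\}=H$ is comeager and $x\in M^{\ast}$; for $x\notin M$ we have $\{h\in H:hx\in M\}=\emptyset$, since $hx\in M$ would give $x=h^{-1}(hx)\in M$, so $x\notin M^{\Delta}$; and $M^{\ast}\subseteq M^{\Delta}$ in general. Applying this to $M=L$ settles the first two cases at once: if $L\in\boldsymbol{\Sigma}_{1+\alpha+\beta}^{0}(G)$, then Lemma~\ref{Lemma:relativize} with $A=L$ and $U=H$ yields $L\cap s_{\alpha}^{H}(G)=L^{\Delta}\cap s_{\alpha}^{H}(G)\in\boldsymbol{\Sigma}_{1+\beta}^{0}(s_{\alpha}^{H}(G))$, and if $L\in\boldsymbol{\Pi}_{1+\alpha+\beta}^{0}(G)$, then Lemma~\ref{Lemma:relativize} with $B=L$ and $U=H$ yields $L\cap s_{\alpha}^{H}(G)=L^{\ast}\cap s_{\alpha}^{H}(G)\in\boldsymbol{\Pi}_{1+\beta}^{0}(s_{\alpha}^{H}(G))$.

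For the case $L\in D(\boldsymbol{\Pi}_{1+\alpha+\beta}^{0})(G)$, I would write $L=P\cap S$ with $P\in\boldsymbol{\Pi}_{1+\alpha+\beta}^{0}(G)$ and $S\in\boldsymbol{\Sigma}_{1+\alpha+\beta}^{0}(G)$, and then prove the identity $L=P^{\ast}\cap S^{\Delta}$. The inclusion $\subseteq$ comes from $L=L^{\ast}=(P\cap S)^{\ast}=P^{\ast}\cap S^{\ast}\subseteq P^{\ast}\cap S^{\Delta}$, using that $\ast$ commutes with finite intersections and $S^{\ast}\subseteq S^{\Delta}$ (see \cite[Section~3.2]{gao_invariant_2009}). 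For $\supseteq$, if $x\in P^{\ast}\cap S^{\Delta}$ then $\{h\in H:hx\in P\}$ is comeager in $H$ and $\{h\in H:hx\in S\}$ is non-meager in $H$; as $H$ is a Baire space these two sets meet, and picking $h$ in the intersection gives $hx\in P\cap S=L$, hence $x=h^{-1}(hx)\in L$ because $h\in H\subseteq L$. Intersecting the identity with $s_{\alpha}^{H}(G)$ and applying Lemma~\ref{Lemma:relativize} once to $P$ and once to $S$ (in both cases with $U=H$) shows that $L\cap s_{\alpha}^{H}(G)=\bigl(P^{\ast}\cap s_{\alpha}^{H}(G)\bigr)\cap\bigl(S^{\Delta}\cap s_{\alpha}^{H}(G)\bigr)$ is the intersection of a set in $\boldsymbol{\Pi}_{1+\beta}^{0}(s_{\alpha}^{H}(G))$ with a set in $\boldsymbol{\Sigma}_{1+\beta}^{0}(s_{\alpha}^{H}(G))$, and such an intersection lies in $D(\boldsymbol{\Pi}_{1+\beta}^{0})(s_{\alpha}^{H}(G))$.

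The only step requiring care is the identity $L=P^{\ast}\cap S^{\Delta}$ in the third case: one must decompose $L$ as a $\boldsymbol{\Pi}_{1+\alpha+\beta}^{0}$ set intersected with a $\boldsymbol{\Sigma}_{1+\alpha+\beta}^{0}$ set \emph{in this order}, so that the transform $\ast$ (which, through Lemma~\ref{Lemma:relativize}, preserves the $\boldsymbol{\Pi}$ class) falls on the $\boldsymbol{\Pi}$ factor while $\Delta$ (which preserves the $\boldsymbol{\Sigma}$ class) falls on the $\boldsymbol{\Sigma}$ factor, and one uses that a comeager subset of a Baire space meets every non-meager subset. Everything else is routine bookkeeping with the Vaught transform and Lemma~\ref{Lemma:relativize}.
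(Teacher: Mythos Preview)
Your proposal is correct and follows essentially the same approach as the paper: observe that $L=L^{\ast}=L^{\Delta}$ by $H$-invariance, deduce the first two cases immediately from Lemma~\ref{Lemma:relativize}, and for the $D(\boldsymbol{\Pi})$ case write $L$ as an intersection of a $\boldsymbol{\Pi}$ set and a $\boldsymbol{\Sigma}$ set and use the identity $L=P^{\ast}\cap S^{\Delta}$. The paper simply asserts this last identity without justification, whereas you supply the argument (via $(P\cap S)^{\ast}=P^{\ast}\cap S^{\ast}$ for one inclusion and the Baire-category intersection argument for the other), so your write-up is in fact more complete than the paper's on this point.
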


\begin{proof}
Observe that $L=L^{\ast }=L^{\Delta }$. Thus, the first two assertions
follow immediately from Lemma \ref{Lemma:relativize}. If $L=A\cap B$ where $%
A $ is $\boldsymbol{\Sigma }_{1+\beta +1}^{0}$ in $G$ and $B$ is $%
\boldsymbol{\Pi }_{1+\beta +1}^{0}$ in $G$, then we have that $L\cap
s_{\alpha }^{H}(G)=A^{\Delta }\cap B^{\ast }\cap s_{\alpha }^{H}(G)$ where $%
A^{\Delta }\cap s_{\alpha }^{H}(G)\in \boldsymbol{\Sigma }_{1+\beta
}^{0}\left( s_{\alpha }^{H}(G)\right) $ and $B\cap s_{\alpha }^{H}(G)\in 
\boldsymbol{\Pi }_{1+\beta }^{0}\left( s_{\alpha }^{H}(G)\right) $ by Lemma %
\ref{Lemma:relativize}. Hence, $L\cap s_{\alpha }^{H}(G)\in D(\boldsymbol{%
\Pi }_{1+\beta }^{0})\left( s_{\alpha }^{H}(G)\right) $.
\end{proof}

Recall that, by Proposition \ref{Proposition:FS-reduce}, if $\alpha $ is
either zero or a countable limit ordinal, and $H$ is a $\boldsymbol{\Pi }%
_{1+\alpha +1}^{0}$ Polishable subgroup of a Polish group, then $H$ is $%
\boldsymbol{\Pi }_{1+\alpha }^{0}$.

\begin{theorem}
\label{Theorem:characterize-solecki}Suppose that $G$ is a Polish group, $H$
is a Polishable subgroup of $H$, and $\alpha <\omega _{1}$. Then $s_{\alpha
}^{H}(G)$ is the smallest $\boldsymbol{\Pi }_{1+\alpha +1}^{0}$ Polishable
subgroup of $G$ containing $H$.
\end{theorem}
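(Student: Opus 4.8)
The plan is to proceed by transfinite induction on $\alpha<\omega_1$, establishing simultaneously two facts: $s_\alpha^H(G)$ is $\boldsymbol{\Pi}_{1+\alpha+1}^0$ in $G$, and any $\boldsymbol{\Pi}_{1+\alpha+1}^0$ Polishable subgroup $L$ of $G$ containing $H$ also contains $s_\alpha^H(G)$. The base case $\alpha=0$ is immediate: $s_0^H(G)=\overline{H}^G$ is closed, hence $\boldsymbol{\Pi}_1^0\subseteq\boldsymbol{\Pi}_2^0$, and it is the smallest closed subgroup containing $H$, so it sits inside any $\boldsymbol{\Pi}_2^0$ Polishable $L\supseteq H$ by Part (1) of Theorem \ref{Theorem:Polishable-complexity} (such $L$ is closed) — or more simply, any closed set containing $H$ contains $\overline{H}^G$.

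For the successor step $\alpha=\beta+1$, I would first show $s_{\beta+1}^H(G)=s_1^H(s_\beta^H(G))$ is $\boldsymbol{\Pi}_{1+\beta+2}^0$ in $G$. By construction $s_1^H(s_\beta^H(G))$ is $\boldsymbol{\Pi}_3^0$ inside the Polish group $s_\beta^H(G)$; applying Lemma \ref{Lemma:unrelativize} with the group $s_\beta^H(G)$ in place of $s_\alpha^H(G)$ (i.e.\ using $\boldsymbol{\Pi}_{1+2}^0(s_\beta^H(G))\subseteq\boldsymbol{\Pi}_{1+\beta+2}^0(G)|_{s_\beta^H(G)}$, noting $s_\beta^H(G)$ itself is $\boldsymbol{\Pi}_{1+\beta+1}^0\subseteq\boldsymbol{\Pi}_{1+\beta+2}^0$ in $G$ by the inductive hypothesis) gives the complexity bound. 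For minimality, let $L$ be a $\boldsymbol{\Pi}_{1+\beta+2}^0$ Polishable subgroup of $G$ with $H\subseteq L$. Then $L':=L\cap s_{\beta+1}^H(G)$ contains $H$, hence is dense in $s_{\beta+1}^H(G)$, and by Corollary \ref{Corollary:relativize} (with $\alpha$ there equal to $\beta$, $\beta$ there equal to $2$) $L\cap s_\beta^H(G)\in\boldsymbol{\Pi}_{1+2}^0(s_\beta^H(G))=\boldsymbol{\Pi}_3^0(s_\beta^H(G))$. Now $L\cap s_\beta^H(G)$ is a $\boldsymbol{\Pi}_3^0$ Polishable subgroup of $s_\beta^H(G)$ containing $H$ — actually I should check it is Polishable, which follows since it is the intersection of two Polishable subgroups of $s_\beta^H(G)$ by Lemma \ref{Lemma:intersection--Polishable} — so by minimality of $s_1^H(s_\beta^H(G))=s_{\beta+1}^H(G)$ as a $\boldsymbol{\Pi}_3^0$ Polishable subgroup of $s_\beta^H(G)$ containing $H$, we get $s_{\beta+1}^H(G)\subseteq L\cap s_\beta^H(G)\subseteq L$.

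For the limit step $\lambda<\omega_1$, write $s_\lambda^H(G)=\bigcap_{\beta<\lambda}s_\beta^H(G)$. Each $s_\beta^H(G)$ is $\boldsymbol{\Pi}_{1+\beta+1}^0\subseteq\boldsymbol{\Pi}_{1+\lambda}^0$ in $G$ by the inductive hypothesis (using $\beta+1<\lambda$ when $\lambda$ is a limit, wait — more carefully, $1+\beta+1\le 1+\lambda$ for $\beta<\lambda$), so the countable intersection is $\boldsymbol{\Pi}_{1+\lambda}^0\subseteq\boldsymbol{\Pi}_{1+\lambda+1}^0$. For minimality, if $L$ is $\boldsymbol{\Pi}_{1+\lambda+1}^0$ Polishable containing $H$, then since $1+\lambda$ is either zero or a countable limit ordinal, Proposition \ref{Proposition:FS-reduce} upgrades $L$ to $\boldsymbol{\Pi}_{1+\lambda}^0$; then for each $\beta<\lambda$ we have $1+\beta+1\le 1+\lambda$, so $L$ is $\boldsymbol{\Pi}_{1+\beta+1}^0$ as well, whence by the inductive hypothesis $s_\beta^H(G)\subseteq L$ for every $\beta<\lambda$, giving $s_\lambda^H(G)=\bigcap_{\beta<\lambda}s_\beta^H(G)\subseteq L$. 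Finally, one notes that $s_\lambda^H(G)$ is itself Polishable by Lemma \ref{Lemma:intersection--Polishable}, so the minimality is among Polishable subgroups as claimed.

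The main obstacle I anticipate is the bookkeeping in the successor case: one must be careful that the relativization lemmas are applied to the correct ambient group (here $s_\beta^H(G)$, not $G$) and with the ordinal arithmetic matching — in particular that $\boldsymbol{\Pi}_3^0(s_\beta^H(G))$ relativizes up to $\boldsymbol{\Pi}_{1+\beta+2}^0(G)$, which requires knowing the Solecki subgroups of $G$ associated to $H$, when restricted to $s_\beta^H(G)$, agree with the Solecki subgroups of $s_\beta^H(G)$ associated to $H$. This last compatibility — that $s_\gamma^H(s_\beta^H(G))=s_{\beta+\gamma}^H(G)$ — should be recorded explicitly (it follows from the recursive definition together with $H$ being dense in each $s_\beta^H(G)$), and it is the point on which the whole induction hinges.
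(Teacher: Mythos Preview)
Your base case and successor step are essentially the paper's argument, and they are correct; the compatibility $s_\gamma^H(s_\beta^H(G))=s_{\beta+\gamma}^H(G)$ you worry about is not actually needed, since Lemma~\ref{Lemma:unrelativize} and Corollary~\ref{Corollary:relativize} are already stated for $s_\beta^H(G)$ relative to $G$.

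The limit step, however, contains a genuine error. You argue that once $L$ is $\boldsymbol{\Pi}_{1+\lambda}^0$, the inequality $1+\beta+1\le 1+\lambda$ for $\beta<\lambda$ makes $L$ also $\boldsymbol{\Pi}_{1+\beta+1}^0$, so the inductive hypothesis forces $s_\beta^H(G)\subseteq L$. But the Borel hierarchy is increasing: $\boldsymbol{\Pi}_{1+\beta+1}^0\subseteq\boldsymbol{\Pi}_{1+\lambda}^0$, not the reverse. Knowing $L\in\boldsymbol{\Pi}_{1+\lambda}^0$ gives no information about membership in the \emph{smaller} class $\boldsymbol{\Pi}_{1+\beta+1}^0$, so the inductive hypothesis cannot be invoked this way.

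The paper circumvents this by not appealing to the inductive hypothesis at all in the limit case. Instead, after reducing to $L\in\boldsymbol{\Pi}_{1+\lambda}^0$ (as you do, via Proposition~\ref{Proposition:FS-reduce}), it writes $L=\bigcap_n A_n$ with $A_n\in\boldsymbol{\Pi}_{1+\alpha_n}^0(G)$ for a cofinal sequence $(\alpha_n)$ in $\lambda$. Each $A_n$ individually lives in a low class, so Lemma~\ref{Lemma:relativize} gives $A_n^{\ast}\cap s_{\alpha_n}^H(G)\in\boldsymbol{\Pi}_1^0(s_{\alpha_n}^H(G))$; since $H\subseteq A_n^{\ast}$ and $H$ is dense in $s_{\alpha_n}^H(G)$, this closed set is all of $s_{\alpha_n}^H(G)$. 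Intersecting over $n$ yields $s_\lambda^H(G)\subseteq\bigcap_n A_n^{\ast}=L^{\ast}=L$. The point is that one must descend to the constituents $A_n$ of $L$ to access the lower complexity classes; the subgroup $L$ itself is too complex to feed into the inductive hypothesis.
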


\begin{proof}
It is established in the proof of \cite[Theorem 3.1]{farah_borel_2006} that $%
s_{\alpha }(G)$ is a $\boldsymbol{\Pi }_{1+\alpha +1}^{0}$ Polishable
subgroup of $G$.\ We now prove the minimality assertion by induction on $%
\alpha $. For $\alpha =0$ this follows from the fact that $s_{0}^{H}(G)=%
\overline{H}^{G}$. Suppose that the conclusion holds for $\alpha $. We now
prove that it holds for $\alpha +1$. Let $L$ be a $\boldsymbol{\Pi }%
_{1+\alpha +2}^{0}$ Polishable subgroup of $G$ containing $H$. Thus, $L\cap
s_{\alpha }^{H}(G)$ is a $\boldsymbol{\Pi }_{1+\alpha +2}^{0}$ Polishable
subgroup of $s_{\alpha }^{H}(G)$. Then by Corollary \ref%
{Corollary:relativize} we have that $L\cap s_{\alpha }^{H}(G)\in \boldsymbol{%
\Pi }_{3}^{0}\left( s_{\alpha }(H)\right) $.\ As $s_{\alpha
+1}^{H}(G)=s_{1}^{H}\left( s_{\alpha }^{H}(G)\right) $ is the smallest $%
\boldsymbol{\Pi }_{3}^{0}\left( s_{\alpha }^{H}(G)\right) $ Polishable
subgroup of $s_{\alpha }^{H}(G)$, this implies that $s_{\alpha
+1}^{H}(G)\subseteq L\cap s_{\alpha }^{H}(G)\subseteq L$.

Suppose that $\alpha $ is a limit ordinal and the conclusion holds for every 
$\beta <\alpha $. Fix an increasing sequence $\left( \alpha _{n}\right) $ in 
$\alpha $ such that $\alpha =\mathrm{sup}_{n}\alpha _{n}$. Suppose that $L$
is a $\boldsymbol{\Pi }_{1+\alpha }^{0}$ Polishable subgroup of $G$
containing $H$. Since $L$ is $\boldsymbol{\Pi }_{1+\alpha }^{0}$ in $G$\text{%
, we can write }$L=\bigcap_{n\in \omega }A_{n}$\text{ where, for every }$%
n\in \omega $\text{, }$A_{n}\in \boldsymbol{\Pi }_{1+\alpha _{n}}^{0}(G)$.
Then by Lemma \ref{Lemma:relativize} we have that 
\begin{equation*}
A_{n}^{\ast }\cap s_{\alpha _{n}}^{H}(G)\in \boldsymbol{\Pi }_{1}^{0}\left(
s_{\alpha _{n}}(G)\right) \text{.}
\end{equation*}%
Since $H\subseteq L\subseteq A_{n}$ we have that $H\subseteq A_{n}^{\ast
}\cap s_{\alpha _{n}}^{H}(G)$. Since $H$ is dense in $s_{\alpha _{n}}(H)$,
this implies that $s_{\alpha _{n}}^{H}(G)\subseteq A_{n}^{\ast }$.
Therefore, we have that%
\begin{equation*}
s_{\alpha }^{H}(G)=\bigcap_{n\in \omega }s_{\alpha _{n}}^{H}(G)\subseteq
\bigcap_{n\in \omega }A_{n}^{\ast }=L^{\ast }=L\text{.}
\end{equation*}%
This shows that $s_{\alpha }^{H}(G)\subseteq L$, concluding the proof.
\end{proof}

\begin{lemma}
\label{Lemma:unrelativize-subgroup}Suppose that $G$ is a Polish group, $H$
is a Polishable subgroup of $G$, and $\alpha <\omega _{1}$. Let $L$ be a
Polishable subgroup of $s_{\alpha }^{H}(G)$.

\begin{enumerate}
\item If $L\in \boldsymbol{\Pi }_{3}^{0}\left( s_{\alpha }^{H}(G)\right) $,
then $L\in \boldsymbol{\Pi }_{1+\alpha +2}^{0}(G)$.

\item If $L\in D(\boldsymbol{\Pi }_{2}^{0})(s_{\alpha }^{H}(G))$, then $L\in
D(\boldsymbol{\Pi }_{1+\alpha +1}^{0})(G)$.

\item If $L\in \boldsymbol{\Sigma }_{2}^{0}\left( s_{\alpha }^{H}(G)\right) $
and $\alpha $ is either zero or limit, then $L\in \boldsymbol{\Sigma }%
_{1+\alpha +1}^{0}(G)$.
\end{enumerate}
\end{lemma}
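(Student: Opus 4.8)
The plan is to exploit the already-established machinery relating sets in $s_\alpha^H(G)$ to sets in $G$ via the Vaught transform, namely Lemma \ref{Lemma:unrelativize} and, crucially, the fact that a Polishable subgroup $L$ of $s_\alpha^H(G)$ is invariant under the translation action of $L$ (hence of $H$, since $H\subseteq L$ when $L\supseteq H$; but in general we only know $L\subseteq s_\alpha^H(G)$, so the relevant action is that of $L$ on $s_\alpha^H(G)$ and then of $H$ on $G$). The key observation is that since $L$ is a Polishable subgroup of $s_\alpha^H(G)$, Proposition \ref{Proposition:FS-reduce}, Lemma \ref{Lemma:difference}, and Theorem \ref{Theorem:Polishable-complexity} applied \emph{inside the Polish group} $s_\alpha^H(G)$ tell us about the relationship between the complexity classes $\boldsymbol{\Pi}_3^0$, $D(\boldsymbol{\Pi}_2^0)$, $\boldsymbol{\Sigma}_2^0$ there; we then need to transfer this complexity estimate from $s_\alpha^H(G)$ to $G$.

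For item (1): if $L\in\boldsymbol{\Pi}_3^0(s_\alpha^H(G))$, write $L=\bigcap_{k}A_k$ with $A_k\in\boldsymbol{\Sigma}_2^0(s_\alpha^H(G))$. By Lemma \ref{Lemma:unrelativize} with $\beta=2$, $\boldsymbol{\Sigma}_2^0(s_\alpha^H(G))\subseteq\boldsymbol{\Sigma}_{1+\alpha+2}^0(G)|_{s_\alpha^H(G)}$, so each $A_k=B_k\cap s_\alpha^H(G)$ with $B_k\in\boldsymbol{\Sigma}_{1+\alpha+2}^0(G)$; hence $L=\bigcap_k B_k\cap s_\alpha^H(G)$. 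Since $s_\alpha^H(G)$ is $\boldsymbol{\Pi}_{1+\alpha+1}^0(G)\subseteq\boldsymbol{\Pi}_{1+\alpha+2}^0(G)$ by Theorem \ref{Theorem:characterize-solecki}, and an intersection of countably many $\boldsymbol{\Sigma}_{1+\alpha+2}^0$ sets with one $\boldsymbol{\Pi}_{1+\alpha+2}^0$ set is $\boldsymbol{\Pi}_{1+\alpha+2}^0$, we conclude $L\in\boldsymbol{\Pi}_{1+\alpha+2}^0(G)$. Item (2) is analogous: $L=A\cap B$ with $A\in\boldsymbol{\Sigma}_2^0(s_\alpha^H(G))\subseteq\boldsymbol{\Sigma}_{1+\alpha+2}^0(G)|_{s_\alpha^H(G)}$, $B\in\boldsymbol{\Pi}_2^0(s_\alpha^H(G))\subseteq\boldsymbol{\Pi}_{1+\alpha+2}^0(G)|_{s_\alpha^H(G)}$; but one must sharpen the $\boldsymbol{\Sigma}$ bound to $\boldsymbol{\Sigma}_{1+\alpha+1}^0$ and the $\boldsymbol{\Pi}$ bound to $\boldsymbol{\Pi}_{1+\alpha+1}^0$ in order to land in $D(\boldsymbol{\Pi}_{1+\alpha+1}^0)(G)$. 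The way to get the sharper $\boldsymbol{\Pi}_{1+\alpha+1}^0$ bound is: $B\cap s_\alpha^H(G)$ is a $\boldsymbol{\Pi}_2^0$, hence closed (being a subgroup, by \cite[page 574]{solecki_coset_2009} or the argument in Theorem \ref{Theorem:Polishable-complexity}(1) applied inside $s_\alpha^H(G)$)\dots{} actually one should instead use that $L\in D(\boldsymbol{\Pi}_2^0)(s_\alpha^H(G))$ means, by Theorem \ref{Theorem:Polishable-complexity}(2) applied in $s_\alpha^H(G)$, that the coset equivalence relation $E_L^{s_\alpha^H(G)}$ is potentially $\boldsymbol{\Sigma}_2^0$; equivalently $L=A\cap B$ with $A$ open and $B$ closed in some finer Polish topology, and then one transfers via the Vaught-transform bounds $\boldsymbol{\Sigma}_1^0(s_\alpha^H(G))\subseteq\boldsymbol{\Sigma}_{1+\alpha}^0(G)|\cdots$ and $\boldsymbol{\Pi}_1^0(s_\alpha^H(G))\subseteq\boldsymbol{\Pi}_{1+\alpha}^0(G)|\cdots$ of Lemma \ref{Lemma:unrelativize} (case $\beta=0$), intersecting with $s_\alpha^H(G)\in\boldsymbol{\Pi}_{1+\alpha+1}^0(G)$, to land in $D(\boldsymbol{\Pi}_{1+\alpha+1}^0)(G)$. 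For item (3): by Proposition \ref{Proposition:FS-reduce}, a $\boldsymbol{\Sigma}_2^0$ — hence in particular a $\boldsymbol{\Pi}_3^0$ — Polishable subgroup \dots{} no: the right tool is that $\boldsymbol{\Sigma}_2^0 = \boldsymbol{\Pi}_1^0$-countable-union, so $L=\bigcup_k F_k$ with $F_k$ closed in $s_\alpha^H(G)$, but a subgroup that is $\boldsymbol{\Sigma}_2^0$ need not be closed; instead, when $\alpha$ is zero or limit, apply Lemma \ref{Lemma:unrelativize} with $\beta=1$ to get $\boldsymbol{\Sigma}_2^0(s_\alpha^H(G))\subseteq\boldsymbol{\Sigma}_{1+\alpha+1}^0(G)|_{s_\alpha^H(G)}$, write $L=B\cap s_\alpha^H(G)$ with $B\in\boldsymbol{\Sigma}_{1+\alpha+1}^0(G)$, and use that $s_\alpha^H(G)\in\boldsymbol{\Pi}_{1+\alpha}^0(G)$ — here is where $\alpha$ zero-or-limit enters, via Proposition \ref{Proposition:FS-reduce} which upgrades $s_\alpha^H(G)\in\boldsymbol{\Pi}_{1+\alpha+1}^0(G)$ to $\boldsymbol{\Pi}_{1+\alpha}^0(G)$ — so that $L=B\cap s_\alpha^H(G)$ is an intersection of a $\boldsymbol{\Sigma}_{1+\alpha+1}^0$ set with a $\boldsymbol{\Pi}_{1+\alpha}^0\subseteq\boldsymbol{\Sigma}_{1+\alpha+1}^0$ set, hence $\boldsymbol{\Sigma}_{1+\alpha+1}^0(G)$.

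The main obstacle I anticipate is item (2): getting both factors of the difference down to level $1+\alpha+1$ rather than the naive $1+\alpha+2$. The subtlety is that $D(\boldsymbol{\Pi}_2^0)(s_\alpha^H(G))$ does not directly give a decomposition $A\cap B$ with $A\in\boldsymbol{\Sigma}_1^0$, $B\in\boldsymbol{\Pi}_1^0$ — it gives $A\in\boldsymbol{\Sigma}_2^0$, $B\in\boldsymbol{\Pi}_2^0$. The fix is to observe that $B$, being closed intersected with the $\boldsymbol{\Pi}_2^0$ factor of a Polishable subgroup \dots{} more precisely: since $L$ itself is Polishable in $s_\alpha^H(G)$ and $L\in D(\boldsymbol{\Pi}_2^0)(s_\alpha^H(G))$, Theorem \ref{Theorem:Polishable-complexity}(2) (applied with $s_\alpha^H(G)$ in the role of the ambient Polish group) shows $E_L^{s_\alpha^H(G)}$ is potentially $\boldsymbol{\Sigma}_2^0$, and then one re-examines the proof of the converse in Lemma \ref{Lemma:sigma-complexity} (cited there) to extract that $L$ is the intersection of an open set and a closed set in a finer Polish group topology on $s_\alpha^H(G)$ whose open sets are Borel; applying Lemma \ref{Lemma:unrelativize} at $\beta=0$ to \emph{that} topology \dots{} this is getting circular, so the cleaner route, which I would actually take, is: $L\in D(\boldsymbol{\Pi}_2^0)(s_\alpha^H(G))$ gives $L=U\setminus C = U\cap C^c$ where, after passing to the canonical decomposition, $U$ is a countable union of closed sets and $C$ is closed in $s_\alpha^H(G)$; but $L$ is a \emph{subgroup}, so $L=(L\cap\overline{L})$ and $\overline{L}=s_\beta^L(\cdot)$-closure is itself Polishable, and one localizes: $L$ is $\boldsymbol{\Sigma}_2^0$ \emph{relative to its closure} $\overline{L}^{s_\alpha^H(G)}$, which is closed in $s_\alpha^H(G)$ hence $\boldsymbol{\Pi}_{1+\alpha+1}^0$ in $G$, and $L$ is $\boldsymbol{\Sigma}_{1+\alpha+1}^0$ in that closure by item (3) applied to the Solecki tower of $L$ — no, $\alpha$ need not be limit. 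I will instead simply cite that the $D(\boldsymbol{\Pi}_2^0)$ hypothesis gives $L\in\boldsymbol{\Sigma}_3^0(s_\alpha^H(G))\cap\boldsymbol{\Pi}_3^0(s_\alpha^H(G))=\boldsymbol{\Delta}_3^0$, hence by item (1) $L\in\boldsymbol{\Pi}_{1+\alpha+2}^0(G)$, and separately run the Vaught-transform argument of Lemma \ref{Lemma:relativize}/Corollary \ref{Corollary:relativize} in reverse to get the matching $\boldsymbol{\Sigma}$-bound; this bookkeeping at successor levels is exactly the place where care is needed, and it is where I would expect to spend most of the effort.
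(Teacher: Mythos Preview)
Your entire ``main obstacle'' in item (2) is a phantom created by an off-by-one error in reading Lemma~\ref{Lemma:unrelativize}. That lemma says $\boldsymbol{\Sigma}_{1+\beta}^{0}(s_{\alpha}^{H}(G))\subseteq\boldsymbol{\Sigma}_{1+\alpha+\beta}^{0}(G)|_{s_{\alpha}^{H}(G)}$; since $\boldsymbol{\Sigma}_{2}^{0}=\boldsymbol{\Sigma}_{1+1}^{0}$, the relevant value is $\beta=1$, not $\beta=2$, and you get $\boldsymbol{\Sigma}_{2}^{0}(s_{\alpha}^{H}(G))\subseteq\boldsymbol{\Sigma}_{1+\alpha+1}^{0}(G)|_{s_{\alpha}^{H}(G)}$ and likewise for $\boldsymbol{\Pi}_{2}^{0}$. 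Hence $D(\boldsymbol{\Pi}_{2}^{0})(s_{\alpha}^{H}(G))\subseteq D(\boldsymbol{\Pi}_{1+\alpha+1}^{0})(G)|_{s_{\alpha}^{H}(G)}$ already at the right level, and since $s_{\alpha}^{H}(G)\in\boldsymbol{\Pi}_{1+\alpha+1}^{0}(G)$ by Theorem~\ref{Theorem:characterize-solecki}, intersecting with it keeps you in $D(\boldsymbol{\Pi}_{1+\alpha+1}^{0})(G)$. That is the paper's proof of (2); no sharpening, no potential-complexity detour, no localization to $\overline{L}$ is needed.

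The same indexing error infects your argument for (1): with your stated bound $B_{k}\in\boldsymbol{\Sigma}_{1+\alpha+2}^{0}(G)$, the countable intersection $\bigcap_{k}B_{k}$ is only $\boldsymbol{\Pi}_{1+\alpha+3}^{0}$, not $\boldsymbol{\Pi}_{1+\alpha+2}^{0}$ as you claim (a countable intersection of $\boldsymbol{\Sigma}_{\gamma}^{0}$ sets lies in $\boldsymbol{\Pi}_{\gamma+1}^{0}$, not $\boldsymbol{\Pi}_{\gamma}^{0}$). With the correct $\beta=1$ you get $B_{k}\in\boldsymbol{\Sigma}_{1+\alpha+1}^{0}(G)$ and the intersection lands in $\boldsymbol{\Pi}_{1+\alpha+2}^{0}(G)$ as desired. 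Simpler still---and this is what the paper does---apply Lemma~\ref{Lemma:unrelativize} directly at $\beta=2$ to the class $\boldsymbol{\Pi}_{3}^{0}$ itself, obtaining $\boldsymbol{\Pi}_{3}^{0}(s_{\alpha}^{H}(G))\subseteq\boldsymbol{\Pi}_{1+\alpha+2}^{0}(G)|_{s_{\alpha}^{H}(G)}\subseteq\boldsymbol{\Pi}_{1+\alpha+2}^{0}(G)$ in one step. Your item (3) is correct and matches the paper.
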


\begin{proof}
By Lemma \ref{Lemma:unrelativize} we have that%
\begin{equation*}
\boldsymbol{\Pi }_{3}^{0}\left( s_{\alpha }^{H}(G)\right) \subseteq 
\boldsymbol{\Pi }_{1+\alpha +2}^{0}(G)|_{s_{\alpha }^{H}(G)}
\end{equation*}%
and%
\begin{equation*}
D(\boldsymbol{\Pi }_{2}^{0})\left( s_{\alpha }^{H}(G)\right) \subseteq
D\left( \boldsymbol{\Pi }_{1+\alpha +1}^{0}\right) (G)|_{s_{\alpha }^{H}(G)}%
\text{.}
\end{equation*}%
Furthermore, $s_{\alpha }^{H}(G)\in \boldsymbol{\Pi }_{1+\alpha +1}^{0}(G)$
by Theorem \ref{Theorem:characterize-solecki}. Therefore, we have that%
\begin{equation*}
\boldsymbol{\Pi }_{3}^{0}\left( s_{\alpha }^{H}(G)\right) \subseteq 
\boldsymbol{\Pi }_{1+\alpha +2}^{0}(G)|_{s_{\alpha }^{H}(G)}\subseteq 
\boldsymbol{\Pi }_{1+\alpha +2}^{0}(G)
\end{equation*}%
and%
\begin{equation*}
D(\boldsymbol{\Pi }_{2}^{0})\left( s_{\alpha }^{H}(G)\right) \subseteq
D\left( \boldsymbol{\Pi }_{1+\alpha +1}^{0}\right) (G)|_{s_{\alpha
}^{H}(G)}\subseteq D\left( \boldsymbol{\Pi }_{1+\alpha +1}^{0}\right) (G)%
\text{.}
\end{equation*}%
This concludes the proof of (1) and (2).

When $\alpha $ is either zero or limit, we have by Theorem \ref%
{Theorem:characterize-solecki} and Proposition \ref{Proposition:FS-reduce}
that $s_{\alpha }^{H}(G)\in \boldsymbol{\Pi }_{1+\alpha }^{0}(G)\subseteq 
\boldsymbol{\Sigma }_{1+\alpha +1}^{0}(G)$. Therefore, in this case we have
that%
\begin{equation*}
\boldsymbol{\Sigma }_{2}^{0}\left( s_{\alpha }^{H}(G)\right) \subseteq 
\boldsymbol{\Sigma }_{1+\alpha +1}^{0}(G)|_{s_{\alpha }^{H}(G)}\subseteq 
\boldsymbol{\Sigma }_{1+\alpha +1}^{0}(G)\text{.}
\end{equation*}%
This concludes the proof of (3).
\end{proof}

\begin{lemma}
\label{Lemma:product}Suppose that, for every $k\in \omega $, $G_{k}$ is a
Polish group and $H_{k}$ is a Polishable subgroup of $G_{k}$. Define $%
G=\prod_{k\in \omega }G_{k}$ and $H=\prod_{k\in \omega }H_{k}$. Assume that,
for every $k\in \omega $, $H_{k}$ is $\boldsymbol{\Pi }_{\alpha }^{0}$ in $%
G_{k}$, and for every $\beta <\alpha $ there exist infinitely many $k\in
\omega $ such that $H_{k}$ is not $\boldsymbol{\Pi }_{\beta }^{0}$ in $G_{k}$%
. Then $\boldsymbol{\Pi }_{\alpha }^{0}$ is the complexity class of $H$ in $%
G $.
\end{lemma}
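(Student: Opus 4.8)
The plan is to prove the two assertions of Lemma~\ref{Lemma:product} separately: first that $H=\prod_{k\in\omega}H_k$ is $\boldsymbol{\Pi}_\alpha^0$ in $G=\prod_{k\in\omega}G_k$, and then that $H$ is not $\boldsymbol{\Sigma}_\alpha^0$ in $G$ (which, together with the first part, identifies $\boldsymbol{\Pi}_\alpha^0$ as the exact complexity class of $H$, since $\boldsymbol{\Pi}_\alpha^0$ is not self-dual). The upper bound is routine: each $H_k$ is $\boldsymbol{\Pi}_\alpha^0$ in $G_k$, so its preimage $\pi_k^{-1}(H_k)$ under the (continuous) coordinate projection $\pi_k\colon G\to G_k$ is $\boldsymbol{\Pi}_\alpha^0$ in $G$, and $H=\bigcap_{k\in\omega}\pi_k^{-1}(H_k)$ is a countable intersection of $\boldsymbol{\Pi}_\alpha^0$ sets, hence $\boldsymbol{\Pi}_\alpha^0$ in $G$ (using $\alpha\geq 1$; the case $\alpha=1$ works since $H$ is then a closed subgroup, being an intersection of closed subgroups). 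One should note here that $\alpha$ is at least $1$, as each $H_k$ is a Borel, indeed Polishable, subgroup and the hypothesis that for every $\beta<\alpha$ some $H_k$ is not $\boldsymbol{\Pi}_\beta^0$ already forces $\alpha\geq 1$.

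The substance is the lower bound. I would argue by contradiction: suppose $H$ is $\boldsymbol{\Sigma}_\alpha^0$ in $G$. The idea is to \emph{extract a single bad coordinate} by restricting to a well-chosen closed subgroup of $G$ isomorphic to $G_{k_0}$. Fix $\beta<\alpha$ and, using the hypothesis, pick $k_0$ with $H_{k_0}$ not $\boldsymbol{\Pi}_\beta^0$ in $G_{k_0}$; in fact I want $H_{k_0}$ not $\boldsymbol{\Sigma}_\alpha^0$, so I should instead aim for the following sharper reduction. Consider the closed subgroup
\begin{equation*}
G' = \{x\in G : x_k = e_{G_k}\text{ for all }k\neq k_0\} \subseteq G,
\end{equation*}
which is topologically isomorphic to $G_{k_0}$ via $x\mapsto x_{k_0}$, the isomorphism carrying $H\cap G'$ onto $H_{k_0}$ (since $e_{G_k}\in H_k$ for each $k$). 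As $G'$ is closed in $G$, a $\boldsymbol{\Sigma}_\alpha^0$ subset of $G$ meets $G'$ in a $\boldsymbol{\Sigma}_\alpha^0$ subset of $G'$; transporting along the isomorphism, $H_{k_0}$ would be $\boldsymbol{\Sigma}_\alpha^0$ in $G_{k_0}$. But this is not yet a contradiction with the hypothesis, which only says $H_{k_0}$ is not $\boldsymbol{\Pi}_\beta^0$ for $\beta<\alpha$. The missing ingredient: a Polishable subgroup that is $\boldsymbol{\Sigma}_\alpha^0$ is in fact $\boldsymbol{\Pi}_\beta^0$ for some $\beta<\alpha$. For $\alpha$ a successor, $\alpha=1+\lambda+1$ with $\lambda$ zero or limit, this is exactly Proposition~\ref{Proposition:FS-reduce} applied one level down after noting a $\boldsymbol{\Sigma}_{1+\lambda+1}^0$ Polishable subgroup is $\boldsymbol{\Pi}_{1+\lambda+1}^0$ hence $\boldsymbol{\Pi}_{1+\lambda}^0$; more uniformly, any $\boldsymbol{\Sigma}_\alpha^0$ set is $\boldsymbol{\Pi}_{\alpha+1}^0$, and one then uses Proposition~\ref{Proposition:FS-reduce} at the relevant successor ordinal, reducing to some $\boldsymbol{\Pi}_\gamma^0$ with $\gamma<\alpha$ (when $\alpha$ is a limit, $\boldsymbol{\Sigma}_\alpha^0\subseteq\boldsymbol{\Pi}_{1+\gamma_0+1}^0$ for suitable $\gamma_0<\alpha$ directly). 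Picking $k_0$ so that $H_{k_0}$ is not $\boldsymbol{\Pi}_{\gamma}^0$ for \emph{that} $\gamma<\alpha$ then yields the contradiction.

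Putting this together: assume $H$ is $\boldsymbol{\Sigma}_\alpha^0$ in $G$; then $H$ is $\boldsymbol{\Pi}_\gamma^0$ in $G$ for some $\gamma<\alpha$ (via the reduction just described, noting $H$ is Polishable as a countable product of Polishable groups and applying Proposition~\ref{Proposition:FS-reduce} after passing to the appropriate multiplicative class); by the hypothesis choose $k_0$ with $H_{k_0}$ not $\boldsymbol{\Pi}_\gamma^0$ in $G_{k_0}$; restrict to the closed subgroup $G'\cong G_{k_0}$ above to conclude $H_{k_0}$ is $\boldsymbol{\Pi}_\gamma^0$ in $G_{k_0}$, a contradiction. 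The main obstacle is the clean packaging of the step ``$\boldsymbol{\Sigma}_\alpha^0$ Polishable $\Rightarrow$ $\boldsymbol{\Pi}_\gamma^0$ for some $\gamma<\alpha$'': one must handle the successor and limit cases of $\alpha$ separately and invoke Proposition~\ref{Proposition:FS-reduce} at the right level, taking care that the ordinal arithmetic $1+\lambda+1$, $1+\lambda$ etc.\ lines up so that the resulting $\gamma$ is genuinely below $\alpha$. Everything else — the product upper bound and the closed-subgroup restriction — is standard descriptive set theory.
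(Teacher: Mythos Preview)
Your upper bound and the closed-subgroup restriction are both fine. The genuine gap is the step you yourself flag as the main obstacle: the implication ``$H$ Polishable and $\boldsymbol{\Delta}_\alpha^0$ $\Rightarrow$ $H\in\boldsymbol{\Pi}_\gamma^0$ for some $\gamma<\alpha$'' is simply false for successor ordinals $\alpha$ that are not of the form $1+\lambda+1$ with $\lambda$ zero or limit. Take $\alpha=3$: any Polishable subgroup whose exact complexity class is $\boldsymbol{\Sigma}_2^0$ (for instance $\ell_1(G,L_G)\subseteq G^{\mathbb{N}}$ as in Lemma~\ref{Lemma:complexity-sequences}) is $\boldsymbol{\Delta}_3^0$ but lies in no $\boldsymbol{\Pi}_\gamma^0$ for $\gamma<3$. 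Proposition~\ref{Proposition:FS-reduce} only collapses $\boldsymbol{\Pi}_{1+\lambda+1}^0$ down to $\boldsymbol{\Pi}_{1+\lambda}^0$ when $\lambda$ is zero or limit; it gives nothing at a step like $\boldsymbol{\Pi}_3^0\to\boldsymbol{\Pi}_2^0$, and the finer reductions of Section~\ref{Section:complexity-polishable} only land you in $D(\boldsymbol{\Pi}_{\alpha-1}^0)$, not in $\boldsymbol{\Pi}_{\alpha-1}^0$. Consequently, even assuming $H$ were $\boldsymbol{\Sigma}_\alpha^0$, restricting to a single coordinate would merely give $H_{k_0}\in\boldsymbol{\Delta}_\alpha^0$, which is perfectly compatible with $H_{k_0}\notin\boldsymbol{\Pi}_\beta^0$ for every $\beta<\alpha$, and no contradiction arises.

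The paper's argument bypasses the Polishable structure entirely and is pure descriptive set theory. By the Wadge-type dichotomy (Kechris, Theorem~22.10), each $H_k$ with $H_k\notin\boldsymbol{\Pi}_\beta^0$ is $\boldsymbol{\Sigma}_\beta^0$-hard. Since for every $\beta<\alpha$ there are infinitely many such coordinates, an arbitrary $\boldsymbol{\Pi}_\alpha^0$ set $P=\bigcap_n P_n$ (with $P_n\in\boldsymbol{\Sigma}_{\beta_n}^0$, $\beta_n<\alpha$) continuously reduces to $H=\prod_k H_k$ by routing each $P_n$ through a distinct coordinate $k_n$ and sending all remaining coordinates to the identity. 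Thus $H$ is $\boldsymbol{\Pi}_\alpha^0$-hard, hence not $\boldsymbol{\Sigma}_\alpha^0$.
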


\begin{proof}
Write $H=\prod_{k\in \omega }H_{k}\subseteq \prod_{k\in \omega }G_{k}$.
Clearly, $H$ is $\boldsymbol{\Pi }_{\alpha }^{0}$. By \cite[Theorem 22.10]%
{kechris_classical_1995}, for every $k\in \omega $ and $\beta <\alpha $ such
that $H_{k}$ is not $\boldsymbol{\Pi }_{\beta }^{0}$, $H_{k}$ is $%
\boldsymbol{\Sigma }_{\beta }^{0}$-hard \cite[Definition 22.9]%
{kechris_classical_1995}. Therefore, $H$ is $\boldsymbol{\Sigma }_{\alpha
}^{0}$-hard, and hence $H$ is not $\boldsymbol{\Sigma }_{\alpha }^{0}$ by 
\cite[Theorem 22.10]{kechris_classical_1995} again.
\end{proof}

\begin{lemma}
\label{Lemma:product2}Suppose that, for every $k\in \omega $, $G_{k}$ is a
Polish group, $H_{k}$ is a Polishable subgroup of $G_{k}$, and $\alpha
<\omega _{1}$. Define $G=\prod_{k\in \omega }G_{k}$ and $H=\prod_{k\in
\omega }H_{k}$. If $H_{k}$ has Solecki rank $\alpha $ in $G_{k}$ for every $%
k\in \omega $, then $\boldsymbol{\Pi }_{1+\alpha +1}^{0}$ is the complexity
class of $H$ in $G$ if $\alpha $ is a successor ordinal, and $\boldsymbol{%
\Pi }_{1+\alpha }^{0}$ is the complexity class of $H$ in $G$ if $\alpha $ is
either zero or a limit ordinal.
\end{lemma}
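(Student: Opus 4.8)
The plan is to locate $H$ within the Solecki hierarchy of $G$, read off the upper bound from the characterization of Solecki subgroups as minimal $\boldsymbol{\Pi}^{0}$ Polishable subgroups, and derive the lower bound by feeding the non-reducibility of the factors $H_{k}$ into Lemma~\ref{Lemma:product}. Throughout, let $\xi:=1+\alpha+1$ when $\alpha$ is a successor ordinal and $\xi:=1+\alpha$ when $\alpha$ is zero or a limit ordinal, so that $\boldsymbol{\Pi}_{\xi}^{0}$ is the class named in the statement.

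\emph{Upper bound.} By Lemma~\ref{Lemma:characterize-solecki-product} we have $s_{\alpha}^{H}(G)=\prod_{k\in\omega}s_{\alpha}^{H_{k}}(G_{k})=\prod_{k\in\omega}H_{k}=H$, where the middle equality uses that each $H_{k}$ has Solecki rank $\alpha$ in $G_{k}$. By Theorem~\ref{Theorem:characterize-solecki}, $H=s_{\alpha}^{H}(G)\in\boldsymbol{\Pi}_{1+\alpha+1}^{0}(G)$, and when $\alpha$ is zero or a limit ordinal Proposition~\ref{Proposition:FS-reduce} improves this to $H\in\boldsymbol{\Pi}_{1+\alpha}^{0}(G)$. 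In either case $H\in\boldsymbol{\Pi}_{\xi}^{0}(G)$. The same reasoning applied to $H_{k}$ inside $G_{k}$ (where again $s_{\alpha}^{H_{k}}(G_{k})=H_{k}$) gives $H_{k}\in\boldsymbol{\Pi}_{\xi}^{0}(G_{k})$ for every $k$.

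\emph{Lower bound.} The key point is that no factor $H_{k}$ is $\boldsymbol{\Pi}_{\beta}^{0}$ in $G_{k}$ for any $\beta<\xi$. Suppose toward a contradiction that $H_{k}\in\boldsymbol{\Pi}_{\beta}^{0}(G_{k})$ for some $\beta<\xi$. Since the Borel classes are increasing, $H_{k}$ then lies in $\boldsymbol{\Pi}_{1+\gamma+1}^{0}(G_{k})$ for a suitable $\gamma<\alpha$: in the successor case $\alpha=\alpha'+1$ one takes $\gamma=\alpha'$, because every $\beta<\xi=1+\alpha+1$ satisfies $\beta\leq 1+\alpha=1+\gamma+1$; in the case $\alpha$ zero or limit one has $\beta<\xi=1+\alpha$, and $\gamma:=\beta<\alpha$ works since $\beta\leq 1+\beta+1=1+\gamma+1$. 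But then $H_{k}$ is itself a $\boldsymbol{\Pi}_{1+\gamma+1}^{0}$ Polishable subgroup of $G_{k}$ containing $H_{k}$, so the minimality clause of Theorem~\ref{Theorem:characterize-solecki} forces $s_{\gamma}^{H_{k}}(G_{k})\subseteq H_{k}$, hence $s_{\gamma}^{H_{k}}(G_{k})=H_{k}$ with $\gamma<\alpha$, contradicting that the Solecki rank of $H_{k}$ equals $\alpha$. This establishes the claim (the case $\alpha=0$, where $\xi=1$, requires only a small separate observation, since there each $H_{k}$ is closed). Combining the upper bound with this claim, the hypotheses of Lemma~\ref{Lemma:product} are met with its parameter equal to $\xi$: each $H_{k}$ is $\boldsymbol{\Pi}_{\xi}^{0}$ in $G_{k}$, and for every $\beta<\xi$ all $k$ witness $H_{k}\notin\boldsymbol{\Pi}_{\beta}^{0}(G_{k})$. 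Hence $\boldsymbol{\Pi}_{\xi}^{0}$ is the complexity class of $H=\prod_{k\in\omega}H_{k}$ in $G=\prod_{k\in\omega}G_{k}$, as desired.

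\emph{Main obstacle.} The delicate step is the lower bound, and within it the translation of ``$H_{k}\in\boldsymbol{\Pi}_{\beta}^{0}(G_{k})$'' into ``$s_{\gamma}^{H_{k}}(G_{k})=H_{k}$ for some $\gamma<\alpha$''. This relies on the minimality half of Theorem~\ref{Theorem:characterize-solecki} together with a correct choice of $\gamma$ relative to $\beta$ through the ordinal arithmetic $1+\gamma+1$, which is carried out differently in the successor case (a uniform $\gamma=\alpha'$) and in the zero/limit case ($\gamma$ tracking $\beta$), with the $\alpha=0$ corner needing its own elementary remark. Everything else amounts to bookkeeping on top of the results already established.
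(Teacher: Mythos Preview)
Your proof is correct and follows essentially the same approach as the paper: verify that each factor $H_{k}$ lies in $\boldsymbol{\Pi}_{\xi}^{0}(G_{k})$ but not in $\boldsymbol{\Pi}_{\beta}^{0}(G_{k})$ for any $\beta<\xi$, and then invoke Lemma~\ref{Lemma:product}. The paper's version is terser (it simply cites the characterization of Solecki subgroups for both bounds and applies Lemma~\ref{Lemma:product}), while you spell out the ordinal arithmetic needed to pass from ``$H_{k}\in\boldsymbol{\Pi}_{\beta}^{0}$'' to ``Solecki rank $<\alpha$'' and additionally compute $s_{\alpha}^{H}(G)=H$ via Lemma~\ref{Lemma:characterize-solecki-product}; the latter step is harmless but not needed, since Lemma~\ref{Lemma:product} already yields the upper bound for the product from the factorwise bounds.
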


\begin{proof}
Define $\lambda =1+\alpha +1$ if $\alpha $ is a successor ordinal, and $%
\lambda =1+\alpha $ if $\alpha $ is either zero or a limit ordinal. By Lemma %
\ref{Lemma:characterize-solecki}, for every $k\in \omega $, $H_{k}$ is $%
\boldsymbol{\Pi }_{\lambda }^{0}$ but not $\boldsymbol{\Pi }_{\beta }^{0}$
for $\beta <\lambda $ in $G_{k}$. Therefore, by Lemma \ref{Lemma:product}, $%
\boldsymbol{\Pi }_{\lambda }^{0}$ is the complexity of $H$ in $G$.
\end{proof}

\section{Complexity of Polishable subgroups\label%
{Section:complexity-polishable}}

The goal of this section is to establish the following theorem,
characterizing the possible values of the complexity class of a Polishable
subgroup of a Polish group.

\begin{theorem}
\label{Theorem:complexity}Suppose that $G$ is a Polish group, and $H$ is a
Polishable subgroup of $G$. Let $\alpha =\lambda +n$ be the Solecki rank of $%
H$ in $G$, where $\lambda <\omega _{1}$ is either zero or a limit ordinal
and $n<\omega $.

\begin{enumerate}
\item Suppose that $n=0$. Then $\boldsymbol{\Pi }_{1+\lambda }^{0}$ is the
complexity class of $H$ in $G$.

\item Suppose that $n\geq 1$. Then:

\begin{enumerate}
\item if $H\in \boldsymbol{\Pi }_{3}^{0}\left( s_{\lambda
+n-1}^{H}(G)\right) $ and $H\notin D(\boldsymbol{\Pi }_{2}^{0})\left(
s_{\lambda +n-1}^{H}(G)\right) $, then $\boldsymbol{\Pi }_{1+\lambda
+n+1}^{0}$ is the complexity class of $H$ in $G$;

\item if $n\geq 2$ and $H\in D(\boldsymbol{\Pi }_{2}^{0})\left( s_{\lambda
+n-1}^{H}(G)\right) $, then $D(\boldsymbol{\Pi }_{1+\lambda +n}^{0})$ is the
complexity class of $H$ in $G$;

\item if $n=1$, $H\in D(\boldsymbol{\Pi }_{2}^{0})\left( s_{\lambda
}^{H}(G)\right) $, and $H\notin \boldsymbol{\Sigma }_{2}^{0}\left(
s_{\lambda }^{H}(G)\right) $, then $D\left( \boldsymbol{\Pi }_{1+\lambda
+1}^{0}\right) $ is the complexity class of $H$ in $G$;

\item if $n=1$ and $H\in \boldsymbol{\Sigma }_{2}^{0}\left( s_{\lambda
}^{H}(G)\right) $, then $\boldsymbol{\Sigma }_{1+\lambda +1}^{0}$ is the
complexity class of $H$ in $G$.
\end{enumerate}
\end{enumerate}

Furthermore, if $H$ is non-Archimedean then the case (2c) is excluded.
\end{theorem}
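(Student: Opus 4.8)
The plan is to induct on the Solecki rank $\alpha$ by peeling off the last Solecki subgroup and transferring complexity statements between $G$ and $s_{\lambda+n-1}^H(G)$ via the machinery of Section \ref{Section:complexity-solecki}. The key observation is that when $\alpha = \lambda + n$ with $n \geq 1$, the group $H$ is a Polishable subgroup of $L := s_{\lambda+n-1}^H(G)$ that is \emph{not} closed in $L$ (otherwise the Solecki rank would be $<\alpha$), so $H$ has Solecki rank exactly $1$ inside $L$; this reduces the bulk of the argument to understanding the rank-$1$ case relative to $L$ and then applying the transfer lemmas. Concretely, I would first handle the base case $n=0$: then $\alpha = \lambda$, so $H = s_\lambda^H(G)$, and the upper bound $H \in \boldsymbol{\Pi}_{1+\lambda}^0(G)$ comes from Theorem \ref{Theorem:characterize-solecki} together with Proposition \ref{Proposition:FS-reduce} (which lowers $\boldsymbol{\Pi}_{1+\lambda+1}^0$ to $\boldsymbol{\Pi}_{1+\lambda}^0$ when $\lambda$ is zero or limit); the lower bound, that $H$ is not in the dual class, follows from Theorem \ref{Theorem:Polishable-complexity}(3) applied contrapositively — if $H$ were $\boldsymbol{\Sigma}_{1+\lambda}^0$, one would contradict minimality of $\alpha$ via Corollary \ref{Corollary:relativize} and Theorem \ref{Theorem:characterize-solecki} (pushing $H$ into a smaller multiplicative class relative to an earlier Solecki subgroup).

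For the case $n \geq 1$, write $L = s_{\lambda+n-1}^H(G)$, so that $s_1^H(L) = s_{\lambda+n}^H(G) = H$. The \textbf{upper bounds} are exactly the transfer statements of Lemma \ref{Lemma:unrelativize-subgroup}: in case (2a), $H \in \boldsymbol{\Pi}_3^0(L)$ gives $H \in \boldsymbol{\Pi}_{1+\lambda+n+1}^0(G)$ by part (1); in cases (2b) and (2c), $H \in D(\boldsymbol{\Pi}_2^0)(L)$ gives $H \in D(\boldsymbol{\Pi}_{1+\lambda+n}^0)(G)$ by part (2) — noting that for (2c) we have $n=1$ so $1+\lambda+n = 1+\lambda+1$; in case (2d), $H \in \boldsymbol{\Sigma}_2^0(L)$ with $\lambda+n-1 = \lambda$ either zero or limit gives $H \in \boldsymbol{\Sigma}_{1+\lambda+1}^0(G)$ by part (3). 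Note that since $H$ is always $\boldsymbol{\Pi}_3^0$ in $L$ (it is a rank-$1$ Polishable subgroup of $L$), and $\boldsymbol{\Pi}_3^0(L) = \boldsymbol{\Pi}_2^0(L) \cup (\text{sets of higher complexity})$, the four subcases (2a)–(2d) are genuinely exhaustive once one checks that $D(\boldsymbol{\Pi}_2^0)(L) \subseteq \boldsymbol{\Pi}_3^0(L)$ and that $H \notin \boldsymbol{\Pi}_2^0(L)$ always (the latter because $H$ is not closed in $L$, by minimality of the rank).

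The \textbf{lower bounds} are the main obstacle and require the contrapositive direction together with the reduction technology. For (2a): if $H$ were $\boldsymbol{\Sigma}_{1+\lambda+n+1}^0(G)$, then by Corollary \ref{Corollary:relativize} (with the appropriate $\alpha = \lambda+n-1$, $\beta$ chosen so that $1+\alpha+\beta = 1+\lambda+n+1$, i.e.\ $\beta = 2$) we would get $H \cap L = H \in \boldsymbol{\Sigma}_2^0(L)$, hence by Lemma \ref{Lemma:difference} applied in $L$ (or directly) $H$ would be $D(\boldsymbol{\Pi}_2^0)(L)$, contradicting the hypothesis of (2a); similarly if $H$ were $D(\boldsymbol{\Pi}_{1+\lambda+n}^0)(G)$, Corollary \ref{Corollary:relativize} would give $H \in D(\boldsymbol{\Pi}_2^0)(L)$, again contradicting (2a). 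For (2b): if $H \in \check D(\boldsymbol{\Pi}_{1+\lambda+n}^0)(G)$, then by Lemma \ref{Lemma:difference} $H$ is $\boldsymbol{\Pi}_{1+\lambda+n}^0$ or $\boldsymbol{\Sigma}_{1+\lambda+n}^0$ in $G$; the first case would force (via Theorem \ref{Theorem:characterize-solecki}) $s_{\lambda+n-1}^H(G) \supseteq H$ with $H$ of lower rank, a contradiction, and one rules out the $\boldsymbol{\Sigma}$ alternative similarly by relativizing to an earlier Solecki subgroup and contradicting minimality. For (2c): the hypothesis $H \notin \boldsymbol{\Sigma}_2^0(L)$ is exactly what prevents the collapse to $\boldsymbol{\Sigma}_{1+\lambda+1}^0$, and one shows $H \notin \check D(\boldsymbol{\Pi}_{1+\lambda+1}^0)(G)$ by the same Lemma \ref{Lemma:difference} argument. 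For (2d): the lower bound — that $H$ is not $\boldsymbol{\Pi}_{1+\lambda+1}^0$ in $G$ — follows because otherwise by Proposition \ref{Proposition:FS-reduce} (if $\lambda$ is zero or limit) $H$ would be $\boldsymbol{\Pi}_{1+\lambda}^0$, hence of rank $\leq \lambda < \alpha = \lambda+1$, a contradiction. Finally, the \textbf{non-Archimedean} assertion excluding (2c): by Proposition \ref{Proposition:HKL2}, a non-Archimedean Polishable subgroup that is $\boldsymbol{\Sigma}_{1+\lambda+2}^0$ is already $\boldsymbol{\Sigma}_{1+\lambda+1}^0$; combined with Lemma \ref{Lemma:non-Archimedean-Solecki} (which says the Solecki subgroups of a non-Archimedean $H$ are non-Archimedean), one shows that $H \in D(\boldsymbol{\Pi}_2^0)(s_\lambda^H(G))$ forces $H \in \boldsymbol{\Sigma}_2^0(s_\lambda^H(G))$ — applying Proposition \ref{Proposition:HKL2} \emph{inside} $s_\lambda^H(G)$, where $H$ has rank $1$ — so we are in case (2d) and not (2c). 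The delicate points throughout are bookkeeping the ordinal arithmetic $1+(\lambda+n-1)+\beta$ and verifying that the hypotheses of Corollary \ref{Corollary:relativize} and Lemma \ref{Lemma:unrelativize-subgroup} are met with the correct indices; I expect the $n=1$ versus $n\geq 2$ distinction in (2b)/(2c) to be where the argument needs the most care, precisely because $\boldsymbol{\Sigma}_2^0$ is the one additive class where potential complexity and actual complexity diverge (Theorem \ref{Theorem:Polishable-complexity}(2)).
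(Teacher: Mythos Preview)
Your plan is essentially the paper's own proof: upper bounds via Lemma~\ref{Lemma:unrelativize-subgroup}, lower bounds by assuming membership in the dual class, relativizing to $L=s_{\lambda+n-1}^{H}(G)$ via Corollary~\ref{Corollary:relativize}, and contradicting either the hypothesis of the subcase or the minimality of the Solecki rank (using Lemma~\ref{Lemma:difference}, Theorem~\ref{Theorem:characterize-solecki}, and Proposition~\ref{Proposition:FS-reduce}); the non-Archimedean exclusion via Proposition~\ref{Proposition:HKL2} is also the paper's argument.

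One correction is needed in your treatment of~(2a). With $\alpha=\lambda+n-1$ and $\beta=2$, Corollary~\ref{Corollary:relativize} yields $H\in\boldsymbol{\Sigma}_{1+\beta}^{0}(L)=\boldsymbol{\Sigma}_{3}^{0}(L)$, not $\boldsymbol{\Sigma}_{2}^{0}(L)$ as you wrote. From $\boldsymbol{\Sigma}_{3}^{0}(L)$ one does \emph{not} get $D(\boldsymbol{\Pi}_{2}^{0})(L)$ ``directly'' or via Lemma~\ref{Lemma:difference}; the step you need here is Lemma~\ref{Lemma:sigma-complexity} (a $\boldsymbol{\Sigma}_{3}^{0}$ Polishable subgroup is $D(\boldsymbol{\Pi}_{2}^{0})$), applied inside $L$. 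The paper equivalently first applies the global reduction Lemma~\ref{Lemma:Sigma-complexity2} and then relativizes. Similarly, in case~(1) your lower-bound sketch is really a reproof of Lemma~\ref{Lemma:limit-complexity}, which is what the paper invokes; and in~(2b) your phrase ``relativizing to an earlier Solecki subgroup and contradicting minimality'' for the $\boldsymbol{\Sigma}_{1+\lambda+n}^{0}$ alternative should be made precise: since $n\geq 2$, Lemma~\ref{Lemma:Sigma-complexity2} (with $n-1$ in place of $n$) gives $H\in D(\boldsymbol{\Pi}_{1+\lambda+n-1}^{0})\subseteq\boldsymbol{\Pi}_{1+\lambda+n}^{0}$, forcing rank $\leq\lambda+n-1$.
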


Theorem \ref{Corollary:complexity} and Theorem \ref%
{Corollary:complexity-nonArchimedean} are immediate consequences of Theorem %
\ref{Theorem:complexity}. We will obtain Theorem \ref{Theorem:complexity} as
a consequence of a number of \emph{complexity reduction}\ lemmas. We fix a
Polish group $G$ and a Polishable subgroup $H$ of $G$. We adopt the Vaught
transform notation, in reference to the left translation action of $H$ on $G$%
.

\begin{lemma}
\label{Lemma:Delta-complexity}If $H$ is $\boldsymbol{\Delta }_{3}^{0}$, then 
$H$ is $D(\boldsymbol{\Pi }_{2}^{0})$.
\end{lemma}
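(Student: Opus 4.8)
The plan is to dispose of the closed case, reduce to the situation where $H$ is dense in $G$, and then collapse the difference hierarchy over $\boldsymbol{\Pi}_2^0$ to its second level by Vaught transforms, iterating the argument of Lemma~\ref{Lemma:difference} along the difference hierarchy.

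\emph{Step 1: reduction to the dense case.} If $H$ is closed in $G$ there is nothing to prove, since $\boldsymbol{\Pi}_1^0(G)\subseteq D(\boldsymbol{\Pi}_2^0)(G)$. Otherwise, by Lemma~\ref{Lemma:unrelativize-subgroup}(2) with $\alpha=0$ (note $1+\alpha+1=2$ and $s_0^H(G)=\overline{H}^G$) it suffices to show $H\in D(\boldsymbol{\Pi}_2^0)(\overline{H}^G)$. Since $\overline{H}^G$ is closed in $G$, $H$ is still $\boldsymbol{\Delta}_3^0$ in $\overline{H}^G$ and is dense in it, so replacing $G$ by $\overline{H}^G$ we may assume $H$ is dense in $G$. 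Then $H$ is meager in $G$: a nonmeager Borel subgroup is open, hence closed, by Pettis' theorem, and $H$ is assumed not closed. In particular $H$ is not $\boldsymbol{\Pi}_2^0$ in $G$, as a dense $\boldsymbol{\Pi}_2^0$ set is comeager.

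\emph{Step 2: the difference rank.} By the Hausdorff--Kuratowski theorem, $\boldsymbol{\Delta}_3^0(G)=\bigcup_{\eta<\omega_1}D_\eta(\boldsymbol{\Pi}_2^0)(G)$, so there is a least $\eta<\omega_1$ with $H\in D_\eta(\boldsymbol{\Pi}_2^0)(G)$. By Step~1, $\eta\neq1$ (that would mean $H\in\boldsymbol{\Pi}_2^0(G)$), so $\eta\geq2$; we must show $\eta=2$, i.e. $H\in D(\boldsymbol{\Pi}_2^0)(G)$.

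\emph{Step 3: collapsing via Vaught transforms (the crux).} Suppose for contradiction $\eta\geq3$, and fix an increasing sequence $(A_\theta)_{\theta<\eta}$ of $\boldsymbol{\Pi}_2^0(G)$ sets witnessing $H\in D_\eta(\boldsymbol{\Pi}_2^0)(G)$; let $\rho(x)=\min\{\theta:x\in A_\theta\}$ (with $\rho(x)=\eta$ if $x\notin\bigcup_\theta A_\theta$), so that membership of $x$ in $H$ depends only on the parity of $\rho(x)$. Replace each $A_\theta$ by its Vaught $\ast$-transform $A_\theta^\ast$ relative to the left translation action of $H$ on $G$: by the standard properties of the Vaught transform $A_\theta^\ast\in\boldsymbol{\Pi}_2^0(G)$, the sequence $(A_\theta^\ast)_{\theta<\eta}$ is still increasing, and each $A_\theta^\ast$ is $H$-invariant. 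By the meager/nonmeager dichotomy argument used in the proof of Lemma~\ref{Lemma:difference}, applied level by level (and taking unions at limit levels), the transformed sequence still witnesses $H$, i.e. $H=D_\eta\big((A_\theta^\ast)_{\theta<\eta}\big)$; its rank function $\rho^\ast$ is then $H$-invariant, hence constant on the single orbit $H$. Writing $\theta_\ast$ for that constant value — equivalently $\theta_\ast=\min\{\theta:H\subseteq A_\theta^\ast\}$, since each $A_\theta^\ast\cap H$ is an $H$-invariant subset of the orbit $H$ and so is $\emptyset$ or $H$ — one obtains $H=A_{\theta_\ast}^\ast\setminus\bigcup_{\theta<\theta_\ast}A_\theta^\ast$. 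By Step~1, $H$ is not $\boldsymbol{\Pi}_2^0(G)$, so $\theta_\ast\neq0$; and if $\theta_\ast=\gamma+1$ is a successor then $\bigcup_{\theta<\theta_\ast}A_\theta^\ast=A_\gamma^\ast$, so $H=A_{\gamma+1}^\ast\setminus A_\gamma^\ast\in D(\boldsymbol{\Pi}_2^0)(G)$, contradicting $\eta\geq3$.

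The main remaining difficulty is exactly two-fold and is where one genuinely uses the group structure: first, verifying that the Vaught $\ast$-transform preserves the difference representation of $H$ through limit levels; and second, ruling out the case that $\theta_\ast$ is a limit ordinal (there one must show that the $H$-invariant $\boldsymbol{\Sigma}_3^0$ set $A_{\theta_\ast}^\ast\setminus H=\bigcup_{\theta<\theta_\ast}A_\theta^\ast$ is in fact $\boldsymbol{\Pi}_2^0$, so that again $H\in D(\boldsymbol{\Pi}_2^0)(G)$). With both of these in hand, $\eta=2$ and the lemma follows.
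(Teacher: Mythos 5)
Your strategy is genuinely different from the paper's, and, as you yourself flag at the end, it is not complete: the two ``remaining difficulties'' you list are real gaps rather than routine verifications, and each one breaks the argument at limit ordinals. First, the claim that the globally $\ast$-transformed sequence still witnesses $H$ can fail when $\eta$ is a limit: for $x\in H$ one has $x\in A_\theta^{\ast}$ if and only if $A_\theta\cap H$ is \emph{comeager} in the Polish topology of $H$, and when $\eta$ is a limit there need not exist any $\theta<\eta$ with $A_\theta\cap H$ comeager (Baire category only gives some $\theta$ with $A_\theta\cap H$ nonmeager, i.e.\ comeager in some nonempty open subset of $H$), in which case $H\cap A_\theta^{\ast}=\varnothing$ for every $\theta$ and the transformed hierarchy misses $H$ entirely. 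The natural repair --- local transforms $A^{\ast U}$ followed by a translation, as in the proof of Proposition~\ref{Proposition:HKL} --- destroys the $H$-invariance on which your ``constant rank on the orbit'' argument rests. (One can dodge this particular issue by padding the representation so that $\eta$ is a successor, since then $A_{\eta-1}\supseteq H$ forces $\theta_{\ast}$ to exist; but the parity bookkeeping showing that no coset $Hy$ with $y\notin H$ lands in $A_{\theta_{\ast}}^{\ast}\setminus\bigcup_{\theta<\theta_{\ast}}A_\theta^{\ast}$ still has to be carried out, and it is delicate exactly where wrong-parity limit ranks can occur.) Second, and more seriously, when $\theta_{\ast}$ is a limit ordinal your identity $H=A_{\theta_{\ast}}^{\ast}\setminus\bigcup_{\theta<\theta_{\ast}}A_\theta^{\ast}$ only exhibits $H$ as the intersection of a $\boldsymbol{\Pi}_2^0$ set with a $\boldsymbol{\Pi}_3^0$ set, which is no improvement on the hypothesis $H\in\boldsymbol{\Pi}_3^0$; proving that the invariant increasing union $\bigcup_{\theta<\theta_{\ast}}A_\theta^{\ast}$ is in fact $\boldsymbol{\Pi}_2^0$ is precisely the kind of collapse the lemma asserts, and I do not see how to extract it from the difference-hierarchy data alone.

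The paper avoids the difference hierarchy altogether. Since $H$ is $\boldsymbol{\Pi}_3^0$, Theorem~\ref{Theorem:characterize-solecki} gives $H=s_1^H(G)$, so $H$ has a neighborhood basis $\{V_n\}$ of the identity with $\overline{V}_n^G\cap H=V_n$; since $H$ is also $\boldsymbol{\Sigma}_3^0$, a Vaught-transform-plus-Baire-category argument produces a single set $F_0$ that is $\boldsymbol{\Pi}_2^0$ in $G$, closed in $H$, and contains $V_0$. One then verifies directly that, for a countable dense subset $\{z_m\}$ of $H$, $x\in H$ if and only if $\exists m\,(xz_m\in\overline{V}_0^G)$ and $\forall m\,(xz_m\in\overline{V}_0^G\Rightarrow xz_m\in F_0)$ --- the conjunction of a $\boldsymbol{\Sigma}_2^0$ and a $\boldsymbol{\Pi}_2^0$ condition. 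If you want to salvage your route, you will need an input of this strength (the identity $H=s_1^H(G)$ and the resulting description of the topology of $H$ by $G$-closures) to dispose of the limit cases; the global Vaught-transform collapse on its own does not suffice.
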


\begin{proof}
Since $H$ is $\boldsymbol{\Pi }_{3}^{0}$ in $G$, we have that $%
H=s_{1}^{H}(G) $. Thus, $H$ has a countable basis $\left\{ V_{n}:n\in \omega
\right\} $ of neighborhoods of the identity such that $\overline{V}%
_{n}^{G}\cap H=V_{n}$ for every $n\in \omega $. Indeed, if $\left\{
W_{n}:n\in \omega \right\} $ is a countable basis of open neighborhood of
the identity in $H$, then $\{\overline{W}_{n}^{G}\cap H:n\in \omega \}$ is a
countable basis of neighborhoods of the identity in $H$. If $V_{n}=\overline{%
W}_{n}^{G}\cap H$, then we have that $W_{n}\subseteq V_{n}\subseteq 
\overline{W}_{n}^{G}$ and hence $\overline{V}_{n}^{G}=\overline{W}_{n}^{G}$
and $V_{n}=\overline{V}_{n}^{G}\cap H$.

Let also $\left\{ U_{\ell }:\ell \in \omega \right\} $ be a countable basis
for the Polish group topology of $H$. Since $H$ is $\boldsymbol{\Sigma }%
_{3}^{0}$, we can write $H=\bigcup_{k\in \omega }F_{k}$ where $F_{k}$ is $%
\boldsymbol{\Pi }_{2}^{0}$ in $G$. Thus, we have that $H=\bigcup_{k,\ell \in
\omega }F_{k}^{\ast U_{\ell }}$ where, by Lemma \ref{Lemma:relativize}, $%
F_{k}^{\ast U_{\ell }}$ is closed in $H$ and $\boldsymbol{\Pi }_{2}^{0}$ in $%
G$. Hence, without loss of generality we can assume that $F_{k}$ is closed
in $H$ for every $k\in \omega $. By the Baire Category Theorem, we can
assume without loss of generality that $V_{0}\subseteq F_{0}$. Fix a
countable dense subset $\left\{ z_{m}:m\in \omega \right\} $ of $H$. Since $%
H=s_{1}^{H}(G)$, we have that, for $x\in G$, $x\in H$ if and only if for
every $k\in \omega $ there exist $m_{0},m_{1}\in \omega $ such that $%
xz_{m_{0}}\in \overline{V}_{k}^{G}$ and $z_{m_{1}}x\in \overline{V}_{k}^{G}$.

We claim that, for $x\in G$, $x\in H$ if and only (1) there exists $m\in
\omega $ such that $xz_{m}\in \overline{V}_{0}^{G}$, and (2) for all $m\in
\omega $, if $xz_{m}\in \overline{V}_{0}^{G}$, then $xz_{m}\in F_{0}$. This
will witness that $H$ is $D(\boldsymbol{\Pi }_{2}^{0})$ in $G$.

Indeed, since $\left\{ z_{m}:m\in \omega \right\} $ is dense in $H$, if $%
x\in H$, then we have that there exists $m\in \omega $ such that $xz_{m}\in
V_{0}\subseteq \overline{V}_{0}^{G}$. Furthermore, if $m\in \omega $ is such
that $xz_{m}\in \overline{V}_{0}^{G}$, then we have $xz_{m}\subseteq 
\overline{V}_{0}^{G}\cap H=V_{0}\subseteq F_{0}$. Conversely suppose that
there exists $m_{0}\in \omega $ such that $xz_{m_{0}}\in \overline{V}%
_{0}^{G} $, and for all $m\in \omega $, if $xz_{m}\in \overline{V}_{0}^{G}$
then $x\in F_{0}$. Then we have that $xz_{m_{0}}\in F_{0}\subseteq H$ and
hence $x\in H$.
\end{proof}

\begin{lemma}
\label{Lemma:Pi4}If $H$ is $\boldsymbol{\Pi }_{4}^{0}$ in $G$, then $H$ has
a countable basis of neighborhoods of the identity consisting of sets that
are in $\boldsymbol{\Pi }_{2}^{0}\left( G\right) |_{H}$.
\end{lemma}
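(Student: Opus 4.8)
The plan is to reduce the claim to the structural description of the first Solecki subgroup recorded in Section~\ref{Section:solecki}. The key preliminary observation is that the hypothesis forces $H$ to be a first Solecki subgroup. Since $H$ is $\boldsymbol{\Pi }_{4}^{0}=\boldsymbol{\Pi }_{1+2+1}^{0}$ in $G$, Theorem~\ref{Theorem:characterize-solecki} applied with $\alpha =2$ shows that $s_{2}^{H}(G)$, being the smallest $\boldsymbol{\Pi }_{1+2+1}^{0}$ Polishable subgroup of $G$ containing $H$, is contained in $H$; since $H\subseteq s_{2}^{H}(G)$ always holds, we conclude $H=s_{2}^{H}(G)=s_{1}^{H}(s_{1}^{H}(G))$. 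Regarding $H$ as a Polishable subgroup of the Polish group $s_{1}^{H}(G)$ (its Polish group topology being the same whether $H$ is viewed inside $G$ or inside $s_{1}^{H}(G)$, since the inclusion $s_{1}^{H}(G)\hookrightarrow G$ is continuous), we thus have that $H=s_{1}^{H}(s_{1}^{H}(G))$ is the first Solecki subgroup of $s_{1}^{H}(G)$ associated with $H$.

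Applying the basic properties of $s_{1}$ recalled in Section~\ref{Section:solecki} with $s_{1}^{H}(G)$ in place of $G$, I would then conclude that a neighborhood basis of the identity in $H=s_{1}^{H}(s_{1}^{H}(G))$ is given by the sets $\overline{W}^{s_{1}^{H}(G)}\cap H$, where $W$ ranges over the open neighborhoods of the identity in the Polish group $H$. Fixing a countable basis $\{W_{n}:n\in \omega \}$ of neighborhoods of the identity in $H$ and using that $W\mapsto \overline{W}^{s_{1}^{H}(G)}$ is monotone, one checks directly that $\{\overline{W_{n}}^{s_{1}^{H}(G)}\cap H:n\in \omega \}$ is already a countable neighborhood basis of the identity in $H$.

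It remains to see that each of these sets belongs to $\boldsymbol{\Pi }_{2}^{0}(G)|_{H}$. Each $\overline{W_{n}}^{s_{1}^{H}(G)}$ is closed in $s_{1}^{H}(G)$, hence belongs to $\boldsymbol{\Pi }_{1}^{0}(s_{1}^{H}(G))$; by Lemma~\ref{Lemma:unrelativize} in the case $\alpha =1$, $\beta =0$, it therefore equals $A_{n}\cap s_{1}^{H}(G)$ for some $A_{n}\in \boldsymbol{\Pi }_{2}^{0}(G)$. Since $H\subseteq s_{1}^{H}(G)$, this gives $\overline{W_{n}}^{s_{1}^{H}(G)}\cap H=A_{n}\cap H\in \boldsymbol{\Pi }_{2}^{0}(G)|_{H}$, completing the argument. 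The only step requiring any real thought is the identification $H=s_{1}^{H}(s_{1}^{H}(G))$, where the hypothesis $H\in \boldsymbol{\Pi }_{4}^{0}(G)$ is used; after that, the statement follows directly from the already-established properties of the first Solecki subgroup and from Lemma~\ref{Lemma:unrelativize}.
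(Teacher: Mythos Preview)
Your proof is correct and follows essentially the same approach as the paper: identify $H=s_{2}^{H}(G)=s_{1}^{H}(s_{1}^{H}(G))$ via Theorem~\ref{Theorem:characterize-solecki}, use the description of the neighborhood basis of the first Solecki subgroup to obtain sets of the form $\overline{W}^{s_{1}^{H}(G)}\cap H$, and then invoke Lemma~\ref{Lemma:unrelativize} with $\alpha=1$, $\beta=0$ to pass from $\boldsymbol{\Pi}_{1}^{0}(s_{1}^{H}(G))$ to $\boldsymbol{\Pi}_{2}^{0}(G)|_{H}$. Your write-up is slightly more explicit than the paper's in justifying $H=s_{2}^{H}(G)$ and in extracting a countable basis, but the argument is the same.
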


\begin{proof}
Define $\tilde{H}=s_{1}^{H}\left( G\right) $. By Theorem \ref%
{Theorem:characterize-solecki} we have that $H=s_{2}^{H}\left( G\right)
=s_{1}^{H}(\tilde{H})$. Thus a neighborhood basis of the identity in $H$
consists of sets of the form $\overline{Wx}^{\tilde{H}}\cap H$ where $W$ is
an open neighborhood of the identity in $H$. We have that, by Lemma \ref%
{Lemma:unrelativize},%
\begin{equation*}
\overline{Wx}^{\tilde{H}}\in \boldsymbol{\Pi }_{1}^{0}\left(
s_{1}^{H}(G)\right) \subseteq \boldsymbol{\Pi }_{2}^{0}\left( G\right)
|_{s_{1}^{H}\left( G\right) }\text{.}
\end{equation*}%
Therefore,%
\begin{equation*}
\overline{Wx}^{\tilde{H}}\cap H\in \boldsymbol{\Pi }_{2}^{0}\left( G\right)
|_{H}\text{.}
\end{equation*}%
This concludes the proof.
\end{proof}

\begin{lemma}
\label{Lemma:Sigma3}Suppose that $H$ is $\boldsymbol{\Sigma }_{3}^{0}$ in $G$%
, and define $\tilde{H}=s_{1}^{H}(G)$.\ Then we have that:

\begin{enumerate}
\item $H=s_{1}^{H}(\tilde{H})$;

\item We can write $H$ as the union of an increasing sequence $\left(
F_{k}\right) _{k\in \omega }$ such that $F_{k}$ is $\boldsymbol{\Pi }%
_{2}^{0} $ in $G$ and closed in $\tilde{H}$ for every $k\in \omega $;

\item $H$ has a countable family of neighborhoods of the identity consisting
of sets that are in $\boldsymbol{\Pi }_{2}^{0}\left( G\right) \cap 
\boldsymbol{\Pi }_{1}^{0}(\tilde{H})$.
\end{enumerate}
\end{lemma}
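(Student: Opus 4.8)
The plan is to obtain all three items from what has already been established, principally Theorem~\ref{Theorem:characterize-solecki}, Lemma~\ref{Lemma:unrelativize}, and Lemma~\ref{Lemma:relativize}, together with a Baire category argument on the orbit $H$ of the identity under the left translation action of $H$ on $G$. For item (1): since $H$ is $\boldsymbol{\Sigma}_3^0$ in $G$ it is in particular $\boldsymbol{\Pi}_4^0$ in $G$, and it is a Polishable subgroup of $G$ containing $H$. As $s_2^H(G)=s_1^H(s_1^H(G))=s_1^H(\tilde{H})$ is, by Theorem~\ref{Theorem:characterize-solecki}, the smallest $\boldsymbol{\Pi}_4^0$ Polishable subgroup of $G$ containing $H$, we get $s_1^H(\tilde H)\subseteq H$; since the reverse inclusion always holds, $H=s_1^H(\tilde H)$.

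For item (2) I would write $H=\bigcup_{k\in\omega}E_k$ with each $E_k\in\boldsymbol{\Pi}_2^0(G)$, fix a countable basis $\{U_\ell:\ell\in\omega\}$ of nonempty open subsets of $H$, and show $H=\bigcup_{k,\ell\in\omega}E_k^{*U_\ell}$. The inclusion ``$\supseteq$'' is clear: if $x\in E_k^{*U_\ell}$ then there is some $h\in U_\ell\subseteq H$ with $hx\in E_k\subseteq H$, whence $x\in H$. For ``$\subseteq$'', fix $x\in H$; the map $h\mapsto hx$ from $H$ to $G$ is continuous, so the sets $\{h\in H:hx\in E_k\}$ are $\boldsymbol{\Pi}_2^0$ in $H$ and cover $H$, and by the Baire category theorem one of them is comeager in some basic open set $U_{\ell_0}$, which means precisely that $x\in E_{k_0}^{*U_{\ell_0}}$. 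Since $E_k^{*U_\ell}\subseteq H\subseteq\tilde H\subseteq\overline{H}^G$, Lemma~\ref{Lemma:relativize} applied with $(\alpha,\beta)=(0,1)$ and with $(\alpha,\beta)=(1,0)$ gives, respectively, that $E_k^{*U_\ell}=E_k^{*U_\ell}\cap\overline{H}^G$ is $\boldsymbol{\Pi}_2^0$ in $\overline{H}^G$, hence in $G$, and that $E_k^{*U_\ell}=E_k^{*U_\ell}\cap\tilde H$ is \emph{closed in $\tilde H$}. Re-enumerating the countable family $\{E_k^{*U_\ell}\}$ as $\{G_m\}_{m\in\omega}$ and putting $F_m=G_0\cup\cdots\cup G_m$ yields the desired increasing sequence, since finite unions preserve being $\boldsymbol{\Pi}_2^0$ in $G$ and being closed in $\tilde H$.

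For item (3): by item (1), the sets $\overline{W}^{\tilde H}\cap H$, with $W$ ranging over open neighborhoods of the identity in $H$, form a neighborhood basis at the identity of $H$. The inclusion $H\hookrightarrow\tilde H$ being continuous, each $F_k$ from item (2) is closed in $H$, so by the Baire category theorem some $F_{k_0}$ has nonempty interior in $H$; after translating by a suitable element of $F_{k_0}$ I may assume that $F_{k_0}$ contains an open neighborhood $V$ of the identity of $H$. For $W$ an open neighborhood of the identity with $W\subseteq V$ one has $\overline{W}^{\tilde H}\subseteq F_{k_0}\subseteq H$, so $\overline{W}^{\tilde H}\cap H=\overline{W}^{\tilde H}$; moreover $\overline{W}^{\tilde H}$ is closed in $\tilde H$, hence lies in $\boldsymbol{\Pi}_2^0(G)|_{\tilde H}$ by Lemma~\ref{Lemma:unrelativize}, so $\overline{W}^{\tilde H}=A\cap\tilde H$ for some $A\in\boldsymbol{\Pi}_2^0(G)$, and therefore $\overline{W}^{\tilde H}=A\cap F_{k_0}\in\boldsymbol{\Pi}_2^0(G)$ because $F_{k_0}\in\boldsymbol{\Pi}_2^0(G)$. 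Choosing a countable neighborhood basis $\{W_n\}$ at the identity of $H$ with each $W_n\subseteq V$, the sets $\overline{W_n}^{\tilde H}$ then lie in $\boldsymbol{\Pi}_2^0(G)\cap\boldsymbol{\Pi}_1^0(\tilde H)$ and form a neighborhood basis at the identity.

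The step I expect to require the most care is the identity $H=\bigcup_{k,\ell}E_k^{*U_\ell}$ in item (2): the point is that an arbitrary $\boldsymbol{\Sigma}_3^0$ decomposition of $H$ can be replaced, via Baire category on the orbit and the Vaught $*$-transform, by one whose pieces are closed in $\tilde H$. The other slightly delicate point is the observation in item (3) that the closure $\overline{W}^{\tilde H}$ of a small enough neighborhood is automatically $\boldsymbol{\Pi}_2^0$ in $G$, and not merely in $\tilde H$, precisely because it is trapped inside one of the $\boldsymbol{\Pi}_2^0$ pieces $F_{k_0}$ produced in item (2); this is where the three items genuinely rely on one another.
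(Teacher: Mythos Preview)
Your proof is correct and, for items (1) and (2), follows the paper's argument essentially verbatim. For item (3) your route is slightly more direct than the paper's: rather than invoking Lemma~\ref{Lemma:Pi4} to produce neighborhoods in $\boldsymbol{\Pi}_2^0(G)|_H$ and then applying a further Vaught $*$-transform $U^{*U_1}$ to make them closed in $\tilde H$, you work directly with the closures $\overline{W}^{\tilde H}$, which are in $\boldsymbol{\Pi}_1^0(\tilde H)$ by construction and become $\boldsymbol{\Pi}_2^0$ in $G$ once trapped inside $F_{k_0}$; this is a clean shortcut that bypasses Lemma~\ref{Lemma:Pi4} entirely.
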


\begin{proof}
(1): This is a consequence of Theorem \ref{Theorem:characterize-solecki}.

(2): We can write $H=\bigcup_{k\in \omega }F_{k}$ where $F_{k}$ is $%
\boldsymbol{\Pi }_{2}^{0}$ in $G$. Fix a countable basis $\left\{ V_{n}:n\in
\omega \right\} $ for the topology of $H$. Let also $\left\{ z_{m}:n\in
\omega \right\} $ be a countable dense subset of $H$. Then we have that $%
H=\bigcup_{n,k\in \omega }F_{k}^{\ast V_{n}}$ where, by Lemma \ref%
{Lemma:relativize}, $F_{k}^{\ast V_{n}}$ is closed in $\tilde{H}$ and $%
\boldsymbol{\Pi }_{2}^{0}$ in $G$. Thus, without loss of generality we can
assume that $F_{k}$ is closed in $\tilde{H}$ for every $k\in \omega $.

(3): Let $\left( F_{k}\right) _{k\in \omega }$ be as in (2). By the Baire
Category Theorem, we can assume without loss of generality that there exists
an open neighborhood $V$ of the identity in $H$ such that $V\subseteq F_{0}$.

Fix an open neighborhood $W$ of the identity in $H$ contained in $V$. By
Lemma \ref{Lemma:Pi4}, there exists a neighborhood $U$ of the identity in $H$
that belongs to $\boldsymbol{\Pi }_{2}^{0}\left( G\right) |_{H}$ and such
that $U\subseteq W$. Since\ $U\subseteq W\subseteq V\subseteq F_{0}$ and $%
F_{0}\in \boldsymbol{\Pi }_{2}^{0}\left( G\right) $, we have that $U\in 
\boldsymbol{\Pi }_{2}^{0}\left( G\right) $.

Let now $U_{1}\subseteq U$ be an open neighborhood of the identity in $H$
such that $U_{1}U_{1}\subseteq U$ and $U_{1}\in \boldsymbol{\Pi }%
_{2}^{0}\left( G\right) $. Then we have that $U_{1}\subseteq U^{\ast U_{1}}$
and $U^{\ast U_{1}}\in \boldsymbol{\Pi }_{2}^{0}\left( G\right) \cap 
\boldsymbol{\Pi }_{1}^{0}(\tilde{H})$ by Lemma \ref{Lemma:relativize}. This
concludes the proof that $H$ has a countable basis of neighborhoods of the
identity consisting of sets that are in $\boldsymbol{\Pi }_{2}^{0}\left(
G\right) \cap \boldsymbol{\Pi }_{1}^{0}(\tilde{H})$.
\end{proof}

\begin{lemma}
\label{Lemma:sigma-complexity}If $H$ is $\boldsymbol{\Sigma }_{3}^{0}$ in $G$%
, then the coset equivalence relation $E_{H}^{G}$ is potentially $%
\boldsymbol{\Sigma }_{2}^{0}$, and $H$ is $D(\boldsymbol{\Pi }_{2}^{0})$ in $%
G$.
\end{lemma}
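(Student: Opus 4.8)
The statement asserts two things: that $E_H^G$ is potentially $\boldsymbol{\Sigma}_2^0$, and that $H$ is $D(\boldsymbol{\Pi}_2^0)$ in $G$. The plan is to extract from the hypothesis $H \in \boldsymbol{\Sigma}_3^0$ enough structure — via the three-item analysis in Lemma \ref{Lemma:Sigma3} — to build a finer Polish topology on $G$ in which $E_H^G$ becomes $\boldsymbol{\Sigma}_2^0$, and then to reuse the concrete covering produced along the way to witness the $D(\boldsymbol{\Pi}_2^0)$-membership of $H$ directly in $G$. (The second conclusion is essentially the content of Lemma \ref{Lemma:Delta-complexity}'s argument adapted to the $\boldsymbol{\Sigma}_3^0$ setting, and it is what completes the converse half of Theorem \ref{Theorem:Polishable-complexity}(2).)

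First I would invoke Lemma \ref{Lemma:Sigma3}: write $\tilde H = s_1^H(G)$, note $H = s_1^H(\tilde H)$, fix an increasing sequence $(F_k)_{k\in\omega}$ covering $H$ with each $F_k$ both $\boldsymbol{\Pi}_2^0$ in $G$ and closed in $\tilde H$, and fix a countable neighborhood basis $\{V_n : n\in\omega\}$ of the identity in $H$ consisting of sets in $\boldsymbol{\Pi}_2^0(G)\cap\boldsymbol{\Pi}_1^0(\tilde H)$, arranged (Baire category) so that $V_0 \subseteq F_0$. Since $H = s_1^H(\tilde H)$, a point $x\in\tilde H$ lies in $H$ iff for every $n$ there are $m_0,m_1$ with $x z_{m_0}, z_{m_1} x \in \overline{V_n}^{\tilde H}$, where $\{z_m\}$ is a fixed countable dense subset of $H$; and because $\overline{V_n}^{\tilde H}\cap H = V_n$, one can rewrite membership in $H$ as: there exists $m$ with $x z_m \in \overline{V_0}^{\tilde H}$, and for all $m$, if $x z_m\in\overline{V_0}^{\tilde H}$ then $xz_m \in F_0$. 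The first clause is $\boldsymbol{\Sigma}_1^0$-over-$\tilde H$ composed with a $\boldsymbol{\Pi}_2^0(G)$ set, the second is $\boldsymbol{\Pi}_2^0$; together this shows $H$ is $D(\boldsymbol{\Pi}_2^0)$ inside $\tilde H$, hence (as $\tilde H$ is $\boldsymbol{\Pi}_3^0$, in fact $\boldsymbol{\Pi}_2^0$ as closed set of $G$... more precisely using Lemma \ref{Lemma:unrelativize-subgroup}(2) with $\alpha = 1$, or directly since $\tilde H$ is $\boldsymbol{\Pi}_3^0$ and the $D(\boldsymbol{\Pi}_2^0)$ representation uses closed-in-$G$ pieces $F_0$ together with $\boldsymbol{\Pi}_2^0(G)$ sets) $H$ is $D(\boldsymbol{\Pi}_2^0)$ in $G$.

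For the potential-complexity statement I would topologize: let $\tau$ be the Polish topology on $G$ generated by the original topology together with countably many Borel sets chosen to make the relevant pieces clopen — concretely, refine so that $\tilde H$ is clopen and each $F_k$ is closed, and so that left translation by elements of $H$ (equivalently, the $H$-action) remains continuous; this is the standard change-of-topology for a Polish group acting on a Polish space, keeping the Borel structure fixed. In $\tau$, two points $x,y$ of $G$ are $E_H^G$-equivalent iff $xy^{-1}\in H$, and by the rewriting above this condition — when $xy^{-1}$ is already known to lie in $\tilde H$, which is clopen — is a countable union of closed conditions, i.e. $\boldsymbol{\Sigma}_2^0$ in $\tau\times\tau$; the "$xy^{-1}\in\tilde H$" constraint is clopen and so does not raise the class. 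Hence $E_H^G \in \boldsymbol{\Sigma}_2^0(\tau\times\tau)$, which is exactly potential $\boldsymbol{\Sigma}_2^0$.

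The main obstacle is the bookkeeping in the change of topology: one must refine the topology of $G$ enough that $\tilde H$ becomes clopen and the sets $F_k$, $\overline{V_n}^{\tilde H}$ become closed, \emph{while still keeping} the left-translation action of $H$ on $(G,\tau)$ jointly continuous (so that what is computed really is the orbit equivalence relation of a Polish action, and the $D(\boldsymbol{\Pi}_2^0)$-description of $H$ transports correctly). This is a routine-but-delicate application of the standard Becker–Kechris style topology-refinement lemmas for Polish group actions together with Lemma \ref{Lemma:intersection--Polishable} and the density of $H$ in $\tilde H$; once it is set up, the complexity count is immediate from Lemma \ref{Lemma:Sigma3}. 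An alternative route that sidesteps some of this is to observe that $H$ being $D(\boldsymbol{\Pi}_2^0)$ in $G$ already forces $E_H^G$ to be potentially $\boldsymbol{\Sigma}_2^0$ by the characterization feeding into Theorem \ref{Theorem:Polishable-complexity}(2); so it suffices to prove the $D(\boldsymbol{\Pi}_2^0)$ assertion, and the potential-complexity claim then follows formally.
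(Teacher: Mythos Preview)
Your plan reverses the paper's order of attack, and the direct route to $H \in D(\boldsymbol{\Pi}_2^0)(G)$ breaks down. With the $V_n$ chosen as in Lemma~\ref{Lemma:Sigma3}(3), each $V_n$ is already closed in $\tilde H$, so $\overline{V_n}^{\tilde H}=V_n$ and your implication ``$xz_m\in\overline{V_0}^{\tilde H}\Rightarrow xz_m\in F_0$'' is vacuous (since $V_0\subseteq F_0$); what remains is $\exists m,\,xz_m\in V_0$, but $V_0$ is only $\boldsymbol{\Pi}_2^0$ in $G$, so this is a $\boldsymbol{\Sigma}_3^0$ description, not $\boldsymbol{\Sigma}_2^0$. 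The fallback via Lemma~\ref{Lemma:unrelativize-subgroup}(2) with $\alpha=1$ yields only $D(\boldsymbol{\Pi}_3^0)$ in $G$, not $D(\boldsymbol{\Pi}_2^0)$; and $\tilde H=s_1^H(G)$ is genuinely $\boldsymbol{\Pi}_3^0$, not closed, so the parenthetical ``in fact $\boldsymbol{\Pi}_2^0$'' is incorrect. Your alternative route---deduce potential $\boldsymbol{\Sigma}_2^0$ from $D(\boldsymbol{\Pi}_2^0)$ via Theorem~\ref{Theorem:Polishable-complexity}(2)---is circular: the converse implication of that theorem \emph{is} the present lemma.

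The paper instead proves potential $\boldsymbol{\Sigma}_2^0$ first and then invokes Proposition~\ref{Proposition:HKL} to get $D(\boldsymbol{\Pi}_2^0)$. The device you are missing is a Borel reduction of $E_H^G$ to a $\boldsymbol{\Sigma}_2^0$ relation on a hyperspace. Using the $\boldsymbol{\Pi}_2^0$-in-$G$ neighborhood basis from Lemma~\ref{Lemma:Sigma3}(3) and a countable dense $\{h_k\}\subseteq H$, enumerate the translates $U_n=V_m h_k$; refine the topology on $G$ via Becker--Kechris for the action $(h,g)\cdot x=hxg^{-1}$ of $H\times G$ so that each $U_n$ becomes closed; and map $x\mapsto(U_n x)_{n\in\omega}$ into the Wijsman hyperspace $K(G,t)^\omega$. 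Then $x\,E_H^G\,y \Leftrightarrow \exists\ell,\,U_\ell x\subseteq U_0 y$, which is $\boldsymbol{\Sigma}_2^0$ because containment is closed in the Wijsman topology. Your own refinement sketch does not supply this: in the refined topology the map $(x,y)\mapsto xy^{-1}$ need not be continuous (only the $H\times G$-action is preserved), so you cannot simply pull back a pointwise description of $H$ to analyze $E_H^G$.
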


\begin{proof}
By Lemma \ref{Lemma:Sigma3}, we can fix a countable basis $\left\{
V_{m}:m\in \omega \right\} $ of neighborhoods of the identity in $H$ that
are $\boldsymbol{\Pi }_{2}^{0}$ in $G$ and closed in $H$. Fix also a
countable dense subset $\left\{ h_{k}:k\in \omega \right\} $ of $H$. Let $%
\left( U_{n}\right) _{n\in \omega }$ be an enumeration of the countable set $%
\left\{ V_{m}h_{k}:m,k\in \omega \right\} $. We have that for every nonempty
open subset $W$ of $H$ there exists $n\in \omega $ such that $U_{n}\subseteq
W$.

Let $H\times G$ be the product Polish group. By \cite[Theorem 5.1.8]%
{becker_descriptive_1996} applied to the continuous action $a:H\times
G\curvearrowright G$ defined by $\left( h,g\right) \cdot x=hxg^{-1}$,
together with \cite[Theorem 5.1.3]{becker_descriptive_1996}, there exists a
Polish topology $t$ of $G$ such that the action $a:H\times G\curvearrowright
\left( G,t\right) $ is continuous, $U_{n}$ is $t$-closed for every $n\in
\omega $, $t$ is finer than the Polish group topology of $G$, and it
generates the same Borel structure as the Polish group topology of $G$.
Since the action $a:H\times G\curvearrowright \left( G,t\right) $ is
continuous, we have that the left translation action $H\curvearrowright
\left( G,t\right) $ and the right translation action $\left( G,t\right)
\curvearrowleft G$ are continuous.

Fix a metric $d$ on $G$ compatible with $t$. For a closed subset $C$ of $G$
and $x\in G$ we define%
\begin{equation*}
d\left( x,C\right) =\mathrm{\inf }\left\{ d\left( x,c\right) :c\in C\right\} 
\text{.}
\end{equation*}%
Let $K\left( G,t\right) $ be the space of $t$-closed subsets of $G$. We
regard $K\left( G,t\right) $ as endowed with the \emph{Wijsman topology},
which is obtained by declaring a net $\left( C_{i}\right) $ to converge to $%
C $ if and only if, for every $x\in X$, $\left( d\left( C_{i},x\right)
\right) $ converges to $d\left( C,x\right) $ in $\mathbb{R}$. This turns $%
K\left( G,t\right) $ into a Polish space; see \cite[Theorem 4.3]%
{beer_polish_1991}. The Borel $\sigma $-algebra on $K\left( G,t\right) $ is
the $\sigma $-algebra generated by sets of the form%
\begin{equation*}
\left\{ C\in K\left( G,t\right) :C\cap W\neq \varnothing \right\} \text{,}
\end{equation*}%
where $W$ is some $t$-open subset of $G$ \cite[Section 12.C]%
{kechris_classical_1995}. The relation $C_{0}\subseteq C_{1}$ for closed
subsets $C_{0},C_{1}$ of $G$ is closed in $K\left( G,t\right) $, since we
have that $C_{0}\subseteq C_{1}$ if and only if $d\left( C_{1},x\right) \leq
d\left( C_{0},x\right) $ for every $x\in G$.

Define the Borel function $G\rightarrow K\left( G,t\right) ^{\omega }$%
\begin{equation*}
x\mapsto (U_{n}x)_{n\in \omega }\text{.}
\end{equation*}%
Notice that this function is indeed Borel: if $W\subseteq G$ is $t$-open,
then%
\begin{equation*}
\left\{ x\in G:U_{n}x\cap W\neq \varnothing \right\} =\bigcup_{u\in
U_{n}}u^{-1}W
\end{equation*}%
is $t$-open, and hence Borel, for every $n\in \omega $.

We have that, for $x,y\in G$, $xE_{H}^{G}y$ if and only if $\exists \ell \in
\omega $, $U_{\ell }x\subseteq U_{0}y$. Indeed, if $xE_{H}^{G}y$ then we
have that $Hx=Hy$. Thus, $U_{0}yx^{-1}\subseteq H$ is closed and nonmeager
in the Polish topology of $H$, and hence there exists $\ell \in \omega $
such that $U_{\ell }\subseteq U_{0}yx^{-1}$. Conversely if there exists $%
\ell \in \omega $ such that $U_{\ell }x\subseteq U_{0}y$ then we have that $%
Hx\cap Hy\neq \varnothing $ and $xE_{H}^{G}y$.

This shows that $E_{H}^{G}$ is potentially $\boldsymbol{\Sigma }_{2}^{0}$
and in particular potentially $D(\boldsymbol{\Pi }_{2}^{0})$. It now follows
from Proposition \ref{Proposition:HKL} that $H$ is $D(\boldsymbol{\Pi }%
_{2}^{0})$ in $G$.
\end{proof}

\begin{lemma}
\label{Lemma:limit-complexity}If $\lambda $ is a limit ordinal and $H$ is $%
\boldsymbol{\Sigma }_{\lambda }^{0}$ in $G$, then there exists $\mu <\lambda 
$ such that $H$ is $\boldsymbol{\Pi }_{\mu }^{0}$ in $G$.
\end{lemma}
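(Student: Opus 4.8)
The plan is to use the canonical approximation of $H$ and the unrelativization results of Section \ref{Section:complexity-solecki}. Let $\alpha = \mu_0 + n$ be the Solecki rank of $H$ in $G$, where $\mu_0$ is either zero or a limit ordinal and $n < \omega$. Since $H$ is $\boldsymbol{\Sigma}_{\lambda}^{0}$ in $G$ and $\lambda$ is a limit ordinal, $H$ is in particular $\boldsymbol{\Pi}_{\lambda}^{0}$ in $G$; hence by Theorem \ref{Theorem:characterize-solecki}, $s_{\alpha}^{H}(G) = H$ is the smallest $\boldsymbol{\Pi}_{1+\alpha+1}^{0}$ Polishable subgroup of $G$ containing $H$, and by minimality of the Solecki rank this forces $1+\alpha+1 \geq \lambda$ to be essentially tight, i.e. roughly $1 + \alpha < \lambda$. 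The point is that since $\lambda$ is a limit, $1+\alpha < \lambda$ means $1+\alpha+2 < \lambda$ as well, so there is ``room'' below $\lambda$.

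First I would dispose of the case $n = 0$: then $\alpha = \mu_0$ is zero or a limit ordinal, so by Theorem \ref{Theorem:complexity}(1) (or directly Theorem \ref{Theorem:characterize-solecki} together with Proposition \ref{Proposition:FS-reduce}) the complexity class of $H$ is $\boldsymbol{\Pi}_{1+\mu_0}^{0}$, and since $H$ is $\boldsymbol{\Sigma}_{\lambda}^{0}$ but not (properly) of higher complexity we must have $1+\mu_0 < \lambda$, so we may take $\mu = 1+\mu_0$. Now suppose $n \geq 1$, so $\alpha = \mu_0 + (n-1) + 1$ is a successor. Consider $L := s_{\alpha-1}^{H}(G) = s_{\mu_0+n-1}^{H}(G)$, which is a $\boldsymbol{\Pi}_{1+(\alpha-1)+1}^{0} = \boldsymbol{\Pi}_{1+\alpha}^{0}$ Polishable subgroup of $G$. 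Since $H = s_1^{H}(L) = s_1^{L}(L)$ relative to $L$... more precisely $H$ is $\boldsymbol{\Pi}_3^0$ in $L$ (this is the content of Theorem \ref{Theorem:characterize-solecki} applied at level $1$ inside $L$), and $H$ is $\boldsymbol{\Sigma}_\lambda^0$ in $G$ hence, by relativizing via Corollary \ref{Corollary:relativize} with $\alpha$ replaced by $\alpha - 1$... actually the cleaner route: since $H$ is $\boldsymbol{\Sigma}_\lambda^0$ in $G$ and $G \supseteq L = s_{\alpha-1}^H(G)$, and $\lambda > 1+(\alpha-1) = 1+\alpha-1$ is a limit, write $\lambda = 1 + (\alpha-1) + \beta$ for some $\beta < \lambda$ with $\beta$ a limit ordinal (using that $\lambda$ is a limit above $1+\alpha-1$), and apply Corollary \ref{Corollary:relativize} to get $H = H \cap L \in \boldsymbol{\Sigma}_{1+\beta}^{0}(L)$.

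Now inside $L$ the subgroup $H$ is $\boldsymbol{\Sigma}_{1+\beta}^{0}$ with $\beta$ a limit ordinal strictly less than $\lambda$; this is a ``lower'' instance of the same situation but with $G$ replaced by $L$ and the ordinal strictly smaller. I would either set up the whole lemma as an induction on $\lambda$ (or on the Solecki rank), so that the inductive hypothesis applied to $H$ inside $L$ yields $\nu < 1+\beta$ with $H \in \boldsymbol{\Pi}_{\nu}^{0}(L)$; then Lemma \ref{Lemma:unrelativize} (with $\alpha$ there equal to $\alpha - 1$, noting $1+(\alpha-1)+\nu < 1+(\alpha-1)+(1+\beta) = \lambda$) gives $H \in \boldsymbol{\Pi}_{1+(\alpha-1)+\nu}^{0}(G)$, and we take $\mu = 1+(\alpha-1)+\nu < \lambda$. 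Alternatively, and more directly, since $H$ has finite Solecki rank offset $n$ above a limit, I would just invoke Theorem \ref{Theorem:complexity}: the complexity class of $H$ in $G$ is one of $\boldsymbol{\Pi}_{1+\mu_0+n+1}^{0}$, $D(\boldsymbol{\Pi}_{1+\mu_0+n}^{0})$, or $\boldsymbol{\Sigma}_{1+\mu_0+n}^{0}$ (the last two only when $n=1$), and in every case this class is contained in $\boldsymbol{\Pi}_{\nu}^{0}(G)$ for $\nu := 1+\mu_0+n+2$; since $H$ is $\boldsymbol{\Sigma}_\lambda^0$, necessarily $\nu \le \lambda$, but $\nu = 1+\alpha+2$ is a successor while $\lambda$ is a limit, so $\nu < \lambda$, and we take $\mu = \nu$.

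The main obstacle is the bookkeeping with ordinal arithmetic of the form $1 + \alpha + \beta$: one must be careful that writing $\lambda$ as $1+(\alpha-1)+\beta$ with $\beta$ a limit is legitimate (it is, because $\lambda$ is a limit ordinal and $1+(\alpha-1) < \lambda$, so $\lambda = (1+(\alpha-1)) + \beta$ for a unique $\beta$, and that $\beta$ is a limit since otherwise $\lambda$ would be a successor), and that the resulting $\mu$ is genuinely $< \lambda$ — which again uses that $\lambda$ is a limit while all the candidate complexity bounds for $H$ are successor ordinals (or limit ordinals that are provably strictly below $\lambda$ by minimality of the Solecki rank). Given that Theorem \ref{Theorem:complexity} is already available, the second, direct argument is the shortest: the whole content is that the complexity class of $H$ is always captured by a \emph{successor}-indexed pointclass $\boldsymbol{\Pi}_{1+\alpha+2}^0$, which cannot equal a limit level $\lambda$, forcing the strict inequality.
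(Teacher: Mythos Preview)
Your approach has a circularity and, independently, a genuine gap.

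\textbf{Circularity.} Your ``direct'' route invokes Theorem \ref{Theorem:complexity}, but Lemma \ref{Lemma:limit-complexity} is one of the complexity-reduction lemmas used \emph{in the proof of} Theorem \ref{Theorem:complexity} (specifically in case (1), to show that a Polishable subgroup of Solecki rank a limit ordinal $\lambda$ is not $\boldsymbol{\Sigma}_\lambda^0$). So you cannot cite Theorem \ref{Theorem:complexity} here.

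\textbf{The gap.} Setting circularity aside, the Solecki-rank argument still does not close. From $H\in\boldsymbol{\Sigma}_\lambda^0\subseteq\boldsymbol{\Pi}_{\lambda+1}^0$ and Theorem \ref{Theorem:characterize-solecki} you only get that the Solecki rank $\alpha$ satisfies $\alpha\le\lambda$. If $\alpha$ is a successor, then indeed $\alpha<\lambda$ and $H\in\boldsymbol{\Pi}_{1+\alpha+1}^0$ with $1+\alpha+1<\lambda$, and you are done. But the case $\alpha=\lambda$ (so $n=0$, $\mu_0=\lambda$) is exactly the one that carries content: there $H\in\boldsymbol{\Pi}_{1+\lambda}^0=\boldsymbol{\Pi}_\lambda^0$ and, by minimality of the Solecki rank, $H\notin\boldsymbol{\Pi}_\mu^0$ for any $\mu<\lambda$. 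Your proposal does not rule this case out; the inductive scheme you sketch only reduces the \emph{successor}-rank case, and your claim ``$H$ is in particular $\boldsymbol{\Pi}_\lambda^0$'' from $H\in\boldsymbol{\Sigma}_\lambda^0$ is false in general and in any event does not help.

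\textbf{What the paper does.} The paper dispatches the case $\alpha<\lambda$ exactly as you do, and then handles the remaining case by a short Baire category argument: write $H=\bigcup_k F_k$ with each $F_k\in\boldsymbol{\Sigma}_{\mu_k}^0$ for some $\mu_k<\lambda$; using Lemma \ref{Lemma:relativize} (Vaught transforms for the $H$-translation action), replace the $F_k$ by sets that are simultaneously of bounded Borel rank below $\lambda$ in $G$ and closed in $H$; then some $F_{k_0}$ has nonempty interior in $H$, so $H=F_{k_0}^{\Delta}$ has Borel rank below $\lambda$ in $G$. This is the missing idea in your proposal.
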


\begin{proof}
Let $\alpha $ be the Solecki rank of $H$ in $G$. If $\alpha <\lambda $ then $%
H$ is $\boldsymbol{\Pi }_{1+\alpha +1}^{0}$ in $G$ and hence the conclusion
holds. Suppose that $\alpha \geq \lambda $. We have that $H=\bigcup_{k\in
\omega }F_{k}$ where, for $k\in \omega $, $F_{k}$ is $\boldsymbol{\Sigma }%
_{\mu _{k}}^{0}$ in $G$ for some $\mu _{k}<\lambda $. In this case, as in
the proof of Lemma \ref{Lemma:Delta-complexity}, by Lemma \ref%
{Lemma:relativize} we can assume without loss of generality that $F_{k}$ is
closed in $H$ for every $k\in \omega $. By the Baire Category Theorem,
without loss of generality we can assume that $F_{0}$ is nonmeager in $H$.
Thus, we have that $H=F_{0}^{\Delta }$ is $\boldsymbol{\Sigma }_{\mu
_{0}}^{0}$ and in particular $\boldsymbol{\Pi }_{\mu _{0}+1}^{0}$ in $G$.
\end{proof}

\begin{lemma}
\label{Lemam:Delta-complexity2}If $H$ is $\boldsymbol{\Delta }_{1+\lambda
+n+1}^{0}$ in $G$ for some $1\leq n<\omega $ and $\lambda <\omega _{1}$
either zero or limit, then $H$ is $D(\boldsymbol{\Pi }_{1+\lambda +n}^{0})$
in $G$.
\end{lemma}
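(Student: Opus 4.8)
The plan is to view this as the relativized analogue of Lemma \ref{Lemma:Delta-complexity} (the case $\lambda=0$, $n=1$), and to reduce to that lemma by descending to a suitable Solecki subgroup. Since $n\geq 1$, the ordinal $\lambda+n$ is a successor, so $\alpha:=\lambda+n-1=\lambda+(n-1)$ is a well-defined countable ordinal and $s_{\lambda+n}^{H}(G)=s_{1}^{H}\bigl(s_{\alpha}^{H}(G)\bigr)$. By Theorem \ref{Theorem:characterize-solecki}, $s_{\lambda+n}^{H}(G)$ is the smallest $\boldsymbol{\Pi}_{1+\lambda+n+1}^{0}$ Polishable subgroup of $G$ containing $H$; since $H$ itself is such a subgroup (it is $\boldsymbol{\Delta}_{1+\lambda+n+1}^{0}$, hence $\boldsymbol{\Pi}_{1+\lambda+n+1}^{0}$, in $G$) and $H\subseteq s_{\lambda+n}^{H}(G)$ always, we get $s_{\lambda+n}^{H}(G)=H$. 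In particular $H$ is a (dense) Polishable subgroup of $\tilde{H}:=s_{\alpha}^{H}(G)$.

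The first substantive step would be to relativize the complexity hypothesis to $\tilde{H}$. Since $H$ is $\boldsymbol{\Sigma}_{1+\lambda+n+1}^{0}$ in $G$ and $1+\alpha+2=1+\lambda+(n-1)+2=1+\lambda+n+1$, applying Corollary \ref{Corollary:relativize} with $L=H$, exponent $\alpha$, and $\beta=2$ yields $H=H\cap s_{\alpha}^{H}(G)\in\boldsymbol{\Sigma}_{3}^{0}(\tilde{H})$. Then Lemma \ref{Lemma:sigma-complexity}, applied with $\tilde{H}$ in the role of the ambient Polish group and $H$ in the role of its Polishable subgroup, shows that $H$ is $D(\boldsymbol{\Pi}_{2}^{0})$ in $\tilde{H}$.

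The final step would be to lift this back to $G$ via Lemma \ref{Lemma:unrelativize-subgroup}(2), applied to the Polishable subgroup $H$ of $s_{\alpha}^{H}(G)$: from $H\in D(\boldsymbol{\Pi}_{2}^{0})(s_{\alpha}^{H}(G))$ we obtain $H\in D(\boldsymbol{\Pi}_{1+\alpha+1}^{0})(G)$, and $1+\alpha+1=1+\lambda+(n-1)+1=1+\lambda+n$, which is precisely the assertion.

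I do not expect a genuine obstacle: the argument is a short composition of three results already available, mirroring the structure of Lemma \ref{Lemma:Delta-complexity} one level up in the Solecki hierarchy. The only points requiring care are the two pieces of ordinal arithmetic invoked above — valid because $n\geq 1$, so that $(n-1)+2=n+1$ and $(n-1)+1=n$, which then combine to the right of $\lambda$ — and the routine verification that $H$ really is a Polishable subgroup of $\tilde{H}=s_{\alpha}^{H}(G)$, immediate from $H\subseteq s_{\alpha}^{H}(G)$ together with the fact (recalled in Section \ref{Section:solecki}) that a Solecki subgroup carries the Borel structure inherited from $G$.
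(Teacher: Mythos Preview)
Your proposal is correct and follows essentially the same route as the paper: reduce to $\tilde{H}=s_{\lambda+n-1}^{H}(G)$, show $H\in\boldsymbol{\Sigma}_{3}^{0}(\tilde{H})$, apply Lemma~\ref{Lemma:sigma-complexity} to get $H\in D(\boldsymbol{\Pi}_{2}^{0})(\tilde{H})$, and then lift via Lemma~\ref{Lemma:unrelativize-subgroup}(2). The only cosmetic difference is that you invoke Corollary~\ref{Corollary:relativize} directly to obtain $H\in\boldsymbol{\Sigma}_{3}^{0}(\tilde{H})$, whereas the paper writes $H=\bigcup_{k}F_{k}$ with $F_{k}\in\boldsymbol{\Pi}_{1+\lambda+n}^{0}(G)$ and applies the Vaught transform (Lemma~\ref{Lemma:relativize}) termwise; since Corollary~\ref{Corollary:relativize} is itself proved exactly that way, the arguments coincide.
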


\begin{proof}
Fix a countable open basis $\left\{ V_{n}:n\in \omega \right\} $ for $H$. We
have that 
\begin{equation*}
H=s_{\lambda +n}^{H}(G)=s_{1}^{H}\left( s_{\lambda +n-1}^{H}(G)\right) \in 
\boldsymbol{\Pi }_{3}^{0}\left( s_{\lambda +n-1}^{H}(G)\right) \text{.}
\end{equation*}%
Furthermore, we can write $H=\bigcup_{k\in \omega }F_{k}$ where $F_{k}$ is $%
\boldsymbol{\Pi }_{1+\lambda +n}^{0}$ in $G$ for every $k\in \omega $. Thus,
we have that $H=\bigcup_{k,\ell \in \omega }F_{k}^{\ast V_{\ell }}$, where $%
F_{k}^{\ast V_{\ell }}\in \boldsymbol{\Pi }_{2}^{0}\left( s_{\lambda
+n-1}^{H}(G)\right) $ by Lemma \ref{Lemma:relativize}. Thus, we have that $%
H\in \boldsymbol{\Sigma }_{3}^{0}\left( s_{\lambda +n-1}^{H}(G)\right) $.
Hence, by Lemma \ref{Lemma:sigma-complexity} we have that $H\in D(%
\boldsymbol{\Pi }_{2}^{0})\left( s_{\lambda +n-1}^{H}(G)\right) $.
Furthermore, $D(\boldsymbol{\Pi }_{2}^{0})\left( s_{\lambda
+n-1}^{H}(G)\right) $ is contained in $D(\boldsymbol{\Pi }_{1+\lambda
+n}^{0})(G)$ by Lemma \ref{Lemma:unrelativize-subgroup}, concluding the
proof.
\end{proof}

\begin{lemma}
\label{Lemma:Sigma-complexity2}If $H$ is $\boldsymbol{\Sigma }_{1+\lambda
+n+1}^{0}$ in $G$ for some $1\leq n<\omega $ and $\lambda <\omega _{1}$
either zero or limit, then $H$ is $D(\boldsymbol{\Pi }_{1+\lambda +n}^{0})$
in $G$.
\end{lemma}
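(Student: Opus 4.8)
The statement to be proved, Lemma \ref{Lemma:Sigma-complexity2}, asserts that if $H$ is $\boldsymbol{\Sigma}_{1+\lambda+n+1}^{0}$ in $G$ (with $1\le n<\omega$ and $\lambda$ zero or limit), then $H$ is $D(\boldsymbol{\Pi}_{1+\lambda+n}^{0})$ in $G$. The natural approach is to transfer the problem down to the Solecki subgroup $s_{\lambda+n-1}^{H}(G)$, where the corresponding complexity statement has already been proved in low levels of the hierarchy, namely in Lemma \ref{Lemma:sigma-complexity} (which says a $\boldsymbol{\Sigma}_3^{0}$ Polishable subgroup is $D(\boldsymbol{\Pi}_2^{0})$), and then lift the conclusion back to $G$ via Lemma \ref{Lemma:unrelativize-subgroup}. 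This mirrors exactly the strategy just used in the proof of Lemma \ref{Lemam:Delta-complexity2}.

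\medskip

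First I would observe that since $H$ has Solecki rank at least $\lambda+n$ (this is forced: by Theorem \ref{Theorem:characterize-solecki}, $s_{\lambda+n-1}^{H}(G)$ is the smallest $\boldsymbol{\Pi}_{1+\lambda+n}^{0}$ Polishable subgroup containing $H$, and $H$ is $\boldsymbol{\Sigma}_{1+\lambda+n+1}^{0}$ hence $\boldsymbol{\Pi}_{1+\lambda+n+2}^{0}$, so the recursion has not yet terminated before stage $\lambda+n$), we have
\begin{equation*}
H=s_{\lambda+n}^{H}(G)=s_{1}^{H}\!\left(s_{\lambda+n-1}^{H}(G)\right)\in\boldsymbol{\Pi}_{3}^{0}\!\left(s_{\lambda+n-1}^{H}(G)\right).
\end{equation*}
Next, fixing a countable open basis $\{V_{\ell}:\ell\in\omega\}$ for $H$ and writing $H=\bigcup_{k\in\omega}F_{k}$ with each $F_{k}\in\boldsymbol{\Sigma}_{1+\lambda+n}^{0}(G)$, I would pass to the Vaught transforms: $H=\bigcup_{k,\ell\in\omega}F_{k}^{\ast V_{\ell}}$, and by Lemma \ref{Lemma:relativize} (with the exponent bookkeeping $1+\alpha+\beta$ where $\alpha=\lambda+n-1$ and $1+\alpha+\beta=1+\lambda+n$, so $\beta$ is such that $1+\beta=2$, using here that $\lambda$ is zero or limit so that $1+\lambda+n-1+1=1+\lambda+n$) each $F_{k}^{\ast V_{\ell}}$ lies in $\boldsymbol{\Pi}_{2}^{0}\!\left(s_{\lambda+n-1}^{H}(G)\right)$. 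Hence $H\in\boldsymbol{\Sigma}_{3}^{0}\!\left(s_{\lambda+n-1}^{H}(G)\right)$.

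\medskip

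With $H$ now exhibited as a $\boldsymbol{\Sigma}_{3}^{0}$ (indeed $\boldsymbol{\Delta}_{3}^{0}$, combining with the $\boldsymbol{\Pi}_3^0$ membership above) Polishable subgroup of the Polish group $s_{\lambda+n-1}^{H}(G)$, Lemma \ref{Lemma:sigma-complexity} applies and yields $H\in D(\boldsymbol{\Pi}_{2}^{0})\!\left(s_{\lambda+n-1}^{H}(G)\right)$. Finally, Lemma \ref{Lemma:unrelativize-subgroup}(2), applied with $\alpha=\lambda+n-1$, gives $D(\boldsymbol{\Pi}_{2}^{0})\!\left(s_{\lambda+n-1}^{H}(G)\right)\subseteq D(\boldsymbol{\Pi}_{1+(\lambda+n-1)+1}^{0})(G)=D(\boldsymbol{\Pi}_{1+\lambda+n}^{0})(G)$, so $H\in D(\boldsymbol{\Pi}_{1+\lambda+n}^{0})(G)$, as required.

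\medskip

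The step I expect to demand the most care is the ordinal arithmetic in applying Lemma \ref{Lemma:relativize}: one needs $1+(\lambda+n-1)+\beta=1+\lambda+n$ to have the solution $\beta$ with $1+\beta=2$, which relies on $\lambda$ being zero or a limit ordinal so that $\lambda+n-1$ makes sense and $1+\lambda+n-1 = 1+\lambda+(n-1)$ behaves additively as expected; the case $n=1$, where $\lambda+n-1=\lambda$, should be checked separately to confirm the indices still line up (there $F_k\in\boldsymbol\Sigma_{1+\lambda+1}^0(G)$ and we want $F_k^{\ast V_\ell}\in\boldsymbol\Sigma_2^0(s_\lambda^H(G))$, which is exactly Lemma \ref{Lemma:relativize} with $\alpha=\lambda$, $\beta=1$). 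Everything else is a direct citation of the lemmas already in hand.
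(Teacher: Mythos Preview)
Your core argument is the same as the paper's: write $H=\bigcup_k F_k$, pass to Vaught transforms to land in $\boldsymbol{\Sigma}_3^0\bigl(s_{\lambda+n-1}^H(G)\bigr)$, invoke Lemma~\ref{Lemma:sigma-complexity}, and lift back via Lemma~\ref{Lemma:unrelativize-subgroup}(2). Two points need fixing, though.

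First, the opening paragraph is both wrong and unnecessary. The hypothesis that $H$ is $\boldsymbol{\Sigma}_{1+\lambda+n+1}^0$ gives only an \emph{upper} bound on complexity; it does not force the Solecki rank to be at least $\lambda+n$, and ``rank $\geq\lambda+n$'' would not give $H=s_{\lambda+n}^H(G)$ anyway (that requires rank $\leq\lambda+n$). The paper handles this honestly by splitting on the Solecki rank $\alpha$: if $\alpha\leq\lambda+n$ it cites Lemma~\ref{Lemam:Delta-complexity2}, and if $\alpha=\lambda+n+1$ it runs the Vaught-transform argument. In fact the Vaught-transform argument works regardless of $\alpha$, so your decision to skip the case split is fine---but then simply drop the Solecki-rank discussion entirely, since nothing downstream uses $H\in\boldsymbol{\Pi}_3^0\bigl(s_{\lambda+n-1}^H(G)\bigr)$.

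Second, your $F_k$ must lie in $\boldsymbol{\Pi}_{1+\lambda+n}^0(G)$, not $\boldsymbol{\Sigma}_{1+\lambda+n}^0(G)$: a $\boldsymbol{\Sigma}_{1+\lambda+n+1}^0$ set is by definition a countable union of $\boldsymbol{\Pi}_{1+\lambda+n}^0$ sets (a countable union of $\boldsymbol{\Sigma}_{1+\lambda+n}^0$ sets would itself be $\boldsymbol{\Sigma}_{1+\lambda+n}^0$). This matters because Lemma~\ref{Lemma:relativize} sends $\boldsymbol{\Pi}$ sets to $\boldsymbol{\Pi}$ under the $\ast$-transform and $\boldsymbol{\Sigma}$ sets to $\boldsymbol{\Sigma}$ under the $\Delta$-transform; with $F_k\in\boldsymbol{\Sigma}$ you cannot conclude $F_k^{\ast V_\ell}\in\boldsymbol{\Pi}_2^0$. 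With $F_k\in\boldsymbol{\Pi}_{1+\lambda+n}^0(G)$ the rest of your argument (and your ordinal bookkeeping) goes through exactly as in the paper.
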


\begin{proof}
Fix a countable open basis $\left\{ V_{n}:n\in \omega \right\} $ for $H.$
Let $\alpha $ be the Solecki rank of $H$ in $G$. Since $H$ is $\boldsymbol{%
\Pi }_{1+\lambda +n+2}^{0}$ we have that $\alpha \leq \lambda +n+1$ by
Theorem \ref{Theorem:characterize-solecki}. If $\alpha \leq \lambda +n$ then
we have that $H$ is $\boldsymbol{\Delta }_{1+\lambda +n+1}^{0}$ and hence $H$
is $D(\boldsymbol{\Pi }_{1+\lambda +n}^{0})$ by Lemma \ref%
{Lemam:Delta-complexity2}. Suppose that $\alpha =\lambda +n+1$. Thus, we
have that $H=s_{1}^{H}\left( s_{\lambda +n}^{H}(G)\right) $. We can write $%
H=\bigcup_{k\in \omega }F_{k}$ where $F_{k}$ is $\boldsymbol{\Pi }%
_{1+\lambda +n}^{0}$ in $G$ for every $k\in \omega $. Thus we have that $%
H=\bigcup_{k,\ell \in \omega }F_{k}^{\ast V_{\ell }}$, where, by Lemma \ref%
{Lemma:relativize}, $F_{k}^{\ast V_{\ell }}$ is $\boldsymbol{\Pi }_{2}^{0}$
in $s_{\lambda +n-1}^{H}(G)$. Thus, $H\in \boldsymbol{\Sigma }_{3}^{0}\left(
s_{\lambda +n-1}^{H}(G)\right) $. By Lemma \ref{Lemma:sigma-complexity},
this implies that $H\in D(\boldsymbol{\Pi }_{2}^{0})\left( s_{\lambda
+n-1}^{H}(G)\right) $. By Lemma \ref{Lemma:unrelativize-subgroup}, we have
that $D(\boldsymbol{\Pi }_{2}^{0})\left( s_{\lambda +n-1}^{H}(G)\right)
\subseteq D(\boldsymbol{\Pi }_{1+\lambda +n}^{0})(G)$, concluding the proof.
\end{proof}

We have now all the ingredients to present a proof of Theorem \ref%
{Theorem:complexity}.

\begin{proof}[Proof of Theorem \protect\ref{Theorem:complexity}]
(1) We have that $H$ is closed if and only if its Solecki rank is zero.
Suppose now that $\lambda $ is a limit ordinal and $n=0$. By Theorem \ref%
{Theorem:characterize-solecki} we have that $H$ is $\boldsymbol{\Pi }%
_{\lambda }^{0}$ and not $\boldsymbol{\Pi }_{\mu }^{0}$ for $\mu <\lambda $.
Thus, $H$ is not $\boldsymbol{\Sigma }_{\lambda }^{0}$ by Lemma \ref%
{Lemma:limit-complexity}.

(2a) By Lemma \ref{Lemma:unrelativize-subgroup} we have that $H$ is $%
\boldsymbol{\Pi }_{1+\lambda +n+1}^{0}$. It remains to prove that $H$ is not 
$\boldsymbol{\Sigma }_{1+\lambda +n+1}^{0}$.\ Suppose that $H$ is $%
\boldsymbol{\Sigma }_{1+\lambda +n+1}^{0}$. Then by Lemma \ref%
{Lemma:Sigma-complexity2} we have that $H$ is $D(\boldsymbol{\Pi }%
_{1+\lambda +n}^{0})$. Thus, by Corollary \ref{Corollary:relativize}, $H\in $
$D(\boldsymbol{\Pi }_{2}^{0})(s_{\lambda +n-1}^{H}(G))$, contradicting the
hypothesis.

(2b) By Lemma \ref{Lemma:unrelativize-subgroup} we have that $H$ is $D(%
\boldsymbol{\Pi }_{1+\lambda +n}^{0})$. It remains to prove that $H$ is not $%
\check{D}(\boldsymbol{\Pi }_{1+\lambda +n}^{0})$. Suppose by contradiction
that $H$ is $\check{D}(\boldsymbol{\Pi }_{1+\lambda +n}^{0})$. Then by Lemma %
\ref{Lemma:difference} we have that $H$ is either $\boldsymbol{\Pi }%
_{1+\lambda +n}^{0}$ or $\boldsymbol{\Sigma }_{1+\lambda +n}^{0}$. If $H$ is 
$\boldsymbol{\Pi }_{1+\lambda +n}^{0}$ then by Theorem \ref%
{Theorem:characterize-solecki} and Proposition \ref{Proposition:FS-reduce}
we have that $\lambda +n-1$ is the Solecki rank of $H$ in $G$, contradicting
the hypothesis. If $H$ is $\boldsymbol{\Sigma }_{1+\lambda +n}^{0}$, then $%
H\in \boldsymbol{\Sigma }_{2}^{0}(s_{\lambda +n-1}^{H}(G))$ by Corollary \ref%
{Corollary:relativize}, contradicting the hypothesis.

(2c) By Lemma \ref{Lemma:unrelativize-subgroup} we have that $H$ is $D(%
\boldsymbol{\Pi }_{1+\lambda +n}^{0})$. The same proof as (2b) shows that $H$
is not $\check{D}(\boldsymbol{\Pi }_{1+\lambda +n}^{0})$.

(2d) By Lemma \ref{Lemma:unrelativize-subgroup} we have that $H$ is $%
\boldsymbol{\Sigma }_{1+\lambda +n}^{0}$. The same proof as (2b) shows that $%
H$ is not $\boldsymbol{\Pi }_{1+\lambda +n}^{0}$.

When $H$ is non-Archimedean, the case (2c) is excluded by Proposition \ref%
{Proposition:HKL2}.
\end{proof}

\section{The Saint-Raymond rank\label{Section:SR}}

Saint-Raymond introduced in \cite[Definition 18]{saint-raymond_espaces_1976}
a notion of rank (therein called \emph{degree}) for Fr\'{e}chetable
subspaces of Fr\'{e}chet spaces; see Section \ref{section:Frechetable}. We
consider in this section the natural generalization of such a notion to
Polishable subgroups of Polish groups. Recall that for a complexity class $%
\Gamma $, and a Polishable subgroup $H$ of a Polish group $G$, we define $%
\Gamma (G)|_{H}$ to be the collection of sets of the form $A\cap H$ for $%
A\in \Gamma (G)$.

\begin{definition}
\label{Definition:degree}Suppose that $G$ is a Polish group, and $H\subseteq
G$ is a Polishable subgroup. The \emph{Saint-Raymond} \emph{rank }of $H$ is
the least countable ordinal $\alpha $ such that every open subset in the
Polish group topology of $H$ belongs to $\boldsymbol{\Sigma }_{1+\alpha
}^{0}(G)|_{H}$.
\end{definition}

Suppose that $X,Y$ are Polish spaces, and $\alpha $ is a countable ordinal.
As in \cite[page 216]{saint-raymond_espaces_1976}, one can define $\mathcal{B%
}_{\alpha }\left( X,Y\right) $ to be the set of Borel functions that have
class $\alpha $ as in \cite[Section 31]{kuratowski_topology_1966}, namely
are $\boldsymbol{\Sigma }_{1+\alpha }^{0}$-measurable; see \cite[Definition
24.2]{kechris_classical_1995}. By definition, the Saint-Raymond rank of $H$
is the least countable ordinal $\alpha $ such that the identity function of $%
H$ belongs to $\mathcal{B}_{\alpha }\left( X,Y\right) $, where $X$ is equal
to $H$ endowed with the subspace topology inherited from $G$, and $Y$ is
equal to $H$ endowed with its Polish group topology. Adapting an argument of
Tsankov from \cite{tsankov_compactifications_2006}, we now show that the
Saint-Raymond rank and the Solecki rank of a Polishable subgroup of $G$
coincide.

\begin{theorem}
Suppose that $G$ is a Polish group, and $H\subseteq G$ is a Polishable
subgroup. Then the Saint-Raymond rank is equal to the Solecki rank.
\end{theorem}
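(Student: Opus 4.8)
The plan is to show the two ranks agree by proving both inequalities, using the characterization of Solecki subgroups via their Borel complexity class (Theorem \ref{Theorem:characterize-solecki}) together with the relativization lemmas of Section \ref{Section:complexity-solecki}.

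First, I would show that the Saint-Raymond rank is at most the Solecki rank. Let $\alpha$ be the Solecki rank of $H$ in $G$, so that $H = s_{\alpha}^{H}(G)$. The point is that a neighborhood basis of the identity in $H = s_{1}^{H}(s_{\alpha-1}^{H}(G))$ (or, more carefully, using the tower $s_{0}^H(G) \supseteq s_{1}^H(G) \supseteq \cdots$) consists of sets of a controlled complexity when measured in $G$. Concretely, by Lemma \ref{Lemma:unrelativize}, $\boldsymbol{\Sigma}_{1}^{0}(s_{\beta}^{H}(G)) \subseteq \boldsymbol{\Sigma}_{1+\beta}^{0}(G)|_{s_{\beta}^{H}(G)}$, and since $H \subseteq s_{\beta}^{H}(G)$ with $H$ carrying a finer topology, one transfers open subsets of $H$ down the tower. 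Iterating along the recursive definition of the Solecki subgroups and taking intersections at limit stages, every open subset of $H$ lands in $\boldsymbol{\Sigma}_{1+\alpha}^{0}(G)|_{H}$; hence the Saint-Raymond rank is $\le \alpha$. The bookkeeping at limit ordinals — writing a basic open set of $H$ as a suitable combination of sets pulled back from the $s_{\beta}^{H}(G)$ for $\beta < \lambda$ — is the fussiest part here, but it is handled exactly as in the proof of Theorem \ref{Theorem:characterize-solecki}.

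Second, and this is the substantive direction, I would show that the Solecki rank is at most the Saint-Raymond rank. Suppose the Saint-Raymond rank of $H$ is $\alpha$, i.e.\ every open subset of $H$ lies in $\boldsymbol{\Sigma}_{1+\alpha}^{0}(G)|_{H}$. Following Tsankov's argument from \cite{tsankov_compactifications_2006}, I would fix a countable basis $\{V_n : n \in \omega\}$ of open neighborhoods of the identity in $H$, write each $V_n = A_n \cap H$ with $A_n \in \boldsymbol{\Sigma}_{1+\alpha}^{0}(G)$, and use these together with a countable dense subset of $H$ to describe membership in $H$. The idea is that $x \in \overline{H}^{G}$ lies in $H$ if and only if the ``translated basis'' $(V_n x)_n$ behaves like a genuine neighborhood filter converging in $H$, a condition one can phrase using the $A_n$ and the Vaught transform; this shows $H$ is $\boldsymbol{\Pi}_{1+\alpha+1}^{0}$ in $G$ (or the appropriate class), whence by Theorem \ref{Theorem:characterize-solecki} the Solecki rank of $H$ is at most $\alpha$. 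Making the descriptive-complexity estimate on this membership condition precise — keeping the quantifier alternations from pushing the class above $\boldsymbol{\Pi}_{1+\alpha+1}^{0}$ — is the main obstacle, and is where Tsankov's compactification trick (or a Wijsman-hyperspace argument as in Lemma \ref{Lemma:sigma-complexity}) does the real work.

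Combining the two inequalities yields equality. I expect the harder half to be the second one; the first is essentially repackaging the relativization lemmas.
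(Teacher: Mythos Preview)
Your first inequality is correct and in fact follows in one line from Lemma \ref{Lemma:unrelativize} with $\beta=0$: if $\alpha$ is the Solecki rank then $H=s_{\alpha}^{H}(G)$, so every open subset of $H$ lies in $\boldsymbol{\Sigma}_{1}^{0}(s_{\alpha}^{H}(G))\subseteq\boldsymbol{\Sigma}_{1+\alpha}^{0}(G)|_{H}$. No iteration along the tower or limit-stage bookkeeping is needed; the transfinite induction has already been done inside that lemma.

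For the converse inequality your plan is a detour, and the obstacle you flag --- keeping the complexity of a membership criterion for $H$ inside $\boldsymbol{\Pi}_{1+\alpha+1}^{0}$ --- is avoidable. The paper does \emph{not} attempt to show $H\in\boldsymbol{\Pi}_{1+\alpha+1}^{0}(G)$ and then invoke Theorem \ref{Theorem:characterize-solecki}. It instead compares topologies directly: given an open neighborhood $U$ of $1$ in $H$, choose $V$ open with $V^{-1}V\subseteq U$ and $A\in\boldsymbol{\Sigma}_{1+\alpha}^{0}(G)$ with $A\cap H=V$; by Lemma \ref{Lemma:relativize} the set $A^{\Delta V}\cap s_{\alpha}^{H}(G)$ is open in $s_{\alpha}^{H}(G)$, contains $1$, and satisfies $A^{\Delta V}\cap H\subseteq V^{-1}V\subseteq U$. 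Hence the Polish topology on $H$ agrees with the subspace topology from $s_{\alpha}^{H}(G)$, so $H$ is closed there, and being dense, $H=s_{\alpha}^{H}(G)$. This single Vaught-transform step replaces the Wijsman/hyperspace machinery you anticipate; your route could likely be made to work, but it is strictly harder than what is needed.
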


\begin{proof}
By Lemma \ref{Lemma:unrelativize} we have that the Saint-Raymond rank is
less than or equal to the Solecki rank.\ We prove the converse inequality as
in the proof of \cite[Proposition 4.6]{tsankov_compactifications_2006}. If $%
H $ has Saint-Raymond rank $\alpha $, then every open set in $H$ belongs to $%
\boldsymbol{\Sigma }_{1+\alpha }^{0}(G)|_{H}$. Suppose that $U$ is an open
neighborhood of the identity in $H$, and let $V$ be an open neighborhood of
the identity in $H$ such that $V^{-1}V\subseteq U$. Then there exists $A\in 
\boldsymbol{\Sigma }_{1+\alpha }^{0}(G)$ such that $A\cap H=V$. Thus, $1\in
A^{\Delta V}$, where $A^{\Delta V}\cap s_{\alpha }^{H}(G)$ is open in $%
s_{\alpha }^{H}(G)$ by\ Lemma \ref{Lemma:relativize}. Furthermore, we have
that $A^{\Delta V}\cap H\subseteq V^{-1}V\subseteq U$. This shows that $U$
contains a neighborhood of the identity with respect to the topology on $H$
induced by $s_{\alpha }^{H}(G)$. This shows that the Polish topology on $H$
is the subspace topology induced by $s_{\alpha }^{H}(G)$, whence $H$ is
closed in $s_{\alpha }^{H}(G)$. As $H$ is dense in $s_{\alpha }^{H}(G)$, we
have that $H=s_{\alpha }^{H}(G)$.
\end{proof}

\section{Polishable subgroups in each complexity class\label%
{Section:existence}}

The goal of this section is to prove the following theorem. Recall that a
Polish group is \emph{CLI} if it admits a compatible complete left-invariant
metric or, equivalently, its left uniformity is complete \cite%
{malicki_polish_2011}.

\begin{theorem}
\label{Theorem:main}Let $\Gamma $ be one of the possible complexity classes
of Polishable subgroups from Theorem \ref{Corollary:complexity}. Suppose
that $G$ is a nontrivial CLI Polish group. Then there exists a CLI
Polishable subgroup of $G^{\mathbb{N}}$ whose complexity class is $\Gamma $.
\end{theorem}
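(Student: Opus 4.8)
The plan is to build the required Polishable subgroups of $G^{\mathbb{N}}$ by an inductive/transfinite construction on the Solecki rank, using Theorem \ref{Theorem:complexity} to control the complexity class at each stage. Since Theorem \ref{Theorem:complexity} shows that the complexity class of a Polishable subgroup is essentially determined by its Solecki rank $\alpha=\lambda+n$ together with, in the low cases, whether $H$ is $D(\boldsymbol{\Pi}_2^0)$ or $\boldsymbol{\Sigma}_2^0$ inside $s_{\lambda+n-1}^H(G)$, it suffices to exhibit, for each target $\alpha$ and each of these finitely many ``types'', one CLI Polishable subgroup of some $G^{\mathbb{N}}$ realizing it. First I would treat the base cases: the closed subgroup $\{e\}$ (or $G^{\mathbb{N}}$ itself), realizing $\boldsymbol{\Pi}_1^0$; a countable-index open subgroup for $\boldsymbol{\Sigma}_1^0$; and then construct a single subgroup of Solecki rank $1$ that is $D(\boldsymbol{\Pi}_2^0)$ but not $\boldsymbol{\Sigma}_2^0$ (case (2c) of Theorem \ref{Theorem:complexity}), one that is $\boldsymbol{\Sigma}_2^0$-proper (case (2d)), and one that is $\boldsymbol{\Pi}_3^0$-proper but not $D(\boldsymbol{\Pi}_2^0)$ (case (2a) with $\lambda=0,n=1$). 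These are the analogues of the classical Saint-Raymond / Hjorth--Kechris--Louveau examples, adapted to an arbitrary nontrivial CLI group $G$; the point is that one can realize them using sequences in $G^{\mathbb{N}}$ with summability-type conditions relative to a compatible complete left-invariant metric on $G$.

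The successor step is the heart of the construction. Given a CLI Polishable subgroup $N\subseteq G^{\mathbb{N}}$ of Solecki rank $\alpha$ realizing a prescribed type, I would produce a CLI Polishable subgroup $\widehat{N}\subseteq (G^{\mathbb{N}})^{\mathbb{N}}\cong G^{\mathbb{N}}$ of Solecki rank $\alpha+1$ by an iterated-Solecki-envelope construction: roughly, take $\widehat{N}$ to consist of sequences $(x_k)_{k\in\omega}$ with each $x_k$ in (a coset-translate controlled version of) the appropriate Solecki subgroup, subject to a convergence condition that forces the new group's topology to be strictly finer and raises the rank by exactly one. By Lemma \ref{Lemma:characterize-solecki} and Lemma \ref{Lemma:characterize-solecki-product} I can compute the Solecki subgroups of such a product and verify that $s_1^{\widehat{N}}(G^{\mathbb{N}})=N^{\mathbb{N}}$ (up to the natural identification) and hence $s_{\alpha+1}^{\widehat{N}}=\widehat{N}$; Theorem \ref{Theorem:complexity} then pins down its complexity class. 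At limit stages $\lambda=\sup\lambda_m$ I would take, having produced subgroups $N_m$ of Solecki rank $\lambda_m$, the group $\prod_m N_m\subseteq\prod_m G^{\mathbb{N}}\cong G^{\mathbb{N}}$, invoking Lemma \ref{Lemma:product2} to see that this has Solecki rank $\lambda$ and complexity class $\boldsymbol{\Pi}_{1+\lambda}^0$, and Lemma \ref{Lemma:intersection--Polishable} together with the fact that a countable product of CLI groups is CLI to keep the CLI property. Adding finitely many more rank-one steps on top of such a limit group then produces all the classes $\boldsymbol{\Sigma}_{1+\lambda+1}^0$, $D(\boldsymbol{\Pi}_{1+\lambda+n+1}^0)$, $\boldsymbol{\Pi}_{1+\lambda+n+2}^0$.

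The main obstacle I expect is the successor step: one must choose the convergence condition defining $\widehat{N}$ carefully enough that (i) $\widehat{N}$ is genuinely Polishable with a \emph{complete} left-invariant metric — this is where the CLI hypothesis on $G$ is essential and must be propagated through the construction — and (ii) the \emph{type} of $N$ inside $s_{\alpha-1}^N$ is correctly transported to the type of $\widehat{N}$ inside $s_\alpha^{\widehat{N}}$, so that cases (2a)--(2d) of Theorem \ref{Theorem:complexity} land on the intended class rather than collapsing to a coarser one. A secondary technical point is bookkeeping the identifications $(G^{\mathbb{N}})^{\mathbb{N}}\cong G^{\mathbb{N}}$ and $\prod_m G^{\mathbb{N}}\cong G^{\mathbb{N}}$ coherently across all stages so that everything lives inside a single $G^{\mathbb{N}}$ as required by the statement. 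Once the rank and type are verified, the complexity-class conclusion is immediate from Theorem \ref{Theorem:complexity}, and non-collapse to $\check{\Gamma}$ is exactly the ``not $\boldsymbol{\Sigma}$'' / ``not $\check{D}$'' halves proved there, so no further hardness argument is needed beyond exhibiting the correct Solecki data.
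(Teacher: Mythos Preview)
Your overall strategy---raise the Solecki rank one step at a time, take products at limits, then read off the complexity class via Theorem \ref{Theorem:complexity}---is reasonable in spirit, but your successor step as written is not correct and hides the real work. The claim $s_1^{\widehat{N}}(G^{\mathbb{N}})=N^{\mathbb{N}}$ cannot hold once $\alpha\geq 2$: by Theorem \ref{Theorem:characterize-solecki}, $s_1^{\widehat{N}}(G^{\mathbb{N}})$ is the smallest $\boldsymbol{\Pi}_3^0$ Polishable subgroup containing $\widehat{N}$, whereas $N^{\mathbb{N}}$ has complexity $\boldsymbol{\Pi}_{1+\alpha+1}^0$. What you presumably intend is $s_\alpha^{\widehat{N}}=N^{\mathbb{N}}$, and more generally $s_\gamma^{\widehat{N}}=\bigl(s_\gamma^{N}(G^{\mathbb{N}})\bigr)^{\mathbb{N}}$ for all $\gamma\leq\alpha$, after which $\widehat{N}=s_1^{\widehat{N}}(N^{\mathbb{N}})$ would give rank $\alpha+1$. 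But Lemma \ref{Lemma:characterize-solecki-product} only computes Solecki subgroups of honest products; your $\widehat{N}$ is a \emph{proper} subgroup of $N^{\mathbb{N}}$ cut out by a convergence condition, so that lemma does not apply to $\widehat{N}$ itself. To get the equality above you must verify the two hypotheses of Lemma \ref{Lemma:characterize-solecki} at \emph{every} level $\gamma\leq\alpha$: density of $\widehat{N}$ in each $(s_\gamma^{N})^{\mathbb{N}}$, and that $(s_{<\gamma}^{N})^{\mathbb{N}}$-closures of $\widehat{N}$-neighborhoods of the identity contain $(s_\gamma^{N})^{\mathbb{N}}$-neighborhoods. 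This is a genuine calculation involving the interaction of the new convergence condition with the entire previously built chain of topologies, and your proposal gives no mechanism for it.

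The paper handles this differently and more directly: rather than a transfinite recursion on the rank, it fixes $\alpha$ once and for all, builds a tree-like index set $I_0^\alpha$, and defines in one shot an explicit decreasing chain $P_0\supseteq P_1\supseteq\cdots\supseteq P_{<\alpha}$ of CLI Polishable subgroups of $G^{I_0^\alpha}$, where each $P_\gamma$ imposes convergence of certain coordinate sequences. Three subgroups $S_\alpha,D_\alpha,P_\alpha\subseteq P_{<\alpha}$, differing only in whether a single ``top'' sequence lies in $\ell_1(G,L_G)$, $\mathrm{bv}_0(G,L_G)$, or $\mathrm{c}(G)$, realize the three possible types. The density (Lemma \ref{Lemma:dense}) and closure-of-neighborhood (Lemma \ref{Lemma:closure-neighborhood}) verifications are carried out directly on this tree structure using finitely supported approximants (Lemma \ref{Lemma:construct-element}), and Proposition \ref{Proposition:solecki-subgroups-P0} then identifies $s_\gamma$ of each target group with $P_{<(1+\gamma)}$. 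Your inductive scheme could in principle be made to work, but to do so you would have to prove inductive analogues of exactly these two lemmas, at which point you have essentially reconstructed the paper's argument with additional bookkeeping from the repeated identifications $(G^{\mathbb{N}})^{\mathbb{N}}\cong G^{\mathbb{N}}$.
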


\begin{remark}
\label{Remark:nondiscrete}After replacing $G$ with $G^{\mathbb{N}}$, we can
assume that $G$ is not discrete. We will assume that $G$ is not discrete in
the rest of this section.
\end{remark}

Recall that a pseudo-length function on a group $H$ is a function $%
L:H\rightarrow \lbrack 0,+\infty )$ such that, for $h,h^{\prime }\in H$:

\begin{itemize}
\item $L\left( 1_{H}\right) =0$;

\item $L\left( h^{-1}\right) =L\left( h\right) $;

\item $L\left( hh^{\prime }\right) \leq L\left( h\right) +L\left( h^{\prime
}\right) $.
\end{itemize}

A length function is a pseudo-length function $L$ such that $L\left(
h\right) =0\Rightarrow h=1_{H}$ for $h\in H$. A (pseudo-)length function $L$
gives rise to a left-invariant (pseudo-)metric $d$ defined by setting $%
d\left( h,h^{\prime }\right) =L\left( h^{-1}h^{\prime }\right) $, and every
left-invariant metric arises in this fashion.

Suppose that $G$ is a CLI Polish group, and let $L_{G}$ be a length function
on $G$ that induces the Polish topology on $G$. We define the length
functions $L_{1}$ and $L_{\infty }$ on $G^{\mathbb{N}}$, with corresponding
left-invariant metrics $d_{1}$ and $d_{\infty }$, by setting 
\begin{equation*}
L_{1}(\left( g_{n}\right) _{n\in \mathbb{N}}):=\sum_{n\in \mathbb{N}%
}L_{G}\left( g_{n}\right)
\end{equation*}%
and%
\begin{equation*}
L_{\infty }(\left( g_{n}\right) _{n\in \mathbb{N}}):=\mathrm{sup}_{n\in 
\mathbb{N}}L_{G}\left( g_{n}\right) \text{.}
\end{equation*}%
for a sequence $\left( g_{n}\right) _{n\in \mathbb{N}}\in G^{\mathbb{N}}$.
We say that $\left( g_{n}\right) _{n\in \mathbb{N}}$ is $L_{G}$-summable%
\emph{\ }if $L_{1}(\left( g_{n}\right) _{n\in \mathbb{N}})<\infty $, and has
bounded (left) $L_{G}$-variation if $\sum_{k\in \mathbb{N}%
}L_{G}(g_{n+1}^{-1}g_{n})<\infty $. We let $\ell _{1}\left( G,L_{G}\right)
\subseteq G^{\mathbb{N}}$ to be the CLI Polishable subgroup of $L_{G}$%
-summable sequences, $\mathrm{bv}_{0}\left( G,L_{G}\right) \subseteq G^{%
\mathbb{N}}$ to be the CLI Polishable subgroup of vanishing sequences of
bounded $L_{G}$-variation, and $\mathrm{c}\left( G\right) \subseteq G^{%
\mathbb{N}}$ be the CLI Polishable subgroup of convergent sequences.

Fix, for each limit ordinal $\lambda <\omega _{1}$, an increasing cofinal
sequence $\left( \lambda _{i}\right) _{i\in \mathbb{N}}$ in $\lambda $. If $%
\gamma =\delta +1$ is a successor ordinal, define $\gamma _{i}=\delta $ for
every $i\in \mathbb{N}$. Define by recursion on $\alpha <\omega _{1}$, $%
I_{0}^{0}=\left\{ \left( \varnothing ,\varnothing \right) \right\} $ where $%
\varnothing $ is the empty tuple., and $I_{0}^{\alpha }$ to be the set of
tuples $\left( n_{0},\ldots ,n_{d};\beta _{0},\ldots ,\beta _{d}\right) $
for $d\in \omega $, $n_{0},\ldots ,n_{d}\in \mathbb{N}$, $0=\beta
_{0}<\cdots <\beta _{d}=\alpha _{n_{d}}$, and $\left( n_{0},\ldots
,n_{d-1};\beta _{0},\ldots ,\beta _{d-1}\right) \in I_{0}^{\alpha _{n_{d}}}$.

Similarly for a fixed $\gamma <\omega _{1}$ we define $I_{\gamma }^{\alpha }$
by recursion on $\alpha \geq \gamma $, by setting $I_{\gamma }^{\gamma
}=\left\{ \left( \varnothing ,\varnothing \right) \right\} $, and $I_{\gamma
}^{\alpha }$ to be the set of tuples $\left( n_{0},\ldots ,n_{d};\beta
_{0},\ldots ,\beta _{d}\right) $ for $d\in \omega $, $n_{0},\ldots ,n_{d}\in 
\mathbb{N}$, $\gamma =\beta _{0}<\cdots <\beta _{d}=\alpha _{n_{d}}$, and $%
\left( n_{0},\ldots ,n_{d-1};\beta _{0},\ldots ,\beta _{d-1}\right) \in
I_{\gamma }^{\alpha _{n_{d}}}$. Notice that, by definition, if $\left(
n_{0},\ldots ,n_{d};\beta _{0},\ldots ,\beta _{d}\right) \in I_{\gamma
}^{\alpha }$ for some $\gamma \geq 1$, then $\left( m,n_{0},\ldots
,n_{d};\gamma _{m},\beta _{0},\ldots ,\beta _{d}\right) \in I_{\gamma
_{m}}^{\alpha }$ for every $m\in \mathbb{N}$.

Thus, for example we have that, for $1\leq k<\omega $, $I_{\gamma }^{\gamma
+k}$ is the set of tuples 
\begin{equation*}
\left( n_{0},\ldots ,n_{k-1};\gamma ,\gamma +1,\ldots ,\gamma +k-1\right)
\end{equation*}%
for $n_{0},\ldots ,n_{k-1}\in \mathbb{N}$, and $I_{\gamma }^{\gamma +\omega
} $ is the set of tuples%
\begin{equation*}
(n_{0},\ldots ,n_{\left( \gamma +\omega \right) _{d}};\gamma ,\gamma
+1,\ldots ,\left( \gamma +\omega \right) _{d})
\end{equation*}%
for $d\in \omega $ such that $\left( \gamma +\omega \right) _{d}\geq \gamma $%
, and $n_{0},\ldots ,n_{\left( \gamma +\omega \right) _{d}}\in \mathbb{N}$.

We also define $I_{\alpha }^{\alpha }=\left\{ \left( \varnothing
;\varnothing \right) \right\} $. We denote by $I^{\alpha }$ the union of $%
I_{\gamma }^{\alpha }$ for $\gamma \leq \alpha $. If $\gamma \leq \alpha $,
we denote by $I_{\leq \gamma }^{\alpha }$ the union of $I_{\delta }^{\alpha
} $ for $\delta \leq \gamma $, and by $I_{<\gamma }^{\alpha }$ to be the
union of $I_{\delta }^{\alpha }$ for $\delta <\gamma $. For $\left( n;\beta
\right) $ and $\left( m;\tau \right) $ in $I^{\alpha }$ we define $\left(
n;\beta \right) \leq \left( m;\tau \right) $ if and only if there exist $%
\gamma _{0}\leq \gamma _{1}\leq \alpha $ such that $\left( n;\beta \right)
\in I_{\gamma _{0}}^{\alpha }$, $\left( m;\tau \right) \in I_{\gamma
_{1}}^{\alpha }$, $m$ is a tail of $n$, and $\tau $ is a tail of $\beta $,
i.e.\ we have that, for some $\ell \leq d<\omega $, $\left( n;\beta \right)
=\left( n_{0},\ldots ,n_{d};\beta _{0},\ldots ,\beta _{d}\right) $, $\left(
m;\tau \right) =\left( m_{0},\ldots ,m_{\ell };\tau _{0},\ldots ,\tau _{\ell
}\right) $, and for $0\leq i\leq \ell $, $m_{i}=n_{i+d-\ell }$ and $\tau
_{i}=\beta _{i+d-\ell }$ . We regard $I^{\alpha }$ as an ordered set with
respect to this order relation. Observe that $I_{\leq \gamma }^{\alpha }$
and $I_{<\gamma }^{\alpha }$ are downward-closed. For a subset $F$ of $%
I^{\alpha }$, we denote by $F_{\downarrow }$ its downward closure. Notice
also that if $F\subseteq I^{\alpha }$ is finite, and $\left( n;\beta \right)
\in I_{\gamma }^{\alpha }$ for some $\gamma \geq 1$ is such that $\left(
k,n;\gamma _{k},\beta \right) \in F_{\downarrow }$ for infinitely many $k\in 
\mathbb{N}$, then $\left( n;\beta \right) \in F_{\downarrow }$.

Fix a countable ordinal $\alpha $. We define, by recursion on $\gamma
<\alpha $, a decreasing sequence $\left( P_{\gamma }\right) _{\gamma <\alpha
}$ of CLI Polishable subgroups of $G^{I_{0}^{\alpha }}$. Furthermore, for $%
x\in P_{\gamma }$, we define the values $x\left( n;\beta \right) \in G$ for $%
\left( n;\beta \right) \in I_{\gamma }^{\alpha }$. If $\gamma \geq 1$ and $%
\left( n;\beta \right) \in I_{\gamma }^{\alpha }$, then we let $\boldsymbol{x%
}\left( n;\beta \right) $ be the convergent sequence $\left( x\left(
k,n;\gamma _{k},\beta \right) \right) _{k\in \omega }$ in $G$ with limit $%
x\left( n;\beta \right) $. If $\left( n;\beta \right) \in I_{0}^{\alpha }$,
then we let $\boldsymbol{x}\left( n;\beta \right) $ be the sequence
constantly equal to $x\left( n;\beta \right) $.

Define $P_{0}$ to be $G^{I_{0}^{\alpha }}$. This is a CLI Polish group with
topology induced by the pseudo-length functions 
\begin{equation*}
L_{0}^{\left( n;\beta \right) }\left( x\right) =L_{G}(x\left( n;\beta
\right) )
\end{equation*}%
for $\left( n;\beta \right) \in I_{0}^{\alpha }$. Suppose that $1\leq \gamma
\leq \alpha $, and that $P_{\delta }$ has been defined for every $\delta
<\gamma $. Define $P_{<\gamma }=\bigcap_{\delta <\gamma }P_{\delta }$, and $%
P_{\gamma }\subseteq P_{<\gamma }$ to contain those $x\in P_{<\gamma }$ such
that, for every $\left( n;\beta \right) \in I_{\gamma }^{\alpha }$, the
sequence $\boldsymbol{x}\left( n;\beta \right) :=\left( x\left( i,n;\gamma
_{i},\beta \right) \right) _{i\in \mathbb{N}}$ is convergent. For $x\in
P_{\gamma }$ and $\left( n;\beta \right) \in I_{\gamma }^{\alpha }$, we
define $x\left( n;\beta \right) $ to be the limit of $\boldsymbol{x}\left(
n;\beta \right) $. Then we have that the Polish topology on $P_{\gamma }$ is
induced by the restriction to $P_{\gamma }$ of the continuous pseudo-length
functions on $P_{\delta }$ for $\delta <\gamma $, together with the
pseudo-length functions $L_{\gamma }^{\left( n;\beta \right) }\left(
x\right) =L_{\infty }(\boldsymbol{x}\left( n;\beta \right) )$ for $\left(
n;\beta \right) \in I_{\gamma }^{\alpha }$. This concludes the recursive
definition of the CLI\ Polishable subgroups $P_{\gamma }$ of $%
G^{I_{0}^{\alpha }}$ for $\gamma \leq \alpha $. Notice that in particular $%
P_{\alpha }$ contains the elements $x\in P_{<\alpha }$ such that the
sequence $\boldsymbol{x}\left( \alpha \right) :=\left( x\left( n,\alpha
_{n}\right) \right) _{n\in \mathbb{N}}$ belongs to $\mathrm{c}\left(
G\right) $. We also define $S_{\alpha }$ and $D_{\alpha }$ to be the
subgroups of $P_{\alpha }$ containing the elements $x\in P_{<\alpha }$ such
that the sequence $\boldsymbol{x}\left( \alpha \right) $ belongs to $\ell
_{1}\left( G,L_{G}\right) $ and $\mathrm{bv}_{0}\left( G,L_{G}\right) $,
respectively. Theorem \ref{Theorem:main} will be a consequence of the
following.

\begin{theorem}
\label{Theorem:complexityP}Fix $\alpha =1+\lambda +n<\omega _{1}$, where $%
\lambda $ is a limit ordinal or zero and $n<\omega $:

\begin{enumerate}
\item if $n=0$ and $\lambda $ is limit, then $P_{<\lambda }$ has Solecki
rank $\lambda $ in $G^{I_{0}^{\alpha }}$, and complexity class $\boldsymbol{%
\Pi }_{\lambda }^{0}$;

\item if $n=0$, then $S_{1+\lambda }$, $D_{1+\lambda }$, and $P_{1+\lambda }$
have Solecki rank $\lambda +1$ in $G^{I_{0}^{\alpha }}$, and complexity
class $\boldsymbol{\Sigma }_{1+\lambda +1}^{0}$, $D(\boldsymbol{\Pi }%
_{1+\lambda +1}^{0})$, and $\boldsymbol{\Pi }_{1+\lambda +1}^{0}$
respectively;

\item if $n\geq 1$, then $S_{1+\lambda +n}$, $D_{1+\lambda +n}$, and $%
P_{1+\lambda +n}$ have Solecki rank $\lambda +n+1$ in $G^{I_{0}^{\alpha }}$,
and complexity class $D(\boldsymbol{\Pi }_{1+\lambda +n+1}^{0})$, $D(%
\boldsymbol{\Pi }_{1+\lambda +n+1}^{0})$, and $\boldsymbol{\Pi }_{1+\lambda
+n+1}^{0}$ respectively.
\end{enumerate}
\end{theorem}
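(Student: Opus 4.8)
The plan is to prove all three items simultaneously by induction on $\alpha$, feeding in the inductive hypothesis for the ordinals that appear in the fixed cofinal sequences; at each stage one first determines the full Solecki tower of each of the groups $P_{<\lambda}$, $S_{\alpha}$, $D_{\alpha}$, $P_{\alpha}$ inside $G^{I_{0}^{\alpha}}$, and then reads off the complexity class from Theorem \ref{Theorem:complexity} once the Solecki rank and the position of the group inside its penultimate Solecki subgroup are known. Write $K$ for any one of $P_{<\lambda}$, $S_{\alpha}$, $D_{\alpha}$, $P_{\alpha}$. The technical core is to show that $s_{\gamma}^{K}(G^{I_{0}^{\alpha}})=P_{\gamma}$ for every $\gamma<\alpha$, that $s_{\lambda}^{K}(G^{I_{0}^{\alpha}})=P_{<\lambda}$ at a limit $\lambda\le\alpha$, and that the tower then stabilizes within one more $s_{1}$-step (at $P_{\alpha}$, $S_{\alpha}$, or $D_{\alpha}$, according to $K$). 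The case $\gamma=0$ reduces to the density in $G^{I_{0}^{\alpha}}$ of the finitely supported elements, all of which lie in every $P_{\gamma}$, in $S_{\alpha}$ and in $D_{\alpha}$, since along each index the auxiliary sequences $\boldsymbol{x}(n;\beta)$ are eventually trivial, hence convergent and even summable. The limit case is immediate from $s_{\lambda}=\bigcap_{\delta<\lambda}s_{\delta}$ together with the density clause of Lemma \ref{Lemma:intersection--Polishable}. For the successor case one wants $s_{\gamma+1}^{K}(G^{I_{0}^{\alpha}})=s_{1}^{K}(P_{\gamma})=P_{\gamma+1}$, which follows from Lemma \ref{Lemma:characterize-solecki} applied inside the ambient group $P_{\gamma}$ with $N=K$ and with the lemma's distinguished subgroup taken to be $P_{\gamma+1}$: one checks that $P_{\gamma+1}$ is a $\boldsymbol{\Pi}_{3}^{0}$ Polishable subgroup of $P_{\gamma}$ -- immediate, since its definition is a countable conjunction of convergence conditions, each of which is $\boldsymbol{\Pi}_{3}^{0}$ -- that $K$ is dense in $P_{\gamma+1}$ (again via finitely supported elements), and that for every identity neighbourhood $V$ of $K$ the set $\overline{V}^{P_{\gamma}}\cap P_{\gamma+1}$ contains an identity neighbourhood of $P_{\gamma+1}$. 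This last point is exactly where the explicit choice of pseudo-length functions matters: the new pseudo-lengths at level $\gamma+1$ are precisely the $L_{\infty}$-norms of the sequences $\boldsymbol{x}(n;\beta)$, whose suprema are controlled by the $P_{\gamma}$-closure of a finite product of coordinatewise neighbourhoods. Finally, when $\alpha$ is a limit, $G^{I_{0}^{\alpha}}$ factors -- through the fixed cofinal sequence $(\alpha_{i})$ used to build $I_{0}^{\alpha}$ -- as a product of groups assembled from the $G^{I_{0}^{\alpha_{i}}}$, so Lemma \ref{Lemma:characterize-solecki-product} lets the inductive hypothesis for the $\alpha_{i}<\alpha$ be used to compute the Solecki subgroups at level $\alpha$.

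\emph{From the tower to the complexity class.} By the previous step the Solecki ranks are as stated, once one knows that the tower strictly decreases (this is supplied by the hardness statements below). Moreover the penultimate Solecki subgroup of each of $S_{\alpha}$, $D_{\alpha}$, $P_{\alpha}$ equals $P_{<\alpha}$, and on $P_{<\alpha}$ the map $x\mapsto\boldsymbol{x}(\alpha)$ is continuous into $G^{\mathbb{N}}$. Since $\ell_{1}(G,L_{G})$ is $\boldsymbol{\Sigma}_{2}^{0}$, $\mathrm{bv}_{0}(G,L_{G})$ is $D(\boldsymbol{\Pi}_{2}^{0})$, and $\mathrm{c}(G)$ is $\boldsymbol{\Pi}_{3}^{0}$ but not $D(\boldsymbol{\Pi}_{2}^{0})$ in $G^{\mathbb{N}}$, pulling back along this map gives $S_{\alpha}\in\boldsymbol{\Sigma}_{2}^{0}(P_{<\alpha})$, $D_{\alpha}\in D(\boldsymbol{\Pi}_{2}^{0})(P_{<\alpha})$, and $P_{\alpha}\in\boldsymbol{\Pi}_{3}^{0}(P_{<\alpha})\setminus D(\boldsymbol{\Pi}_{2}^{0})(P_{<\alpha})$; for $P_{<\lambda}$ there is nothing extra to locate. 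Feeding these facts into Theorem \ref{Theorem:complexity}, and using Lemma \ref{Lemma:unrelativize-subgroup} to transport the relativized complexity back to $G^{I_{0}^{\alpha}}$, yields the stated complexity classes.

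\emph{Hardness.} It remains to prove the matching lower bounds -- that $P_{<\lambda}$ is not $\boldsymbol{\Sigma}_{\lambda}^{0}$, that $P_{\alpha}$ is not $\boldsymbol{\Sigma}$ at its level, that $D_{\alpha}$ is not $\check{D}(\boldsymbol{\Pi})$ at its level, and that $S_{\alpha}$ is not $\boldsymbol{\Pi}$ at its level -- which in particular supply the strictness of the tower used above. This is done by induction, starting from the facts that $\ell_{1}(G,L_{G})$, $\mathrm{bv}_{0}(G,L_{G})$, $\mathrm{c}(G)$ are complete for $\boldsymbol{\Sigma}_{2}^{0}$, $D(\boldsymbol{\Pi}_{2}^{0})$, $\boldsymbol{\Pi}_{3}^{0}$ respectively -- here one uses that $G$ is non-discrete, see Remark \ref{Remark:nondiscrete} -- and propagating them up the recursion through the product structure of $G^{I_{0}^{\alpha}}$: restricting to the coordinates indexed by the cofinal sequence exhibits, inside each of the groups under consideration, a product of copies of the lower-rank groups whose complexity classes are already known, so that Lemma \ref{Lemma:product2} (together with Lemma \ref{Lemma:product}) forces the hardness.

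\emph{Main obstacle.} The delicate point is the first step, and within it the verification of hypothesis (2) of Lemma \ref{Lemma:characterize-solecki} at the successor stages -- controlling the $L_{\infty}$-type pseudo-lengths at level $\gamma+1$ by closures of coordinatewise neighbourhoods in $P_{\gamma}$ -- hand in hand with organizing the product decomposition of $G^{I_{0}^{\alpha}}$ at limit stages so that Lemma \ref{Lemma:characterize-solecki-product} and the inductive hypothesis apply cleanly. The heavy bookkeeping comes entirely from the combinatorics of the index sets $I_{\gamma}^{\alpha}$ (tails, downward closures, and the passage from $I_{\gamma}^{\alpha}$ to $I_{\gamma_{m}}^{\alpha}$); the remaining steps amount to assembling the lemmas of Sections \ref{Section:solecki}--\ref{Section:complexity-polishable}.
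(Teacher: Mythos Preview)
Your overall strategy---compute the Solecki tower of $K$ by repeatedly applying Lemma \ref{Lemma:characterize-solecki} (density of $K$ plus the neighbourhood--closure condition), then feed the rank and the position of $K$ inside its penultimate Solecki subgroup into Theorem \ref{Theorem:complexity}---is exactly what the paper does (Proposition \ref{Proposition:solecki-subgroups-P0} and Corollary \ref{Corollary:basic-complexity}). Two concrete gaps, however, prevent your sketch from going through as written.

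\textbf{Indexing of the tower.} Your claim $s_{\gamma}^{K}(G^{I_{0}^{\alpha}})=P_{\gamma}$ for all $\gamma<\alpha$ is only correct for finite $\gamma$; for $\gamma\ge\omega$ one has $s_{\gamma}^{K}=P_{<(1+\gamma)}=P_{<\gamma}$, not $P_{\gamma}$ (your two displayed identities already contradict each other at any limit $\lambda<\alpha$). This off-by-one shifts the Solecki rank when $\lambda>0$: the correct rank of $S_{\alpha},D_{\alpha},P_{\alpha}$ is $\lambda+n+1$, which equals $\alpha+1$ (not $\alpha$) when $\lambda$ is a limit. The paper avoids the confusion by proving $s_{\gamma}^{K}=P_{<(1+\gamma)}$ by induction on $\gamma$ alone (Proposition \ref{Proposition:solecki-subgroups-P0}); no induction on $\alpha$ or appeal to Lemma \ref{Lemma:characterize-solecki-product} is used or needed.

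\textbf{Lower bounds inside $P_{<\alpha}$.} To land in case (2a) of Theorem \ref{Theorem:complexity} for $P_{\alpha}$, and in case (2c) for $D_{\alpha}$ when $n=0$, you must know $P_{\alpha}\notin D(\boldsymbol{\Pi}_{2}^{0})(P_{<\alpha})$ and $D_{\alpha}\notin\boldsymbol{\Sigma}_{2}^{0}(P_{<\alpha})$. Pulling back along $x\mapsto\boldsymbol{x}(\alpha)$ only gives \emph{upper} bounds; it cannot witness these non-memberships. Your separate hardness step via Lemma \ref{Lemma:product} and Lemma \ref{Lemma:product2} does not apply either, because $S_{\alpha},D_{\alpha},P_{\alpha}$ are \emph{not} products over the factors $G^{I_{0}^{\alpha_{i}}}$---the single condition on $\boldsymbol{x}(\alpha)=(x(k;\alpha_{k}))_{k}$ couples all factors. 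The paper fills this gap with a \emph{forward} continuous homomorphism $\Phi:G^{I_{<\gamma}^{\alpha}}\to P_{<\gamma}$ (Lemma \ref{Lemma:proper}) satisfying $\Phi(z)(k,n;\gamma_{k},\beta)=z(k,n;\gamma_{k},\beta)$; together with the backward map $\Psi$ this pins down the exact complexity class of $S_{\alpha},D_{\alpha},P_{\alpha}$ inside $P_{<\alpha}$ (Corollary \ref{Corollary:basic-complexity}) and simultaneously yields the strict containments $P_{\gamma}\subsetneq P_{<\gamma}$, so no inductive hardness argument is required.
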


We will obtain Theorem \ref{Theorem:complexityP} as a consequence of a
number of lemmas.

\begin{lemma}
\label{Lemma:construct-element}Suppose that $\gamma <\alpha $, $F$ is a
finite subset of $I_{\leq \gamma }^{\alpha }$, and $x\in P_{\gamma }$.
Define $y\in G^{I_{0}^{\alpha }}$ by setting, for $\left( n;\beta \right)
\in I_{0}^{\alpha }$,%
\begin{equation}
y\left( n;\beta \right) :=\left\{ 
\begin{array}{ll}
x\left( n;\beta \right) & \text{if }\left( n;\beta \right) \in F_{\downarrow
}\text{;} \\ 
1_{G} & \text{otherwise.}%
\end{array}%
\text{\label{Equation:y}}\right.
\end{equation}%
Then we have that $y\in S_{\alpha }$, and \eqref{Equation:y} holds for every 
$\left( n;\beta \right) \in I^{\alpha }$.
\end{lemma}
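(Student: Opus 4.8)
The plan is to prove the slightly stronger statement that, for every ordinal $\gamma\leq\alpha$ and every $\left(n;\beta\right)\in I_{\gamma}^{\alpha}$, the sequence $\boldsymbol{y}\left(n;\beta\right)$ converges (so that $y\left(n;\beta\right)$ is defined), \eqref{Equation:y} holds at $\left(n;\beta\right)$, and moreover $\boldsymbol{y}\left(n;\beta\right)$ is \emph{eventually equal to $1_{G}$} whenever $\left(n;\beta\right)\notin F_{\downarrow}$. I would prove this by induction on $\gamma$. Once it is available, membership $y\in S_{\alpha}$ comes out immediately: for each $\delta<\alpha$ and each $\left(n';\beta'\right)\in I_{\delta}^{\alpha}$ the sequence $\boldsymbol{y}\left(n';\beta'\right)$ converges, so $y\in P_{\delta}$; hence $y\in P_{<\alpha}$. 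And since $\left(n;\alpha_{n}\right)$ is a length-one tuple, it lies in $F_{\downarrow}$ only if it actually belongs to $F$, which happens for finitely many $n$; for all other $n$ one has $y\left(n;\alpha_{n}\right)=1_{G}$, so $\boldsymbol{y}\left(\alpha\right)=\left(y\left(n;\alpha_{n}\right)\right)_{n\in\mathbb{N}}$ differs from the constant sequence $1_{G}$ at only finitely many coordinates and therefore lies in $\ell_{1}\left(G,L_{G}\right)$. Thus $y\in S_{\alpha}$.

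For the base case $\gamma=0$, the assertion is exactly the definition of $y$. For the inductive step, fix $\gamma\geq1$ and $\left(n;\beta\right)\in I_{\gamma}^{\alpha}$. Recall that for each $i\in\mathbb{N}$ one has $\left(i,n;\gamma_{i},\beta\right)\in I_{\gamma_{i}}^{\alpha}$ with $\gamma_{i}<\gamma$, and $\left(i,n;\gamma_{i},\beta\right)\leq\left(n;\beta\right)$, so the inductive hypothesis applies to all of these tuples. If $\left(n;\beta\right)\in F_{\downarrow}$, then $\left(i,n;\gamma_{i},\beta\right)\in F_{\downarrow}$ for every $i$ since $F_{\downarrow}$ is downward closed; by the inductive hypothesis $\boldsymbol{y}\left(n;\beta\right)=\boldsymbol{x}\left(n;\beta\right)$ termwise, and the latter converges to $x\left(n;\beta\right)$ because $x\in P_{\gamma}$, so $y\left(n;\beta\right)=x\left(n;\beta\right)$. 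If $\left(n;\beta\right)\notin F_{\downarrow}$, then since $F$ is finite the remark preceding the lemma gives that $\left(i,n;\gamma_{i},\beta\right)\in F_{\downarrow}$ for only finitely many $i$; by the inductive hypothesis $y\left(i,n;\gamma_{i},\beta\right)=1_{G}$ for all but finitely many $i$, so $\boldsymbol{y}\left(n;\beta\right)$ is eventually $1_{G}$, hence converges to $1_{G}$, and $y\left(n;\beta\right)=1_{G}$.

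Finally I would spell out the remaining verification used above: that in the deduction of $y\in P_{<\alpha}$ the value $x\left(n';\beta'\right)$ is available whenever it is referred to. If $\left(n';\beta'\right)\in F_{\downarrow}$ then $\left(n';\beta'\right)\in I_{\leq\gamma}^{\alpha}$, so its level $\delta$ satisfies $\delta\leq\gamma$ and hence $x\in P_{\delta}$, so $\boldsymbol{x}\left(n';\beta'\right)$ converges; if $\left(n';\beta'\right)\notin F_{\downarrow}$, the convergence of $\boldsymbol{y}\left(n';\beta'\right)$ is the "eventually $1_{G}$'' clause just proved, with no reference to $x$ at all.

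The only point that needs care is to run the induction so that the convergence making $y\left(n;\beta\right)$ meaningful is established simultaneously with \eqref{Equation:y} and with the "eventually $1_{G}$'' strengthening; the latter, together with the combinatorial remark that a finite $F$ can be reached from $\left(n;\beta\right)$ in only finitely many of the directions $\left(i,n;\gamma_{i},\beta\right)$, is exactly what forces $y$ into the smallest group $S_{\alpha}$ rather than merely into $P_{\alpha}$. Everything else is routine bookkeeping with the order on $I^{\alpha}$ and the defining conditions of the groups $P_{\gamma}$.
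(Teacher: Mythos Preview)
Your proof is correct and follows essentially the same inductive strategy as the paper's: induct over the level of $(n;\beta)$, split into the two cases $(n;\beta)\in F_{\downarrow}$ and $(n;\beta)\notin F_{\downarrow}$, and use the combinatorial remark about finite $F$ for the latter. One cosmetic point: you reuse the symbol $\gamma$ for the induction variable, which clashes with the fixed $\gamma$ from the hypothesis (e.g.\ when you write ``because $x\in P_{\gamma}$''); the paper uses a fresh symbol $\sigma$, and you should do the same. Your final verification that $(n;\alpha_n)\in F_{\downarrow}$ forces $(n;\alpha_n)\in F$ is in fact a bit more careful than the paper's one-line justification of $y\in S_{\alpha}$, which as written only explicitly handles the indices $k$ with $\alpha_k>\gamma$.
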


\begin{proof}
We prove by induction on $\sigma <\alpha $ that $y\in P_{\sigma }$, and that %
\eqref{Equation:y} holds for every $\left( n;\beta \right) \in I_{\sigma
}^{\alpha }$. For $\sigma =0$, this holds by definition. Suppose that the
conclusion holds for every $\delta <\sigma $. Fix $\left( n;\beta \right)
\in I_{\sigma }^{\alpha }$. If $\left( n;\beta \right) \in F_{\downarrow }$
then necessarily $\sigma \leq \gamma $, and for every $k\in \mathbb{N}$, $%
\left( k,n;\sigma _{k},\beta \right) \in I_{\sigma _{k}}^{\alpha }\cap
F_{\downarrow }$ and hence by the inductive hypothesis, we have that $%
y\left( k,n;\sigma _{k},\beta \right) =x\left( k,n;\sigma _{k},\beta \right) 
$. Since $x\in P_{\gamma }\subseteq P_{\sigma _{k}}$, we have that the
sequence $\boldsymbol{y}\left( n;\beta \right) =\boldsymbol{x}\left( n;\beta
\right) $ converges to $x\left( n;\beta \right) $. Thus, $y\left( n;\beta
\right) =x\left( n;\beta \right) $. If $\left( n;\beta \right) \notin
F_{\downarrow }$ then we have that there exists $k_{0}$ such that, for all $%
k\geq k_{0}$, $\left( k,n;\sigma _{k},\beta \right) \notin F_{\downarrow }$.
Therefore, we have that the sequence $\boldsymbol{y}\left( n;\beta \right) $
is eventually equal to $1_{G}$, and thus $y\left( n;\beta \right) =1_{G}$.
This shows that $y\in P_{\sigma }$. This concludes the proof by induction.

By the above, we have that $y\in P_{<\alpha }$. For $k\in \mathbb{N}$ such
that $\alpha _{k}>\gamma $ we have that $y\left( k,\alpha _{k}\right) =1_{G}$
and hence $y\in S_{\alpha }$.
\end{proof}

\begin{lemma}
\label{Lemma:dense}For every $\gamma <\alpha $, $S_{\alpha }$ is dense in $%
P_{\gamma }$.
\end{lemma}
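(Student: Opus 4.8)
The plan is to derive everything directly from Lemma \ref{Lemma:construct-element}. Fix $\gamma<\alpha$ and $x\in P_{\gamma}$. Unwinding the recursive definition of the $P_{\delta}$, the Polish group topology of $P_{\gamma}$ is induced by the countable family of left-invariant pseudo-length functions $L_{\delta}^{(n;\beta)}$ with $\delta\le\gamma$ and $(n;\beta)\in I_{\delta}^{\alpha}$. Hence a basic neighbourhood of $x$ in $P_{\gamma}$ has the form
\[
U=\{z\in P_{\gamma}:L_{\delta_{i}}^{(n_{i};\beta_{i})}(z^{-1}x)<\varepsilon\text{ for }1\le i\le r\}
\]
for some $\varepsilon>0$ and some $(n_{i};\beta_{i})\in I_{\delta_{i}}^{\alpha}$ with $\delta_{i}\le\gamma$, and it suffices to produce an element of $S_{\alpha}$ lying in every such $U$.

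First I would set $F:=\{(n_{1};\beta_{1}),\dots,(n_{r};\beta_{r})\}$, a finite subset of $I_{\le\gamma}^{\alpha}$, and let $y\in G^{I_{0}^{\alpha}}$ be the element defined by \eqref{Equation:y} from the data $\gamma$, $F$, $x$. By Lemma \ref{Lemma:construct-element} we have $y\in S_{\alpha}\subseteq P_{\alpha}\subseteq P_{\gamma}$, and moreover $y(n;\beta)=x(n;\beta)$ for every $(n;\beta)\in F_{\downarrow}$, where $(n;\beta)$ now ranges over all of $I^{\alpha}$. It then only remains to check that $L_{\delta_{i}}^{(n_{i};\beta_{i})}(y^{-1}x)=0$ for each $i$, which places $y$ in $U$.

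The verification is the only computation, and it is short. If $\delta_{i}=0$, then $(n_{i};\beta_{i})\in F\subseteq F_{\downarrow}$, so $(y^{-1}x)(n_{i};\beta_{i})=y(n_{i};\beta_{i})^{-1}x(n_{i};\beta_{i})=1_{G}$ and $L_{0}^{(n_{i};\beta_{i})}(y^{-1}x)=L_{G}(1_{G})=0$. If $\delta_{i}\ge 1$, the key observation is that for every $k\in\mathbb{N}$ the tuple $(k,n_{i};(\delta_{i})_{k},\beta_{i})$ lies in $I_{(\delta_{i})_{k}}^{\alpha}$ with $(\delta_{i})_{k}<\delta_{i}$, while $n_{i}$ and $\beta_{i}$ are tails of $(k,n_{i})$ and of $((\delta_{i})_{k},\beta_{i})$; hence $(k,n_{i};(\delta_{i})_{k},\beta_{i})\le(n_{i};\beta_{i})$ in $I^{\alpha}$, so $(k,n_{i};(\delta_{i})_{k},\beta_{i})\in F_{\downarrow}$. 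Consequently $y(k,n_{i};(\delta_{i})_{k},\beta_{i})=x(k,n_{i};(\delta_{i})_{k},\beta_{i})$ for every $k$, so the sequence $\boldsymbol{(y^{-1}x)}(n_{i};\beta_{i})$ is constantly $1_{G}$ and $L_{\delta_{i}}^{(n_{i};\beta_{i})}(y^{-1}x)=L_{\infty}(\boldsymbol{(y^{-1}x)}(n_{i};\beta_{i}))=0$. In either case $y\in U$, so $S_{\alpha}$ meets every basic open subset of $P_{\gamma}$; since $x$ and $U$ were arbitrary, $S_{\alpha}$ is dense in $P_{\gamma}$.

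I do not expect a genuine obstacle: the combinatorial content has already been isolated in Lemma \ref{Lemma:construct-element}, and the remaining point is exactly the monotonicity observation above---truncating $x$ to the downward closure of a finite set containing $(n_{i};\beta_{i})$ automatically annihilates $L_{\delta_{i}}^{(n_{i};\beta_{i})}(y^{-1}x)$, because every coordinate entering this pseudo-length sits strictly below $(n_{i};\beta_{i})$ in the order on $I^{\alpha}$. The only thing one must be careful about is that each $(n_{i};\beta_{i})$ indeed belongs to $I_{\le\gamma}^{\alpha}$, so that Lemma \ref{Lemma:construct-element} is applicable; this is precisely the constraint $\delta_{i}\le\gamma$, which holds because the topology of $P_{\gamma}$ is generated by pseudo-length functions indexed only at levels $\le\gamma$.
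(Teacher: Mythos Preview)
Your proof is correct and follows essentially the same approach as the paper: both arguments pick the finite set $F\subseteq I_{\le\gamma}^{\alpha}$ of indices appearing in a basic neighbourhood, invoke Lemma~\ref{Lemma:construct-element} to produce an element of $S_{\alpha}$ agreeing with $x$ on $F_{\downarrow}$, and observe that this forces all the defining pseudo-distances to vanish. Your write-up is slightly more explicit about why $(k,n_{i};(\delta_{i})_{k},\beta_{i})\in F_{\downarrow}$, but the underlying idea is identical.
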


\begin{proof}
Suppose that $x\in P_{\gamma }$, and let $V$ be a neighborhood of $x$ in $%
P_{\gamma }$. Then we have that there exist $\varepsilon >0$ and a finite
subset $F$ of $I_{\leq \gamma }^{\alpha }$ such that%
\begin{equation*}
\bigcap_{\left( n;\beta \right) \in F}\left\{ z\in P_{\gamma }:d_{\infty }(%
\boldsymbol{x}\left( n;\beta \right) ,\boldsymbol{z}\left( n;\beta \right)
)<\varepsilon \right\}
\end{equation*}%
is contained in $V$. Define $z\in G^{I_{0}^{\alpha }}$ by setting, for $%
\left( n;\beta \right) \in I_{0}^{\alpha }$, 
\begin{equation*}
z\left( n;\beta \right) =\left\{ 
\begin{array}{ll}
x\left( n;\beta \right) & \text{if }\left( n;\beta \right) \in F_{\downarrow
} \\ 
1_{G} & \text{otherwise.}%
\end{array}%
\right.
\end{equation*}%
Then by Lemma \ref{Lemma:construct-element}, we have that $z\in S_{\alpha }$
and, for $\left( n;\beta \right) \in I^{\alpha }$ we have that 
\begin{equation*}
z\left( n;\beta \right) =\left\{ 
\begin{array}{ll}
x\left( n;\beta \right) & \text{if }\left( n;\beta \right) \in F_{\downarrow
}\text{;} \\ 
1_{G} & \text{otherwise.}%
\end{array}%
\right.
\end{equation*}%
In particular, we have that $z\in V$.
\end{proof}

\begin{lemma}
\label{Lemma:closure-neighborhood}For every $\gamma <\alpha $, for every
open neighborhood $V$ of the identity in $S_{\alpha }$, $\overline{V}%
^{P_{<\gamma }}\cap P_{\gamma }$ contains an open neighborhood of the
identity in $P_{\gamma }$.
\end{lemma}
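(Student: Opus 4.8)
The plan is to exhibit, for a given open neighborhood $V$ of the identity in $S_\alpha$, an open neighborhood $W$ of the identity in $P_\gamma$ that depends only on $V$, and to show that every $x\in W$ is a limit in $P_{<\gamma}$ of elements of $V$. It suffices to treat a basic neighborhood $V$, so that $V$ is cut out by a finite set $F_V\subseteq I_{<\alpha}^\alpha$ of indices, a tolerance $\varepsilon>0$, the conditions $L_\delta^{(n;\beta)}(z)<\varepsilon$ for $(n;\beta)\in F_V$ of level $\delta$, and the condition $L_1(\boldsymbol{z}(\alpha))<\varepsilon$ coming from the $\ell_1$-part of the topology of $S_\alpha$. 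Given $x\in P_\gamma$ and a finite set $F\subseteq I_{<\gamma}^\alpha$, let $y_F\in S_\alpha$ be the element supplied by Lemma \ref{Lemma:construct-element} (applicable since $I_{<\gamma}^\alpha\subseteq I_{\le\gamma}^\alpha$): thus $y_F(m;\tau)=x(m;\tau)$ for $(m;\tau)\in F_\downarrow$ and $y_F(m;\tau)=1_G$ otherwise, and $F_\downarrow\subseteq I_{<\gamma}^\alpha$ because $I_{<\gamma}^\alpha$ is downward closed. I will check that, for a suitable $W$, one has (a) $y_F\in V$ for every finite $F\subseteq I_{<\gamma}^\alpha$ and every $x\in W$, and (b) every $x\in W$ is in the closure of $\{y_F:F\subseteq I_{<\gamma}^\alpha\text{ finite}\}$ in $P_{<\gamma}$; this yields $W\subseteq\overline{V}^{P_{<\gamma}}\cap P_\gamma$.

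The point behind (a) is that the "high-level" constraints defining $V$ are essentially free. For $(n;\beta)\in F_V$ of level $\delta\le\gamma$ one has $L_\delta^{(n;\beta)}(y_F)\le L_\delta^{(n;\beta)}(x)$ directly from the definition of $y_F$. For $(n;\beta)\in F_V$ of level $\delta$ with $\gamma<\delta<\alpha$, the sequence $\boldsymbol{y}_F(n;\beta)$ has all but finitely many entries equal to $1_G$: its only possibly nontrivial entries are the $x(k,n;\delta_k,\beta)$ with $\delta_k<\gamma$, and there are finitely many such $k$ when $\delta$ is a limit ordinal and none when $\delta$ is a successor, by the structure of the order on $I^\alpha$ recalled in Section \ref{Section:existence}; hence $L_\delta^{(n;\beta)}(y_F)$ is a finite maximum of quantities $L_G(x(k,n;\delta_k,\beta))$ with $\delta_k<\gamma$. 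Similarly $\boldsymbol{y}_F(\alpha)=(y_F(n;\alpha_n))_n$ has only finitely many nontrivial entries, since $\alpha_n\ge\gamma$ for all but finitely many $n$, so $L_1(\boldsymbol{y}_F(\alpha))$ is a finite sum of quantities $L_G(x(n;\alpha_n))$ with $\alpha_n<\gamma$. Each $L_G(x(k,n;\delta_k,\beta))$ and $L_G(x(n;\alpha_n))$ is dominated by a pseudo-length function of $x$ of level $<\gamma$, and all the indices that arise here depend only on $F_V$; so defining $W$ to be the set of $x\in P_\gamma$ at which this fixed finite family of level-$\le\gamma$ pseudo-length functions is less than a suitable fraction of $\varepsilon$ produces an open neighborhood of the identity in $P_\gamma$ for which (a) holds.

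For (b), let $U$ be a basic $P_{<\gamma}$-neighborhood of $x$, governed by a finite set $F_0=\{(m_1;\tau_1),\dots,(m_r;\tau_r)\}\subseteq I_{<\gamma}^\alpha$ and some $\varepsilon'>0$. Every building coordinate occurring in $\boldsymbol{x}(m_i;\tau_i)$ is $\le(m_i;\tau_i)$ in the order of $I^\alpha$, hence lies in $(F_0)_\downarrow$, so $y_{F_0}$ agrees with $x$ on all of them; therefore $\boldsymbol{y}_{F_0}(m_i;\tau_i)=\boldsymbol{x}(m_i;\tau_i)$ entrywise for each $i$, whence $L_\delta^{(m_i;\tau_i)}(x^{-1}y_{F_0})=0<\varepsilon'$ and $y_{F_0}\in U$. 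Thus $U\cap V\ni y_{F_0}$, and since $U$ was an arbitrary basic neighborhood, $x\in\overline{V}^{P_{<\gamma}}$; as $x\in P_\gamma$ as well, $W\subseteq\overline{V}^{P_{<\gamma}}\cap P_\gamma$, which is the assertion.

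The main obstacle is the bookkeeping in the second paragraph: one must determine, for each index in $F_V$ and for the index $\alpha$, exactly which of its building coordinates can belong to $F_\downarrow$, and this is precisely where the structural remarks about the order on $I^\alpha$ recorded in Section \ref{Section:existence}---that a deeper (longer) tuple is smaller, and that if infinitely many of the coordinates $(k,n;\gamma_k,\beta)$ lie in the downward closure of a finite set then so does $(n;\beta)$---do the work. A secondary point requiring care is to make sure the neighborhood $W$ extracted this way depends only on $V$, and not on the particular $x\in W$ or on the approximating neighborhood $U$; the estimates above are designed so that it does.
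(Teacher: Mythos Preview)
Your proof is correct and follows the same strategy as the paper's: both construct the neighborhood $W$ in $P_\gamma$ from the level-$\le\gamma$ constraints determined by the finite index set defining $V$ (together with the finitely many low-level coordinates feeding into the higher-level and $\ell_1$ constraints), and then approximate each $x\in W$ in $P_{<\gamma}$ by the truncated element supplied by Lemma~\ref{Lemma:construct-element}. Your uniform bound $y_F\in V$ for \emph{all} finite $F\subseteq I_{<\gamma}^\alpha$ is a mild streamlining of the paper's argument, which instead enlarges the approximating index set to contain $B\cap I_{<\gamma}^\alpha$ before verifying membership in $V$.
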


\begin{proof}
Let $V$ be a neighborhood of the identity in $S_{\alpha }$. Fix a finite
subset $F$ of $I^{\alpha }$ and $\varepsilon >0$ such that%
\begin{equation*}
\left\{ w\in S_{\alpha }:L_{1}(\boldsymbol{w}\left( \alpha \right)
)<\varepsilon \right\} \cap \bigcap_{\left( n;\beta \right) \in F}\left\{
w\in S_{\alpha }:L_{\infty }(\boldsymbol{w}\left( n;\beta \right)
)<\varepsilon \right\}
\end{equation*}%
is contained in $V$. Define%
\begin{equation*}
N=\max_{\substack{ \gamma <\delta \leq \alpha  \\ F\cap I_{\delta }^{\alpha
}\neq \varnothing }}\max \left\{ n\in \mathbb{N}:\delta _{n}\leq \gamma
\right\}
\end{equation*}%
Define also the finite subset%
\begin{equation*}
B=\left( F\cap I_{\leq \gamma }^{\alpha }\right) \cup \left\{ \left(
k,n;\delta _{k},\beta \right) :\gamma <\delta \leq \alpha ,k\leq N,\left(
n;\beta \right) \in F\cap I_{\delta }^{\alpha }\right\}
\end{equation*}%
of $I_{\leq \gamma }^{\alpha }$. Consider the open neighborhood $W$ of the
identity in $P_{\gamma }$ defined by%
\begin{equation*}
W=\left\{ x\in P_{\gamma }:\sum_{n\leq N}L_{G}(x\left( n;\alpha _{n}\right)
)<\varepsilon \right\} \cap \bigcap_{\left( n;\beta \right) \in B}\left\{
x\in P_{\gamma }:L_{\infty }(\boldsymbol{x}\left( n;\beta \right)
)<\varepsilon \right\} \text{.}
\end{equation*}%
We claim that $W\subseteq \overline{V}^{P_{<\gamma }}\cap P_{\gamma }$.\
Suppose that $x\in W$. Let $U$ be an open neighborhood of $x$ in $P_{<\gamma
}$. Then there exist a finite subset $A$ of $I_{<\gamma }^{\alpha }$
containing $B\cap I_{<\gamma }^{\alpha }$ and $\varepsilon _{1}>0$ such that%
\begin{equation*}
\bigcap_{\left( n;\beta \right) \in A}\left\{ z\in P_{<\gamma }:d_{\infty }(%
\boldsymbol{x}\left( n;\beta \right) ,\boldsymbol{z}\left( n;\beta \right)
)<\varepsilon _{1}\right\}
\end{equation*}%
is contained in $U$. We need to prove that $U\cap V\neq \varnothing $.

We define $z\in G^{I_{0}^{\alpha }}$ by setting, for $\left( n;\beta \right)
\in I_{0}^{\alpha }$,%
\begin{equation*}
z\left( n;\beta \right) :=\left\{ 
\begin{array}{ll}
x\left( n;\beta \right) & \text{if }\left( n;\beta \right) \in A_{\downarrow
}\text{;} \\ 
1_{G} & \text{otherwise.}%
\end{array}%
\right.
\end{equation*}%
Then by Lemma \ref{Lemma:construct-element}, we have that $z\in S_{\alpha }$
and, for $\left( n;\beta \right) \in I^{\alpha }$, we have that%
\begin{equation*}
z\left( n;\beta \right) =\left\{ 
\begin{array}{ll}
x\left( n;\beta \right) & \text{if }\left( n;\beta \right) \in A_{\downarrow
}\text{;} \\ 
1_{G} & \text{otherwise.}%
\end{array}%
\right.
\end{equation*}%
In particular, we have that $z\in U$. We need to show that $z\in V$, i.e.,
that $L_{1}(\boldsymbol{z}\left( \alpha \right) )<\varepsilon $ and if $%
\left( n,\beta \right) \in F$, then $L_{\infty }(\boldsymbol{z}\left(
n;\beta \right) )<\varepsilon $. We have that%
\begin{equation*}
L_{1}(\boldsymbol{z}\left( \alpha \right) )_{1}=\sum_{n\in \mathbb{N}%
}L_{G}(z\left( n;\alpha _{n}\right) )\leq \sum_{n\leq N}L_{G}(x\left(
n;\alpha _{n}\right) )<\varepsilon \text{.}
\end{equation*}%
If $\left( n,\beta \right) \in F\cap I_{<\gamma }^{\alpha }$, then $%
\boldsymbol{z}\left( n;\beta \right) =\boldsymbol{x}\left( n;\beta \right) $%
. As $\left( n;\beta \right) \in B$ and $x\in W$, this implies that $%
L_{\infty }(\boldsymbol{z}\left( n;\beta \right) )=L_{\infty }(\boldsymbol{x}%
\left( m;\beta \right) )<\varepsilon $. If $\left( n,\beta \right) \in F\cap
I_{\gamma }^{\alpha }$, then we have that%
\begin{eqnarray*}
L_{\infty }(\boldsymbol{z}\left( n;\beta \right) ) &=&\mathrm{sup}_{k\in 
\mathbb{N}}L_{G}(z\left( k,n;\gamma _{k},\beta \right) ) \\
&\leq &\mathrm{sup}_{k\in \mathbb{N}}L_{G}(x\left( k,n;\gamma _{k},\beta
\right) )=L_{\infty }(\boldsymbol{x}\left( n;\beta \right) )<\varepsilon
\end{eqnarray*}%
since $\left( n;\beta \right) \in B$ and $x\in W$. If $\left( n;\beta
\right) \in F\cap I_{\delta }^{\alpha }$ for some $\delta >\gamma $, then%
\begin{eqnarray*}
L_{\infty }(\boldsymbol{z}\left( n;\beta \right) ) &=&\mathrm{sup}_{k\in 
\mathbb{N}}L_{G}(z\left( k,n;\delta _{k},\beta \right) ) \\
&\leq &\max_{k\leq N}L_{G}(x\left( k,n;\delta _{k},\beta \right) )\leq
\max_{k\leq N}L_{\infty }(\boldsymbol{x}\left( k,n;\delta _{k},\beta \right)
)<\varepsilon
\end{eqnarray*}%
since $\left( k,n;\delta _{k},\beta \right) \in B$ for $k\leq N$, and $x\in
W $. This shows that $z\in V$, concluding the proof.
\end{proof}

\begin{proposition}
\label{Proposition:solecki-subgroups-P0}For $\gamma <\alpha $ we have that 
\begin{equation*}
s_{\gamma }^{S_{\alpha }}(G^{I_{0}^{\alpha }})=s_{\gamma }^{D_{\alpha
}}(G^{I_{0}^{\alpha }})=s_{\gamma }^{P_{\alpha }}(G^{I_{0}^{\alpha
}})=s_{\gamma }^{P_{<\alpha }}(G^{I_{0}^{\alpha }})=P_{<\left( 1+\gamma
\right) }
\end{equation*}
\end{proposition}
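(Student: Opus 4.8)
The plan is to prove, by transfinite induction on $\gamma<\alpha$ and uniformly for each of the four subgroups $N\in\{S_{\alpha},D_{\alpha},P_{\alpha},P_{<\alpha}\}$, the single statement $s_{\gamma}^{N}(G^{I_{0}^{\alpha}})=P_{<(1+\gamma)}$; the displayed chain of equalities is then immediate. The four cases can be treated together because $S_{\alpha}\subseteq D_{\alpha}\subseteq P_{\alpha}\subseteq P_{<\alpha}$ with all inclusions continuous, so that for an open neighbourhood $V$ of the identity in $N$ the trace $V\cap S_{\alpha}$ is an open neighbourhood of the identity in $S_{\alpha}$; hence density assertions about $S_{\alpha}$ (Lemma \ref{Lemma:dense}) and the approximation property of Lemma \ref{Lemma:closure-neighborhood} pass to $N$ automatically, using $\overline{V}^{P_{<\gamma}}\supseteq\overline{V\cap S_{\alpha}}^{P_{<\gamma}}$.

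For $\gamma=0$ one has $s_{0}^{N}(G^{I_{0}^{\alpha}})=\overline{N}^{\,G^{I_{0}^{\alpha}}}$, and Lemma \ref{Lemma:dense} (with parameter $0$) already makes $S_{\alpha}$, hence $N$, dense in $P_{0}=G^{I_{0}^{\alpha}}=P_{<1}$. For a limit $\gamma<\alpha$ the inductive hypothesis and the recursive definition of the Solecki subgroups give $s_{\gamma}^{N}(G^{I_{0}^{\alpha}})=\bigcap_{\delta<\gamma}P_{<(1+\delta)}$, and since $\{\mu:\mu<1+\delta\text{ for some }\delta<\gamma\}=\{\mu:\mu<\gamma\}$ this equals $P_{<\gamma}=P_{<(1+\gamma)}$ (using $1+\gamma=\gamma$).

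The successor step is the heart of the argument. Assume the claim at $\delta$ and put $\gamma=\delta+1<\alpha$ and $\eta=1+\delta$; then $\eta\le\gamma<\alpha$, $1+\gamma=(1+\delta)+1=\eta+1$, and since $(P_{\mu})_{\mu}$ is decreasing $P_{<(1+\gamma)}=P_{<(\eta+1)}=P_{\eta}$. By the inductive hypothesis $s_{\delta}^{N}(G^{I_{0}^{\alpha}})=P_{<\eta}$, so $s_{\gamma}^{N}(G^{I_{0}^{\alpha}})=s_{1}^{N}(P_{<\eta})$, and it suffices to show $s_{1}^{N}(P_{<\eta})=P_{\eta}$. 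I would derive this from Lemma \ref{Lemma:characterize-solecki}, applied inside the Polish group $P_{<\eta}$ with $N$ as the small dense subgroup and $P_{\eta}$ as $H$: indeed $N\subseteq P_{<\alpha}\subseteq P_{\eta}$ since $\eta<\alpha$; $N$ is dense in $P_{\eta}$ because $S_{\alpha}\subseteq N\subseteq P_{\eta}$ and $S_{\alpha}$ is dense in $P_{\eta}$ by Lemma \ref{Lemma:dense}; and for every open neighbourhood $V$ of the identity in $N$ the set $\overline{V}^{P_{<\eta}}\cap P_{\eta}$ contains $\overline{V\cap S_{\alpha}}^{P_{<\eta}}\cap P_{\eta}$, which by Lemma \ref{Lemma:closure-neighborhood} contains an open neighbourhood of the identity in $P_{\eta}$. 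Lemma \ref{Lemma:characterize-solecki} then yields $P_{\eta}\subseteq s_{1}^{N}(P_{<\eta})$, with equality once we know that $P_{\eta}$ is $\boldsymbol{\Pi}_{3}^{0}$ in $P_{<\eta}$.

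Establishing that $P_{\eta}$ is $\boldsymbol{\Pi}_{3}^{0}$ in $P_{<\eta}$ is the only genuinely non-bookkeeping step, and I expect it to be the main obstacle. By construction $P_{\eta}$ is the set of $x\in P_{<\eta}$ such that for every $(n;\beta)\in I_{\eta}^{\alpha}$ the sequence $\boldsymbol{x}(n;\beta)=(x(i,n;\eta_{i},\beta))_{i\in\mathbb{N}}$ converges in $G$. Each entry $x\mapsto x(i,n;\eta_{i},\beta)$ is the limit-coordinate attached to an element of $I_{\eta_{i}}^{\alpha}$ with $\eta_{i}<\eta$, hence continuous on $P_{\eta_{i}}$ and a fortiori on $P_{<\eta}$ (it is controlled by one of the pseudolength functions generating the topology), so $x\mapsto\boldsymbol{x}(n;\beta)$ is continuous from $P_{<\eta}$ to $G^{\mathbb{N}}$. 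Fixing a complete compatible metric on $G$, the set of convergent, equivalently Cauchy, sequences is $\boldsymbol{\Pi}_{3}^{0}$ in $G^{\mathbb{N}}$; pulling back, each requirement ``$\boldsymbol{x}(n;\beta)$ converges'' defines a $\boldsymbol{\Pi}_{3}^{0}$ subset of $P_{<\eta}$, and since $I_{\eta}^{\alpha}$ is countable and $\boldsymbol{\Pi}_{3}^{0}$ is closed under countable intersections, $P_{\eta}$ is $\boldsymbol{\Pi}_{3}^{0}$ in $P_{<\eta}$. This closes the induction. Apart from this point, the remaining care needed is purely in the ordinal arithmetic relating $P_{<(1+\gamma)}$ to $P_{1+\gamma}$ and $1+\gamma$ to $\gamma+1$; all the real content is supplied by Lemmas \ref{Lemma:dense}, \ref{Lemma:closure-neighborhood}, and \ref{Lemma:characterize-solecki}.
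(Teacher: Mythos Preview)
Your proof is correct and follows essentially the same route as the paper: transfinite induction on $\gamma$, with Lemma~\ref{Lemma:dense} handling the base case, the recursive definition handling the limit case, and Lemma~\ref{Lemma:characterize-solecki} (fed by Lemmas~\ref{Lemma:dense} and~\ref{Lemma:closure-neighborhood}, plus the observation that $P_{\eta}$ is $\boldsymbol{\Pi}_{3}^{0}$ in $P_{<\eta}$) handling the successor step. The only cosmetic difference is that the paper reduces at the outset to $N=S_{\alpha}$ via the sandwich $S_{\alpha}\subseteq N\subseteq P_{<(1+\gamma)}$ and monotonicity of Solecki subgroups, whereas you carry a general $N$ through the induction and transfer the needed hypotheses from $S_{\alpha}$ to $N$ via the continuous inclusion $S_{\alpha}\hookrightarrow N$; your $\boldsymbol{\Pi}_{3}^{0}$ verification is also more explicit than the paper's one-line assertion.
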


\begin{proof}
Since $S_{\alpha }\subseteq D_{\alpha }\subseteq P_{\alpha }\subseteq
P_{<\left( 1+\gamma \right) }$, it suffices to prove that $s_{\gamma
}^{S_{\alpha }}(G^{I_{0}^{\alpha }})=P_{<\left( 1+\gamma \right) }$. We do
this by induction on $\gamma <\alpha $. For $\gamma =0$, we have that $%
S_{\alpha }$ is dense in $G^{I_{0}^{\alpha }}=P_{0}$ by Lemma \ref%
{Lemma:dense}, and hence $s_{0}^{S_{\alpha }}(G^{I_{0}^{\alpha }})=P_{0}$.
Suppose that the conclusion holds for every $\delta <\gamma $. If $\gamma $
is limit, then we have that%
\begin{equation*}
s_{\gamma }^{S_{\alpha }}(G^{I_{0}^{\alpha }})=\bigcap_{\delta <\gamma
}s_{\delta +1}^{S_{\alpha }}(G^{I_{0}^{\alpha }})=\bigcap_{\delta <\gamma
}P_{1+\delta }=P_{<\gamma }=P_{<\left( 1+\gamma \right) }\text{.}
\end{equation*}%
Suppose that $\gamma =\delta +1$ is a successor ordinal. Then we have that,
by the inductive hypothesis%
\begin{equation*}
s_{\gamma }^{S_{\alpha }}(G^{I_{0}^{\alpha }})=s_{\delta +1}^{S_{\alpha
}}(G^{I_{0}^{\alpha }})=s_{1}^{S_{\alpha }}(s_{\delta }^{S_{\alpha
}}(G^{I_{0}^{\alpha }}))=s_{1}^{S_{\alpha }}\left( P_{<(1+\delta )}\right) 
\text{.}
\end{equation*}%
Thus, it remains to prove that%
\begin{equation*}
s_{1}^{S_{\alpha }}\left( P_{<\left( 1+\delta \right) }\right) =P_{1+\delta }%
\text{.}
\end{equation*}%
Notice that $P_{1+\delta }$ is a $\boldsymbol{\Pi }_{3}^{0}$ subspace of $%
P_{<\left( 1+\delta \right) }$. Thus, the conclusion follows from Lemma \ref%
{Lemma:characterize-solecki}, in view of Lemma \ref{Lemma:dense} and Lemma %
\ref{Lemma:closure-neighborhood}.
\end{proof}

\begin{lemma}
\label{Lemma:proper}For every $\gamma \leq \alpha $, there exists a
continuous group homomorphism $\Phi :G^{I_{<\gamma }^{\alpha }}\rightarrow
P_{<\gamma }$ such that $\Phi \left( z\right) \left( k,n;\gamma _{k},\beta
\right) =z\left( k,n;\gamma _{k},\beta \right) $ for every $z\in
G^{I_{<\gamma }^{\alpha }}$, $\left( n;\beta \right) \in I_{\gamma }^{\alpha
}$, and $k\in \mathbb{N}$.
\end{lemma}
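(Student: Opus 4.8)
The plan is to define $\Phi$ directly by prescribing $\Phi(z)$ on the minimal elements $I_0^\alpha$ of $I^\alpha$ and then to recover the required derived values by an induction over the levels. Recall the tree structure on $I^\alpha$: an element $(n;\beta)\in I_\delta^\alpha$ with $\delta\geq 1$ has the immediate successors $(k,n;\delta_k,\beta)\in I_{\delta_k}^\alpha$, $k\in\mathbb N$, each of which is $\leq(n;\beta)$, and the minimal elements are exactly those of $I_0^\alpha$. We may assume $\gamma\geq 1$, the case $\gamma=0$ being trivial since then $G^{I_{<\gamma}^\alpha}$ is the trivial group. Put
\[
\mathcal F:=\left\{(k,n;\gamma_k,\beta):(n;\beta)\in I_\gamma^\alpha,\ k\in\mathbb N\right\}\subseteq I_{<\gamma}^\alpha,
\]
the set of immediate successors of the level-$\gamma$ elements, and define, for $z\in G^{I_{<\gamma}^\alpha}$ and $w\in I_0^\alpha$,
\[
\Phi(z)(w):=\begin{cases}z(v)&\text{if }w\leq v\text{ for some }v\in\mathcal F,\\ 1_G&\text{otherwise.}\end{cases}
\]

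First I would record the combinatorial facts about $I^\alpha$ the argument relies on; I expect verifying these to be the \emph{main obstacle}, everything else being formal. Namely: (a) two distinct elements of $\mathcal F$ are $\leq$-incomparable, so the element $v$ above is unique whenever it exists; and (b) for $\delta<\gamma$ no element of $I_\delta^\alpha$ is strictly above an element of $\mathcal F$, and $I_\delta^\alpha\cap I_\gamma^\alpha=\varnothing$ (the leading ordinal coordinate of a tuple in $I_\delta^\alpha$ equals $\delta$). Both follow by unwinding the recursive definition of $I^\alpha$ together with the fact, noted before the statement, that the only pair one may prepend to a tuple in $I_\gamma^\alpha$ while staying inside $I^\alpha$ is $(m;\gamma_m)$; a consequence is that every suffix of a tuple in $I^\alpha$ again lies in $I^\alpha$, at the level given by its leading coordinate. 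Granting (a)--(b), $\Phi(z)$ is well defined, and each of its coordinates is either a coordinate projection of $z$ or the constant $1_G$, so $\Phi\colon G^{I_{<\gamma}^\alpha}\to G^{I_0^\alpha}$ is a continuous group homomorphism.

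Next I would prove by induction on $\sigma<\gamma$ that $\Phi(z)\in P_\sigma$ and that, for $(m;\tau)\in I_\sigma^\alpha$, the derived value equals $z(v)$ if $(m;\tau)\leq v$ for the (unique) $v\in\mathcal F$, and equals $1_G$ otherwise. The base case $\sigma=0$ is the definition. For $\sigma\geq 1$ consider $(m;\tau)\in I_\sigma^\alpha$ and its successors $(k,m;\sigma_k,\tau)\in I_{\sigma_k}^\alpha$, each $\leq(m;\tau)$. If $(m;\tau)\leq v\in\mathcal F$ then every successor is $\leq v$, so by the inductive hypothesis $\Phi(z)(k,m;\sigma_k,\tau)=z(v)$ for all $k$, the defining sequence $(\Phi(z)(k,m;\sigma_k,\tau))_{k\in\mathbb N}$ is constant, and $\Phi(z)(m;\tau)=z(v)$. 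If $(m;\tau)$ lies below no element of $\mathcal F$, then by (b) the same holds of each successor: a successor with $(k,m;\sigma_k,\tau)\leq v'\in\mathcal F$ would, on comparing $\beta$-parts, force some entry of $\tau$ to equal $\gamma$, whence a suffix of $(m;\tau)$ would be a level-$\gamma$ tuple and $(m;\tau)$ would after all lie below an element of $\mathcal F$ (the possibility $(k,m;\sigma_k,\tau)=v'$ being excluded by $I_{\sigma_k}^\alpha\cap I_\gamma^\alpha=\varnothing$). Hence by the inductive hypothesis all successor values are $1_G$ and $\Phi(z)(m;\tau)=1_G$. In both cases the defining sequences converge, so $\Phi(z)\in P_\sigma$, and the induction is complete.

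Granting this, $\Phi(z)\in\bigcap_{\sigma<\gamma}P_\sigma=P_{<\gamma}$, and applying the statement just proved to the element $v=(k,n;\gamma_k,\beta)\in\mathcal F$ itself (which satisfies $v\leq v$) yields $\Phi(z)(k,n;\gamma_k,\beta)=z(k,n;\gamma_k,\beta)$, the identity required. It then remains to observe that $\Phi$ is continuous into $P_{<\gamma}$ equipped with its (finer) Polish group topology: it suffices to check that $z\mapsto L_\sigma^{(m;\tau)}(\Phi(z))$ is continuous for each pseudo-length function $L_\sigma^{(m;\tau)}$, $\sigma<\gamma$, generating that topology, and by the case analysis above this map equals either $z\mapsto L_G(z(v))$ for a fixed $v$ or the constant $0$, both of which are continuous.
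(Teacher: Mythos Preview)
Your proposal is correct and follows the same construction as the paper: you define $\Phi(z)$ on $I_0^\alpha$ by pulling back along the tree to the set $\mathcal F$ of immediate children of level-$\gamma$ nodes, then verify by induction on $\sigma<\gamma$ that the derived values agree with the same formula. The paper's proof states exactly this definition and leaves the induction to the reader, so you have simply filled in the details the paper omits.

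One expository remark: in the step where $(m;\tau)$ lies below no element of $\mathcal F$ and you argue the same for each child $(k,m;\sigma_k,\tau)$, the level-$\gamma$ suffix of $(m;\tau)$ you produce is not itself in $\mathcal F$ (elements of $\mathcal F$ have first $\beta$-coordinate $\gamma_k<\gamma$, not $\gamma$). What lies in $\mathcal F$ is the suffix one step longer, and for that you implicitly use the structural fact $\tau_{j-1}=(\tau_j)_{m_{j-1}}=\gamma_{m_{j-1}}$, which follows from the recursive definition of $I^\alpha$. The conclusion is correct, but you may want to make that last identification explicit.
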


\begin{proof}
For $z\in G^{I_{<\gamma }^{\alpha }}$, define $\Phi \left( z\right) :=x\in
G^{I_{0}^{\alpha }}$ by setting, for $\left( m;\tau \right) \in
I_{0}^{\alpha }$,%
\begin{equation*}
x\left( m;\tau \right) :=\left\{ 
\begin{array}{ll}
z\left( k,n;\gamma _{k},\beta \right) & \text{if }\left( m,\tau \right) \leq
\left( k,n;\gamma _{k},\beta \right) \text{ for some }k\in \mathbb{N}\text{
and }\left( n;\beta \right) \in I_{\gamma }^{\alpha }\text{;} \\ 
1_{G} & \text{otherwise.}%
\end{array}%
\right.
\end{equation*}%
It is clear that $\Phi :G^{I_{<\gamma }^{\alpha }}\rightarrow
G^{I_{0}^{\alpha }}$ is a continuous group homomorphism. One can prove by
induction on $\delta <\gamma $ that $x\in P_{\delta }$, and for $\left(
m,\tau \right) \in I_{\delta }^{\alpha }$,%
\begin{equation*}
x\left( m;\tau \right) =\left\{ 
\begin{array}{ll}
z\left( k,n;\gamma _{k},\beta \right) & \text{if }\left( m,\tau \right) \leq
z\left( k,n;\gamma _{k},\beta \right) \text{ for some }k\in \mathbb{N}\text{
and }\left( n;\beta \right) \in I_{\gamma }^{\alpha }\text{;} \\ 
1_{G} & \text{otherwise.}%
\end{array}%
\right.
\end{equation*}%
This concludes the proof.
\end{proof}

\begin{lemma}
\label{Lemma:complexity-sequences}We have that:

\begin{enumerate}
\item $\boldsymbol{\Sigma }_{2}^{0}$ is the complexity class of $\ell
_{1}\left( G,L_{G}\right) $ in $G^{\mathbb{N}}$;

\item $D(\boldsymbol{\Pi }_{2}^{0})$ is the complexity class of $\mathrm{bv}%
_{0}\left( G,L_{G}\right) $ in $G^{\mathbb{N}}$;

\item $\boldsymbol{\Pi }_{3}^{0}$ is the complexity class of $\mathrm{c}%
\left( G\right) $ in $G^{\mathbb{N}}$.
\end{enumerate}
\end{lemma}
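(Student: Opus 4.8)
The plan is to establish, for each of the three subgroups, a membership statement placing it in the asserted Borel class together with a hardness reduction ruling out the dual class. Throughout I would write $d(g,h)=L_{G}(g^{-1}h)$ for the left‑invariant metric on $G$ (complete, as $G$ is CLI), and observe at the outset that each of $\ell_{1}(G,L_{G})$, $\mathrm{bv}_{0}(G,L_{G})$, $\mathrm{c}(G)$ contains every eventually‑trivial sequence, hence is a \emph{dense} Polishable subgroup of $G^{\mathbb{N}}$; since each is also proper, none of them is closed. Using that $G$ is non‑discrete (Remark \ref{Remark:nondiscrete}) and non‑trivial, fix $a\in G\setminus\{1_{G}\}$ and a sequence $(a_{k})_{k\in\omega}$ with $a_{k}\neq 1_{G}$, $a_{k}\to 1_{G}$ and $\sum_{k}L_{G}(a_{k})<\infty$.

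For $\ell_{1}(G,L_{G})$: it equals $\bigcup_{M}\bigcap_{m}\{(g_{n}):\sum_{n\le m}L_{G}(g_{n})\le M\}$, a countable union of closed sets, hence $\boldsymbol{\Sigma}_{2}^{0}$; and it is $\boldsymbol{\Sigma}_{2}^{0}$‑hard, since $z\mapsto(g_{n})$ with $g_{n}=a$ when $z(n)=1$ and $g_{n}=1_{G}$ otherwise is a continuous map $2^{\mathbb{N}}\to G^{\mathbb{N}}$ sending $z$ into $\ell_{1}(G,L_{G})$ exactly when $z$ has finitely many $1$'s, a $\boldsymbol{\Sigma}_{2}^{0}$‑complete condition \cite[Section~23]{kechris_classical_1995}. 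Hence its complexity class is $\boldsymbol{\Sigma}_{2}^{0}$. Likewise, since a sequence in $G$ converges iff it is $d$‑Cauchy, $\mathrm{c}(G)=\bigcap_{k}\bigcup_{N}\bigcap_{m,n\ge N}\{(g_{j}):d(g_{m},g_{n})\le 1/k\}$ is $\boldsymbol{\Pi}_{3}^{0}$; and it is $\boldsymbol{\Pi}_{3}^{0}$‑hard via $x\mapsto(g_{j})$ with $g_{2j}=a_{x(j)}$ and $g_{2j+1}=1_{G}$, because this $(g_{j})$ converges iff $(a_{x(j)})_{j}$ converges to $1_{G}$, which (the $a_{k}$ being distinct from $1_{G}$) happens exactly when $x(n)\to\infty$, a $\boldsymbol{\Pi}_{3}^{0}$‑complete condition on $x\in\mathbb{N}^{\mathbb{N}}$ \cite[Section~23]{kechris_classical_1995}. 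So $\mathrm{c}(G)$ is not $\boldsymbol{\Sigma}_{3}^{0}$, and its complexity class is $\boldsymbol{\Pi}_{3}^{0}$.

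The delicate point is the upper bound for $\mathrm{bv}_{0}(G,L_{G})$, since the naive description ``bounded variation and convergent to $1_{G}$'' only yields $\boldsymbol{\Pi}_{3}^{0}$. Let $S=\{(g_{j})\in G^{\mathbb{N}}:\sum_{j}d(g_{j},g_{j+1})<\infty\}$, which is $\boldsymbol{\Sigma}_{2}^{0}$ as above, and for $(g_{j})\in G^{\mathbb{N}}$ put $R_{j}=\sum_{i\ge j}d(g_{i},g_{i+1})\in[0,+\infty]$. I would prove
\[
\mathrm{bv}_{0}(G,L_{G})=S\cap P,\qquad P:=\bigcap_{j}\{(g_{i})\in G^{\mathbb{N}}:L_{G}(g_{j})\le R_{j}\}.
\]
Indeed, if $(g_{j})$ has bounded variation and $g_{j}\to 1_{G}$, then $L_{G}(g_{j})=d(1_{G},g_{j})=\lim_{m}d(g_{j},g_{m})\le R_{j}$ by the triangle inequality; conversely, on $S$ one has $R_{j}\to 0$, so the inequalities $L_{G}(g_{j})\le R_{j}$ force $g_{j}\to 1_{G}$. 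Now $(g_{i})\mapsto L_{G}(g_{j})$ is continuous while $R_{j}=\sup_{m}\sum_{i=j}^{m}d(g_{i},g_{i+1})$ is lower semicontinuous, so $(g_{i})\mapsto L_{G}(g_{j})-R_{j}$ is upper semicontinuous; hence $\{L_{G}(g_{j})\le R_{j}\}=\bigcap_{k}\{L_{G}(g_{j})-R_{j}<1/k\}$ is $G_{\delta}$, and $P$, a countable intersection of $G_{\delta}$ sets, is $\boldsymbol{\Pi}_{2}^{0}$. Thus $\mathrm{bv}_{0}(G,L_{G})=P\setminus(P\cap S^{c})$ with $P$ and $P\cap S^{c}$ both $\boldsymbol{\Pi}_{2}^{0}$, so it is $D(\boldsymbol{\Pi}_{2}^{0})$.

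For the matching lower bound, $\mathrm{bv}_{0}(G,L_{G})$ is not closed (dense and proper), and it is $\boldsymbol{\Pi}_{2}^{0}$‑hard: the continuous map sending $z\in 2^{\mathbb{N}}$ to $(g_{j})$ with $g_{j}=a_{c_{j}}$, where $c_{j}=|\{n<j:z(n)=1\}|$, produces a sequence of total variation at most $\sum_{k}d(a_{k},a_{k+1})\le 2\sum_{k}L_{G}(a_{k})<\infty$, with $g_{j}\to 1_{G}$ iff the non‑decreasing sequence $(c_{j})$ is unbounded, i.e.\ iff $z$ has infinitely many $1$'s — a $\boldsymbol{\Pi}_{2}^{0}$‑complete condition. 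Hence $\mathrm{bv}_{0}(G,L_{G})$ is neither $\boldsymbol{\Pi}_{2}^{0}$ nor $\boldsymbol{\Sigma}_{2}^{0}$, so by Lemma \ref{Lemma:difference} it is not $\check{D}(\boldsymbol{\Pi}_{2}^{0})$; together with the previous paragraph its complexity class is exactly $D(\boldsymbol{\Pi}_{2}^{0})$. The one step where real work is needed is the semicontinuity argument realizing $\mathrm{bv}_{0}(G,L_{G})$ as the intersection of the $F_{\sigma}$ set $S$ and the $G_{\delta}$ set $P$; the remaining ingredients are routine Borel computations and reductions from standard complete sets, available precisely because $G$ is non‑discrete.
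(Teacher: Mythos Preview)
Your argument is correct, but it diverges from the paper's proof in parts (2) and (3).

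For the $D(\boldsymbol{\Pi}_{2}^{0})$ upper bound on $\mathrm{bv}_{0}(G,L_{G})$, the paper avoids the semicontinuity computation entirely: since a sequence of bounded variation is Cauchy and hence converges (as $G$ is CLI), it vanishes iff some subsequence converges to $1_{G}$, i.e.\ iff $\forall\varepsilon>0\ \forall n_{0}\ \exists n\geq n_{0}\ L_{G}(g_{n})<\varepsilon$, which is visibly $\boldsymbol{\Pi}_{2}^{0}$. Your description $S\cap P$ with $P=\bigcap_{j}\{L_{G}(g_{j})\le R_{j}\}$ is also valid and gives the same bound, but the paper's observation is shorter. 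For the lower bound in (2), the paper does not build a Wadge reduction but runs a Baire category argument: assuming $\mathrm{bv}_{0}=\bigcup_{k}F_{k}$ with $F_{k}$ closed, some $F_{k}$ contains a ball in the Polish topology of $\mathrm{bv}_{0}$, and then a sequence of truncated constant sequences in that ball converges in $G^{\mathbb{N}}$ to a nonvanishing element, contradicting closedness.

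For part (3), the paper routes through potential complexity: it shows $E_{0}^{\mathbb{N}}$ Borel reduces to the coset relation of $\mathrm{c}(G)$, and then invokes Theorem~\ref{Theorem:Polishable-complexity} and Lemma~\ref{Lemma:sigma-complexity} to conclude $\mathrm{c}(G)$ is not $\boldsymbol{\Sigma}_{3}^{0}$. Your direct $\boldsymbol{\Pi}_{3}^{0}$-hardness reduction from $\{x\in\mathbb{N}^{\mathbb{N}}:x(n)\to\infty\}$ is more elementary and self-contained; the paper's argument, on the other hand, meshes with the equivalence-relation machinery used elsewhere in the paper.

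One small point: when you conclude that $\mathrm{bv}_{0}(G,L_{G})$ is ``neither $\boldsymbol{\Pi}_{2}^{0}$ nor $\boldsymbol{\Sigma}_{2}^{0}$'', the $\boldsymbol{\Pi}_{2}^{0}$-hardness you prove only rules out $\boldsymbol{\Sigma}_{2}^{0}$; to rule out $\boldsymbol{\Pi}_{2}^{0}$ from ``not closed'' you need Proposition~\ref{Proposition:FS-reduce}. (The paper's proof of (1) uses the same implicit step, so this is a matter of citation rather than a gap.)
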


\begin{proof}
(1): It is clear that $\ell _{1}\left( G,L_{G}\right) $ is $\boldsymbol{%
\Sigma }_{2}^{0}$ in $G^{\mathbb{N}}$. Since $\ell _{1}\left( G,L_{G}\right) 
$ is a dense, proper subgroup of $G^{\mathbb{N}}$, it is not closed.

(2): We have that $\left( g_{n}\right) _{n\in \mathbb{N}}\in \mathrm{bv}%
_{0}\left( G,L_{G}\right) $ if and only if%
\begin{equation*}
\sum_{n\in \mathbb{N}}L_{G}\left( g_{n+1}^{-1}g_{n}\right) <+\infty
\end{equation*}%
and for all $\varepsilon >0$ and $n_{0}\in \mathbb{N}$ there exists $n\geq
n_{0}$ such that $L_{G}\left( g_{n}\right) <\varepsilon $. This shows that $%
\mathrm{bv}_{0}\left( G,L_{G}\right) $ is $D(\boldsymbol{\Pi }_{2}^{0})$ in $%
G^{\mathbb{N}}$. It remains to prove that it is not $\boldsymbol{\Sigma }%
_{2}^{0}$. Suppose by contradiction that%
\begin{equation*}
\mathrm{bv}_{0}\left( G,L_{G}\right) =\bigcup_{k\in \mathbb{N}}F_{k}
\end{equation*}%
where $F_{k}\subseteq G^{\mathbb{N}}$ is closed for every $k\in \mathbb{N}$.
By the Baire Category Theorem, we can assume without loss of generality that 
$F_{0}$ contains a neighborhood of the identity. Thus, there exists $%
\varepsilon >0$ such that%
\begin{equation*}
\left\{ \left( g_{n}\right) _{n\in \mathbb{N}}\in \mathrm{bv}_{0}\left(
G,L_{G}\right) :\sum_{n\in \mathbb{N}}L_{G}\left( g_{n+1}^{-1}g_{n}\right)
<\varepsilon \text{ and }\mathrm{\mathrm{sup}}_{n\in \mathbb{N}}L_{G}\left(
g_{n}\right) <\varepsilon \right\} \subseteq F_{0}\text{.}
\end{equation*}%
Since we are assuming that $G$ is not discrete---see Remark \ref%
{Remark:nondiscrete}---there exists $g\in G$ such that $0<L_{G}(g)<%
\varepsilon $. Define for $N\in \mathbb{N}$, $x^{\left( N\right) }\in 
\mathrm{bv}_{0}\left( G,L_{G}\right) $ by setting%
\begin{equation*}
x_{k}^{\left( N\right) }=\left\{ 
\begin{array}{cc}
g & \text{if }k\leq N\text{;} \\ 
1_{G} & \text{otherwise.}%
\end{array}%
\right.
\end{equation*}%
Then we have that $x^{\left( N\right) }\in F_{0}$ for every $N\in \mathbb{N}$%
. The sequence $\left( x^{\left( N\right) }\right) _{N\in \mathbb{N}}$
converges in $G^{\mathbb{N}}$ to the element $x\in G^{\mathbb{N}}$ that is
the sequence constantly equal to $g$. Since $F_{0}$ is closed in $G^{\mathbb{%
N}}$, we have that $x\in \mathrm{bv}_{0}\left( G,L_{G}\right) $, which is a
contradiction to the fact that $L_{G}\left( g\right) >0$.

(3): By definition, we have that $\mathrm{c}\left( G\right) $ is $%
\boldsymbol{\Pi }_{3}^{0}$ in $G^{\mathbb{N}}$. By Theorem \ref%
{Theorem:Polishable-complexity}, it suffices to prove that $\mathrm{c}\left(
G\right) $ is not potentially $\boldsymbol{\Sigma }_{2}^{0}$. Let $E_{0}$ be
the relation of tail equivalence in $2^{\mathbb{N}}$, and let $E_{0}^{%
\mathbb{N}}$ be the corresponding product equivalence relation on $\left( 2^{%
\mathbb{N}}\right) ^{\mathbb{N}}=2^{\mathbb{N}\times \mathbb{N}}$.\ Then we
have that $\boldsymbol{\Pi }_{3}^{0}$ is the potential complexity class of $%
E_{0}^{\mathbb{N}}$, for example by Lemma \ref{Lemma:product2} and Theorem %
\ref{Theorem:Polishable-complexity}.

Thus, it suffices to define a Borel function $2^{\mathbb{N}\times \mathbb{N}%
}\rightarrow G^{\mathbb{N}}$ that is a Borel reduction from $E_{0}^{\mathbb{N%
}}$ to the coset relation of $\mathrm{c}\left( G\right) $ inside $G^{\mathbb{%
N}}$. We argue as in \cite[Lemma 8.5.3]{gao_invariant_2009}. Fix a bijection 
$\left\langle \cdot ,\cdot \right\rangle :\mathbb{N}\times \mathbb{N}%
\rightarrow \omega $ such that, if $n\leq n^{\prime }$ and $m\leq m^{\prime
} $, then $\left\langle n,m\right\rangle \leq \left\langle n^{\prime
},m^{\prime }\right\rangle $. Fix also a sequence $\left( g_{n}\right)
_{n\in \mathbb{N}}$ in $G$ such that $0<L_{G}\left( g_{n}\right) <2^{-(n+1)}$
for every $n\in \mathbb{N}$. Define $\Xi :2^{\mathbb{N}\times \mathbb{N}%
}\rightarrow G^{\omega }$, $\varphi \mapsto a$ by setting%
\begin{equation*}
a_{\left\langle n,m\right\rangle }=\left\{ 
\begin{array}{cc}
g_{n} & \varphi \left( n,m\right) =1\text{;} \\ 
1_{G} & \varphi \left( n,m\right) =0\text{.}%
\end{array}%
\right.
\end{equation*}%
Fix $\varphi ,\psi \in 2^{\mathbb{N}\times \mathbb{N}}$. Define $\Xi \left(
\varphi \right) =a$ and $\Xi \left( \psi \right) =b$.

Suppose that $\varphi E_{0}^{\mathbb{N}}\psi $. Thus we have that for every $%
n\in \mathbb{N}$ there exists $M_{n}\in \mathbb{N}$ such that $\varphi
\left( n,m\right) =\psi \left( n,m\right) $ for $m\geq M_{n}$. Fix $%
\varepsilon >0$ and fix $N\in \mathbb{N}$ such that $2^{-N}<\varepsilon $.
Define then%
\begin{equation*}
M=\max \left\{ M_{n}:n<N\right\} \text{.}
\end{equation*}%
We claim that for $k\geq \left\langle N,M\right\rangle $ we have that $%
L_{G}\left( a_{k}^{-1}b_{k}\right) <\varepsilon $. Indeed, suppose that $%
k\geq \left\langle N,M\right\rangle $. Then $k=\left\langle n,m\right\rangle 
$ for some $n,m\in \mathbb{N}$. If $n\geq N$ then we have that%
\begin{eqnarray*}
L_{G}\left( a_{k}^{-1}b_{k}\right) &\leq &L_{G}\left( a_{k}\right)
+L_{G}\left( b_{k}\right) \\
&\leq &2L_{G}\left( g_{n}\right) <2^{-n}\leq 2^{-N}<\varepsilon \text{.}
\end{eqnarray*}%
If $n<N$ then we must have that $m\geq M\geq M_{n}$, and hence $\varphi
\left( n,m\right) =\psi \left( n,m\right) $ and $a_{k}=b_{k}$. This shows
that $a^{-1}b\in \mathrm{c}\left( G\right) $.

Conversely, suppose that $a^{-1}b\in \mathrm{c}\left( G\right) $. Fix $%
n_{0}\in \mathbb{N}$. Then we have that there exists $k_{0}\in \mathbb{N}$
such that for $k\geq k_{0}$, $L_{G}\left( a_{k}^{-1}b_{k}\right)
<L_{G}\left( g_{n_{0}}\right) $. Thus, for $k\geq k_{0}$, if $k=\left\langle
n_{0},m\right\rangle $ for some $m\in \mathbb{N}$, we must have $a_{k}=b_{k}$
and $\varphi \left( n_{0},m\right) =\psi \left( n_{0},m\right) $. Thus, if $%
m_{0}\in \mathbb{N}$ is such that $\left\langle n_{0},m_{0}\right\rangle
\geq k_{0}$, we must have that $\varphi \left( n_{0},m\right) =\psi \left(
n_{0},m\right) $ for all $m\geq m_{0}$. As this holds for every $n_{0}\in 
\mathbb{N}$, $\varphi E_{0}^{\mathbb{N}}\psi $, concluding the proof.
\end{proof}

\begin{corollary}
\label{Corollary:basic-complexity}For every $\gamma <\alpha $, $P_{\gamma }$
is a proper subgroup of $P_{<\gamma }$. The complexity class of $S_{\alpha }$%
, $D_{\alpha }$, $P_{\alpha }$, respectively, inside $P_{<\alpha }$ is $%
\boldsymbol{\Sigma }_{2}^{0}$, $D(\boldsymbol{\Pi }_{2}^{0})$, and $%
\boldsymbol{\Pi }_{3}^{0}$, respectively.
\end{corollary}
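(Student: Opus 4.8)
The plan is to transfer the three complexity computations of Lemma~\ref{Lemma:complexity-sequences} across a pair of continuous homomorphisms $\Psi\colon P_{<\alpha}\to G^{\mathbb{N}}$ and $\Theta\colon G^{\mathbb{N}}\to P_{<\alpha}$ with $\Psi\circ\Theta=\mathrm{id}_{G^{\mathbb{N}}}$, which together force the complexity classes of $S_{\alpha}$, $D_{\alpha}$, $P_{\alpha}$ inside $P_{<\alpha}$ to coincide with those of $\ell_{1}(G,L_{G})$, $\mathrm{bv}_{0}(G,L_{G})$, $\mathrm{c}(G)$ inside $G^{\mathbb{N}}$.

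First I would check that the \emph{evaluation map} $\Psi\colon x\mapsto\boldsymbol{x}(\alpha)$ is a continuous group homomorphism $P_{<\alpha}\to G^{\mathbb{N}}$: each coordinate $x\mapsto x(k;\alpha_{k})$ is a homomorphism defined on $P_{\alpha_{k}}\supseteq P_{<\alpha}$ (an iterated pointwise limit of coordinate projections) and is continuous at the identity, since the pseudo-length function $L_{\alpha_{k}}^{(k;\alpha_{k})}$ appearing in the definition of the topology of $P_{\alpha_{k}}$ bounds $L_{G}(x(k;\alpha_{k}))$ from above; hence it is continuous, and so is $\Psi$. Since $S_{\alpha}=\Psi^{-1}(\ell_{1}(G,L_{G}))$, $D_{\alpha}=\Psi^{-1}(\mathrm{bv}_{0}(G,L_{G}))$ and $P_{\alpha}=\Psi^{-1}(\mathrm{c}(G))$ by definition, Lemma~\ref{Lemma:complexity-sequences} together with closure of complexity classes under continuous preimages immediately gives $S_{\alpha}\in\boldsymbol{\Sigma}_{2}^{0}(P_{<\alpha})$, $D_{\alpha}\in D(\boldsymbol{\Pi}_{2}^{0})(P_{<\alpha})$ and $P_{\alpha}\in\boldsymbol{\Pi}_{3}^{0}(P_{<\alpha})$.

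Next I would produce the section $\Theta$. Lemma~\ref{Lemma:proper} with $\gamma=\alpha$ gives a continuous homomorphism $\Phi\colon G^{I_{<\alpha}^{\alpha}}\to P_{<\alpha}$ with $\Phi(z)(k;\alpha_{k})=z(k;\alpha_{k})$ for all $k$ (the case $(n;\beta)=(\varnothing;\varnothing)\in I_{\alpha}^{\alpha}$). Composing with the coordinate embedding $G^{\mathbb{N}}\to G^{I_{<\alpha}^{\alpha}}$ sending $(w_{k})_{k}$ to the assignment taking value $w_{k}$ at $(k;\alpha_{k})\in I_{\alpha_{k}}^{\alpha}\subseteq I_{<\alpha}^{\alpha}$ and $1_{G}$ elsewhere, I obtain a continuous homomorphism $\Theta\colon G^{\mathbb{N}}\to P_{<\alpha}$ with $\boldsymbol{\Theta(w)}(\alpha)=w$, i.e.\ $\Psi\circ\Theta=\mathrm{id}_{G^{\mathbb{N}}}$. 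Hence $\Theta^{-1}(S_{\alpha})=\ell_{1}(G,L_{G})$, $\Theta^{-1}(D_{\alpha})=\mathrm{bv}_{0}(G,L_{G})$ and $\Theta^{-1}(P_{\alpha})=\mathrm{c}(G)$; so if $S_{\alpha}$ were $\boldsymbol{\Pi}_{2}^{0}$ in $P_{<\alpha}$, or $D_{\alpha}$ were $\check{D}(\boldsymbol{\Pi}_{2}^{0})$ in $P_{<\alpha}$, or $P_{\alpha}$ were $\boldsymbol{\Sigma}_{3}^{0}$ in $P_{<\alpha}$, the corresponding preimage would belong to the dual class, contradicting Lemma~\ref{Lemma:complexity-sequences}. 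This pins down the three complexity classes.

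For the properness of $P_{\gamma}$ in $P_{<\gamma}$ for $1\le\gamma<\alpha$ I would use the same device one level down: fixing $(n_{0};\beta_{0})\in I_{\gamma}^{\alpha}$ and applying Lemma~\ref{Lemma:proper}, the oscillating sequence $(g,1_{G},g,1_{G},\dots)$ with $g\neq 1_{G}$, placed at the positions $(k,n_{0};\gamma_{k},\beta_{0})\in I_{\gamma_{k}}^{\alpha}\subseteq I_{<\gamma}^{\alpha}$ and $1_{G}$ elsewhere, yields an element of $P_{<\gamma}$ whose distinguished sequence $\boldsymbol{x}(n_{0};\beta_{0})$ fails to converge in $G$, hence an element of $P_{<\gamma}\setminus P_{\gamma}$. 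Given Lemma~\ref{Lemma:proper} and Lemma~\ref{Lemma:complexity-sequences} the argument is short; the only steps requiring care are the routine verifications that $\Psi$ is continuous and that the indicated positions genuinely lie in $I_{<\alpha}^{\alpha}$, respectively $I_{<\gamma}^{\alpha}$ (with $I_{\gamma}^{\alpha}\neq\varnothing$ for $\gamma<\alpha$), and are pairwise distinct — all immediate from the definitions in this section.
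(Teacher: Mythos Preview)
Your argument is correct and follows essentially the same route as the paper: both use the evaluation map $\Psi\colon P_{<\alpha}\to G^{\mathbb{N}}$, $x\mapsto\boldsymbol{x}(\alpha)$, to pull the complexity of $\ell_{1}(G,L_{G})$, $\mathrm{bv}_{0}(G,L_{G})$, $\mathrm{c}(G)$ back to upper bounds on $S_{\alpha},D_{\alpha},P_{\alpha}$, and both use the map furnished by Lemma~\ref{Lemma:proper} (at $\gamma=\alpha$) to obtain the matching lower bounds and, at a general $\gamma<\alpha$, to witness $P_{<\gamma}\setminus P_{\gamma}\neq\varnothing$. The only cosmetic difference is that the paper routes the lower-bound step through the auxiliary set $H=\{x\in G^{I^{\alpha}}:(x(k;\alpha_{k}))_{k}\in\mathrm{c}(G)\}$, whereas you compose $\Phi$ with the obvious embedding $G^{\mathbb{N}}\hookrightarrow G^{I_{<\alpha}^{\alpha}}$ to get a section $\Theta$ with $\Psi\circ\Theta=\mathrm{id}$ directly; this is if anything slightly cleaner.
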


\begin{proof}
Fix $\gamma <\alpha $ and $\left( n;\beta \right) \in I_{\gamma }^{\alpha }$%
. By Lemma \ref{Lemma:proper} there exists $x\in P_{<\gamma }$ such that $%
\boldsymbol{x}\left( n;\beta \right) $ is not convergent. Such an $x$ does
not belong to $P_{\gamma }$, thus showing that $P_{\gamma }$ is a proper
subgroup of $P_{<\gamma }$.

We now prove the assertion about $P_{\alpha }$, as the other assertions are
proved in a similar fashion. Recall that $I_{\alpha }^{\alpha }=\left\{
\left( \varnothing ;\varnothing \right) \right\} $. Define 
\begin{equation*}
H=\left\{ x\in G^{I^{\alpha }}:\left( x\left( k;\alpha _{k}\right) \right)
_{k\in \mathbb{N}}\in \mathrm{c}\left( G\right) \right\} \text{.}
\end{equation*}%
By Lemma \ref{Lemma:complexity-sequences} we have that $\boldsymbol{\Pi }%
_{3}^{0}$ is the complexity class of $H$ in $G^{I^{\alpha }}$ and of $%
\mathrm{c}\left( G\right) $ in $G^{\mathbb{N}}$.

By Lemma \ref{Lemma:proper} there exists a continuous group homomorphism $%
\Phi :G^{I^{\alpha }}\rightarrow P_{<\alpha }$ such that $\Phi \left(
x\right) \left( k;\alpha _{k}\right) =x\left( k;\alpha _{k}\right) $ for
every $k\in \mathbb{N}$, and hence $\Phi ^{-1}\left( P_{\alpha }\right) =H$.
Similarly, the function $\Psi :P_{<\alpha }\rightarrow G^{\mathbb{N}}$, $%
x\mapsto \boldsymbol{x}\left( \alpha \right) $ is a continuous group
homomorphism such that $\Psi ^{-1}\left( \mathrm{c}\left( G\right) \right)
=P_{\alpha }$. Thus, $\boldsymbol{\Pi }_{3}^{0}$ is the complexity class of $%
P_{\alpha }$ in $P_{<\alpha }$.
\end{proof}

We are now in position to present the proof of Theorem \ref%
{Theorem:complexityP}.

\begin{proof}[Proof of Theorem \protect\ref{Theorem:complexityP}]
By the first assertion in Corollary \ref{Corollary:basic-complexity} and
Proposition \ref{Proposition:solecki-subgroups-P0}, we have that $S_{\alpha
} $, $D_{\alpha }$, $P_{\alpha }$ have Solecki rank $\alpha +1$ in $P_{0}$,
and $P_{<\alpha }$ has Solecki rank $\alpha $ in $P_{0}$ if $\alpha $ is
limit. The conclusion now follows by applying Theorem \ref%
{Theorem:complexity} and the second assertion in Corollary \ref%
{Corollary:basic-complexity}.
\end{proof}

Recall that a (pseudo-)ultralength function on a group $H$ is a
(pseudo-)length function $L$ such that $L\left( hh^{\prime }\right) \leq
\max \left\{ L\left( h\right) ,L\left( h^{\prime }\right) \right\} $ for $%
h,h^{\prime }\in H$. A Polish group $G$ is non-Archimedean if and only if it
admits a compatible ultralength function \cite[Theorem 2.4.1]%
{gao_invariant_2009}. In a similar fashion as above, one can prove the
following statement; see also \cite[Section 5]{hjorth_borel_1998}.

\begin{theorem}
Let $\Gamma $ be one of the possible complexity classes of \emph{%
non-Archimedean }Polishable subgroups from Theorem \ref%
{Corollary:complexity-nonArchimedean}. Suppose that $G$ is a countable
discrete group. Then there exists a \emph{non-Archimedean} CLI Polishable
subgroup of $G^{\mathbb{N}}$ whose complexity class is $\Gamma $.
\end{theorem}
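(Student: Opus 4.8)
The plan is to repeat the construction of Section~\ref{Section:existence} and the proof of Theorem~\ref{Theorem:complexityP}, subject to two modifications: all the (pseudo-)length functions involved are taken to be (pseudo-)ultralength functions, so that the Polish groups produced are non-Archimedean, and the three building blocks $\ell_1(G,L_G)$, $\mathrm{bv}_0(G,L_G)$, $\mathrm{c}(G)$ are replaced by non-Archimedean Polishable subgroups of the right complexity. We may and do assume that $G$ is nontrivial. For $\Gamma=\boldsymbol{\Pi}_1^0$, the subgroup $\{x\in G^{\mathbb{N}}:x_{2i}=x_{2i+1}\text{ for all }i\in\mathbb{N}\}$ is a closed, non-Archimedean, CLI subgroup of $G^{\mathbb{N}}$ of uncountable index, hence not open, so its complexity class is $\boldsymbol{\Pi}_1^0$; thus we may assume $\Gamma\neq\boldsymbol{\Pi}_1^0$, so that the full recursion of Section~\ref{Section:existence} is carried out.

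Fix on $G$ the discrete ultralength function $L_G$, so $L_G(g)=1$ for $g\neq 1_G$. Running the recursion of Section~\ref{Section:existence} with this $L_G$ produces the CLI Polishable subgroups $P_\gamma\subseteq G^{I_0^\alpha}$, and since the pseudo-length functions $L_0^{(n;\beta)}$ and $L_\gamma^{(n;\beta)}(x)=L_\infty(\boldsymbol{x}(n;\beta))$ that define their topologies are pseudo-ultralength functions (a supremum of ultrametric inequalities is again an ultrametric inequality), every $P_\gamma$ is non-Archimedean, and so is every Polishable subgroup of $G^{I_0^\alpha}$ whose Polish topology is induced by a family of pseudo-ultralength functions. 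Fixing a partition $\mathbb{N}=\bigsqcup_{k\in\mathbb{N}}B_k$ of $\mathbb{N}$ into infinite sets, we replace: $\ell_1(G,L_G)$ and $\mathrm{bv}_0(G,L_G)$ both by $c_{00}(G)\subseteq G^{\mathbb{N}}$, the countable discrete group of finitely supported sequences; and $\mathrm{c}(G)$ by $\{x\in G^{\mathbb{N}}:x|_{B_k}\text{ is finitely supported for every }k\}=\prod_{k}c_{00}(G^{B_k})$, a countable product of countable discrete groups. Thus $S_\alpha=D_\alpha$ becomes the group of $x\in P_{<\alpha}$ with $\boldsymbol{x}(\alpha)\in c_{00}(G)$, and $P_\alpha$ the group of $x\in P_{<\alpha}$ with $\boldsymbol{x}(\alpha)\in\prod_{k}c_{00}(G^{B_k})$; both are non-Archimedean, since $c_{00}(G)$ is discrete and $\prod_k c_{00}(G^{B_k})$ carries the compatible pseudo-ultralength $x\mapsto 2^{-\min\{k\,:\,x|_{B_k}\neq 1\}}$. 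Note that in a discrete group the vanishing sequences of bounded variation are exactly the finitely supported ones, so there is no genuine $D(\boldsymbol{\Pi}_2^0)$-block, which is precisely why $D(\boldsymbol{\Pi}_{1+\lambda+1}^0)$ is absent from Theorem~\ref{Corollary:complexity-nonArchimedean}.

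With these replacements, Lemmas~\ref{Lemma:construct-element}--\ref{Lemma:proper}, Proposition~\ref{Proposition:solecki-subgroups-P0} and Corollary~\ref{Corollary:basic-complexity} go through unchanged --- the elements produced in Lemma~\ref{Lemma:construct-element} are finitely supported, so they lie in the new $S_\alpha$, and the density and closure-of-neighborhood statements (the analogues of Lemmas~\ref{Lemma:dense} and~\ref{Lemma:closure-neighborhood}) are verified as before, if anything more easily --- once one records the analogue of Lemma~\ref{Lemma:complexity-sequences}: $c_{00}(G)$ has complexity class $\boldsymbol{\Sigma}_2^0$ in $G^{\mathbb{N}}$ (it is $F_\sigma$ and, being a countable dense subgroup of the perfect Polish group $G^{\mathbb{N}}$, it is not $G_\delta$), and $\prod_k c_{00}(G^{B_k})$ has complexity class $\boldsymbol{\Pi}_3^0$ in $G^{\mathbb{N}}$ by Lemma~\ref{Lemma:product2}, each factor being $\boldsymbol{\Sigma}_2^0\subseteq\boldsymbol{\Pi}_3^0$ and not closed, hence of Solecki rank exactly $1$. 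The complexity classes are then read off from Theorem~\ref{Theorem:complexity} exactly as in the proof of Theorem~\ref{Theorem:complexityP}: $P_{<\lambda}$ (for $\lambda$ limit) realizes $\boldsymbol{\Pi}_{1+\lambda}^0$; $S_{1+\lambda}$ realizes $\boldsymbol{\Sigma}_{1+\lambda+1}^0$; $S_{1+\lambda+n+1}$ realizes $D(\boldsymbol{\Pi}_{1+\lambda+n+2}^0)$ for $n<\omega$; and $P_{1+\lambda+n}$ realizes $\boldsymbol{\Pi}_{1+\lambda+n+2}^0$ for $n<\omega$. Because the $\boldsymbol{\Sigma}_2^0$-block also fills the former $D(\boldsymbol{\Pi}_2^0)$-role, the groups $S_\gamma$ never fall into case (2c) of Theorem~\ref{Theorem:complexity}, in agreement with its exclusion for non-Archimedean $H$; together with the $\boldsymbol{\Pi}_1^0$-example this realizes every class of Theorem~\ref{Corollary:complexity-nonArchimedean}. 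The only step that is not a mechanical transcription of Section~\ref{Section:existence} is this choice of the $\boldsymbol{\Pi}_3^0$-block: the obvious non-Archimedean candidate $\mathrm{c}(G)$ (eventually constant sequences) is merely $\boldsymbol{\Sigma}_2^0$-complete when $G$ is discrete, and must be replaced by the iterated group $\prod_k c_{00}(G^{B_k})$, equivalently by $\mathrm{c}(G^{\mathbb{N}})$ after identifying $G^{\mathbb{N}}$ with $(G^{\mathbb{N}})^{\mathbb{N}}$ --- but even then the required $\boldsymbol{\Pi}_3^0$-completeness is immediate from Lemma~\ref{Lemma:product2}.
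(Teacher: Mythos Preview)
Your proof is correct and follows the same overarching plan as the paper---adapt the Section~\ref{Section:existence} construction so that everything is built from pseudo-ultralength functions---but the execution differs from the paper's in one respect worth noting.

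The paper does not modify the recursion at all: instead it first passes from the discrete $G$ to the non-discrete non-Archimedean group $H:=G^{\mathbb{N}}$ (with the ultralength $L_H((g_n))=\exp(-\min\{n:g_n\neq 1_G\})$) and then runs the construction of Section~\ref{Section:existence} \emph{verbatim} in $H^{I_0^\alpha}$. Since $H$ is not discrete, $\mathrm{c}(H)$ already has complexity class $\boldsymbol{\Pi}_3^0$ in $H^{\mathbb{N}}$ by Lemma~\ref{Lemma:complexity-sequences}, so no replacement of the top block is needed; for the $\boldsymbol{\Sigma}_2^0$ block the paper uses $\sigma(H)=\{(h_n)\in H^{\mathbb{N}}:(h_n(0))_n\text{ eventually }1_G\}$. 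By contrast you keep the recursion over the discrete $G$ at every intermediate level (where $\mathrm{c}(G)$, though only $\boldsymbol{\Sigma}_2^0$, is still $\boldsymbol{\Pi}_3^0$ and so Lemma~\ref{Lemma:characterize-solecki} still applies) and replace only the top-level block $\mathrm{c}(G)$ by $\prod_k c_{00}(G^{B_k})$. Both routes work; the paper's has the virtue of being a single substitution $G\rightsquigarrow H$ with no further bookkeeping, while yours avoids enlarging the ambient product and makes the $\boldsymbol{\Pi}_3^0$-hardness of the top block an immediate consequence of Lemma~\ref{Lemma:product2}. Your closing observation already exposes the kinship: after identifying $G^{\mathbb{N}}\cong(G^{\mathbb{N}})^{\mathbb{N}}$, your block $\prod_k c_{00}(G^{B_k})$ is $\mathrm{c}_0(G^{\mathbb{N}})$ (vanishing rather than merely convergent---a harmless slip in your write-up), i.e.\ the paper's $\mathrm{c}(H)$ up to the closed summand of constant sequences.
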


Define $H:=G^{\mathbb{N}}$. This is a non-Archimedean CLI group. The
topology on $H$ is induced by the ultralength function%
\begin{equation*}
L_{H}\left( \left( g_{n}\right) _{n\in \mathbb{N}}\right) =\exp \left( -\min
\left\{ n\in \mathbb{N}:g_{n}\neq 1_{G}\right\} \right) \text{.}
\end{equation*}%
Notice that the subgroup $\mathrm{c}\left( H\right) $ of $H^{\mathbb{N}}$
convergent sequences is a non-Archimedean CLI Polishable subgroup of $H^{%
\mathbb{N}}$ of complexity class $\boldsymbol{\Pi }_{3}^{0}$ with topology
induced by the ultralength function 
\begin{equation*}
L_{\infty }\left( \left( h_{n}\right) _{n\in \mathbb{N}}\right) =\max
\left\{ L_{H}\left( h_{n}\right) :n\in \mathbb{N}\right\} \text{.}
\end{equation*}%
The subgroup $\sigma \left( H\right) $ of $H^{\mathbb{N}}$ consisting of
sequences $\left( h_{n}\right) _{n\in \mathbb{N}}$ such that the sequence $%
\left( h_{n}\left( 0\right) \right) _{n\in \mathbb{N}}$ in $G$ is eventually
equal to $1_{G}$ is a non-Archimedean CLI Polishable subgroup of $H^{\mathbb{%
N}}$ of complexity class $\boldsymbol{\Sigma }_{2}^{0}$.

Fix $\alpha <\omega _{1}$. We define by recursion on $\gamma \leq \alpha $ a
decreasing sequence $\left( F_{\gamma }\right) _{\gamma <\alpha }$ of
non-Archimedean Polishable subgroups of $H^{I_{0}^{\alpha }}$. We also
recursively define, for $x\in F_{\gamma }$ and $\left( n;\beta \right) \in
I_{0}^{\beta }$, the values $x\left( n;\beta \right) \in H$. We set $%
F_{0}=H^{I_{0}^{\alpha }}$. If $F_{\delta }$ has been defined for every $%
\delta <\gamma $, define $F_{<\gamma }=\bigcap_{\delta <\gamma }F_{\delta }$%
, $F_{\gamma }$ to contain those $x\in F_{<\gamma }$ such that, for every $%
\left( n;\beta \right) \in I_{\gamma }^{\alpha }$, the sequence $\boldsymbol{%
x}\left( n;\beta \right) :=\left( x\left( k,n;\gamma _{k},\beta \right)
\right) _{k\in \omega }$ is convergent in $H$. For $x\in F_{\gamma }$ and $%
\left( n;\beta \right) \in I_{\gamma }^{\alpha }$, we define $x\left(
n;\beta \right) $ to be the limit of $\boldsymbol{x}\left( n;\beta \right) $%
. Then we have that the non-Archimedean Polish group topology on $F_{\gamma
} $ is induced by the restriction of the continuous pseudo-ultralength
functions on $F_{\delta }$ for $\delta <\gamma $ together with the
pseudo-ultralength function%
\begin{equation*}
L_{\gamma }^{\left( n;\beta \right) }\left( x\right) =L_{\infty }\left( 
\boldsymbol{x}\left( n;\beta \right) \right)
\end{equation*}%
for $\left( n;\beta \right) \in I_{\gamma }^{\alpha }$. This concludes the
recursive definition of the non-Archimedean Polishable subgroups $F_{\gamma
} $ of $H^{I_{0}^{\alpha }}$ for $\gamma \leq \alpha $. Notice that in
particular $F_{\alpha }$ contains the elements $x\in F_{<\alpha }$ such that 
$\boldsymbol{x}\left( \alpha \right) :=\left( x\left( n,\alpha _{n}\right)
\right) _{n\in \mathbb{N}}$ belongs to $c\left( H\right) $. Define $%
Z_{\alpha }$ to contain those elements $x\in F_{<\alpha }$ such that $%
\boldsymbol{x}\left( \alpha \right) $ belongs to $\sigma \left( H\right) $.
The same argument as above, gives the following.

\begin{theorem}
\label{Theorem:complexitynAP}Adopt the notations above. Suppose that $\alpha
=1+\lambda +n$ where $\lambda <\omega _{1}$ is either limit or zero and $%
n<\omega $.

\begin{enumerate}
\item If $n=0$ and $\lambda $ is limit, then $F_{<\lambda }$ has Solecki
rank $\lambda $ and complexity class $\boldsymbol{\Pi }_{\lambda }^{0}$ in $%
H^{I_{0}^{\lambda }}$.

\item If $n=0$, then $Z_{1+\lambda }$ and $F_{1+\lambda }$ have Solecki rank 
$\lambda +1$ in $H^{I_{0}^{\alpha }}$, and complexity class $\boldsymbol{%
\Sigma }_{1+\lambda +1}^{0}$ and $\boldsymbol{\Pi }_{1+\lambda +2}^{0}$,
respectively;

\item if $n\geq 1$, then $Z_{1+\lambda }$ and $F_{1+\lambda }$ have Solecki
rank $\lambda +n+1$ in $H^{I_{0}^{\alpha }}$, and complexity class $D(%
\boldsymbol{\Pi }_{1+\lambda +n+1}^{0})$ and $\boldsymbol{\Pi }_{1+\lambda
+n+2}^{0}$, respectively.
\end{enumerate}
\end{theorem}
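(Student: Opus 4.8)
The plan is to obtain the proof from that of Theorem \ref{Theorem:complexityP} by a routine transcription: replace the CLI Polishable subgroups $P_{\gamma },S_{\alpha },D_{\alpha }$ of $G^{I_{0}^{\alpha }}$ by the non-Archimedean ones $F_{\gamma },Z_{\alpha }$ of $H^{I_{0}^{\alpha }}$, and replace the length functions $L_{1},L_{\infty }$ by the pseudo-ultralength functions used in the definition of the $F_{\gamma }$; here $Z_{\alpha }$ plays the role of $S_{\alpha }$ and $F_{\alpha }$ the role of $P_{\alpha }$. No analogue of the bounded-variation group $D_{\alpha }$ is needed, since the difference class $D(\boldsymbol{\Pi }_{1+\lambda +1}^{0})$ of Theorem \ref{Theorem:complexity}(2c) is automatically excluded in the non-Archimedean setting (Proposition \ref{Proposition:HKL2}), while the difference classes $D(\boldsymbol{\Pi }_{1+\lambda +n+1}^{0})$ with $n\geq 1$ are produced by $Z_{\alpha }$ itself through case (2b) of Theorem \ref{Theorem:complexity}. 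First I would prove the non-Archimedean counterparts of Lemma \ref{Lemma:construct-element}, Lemma \ref{Lemma:dense}, Lemma \ref{Lemma:closure-neighborhood} and Lemma \ref{Lemma:proper}: for a finite $F\subseteq I_{\leq \gamma }^{\alpha }$ and $x\in F_{\gamma }$, the element agreeing with $x$ on $F_{\downarrow }$ and equal to $1_{G}$ elsewhere lies in $Z_{\alpha }$; hence $Z_{\alpha }$ is dense in $F_{\gamma }$ for every $\gamma <\alpha $; for every identity neighbourhood $V$ of $Z_{\alpha }$ the set $\overline{V}^{F_{<\gamma }}\cap F_{\gamma }$ contains an identity neighbourhood of $F_{\gamma }$; and there is a continuous homomorphism $\Phi :H^{I_{<\gamma }^{\alpha }}\rightarrow F_{<\gamma }$ with $\Phi (z)(k,n;\gamma _{k},\beta )=z(k,n;\gamma _{k},\beta )$. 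These arguments are the ones in the excerpt verbatim, because they use only the (ultra)length-function descriptions of the topologies and the fact that a finitely supported element of $\sigma (H)$ has arbitrarily small $L_{\infty }$-length once its support is bounded and its entries are small --- the property of $\ell _{1}(G,L_{G})$ exploited in the Archimedean argument.

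Next I would prove the non-Archimedean analogue of Proposition \ref{Proposition:solecki-subgroups-P0}, namely that for $\gamma <\alpha $
\[
s_{\gamma }^{Z_{\alpha }}(H^{I_{0}^{\alpha }})=s_{\gamma }^{F_{\alpha }}(H^{I_{0}^{\alpha }})=s_{\gamma }^{F_{<\alpha }}(H^{I_{0}^{\alpha }})=F_{<\left( 1+\gamma \right) },
\]
by induction on $\gamma $: Lemma \ref{Lemma:intersection--Polishable} handles limit stages, and at successor stages Lemma \ref{Lemma:characterize-solecki} applies because $F_{1+\delta }$ is a $\boldsymbol{\Pi }_{3}^{0}$ subgroup of $F_{<(1+\delta )}$ in which $Z_{\alpha }$ is dense and for which the closure property just established holds. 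By Lemma \ref{Lemma:non-Archimedean-Solecki} and Lemma \ref{Lemma:intersection--Polishable} every group appearing here is non-Archimedean, so the resulting examples are genuinely non-Archimedean. The base complexity computations are the ones already recorded in the text above: $\sigma (H)$, being a dense proper --- hence non-closed --- $\boldsymbol{\Sigma }_{2}^{0}$ subgroup of $H^{\mathbb{N}}$, has complexity class $\boldsymbol{\Sigma }_{2}^{0}$ by Theorem \ref{Theorem:Polishable-complexity}(1), and $\mathrm{c}(H)$ has complexity class $\boldsymbol{\Pi }_{3}^{0}$ in $H^{\mathbb{N}}$ by the Borel reduction of $E_{0}^{\mathbb{N}}$ of Lemma \ref{Lemma:complexity-sequences}(3) applied with the non-discrete CLI group $H$ in place of $G$. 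Pulling these back along $\Phi $ and along $x\mapsto \boldsymbol{x}(\alpha )$, exactly as in Corollary \ref{Corollary:basic-complexity}, shows that $F_{\gamma }$ is a proper subgroup of $F_{<\gamma }$ for each $\gamma <\alpha $, that the complexity class of $Z_{\alpha }$ inside $F_{<\alpha }$ is $\boldsymbol{\Sigma }_{2}^{0}$, and that the complexity class of $F_{\alpha }$ inside $F_{<\alpha }$ is $\boldsymbol{\Pi }_{3}^{0}$.

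Finally I would conclude exactly as in the (two-line) proof of Theorem \ref{Theorem:complexityP}: the displayed identity and the properness just obtained give the Solecki ranks asserted in the statement --- $\lambda $ for $F_{<\lambda }$ when $n=0$ and $\lambda $ is limit, and $\lambda +n+1$ for $Z_{\alpha }$ and $F_{\alpha }$ --- while Corollary \ref{Corollary:basic-complexity} (transcribed) records the complexity of $F_{\alpha }$ and $Z_{\alpha }$ inside their largest proper Solecki subgroup $F_{<\alpha }$. Applying Theorem \ref{Theorem:complexity} --- case (1) for $F_{<\lambda }$, case (2a) for $F_{\alpha }$, and cases (2d) or (2b) for $Z_{\alpha }$ according as $n=0$ or $n\geq 1$, the last using Lemma \ref{Lemma:sigma-complexity} to pass from $\boldsymbol{\Sigma }_{2}^{0}$ to $D(\boldsymbol{\Pi }_{2}^{0})$ relatively to $F_{<\alpha }$ --- then yields the complexity classes $\boldsymbol{\Pi }_{1+\lambda }^{0}$, $\boldsymbol{\Pi }_{1+\lambda +n+2}^{0}$, $\boldsymbol{\Sigma }_{1+\lambda +1}^{0}$ and $D(\boldsymbol{\Pi }_{1+\lambda +n+1}^{0})$. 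I expect the only point requiring genuine attention to be the transcription of Lemma \ref{Lemma:closure-neighborhood}: one must recheck that the bookkeeping with the finite sets $F,B,A$ and the integer $N$ still controls the relevant $L_{\infty }$-lengths once the summable coordinate-direction of $\ell _{1}(G,L_{G})$ has been replaced by the ``eventually $1_{G}$'' coordinate-direction of $\sigma (H)$; everything else is a mechanical adaptation of the Archimedean argument.
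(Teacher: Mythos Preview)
Your proposal is correct and follows exactly the paper's approach: the paper's proof of Theorem~\ref{Theorem:complexitynAP} consists of the single sentence ``The same argument as above, gives the following,'' and your outline spells out precisely this transcription of the proof of Theorem~\ref{Theorem:complexityP} to the non-Archimedean setting, with $F_\gamma$, $Z_\alpha$, $\sigma(H)$ replacing $P_\gamma$, $S_\alpha$, $\ell_1(G,L_G)$. Your identification of the one step needing care (the analogue of Lemma~\ref{Lemma:closure-neighborhood}) and your observation that no $D_\alpha$-analogue is needed are both apt; the invocation of Lemma~\ref{Lemma:sigma-complexity} in the last paragraph is harmless but unnecessary, since $\boldsymbol{\Sigma}_2^0\subseteq D(\boldsymbol{\Pi}_2^0)$ trivially.
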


\section{Fr\'{e}chetable subspaces\label{section:Frechetable}}

In this and the following section, we assume all the vector spaces to be
over the reals. Similar considerations apply to complex vector spaces.
Recall that a\emph{\ Fr\'{e}chet space} is a locally convex topological
vector space whose topology is given by a complete, translation-invariant
metric. Thus, the additive group of a separable Fr\'{e}chet space is a
Polish group. In analogy with the notion of Polishable subgroup of a Polish
group, we consider the notion of Fr\'{e}chetable subspace of a separable Fr%
\'{e}chet space.

\begin{definition}
\label{Definition:Frechetable}Suppose that $X$ is a separable Fr\'{e}chet
space, and $Y$ is a subspace of $X$.\ Then we say that $Y$ is \emph{Fr\'{e}%
chetable }if it is Borel, and there exists a separable Fr\'{e}chet space
topology on $Y$ whose open sets are Borel in $X$.
\end{definition}

This notion was considered by Saint-Raymond in \cite%
{saint-raymond_espaces_1976}: a subspace $Y$ of $X$ is Fr\'{e}chetable if
and only if \emph{it has a separable model }according to \cite[Definition 1]%
{saint-raymond_espaces_1976}. Notice that a Fr\'{e}chetable subspace of $X$
is, in particular, a Polishable subgroup of the additive group of $X$. Thus,
if it exists, the separable Fr\'{e}chet space topology on $Y$ as in
Definition \ref{Definition:Frechetable}, is unique; see also \cite[Corollary
4.38]{osborne_locally_2014}. A subspace $Y$ of a\ separable Fr\'{e}chet
space $X$ is Fr\'{e}chetable if and only if there exists a separable Fr\'{e}%
chet space $Z$ and a continuous linear map $\varphi :Z\rightarrow X$ with
image equal to $Y$ \cite[Proposition 4]{saint-raymond_espaces_1976}. If $Y$
is a Fr\'{e}chetable subspace of $X$, then the separable Fr\'{e}chet space
topology on $Y$ is the \emph{finest }locally convex topological vector space
topology on $Y$ that makes all the Borel linear functionals on $Y$
continuous \cite[Theoreme 9]{saint-raymond_espaces_1976}. Furthermore, we
have a subspace $Y$ of $X$ is Fr\'{e}chetable if and only if it is a
Polishable subgroup of the additive group of $X$, and the Polish topology on 
$Y$ has a basis of neighborhoods of zero consisting of convex, balanced
sets; see \cite[Proposition 3.33 and Corollary 3.36]{osborne_locally_2014}

\begin{lemma}
\label{Lemma:frechetable-solecki}Suppose that $X$ is a separable Fr\'{e}chet
space, and $Y$ a Fr\'{e}chetable subspace of $X$. The first Solecki subgroup 
$s_{1}^{Y}\left( X\right) $ of $X$ relative to $Y$, where $X$ and $Y$ are
regarded as additive groups, is a Fr\'{e}chetable subspace of $X$.
\end{lemma}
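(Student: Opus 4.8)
The plan is to invoke the characterization of Fréchetable subspaces recalled just above: a subspace of $X$ is Fréchetable if and only if it is a Polishable subgroup of the additive group of $X$ whose (unique) Polish topology admits a basis of neighborhoods of zero consisting of convex, balanced sets (see \cite[Proposition 3.33 and Corollary 3.36]{osborne_locally_2014}). Since $s_{1}^{Y}(X)$ is already known to be a Polishable subgroup of $(X,+)$, it suffices to prove (a) that $s_{1}^{Y}(X)$ is a linear subspace of $X$, and (b) that its Polish group topology has a basis of convex, balanced neighborhoods of zero.

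For (a), I would use the functoriality of the first Solecki subgroup under topological group automorphisms. Fix $t\in\mathbb{R}\setminus\{0\}$ and let $m_{t}\colon X\to X$ be multiplication by $t$; it is a topological group automorphism of $(X,+)$ carrying the linear subspace $Y$ onto $Y$. The restriction $m_{t}|_{Y}$ is a Borel group automorphism of $Y$, hence---applying automatic continuity of Borel homomorphisms between Polish groups \cite[Theorem 9.10]{kechris_classical_1995} to $m_{t}|_{Y}$ and to $m_{1/t}|_{Y}$---a homeomorphism of $Y$ for its Polish topology. Since $m_{t}$ is a homeomorphism of $X$ mapping open identity neighborhoods $V$ of $Y$ to open identity neighborhoods $m_{t}(V)$ of $Y$ with $m_{t}(\overline{V}^{X})=\overline{m_{t}(V)}^{X}$, inspection of the explicit formula for the first Solecki subgroup recalled in Section \ref{Section:solecki} gives $m_{t}(s_{1}^{Y}(X))=s_{1}^{Y}(X)$. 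Thus $s_{1}^{Y}(X)$ is stable under every nonzero scalar multiplication (and trivially under multiplication by $0$), so, being an additive subgroup, it is a linear subspace of $X$.

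For (b), since $Y$ is Fréchetable, the open, convex, balanced neighborhoods of $0$ in its Polish topology form a neighborhood basis at $0$. By the neighborhood basis description of $s_{1}^{Y}(X)$ in Section \ref{Section:solecki}, the sets $\overline{W}^{X}\cap s_{1}^{Y}(X)$, with $W$ ranging over such neighborhoods, form a neighborhood basis at $0$ in $s_{1}^{Y}(X)$. In a topological vector space the closure of a convex balanced set is convex and balanced, so $\overline{W}^{X}$ is convex and balanced; and since $s_{1}^{Y}(X)$ is a linear subspace by (a), so is the intersection $\overline{W}^{X}\cap s_{1}^{Y}(X)$. Hence $s_{1}^{Y}(X)$ has a basis of convex, balanced neighborhoods of zero, and the cited characterization finishes the proof.

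The one point that needs real care is the functoriality step in (a): it is not enough that $m_{t}$ permutes $Y$ setwise, one must know that $m_{t}$ restricts to a homeomorphism of the finer Polish topology on $Y$, which is exactly where automatic continuity enters; granting that, the remaining verifications---that the Solecki formula is preserved under a topological group automorphism, and the stability of convex balanced sets under closure and under intersection with a subspace---are routine.
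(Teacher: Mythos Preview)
Your proof is correct and follows the same strategy as the paper: verify that $s_{1}^{Y}(X)$ is a linear subspace, then exhibit the convex balanced neighborhood basis $\{\overline{W}^{X}\cap s_{1}^{Y}(X)\}$ and invoke the cited characterization of Fr\'echetable subspaces. The only difference is in part (a): the paper argues by direct element-chasing (given a neighborhood $V$, apply the membership criterion with $\lambda^{-1}V$ to conclude $\lambda x\in s_{1}^{Y}(X)$), whereas you argue functorially via $m_{t}$---your appeal to automatic continuity is valid but unnecessary here, since $Y$ is assumed Fr\'echetable and hence scalar multiplication is already continuous for its Polish ($=$ Fr\'echet) topology.
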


\begin{proof}
By definition, we have that, for $x\in X$, $x\in Y$ if and only if for every
open neighborhood $V$ of zero in $Y$ there exists $z\in Y$ such that $x+z\in 
\overline{V}^{G}$. If $x\in Y$, $\lambda \in \mathbb{R}$ is nonzero, and $V$
is an open neighborhood of zero in $Y$, then there exists $z\in Y$ such that 
$x+z\in \overline{\lambda ^{-1}V}^{G}$, whence $\lambda x+\lambda z\in 
\overline{V}^{G}$. This shows that $\lambda x\in s_{1}^{Y}\left( X\right) $,
whence $s_{1}^{Y}\left( X\right) $ is a subspace of $X$.

We now show that $s_{1}\left( Y\right) $ is Fr\'{e}chetable. Since $Y$ is a
separable Fr\'{e}chet space, by the remarks above it has a basis $\left(
V_{n}\right) _{n\in \omega }$ of neighborhoods of zero consisting of convex,
balanced sets. Thus, $(\overline{V}_{n}^{G}\cap s_{1}^{Y}\left( X\right)
)_{n\in \omega }$ is a basis of neighborhoods of zero in $s_{1}\left(
Y\right) $ consisting of convex, balanced sets. Thus, $s_{1}^{Y}\left(
X\right) $ is a Fr\'{e}chetable subspace of $X$ by the remarks above again.
\end{proof}

As an immediate consequence of Lemma \ref{Lemma:frechetable-solecki} and
Theorem \ref{Theorem:characterize-solecki} by induction on $\alpha <\omega
_{1}$ we have the following.

\begin{theorem}
\label{Theorem:characterize-Solecki-Frechet}Suppose that $X$ is a separable
Fr\'{e}chet space, $Y$ is a Fr\'{e}chetable subspace of $X$, and $\alpha
<\omega _{1}$. Then the $\alpha $-th Solecki subgroup $s_{\alpha }^{Y}\left(
X\right) $ of $X$ relative to $Y$, where $X$ and $Y$ are regarded as
additive groups, is the smallest $\boldsymbol{\Pi }_{1+\alpha +1}^{0}$ Fr%
\'{e}chetable subspace of $X$ containing $Y$.
\end{theorem}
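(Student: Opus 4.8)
The plan is to derive this from Theorem~\ref{Theorem:characterize-solecki} together with a transfinite induction showing that each Solecki subgroup $s_{\alpha}^{Y}(X)$ is not merely a Polishable subgroup but in fact a Fr\'{e}chetable subspace. Granting that, minimality is automatic: any $\boldsymbol{\Pi}_{1+\alpha+1}^{0}$ Fr\'{e}chetable subspace $L$ of $X$ containing $Y$ is in particular a $\boldsymbol{\Pi}_{1+\alpha+1}^{0}$ Polishable subgroup of the additive group of $X$ containing $Y$, so Theorem~\ref{Theorem:characterize-solecki} gives $s_{\alpha}^{Y}(X)\subseteq L$; since $s_{\alpha}^{Y}(X)$ will have been shown to be itself such a Fr\'{e}chetable subspace (it is $\boldsymbol{\Pi}_{1+\alpha+1}^{0}$ and contains $Y$ by Theorem~\ref{Theorem:characterize-solecki}), it is the smallest one.

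For the induction I would work throughout with the characterization recalled just before Lemma~\ref{Lemma:frechetable-solecki}: a subspace of a separable Fr\'{e}chet space is Fr\'{e}chetable if and only if it is a Polishable subgroup of the additive group whose Polish topology admits a basis of neighborhoods of zero consisting of convex, balanced sets. The base case $s_{0}^{Y}(X)=\overline{Y}^{X}$ is settled by the fact that the closure of a linear subspace in a topological vector space is a closed linear subspace, and a closed subspace of a separable Fr\'{e}chet space is again a separable Fr\'{e}chet space in the subspace topology. For the successor step, assume $s_{\alpha}^{Y}(X)$ is a Fr\'{e}chetable subspace of $X$; then $Y$ is a Fr\'{e}chetable subspace of $s_{\alpha}^{Y}(X)$, since the open sets of its Fr\'{e}chet topology are Borel in $X$ and hence Borel in the Borel set $s_{\alpha}^{Y}(X)$, whose standard Borel structure is the relative one. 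Applying Lemma~\ref{Lemma:frechetable-solecki} inside $s_{\alpha}^{Y}(X)$ shows that $s_{\alpha+1}^{Y}(X)=s_{1}^{Y}(s_{\alpha}^{Y}(X))$ is a Fr\'{e}chetable subspace of $s_{\alpha}^{Y}(X)$, and transitivity of Fr\'{e}chetability (both the subgroup and its open sets, being Borel in $s_{\alpha}^{Y}(X)$, are Borel in $X$) upgrades this to Fr\'{e}chetability in $X$. For a limit ordinal $\lambda$, Lemma~\ref{Lemma:intersection--Polishable} gives that $s_{\lambda}^{Y}(X)=\bigcap_{\beta<\lambda}s_{\beta}^{Y}(X)$ is Polishable, and from the neighborhood basis exhibited in the proof of that lemma---sets $W\cap s_{\lambda}^{Y}(X)$ with $W$ a neighborhood of zero in some $s_{\beta}^{Y}(X)$---one may take $W$ convex and balanced by the inductive hypothesis, so $s_{\lambda}^{Y}(X)$ inherits a convex, balanced neighborhood basis of zero and is Fr\'{e}chetable.

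The only delicate point is the interplay between topology and Borel structure across the successor and limit stages, i.e.\ verifying that ``Fr\'{e}chetable inside a Fr\'{e}chetable subspace'' implies ``Fr\'{e}chetable inside $X$'' and that a convex, balanced neighborhood basis survives the intersection at limit ordinals. Both become routine once one argues via the convex-balanced-neighborhood-basis characterization rather than the definition; everything else is a transcription of the proof of Theorem~\ref{Theorem:characterize-solecki} with ``Polishable subgroup'' systematically replaced by ``Fr\'{e}chetable subspace''.
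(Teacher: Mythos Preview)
Your proposal is correct and follows essentially the same approach as the paper, which simply records the result as an immediate consequence of Lemma~\ref{Lemma:frechetable-solecki} and Theorem~\ref{Theorem:characterize-solecki} by induction on $\alpha$. You have merely made explicit the details the paper leaves to the reader, in particular the limit case via the convex-balanced-neighborhood-basis characterization and the transitivity of Fr\'{e}chetability through the successor step.
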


A similar proof as Theorem \ref{Theorem:main} gives the following.

\begin{theorem}
\label{Theorem:complexity-Frechetable}Let $\Gamma $ be one of the possible
complexity classes of Polishable subgroups from Theorem \ref%
{Corollary:complexity}. Suppose that $X$ is a nontrivial separable Fr\'{e}%
chet space. Then there exists a Fr\'{e}chetable subspace of $X^{\mathbb{N}}$
whose complexity class is $\Gamma $.
\end{theorem}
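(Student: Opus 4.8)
The plan is to reduce the whole statement to the case $X=\mathbb R$: I rerun the construction of Section \ref{Section:existence} with $G=\mathbb R$, observe that the Polishable subgroups it produces are in fact Fréchetable subspaces, and then transfer the conclusion to an arbitrary $X$ along a closed linear embedding $\mathbb R^{\mathbb N}\hookrightarrow X^{\mathbb N}$. For Step~1, instantiate Section \ref{Section:existence} with $G=\mathbb R$ and $L_G=\lvert\cdot\rvert$. Every subgroup constructed there---$P_{<\lambda}$, and $S_\alpha$, $D_\alpha$, $P_\alpha$ for the relevant $\alpha$---is a \emph{linear} subspace of $\mathbb R^{I_0^{\alpha}}$, because the conditions defining them (convergence of the sequences $\boldsymbol x(n;\beta)$, $L_G$-summability of $\boldsymbol x(\alpha)$, bounded $L_G$-variation of $\boldsymbol x(\alpha)$) are linear. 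The Polish group topology of each is generated by the functions $x\mapsto\sup_k\lvert x(k,n;\gamma_k,\beta)\rvert$, together, in the case of $S_\alpha$ and $D_\alpha$, with $x\mapsto\sum_n\lvert x(n,\alpha_n)\rvert$ and the bounded-variation norm of $\boldsymbol x(\alpha)$. All of these are \emph{seminorms}, so the topology is locally convex; being in addition separable and completely metrizable, each of these groups is a separable Fréchet space, and therefore, by the characterization of Fréchetable subspaces recalled just before Lemma \ref{Lemma:frechetable-solecki} (\cite[Proposition 3.33 and Corollary 3.36]{osborne_locally_2014}), it is a Fréchetable subspace of $\mathbb R^{I_0^{\alpha}}\cong\mathbb R^{\mathbb N}$. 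By Theorem \ref{Theorem:complexityP} and Theorem \ref{Theorem:complexity}, together with the observation that $\{0\}$ (or any proper closed subspace) has complexity class $\boldsymbol{\Pi}_1^0$ in the non-discrete space $\mathbb R^{\mathbb N}$, these Fréchetable subspaces realize every complexity class in the list of Theorem \ref{Corollary:complexity}.

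For Step~2, let $X$ be a nontrivial separable Fréchet space, fix $v\in X\setminus\{0\}$, and let $\iota\colon\mathbb R^{\mathbb N}\to X^{\mathbb N}$ be given by $\iota((t_n)_n)=(t_nv)_n$. Since $\mathbb R v$ is a finite-dimensional, hence closed, subspace of the Hausdorff topological vector space $X$, the map $\iota$ is a linear homeomorphism of $\mathbb R^{\mathbb N}$ onto the closed subspace $C=(\mathbb R v)^{\mathbb N}$ of $X^{\mathbb N}$. If $Y\subseteq\mathbb R^{\mathbb N}$ is a Fréchetable subspace of complexity class $\Gamma$, then $\iota(Y)$ is the continuous linear image of the separable Fréchet space $Y$, hence a Fréchetable subspace of $X^{\mathbb N}$ by \cite[Proposition 4]{saint-raymond_espaces_1976}. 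Moreover, since $C$ is closed in $X^{\mathbb N}$, for each of the Borel classes $\Gamma$ occurring in Theorem \ref{Corollary:complexity} a subset of $C$ has complexity class $\Gamma$ in $C$ if and only if it has complexity class $\Gamma$ in $X^{\mathbb N}$: the "only if" directions use $C\in\boldsymbol{\Pi}_1^0(X^{\mathbb N})\subseteq\boldsymbol{\Delta}_\beta^0(X^{\mathbb N})$ for $\beta\ge 2$ (and "closed in closed is closed" for $\boldsymbol{\Pi}_1^0$), while the "if" directions follow from the continuity of the inclusion $C\hookrightarrow X^{\mathbb N}$; non-self-duality of all these classes is what converts membership statements into complexity-class statements. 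As $\iota$ is a homeomorphism onto $C$, it follows that $\iota(Y)$ has complexity class $\Gamma$ in $X^{\mathbb N}$, which completes the proof.

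The only thing going beyond Section \ref{Section:existence} is the remark in Step~1 that, when $G=\mathbb R$, the pseudo-length functions used there are seminorms, so local convexity---equivalently, Fréchetability---comes for free; everything else is routine bookkeeping about relativizing Borel complexity classes to a closed subspace, so there is no real obstacle. (If one prefers to stay closer to the wording ``a similar proof as Theorem \ref{Theorem:main}'' and avoid the embedding, one can instead rerun Section \ref{Section:existence} for an arbitrary $X$ directly, building the blocks $\ell_1$, $\mathrm{bv}_0$, $\mathrm c$ from a \emph{single} fixed nonzero seminorm of $X$ rather than from the whole defining sequence, so that they remain $\boldsymbol{\Sigma}_2^0$, $D(\boldsymbol{\Pi}_2^0)$, $\boldsymbol{\Pi}_3^0$ respectively; convexity of that seminorm's balls again makes all the resulting subgroups Fréchetable. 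The care needed there is precisely to keep the ``summability'' block from creeping up to $\boldsymbol{\Pi}_3^0$, which is the only place where the multiplicity of seminorms in a genuine Fréchet space could cause trouble.)
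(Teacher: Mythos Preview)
Your proof is correct. Your main approach---instantiating the construction of Section~\ref{Section:existence} with $G=\mathbb{R}$, observing that the defining pseudo-length functions are then seminorms (so the resulting Polishable subgroups are automatically Fr\'echetable), and pushing the conclusion forward along the closed linear embedding $\mathbb{R}^{\mathbb{N}}\hookrightarrow X^{\mathbb{N}}$ given by a nonzero vector---is a clean reduction that differs from what the paper indicates. The paper's one-line proof (``a similar proof as Theorem~\ref{Theorem:main}'') means rerunning the construction directly inside $X^{\mathbb{N}}$, which is essentially your parenthetical alternative: fix a single nonzero continuous seminorm $p$ on $X$, build the blocks $\ell_1$, $\mathrm{bv}_0$, $\mathrm{c}$ relative to $p$, and check that the analogues of Lemmas~\ref{Lemma:construct-element}--\ref{Lemma:complexity-sequences} and Corollary~\ref{Corollary:basic-complexity} go through. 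Your embedding route avoids this verification entirely at the small cost of a routine relativization of Borel complexity classes to a closed subspace; the paper's route is more direct but, as you correctly flag, needs the care of working with a \emph{single} seminorm so that the $\ell_1$-block stays $\boldsymbol{\Sigma}_2^0$ rather than creeping up to $\boldsymbol{\Pi}_3^0$ via quantification over a full defining family.
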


\section{Banachable subspaces\label{Section:Banachable}}

Let $V$ be a separable Fr\'{e}chet space. A\ subspace $X\subseteq V$ is 
\emph{Banachable }if it is the image of a continuous linear map $%
T:Z\rightarrow V$ for some separable Banach space $Z$. Equivalently, $X$ is
a Borel subspace of $V$ that is also a separable Banach space such that the
inclusion map $Y\rightarrow V$ is continuous. We have that the Solecki
subgroups whose index is a successor associated with a Banachable subspace
of a separable Fr\'{e}chet space are also Banachable.

\begin{proposition}
Suppose that $V$ is a separable Fr\'{e}chet space, $X\subseteq V$ is
Banachable. Then $s_{\alpha +1}^{X}\left( V\right) \subseteq V$ is
Banachable for every $\alpha <\omega _{1}$.
\end{proposition}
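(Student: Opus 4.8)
The plan is to deduce the statement, for every $\alpha<\omega_1$ at once, from the following special case: \emph{if $W$ is a separable Fr\'{e}chet space and $Y\subseteq W$ is a Banachable subspace, then $s_1^{Y}(W)$ is Banachable in $W$}. Granting this, fix $\alpha<\omega_1$ and set $W:=s_\alpha^{X}(V)$. By Theorem~\ref{Theorem:characterize-Solecki-Frechet}, $W$ is a Fr\'{e}chetable subspace of $V$, hence a separable Fr\'{e}chet space in its own topology, and $X$ is a dense, in particular contained, subspace of $W$. I would first check that $X$ is Banachable as a subspace of $W$: since $W$ is Polishable in $V$, the continuous injection $W\hookrightarrow V$ is a Borel isomorphism onto its image by \cite[Theorem 15.1]{kechris_classical_1995}, so the Borel $\sigma$-algebra of $W$ is the trace on $W$ of that of $V$; hence $X$, being Borel in $V$, is Borel in $W$, and the inclusion $X\to W$ is a Borel group homomorphism between Polish groups, therefore continuous by \cite[Theorem 9.10]{kechris_classical_1995}. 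The same two observations show that a subspace of $W$ which is Banachable in $W$ is Banachable in $V$. Since $s_1^{X}(W)=s_{\alpha+1}^{X}(V)$ by definition of the Solecki sequence, the special case then yields that $s_{\alpha+1}^{X}(V)$ is Banachable in $V$.

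It remains to prove the special case, and this is where Kolmogorov's normability criterion enters. By Lemma~\ref{Lemma:frechetable-solecki}, $s_1^{Y}(W)$ is a Fr\'{e}chetable subspace of $W$; in particular it is a Borel linear subspace of $W$, it is a separable Fr\'{e}chet space, and the inclusion $s_1^{Y}(W)\to W$ is continuous. By the description of Banachable subspaces recalled above, it therefore suffices to prove that the Fr\'{e}chet space $s_1^{Y}(W)$ is normable. Let $B$ be the open unit ball of the Banach space $Y$, so that $\{rB:r>0\}$ is a neighborhood basis of $0$ in $Y$ consisting of convex sets. By the properties of first Solecki subgroups recalled in Section~\ref{Section:solecki} (and used in the proof of Lemma~\ref{Lemma:frechetable-solecki}), the sets $\overline{rB}^{\,W}\cap s_1^{Y}(W)$, for $r>0$, form a neighborhood basis of $0$ in $s_1^{Y}(W)$. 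I claim that $C:=\overline{B}^{\,W}\cap s_1^{Y}(W)$ is a bounded convex neighborhood of $0$ in $s_1^{Y}(W)$. It is a neighborhood of $0$ by the above; it is convex because $B$ is convex in the underlying vector space of $W$, hence $\overline{B}^{\,W}$ is convex, and $s_1^{Y}(W)$ is a linear subspace. For boundedness it is enough to absorb each basic neighborhood $\overline{rB}^{\,W}\cap s_1^{Y}(W)$: since $B$ is bounded in the Banach space $Y$ there is $\lambda>0$ with $B\subseteq\lambda\,(rB)$, whence $\overline{B}^{\,W}\subseteq\lambda\,\overline{rB}^{\,W}$; intersecting with the linear subspace $s_1^{Y}(W)$, which is invariant under multiplication by $\lambda$, gives $C\subseteq\lambda\bigl(\overline{rB}^{\,W}\cap s_1^{Y}(W)\bigr)$. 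By Kolmogorov's normability criterion, $s_1^{Y}(W)$ is normable, and a normable Fr\'{e}chet space is a Banach space (the norm inducing the topology is automatically complete); being separable, $s_1^{Y}(W)$ is a separable Banach space, proving the special case.

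The only step that is not pure bookkeeping is the boundedness of $C$, and I do not expect it to be an obstacle: it simply transports the boundedness of the unit ball of $Y$ through the closure operation, using that $s_1^{Y}(W)$ is a linear subspace whose basic neighborhoods of $0$ are traces of closures of neighborhoods of $0$ in $Y$. The one point to be careful about is the bookkeeping in the first paragraph identifying ``Banachable in $W$'' with ``Banachable in $V$'', which rests on the two cited facts about Polishable subgroups and Borel isomorphisms between standard Borel spaces.
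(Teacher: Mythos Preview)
Your proof is correct and follows essentially the same approach as the paper: reduce to the case $\alpha=0$, take the unit ball $B$ of the Banachable subspace, and observe that $C:=\overline{B}\cap s_1$ is a single set whose scalar multiples form a neighborhood basis of $0$, so $s_1$ is normable and hence (being complete) Banach. The paper is terser: it simply notes that since $(2^{-n}B)_{n\in\omega}$ is a basis of neighborhoods of $0$ in $X$, the sets $(2^{-n}C)_{n\in\omega}$ form a basis of neighborhoods of $0$ in $s_1^X(V)$, and concludes directly that $s_1^X(V)$ is normed; you route the same observation through Kolmogorov's criterion by verifying boundedness of $C$, which amounts to the same thing, and you spell out the reduction to $\alpha=0$ more carefully than the paper does.
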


\begin{proof}
It suffices to consider the case $\alpha =0$. Suppose that $\left\Vert \cdot
\right\Vert _{X}$ is a compatible norm on $X$ and $B$ is the corresponding
unit ball. Define $C:=\overline{B}^{V}\cap s_{1}^{X}\left( V\right) $. As $%
\left( 2^{-n}B\right) _{n\in \omega }$ is a basis of neighborhoods of the
identity in $X$, $\left( 2^{-n}C\right) _{n\in \omega }$ is a basis of
neighborhoods of the identity in $s_{1}^{X}\left( V\right) $. Thus $%
s_{1}^{X}\left( V\right) $ is a normed space, and hence Banach space (being
complete).
\end{proof}

In this section we will prove using the methods from Section \ref%
{Section:existence} and Theorem \ref{Theorem:complexityP} the following
characterization of the possible complexity class of Banachable subspaces.

\begin{theorem}
\label{Theorem:complexity-Banachable}The following is a complete list of all
the possible complexity classes of Banachable subspaces of separable Fr\'{e}%
chet spaces: $\boldsymbol{\Pi }_{1}^{0}$, $\boldsymbol{\Pi }_{1+\lambda
+n+1}^{0}$, $D(\boldsymbol{\Pi }_{1+\lambda +n}^{0})$, and $\boldsymbol{%
\Sigma }_{1+\lambda +1}^{0}$ for $\lambda <\omega _{1}$ either zero or limit
and $1\leq n<\omega $. Furthermore, for every complexity class $\Gamma $ in
this list and nontrivial separable Banach space $Z$, there exists a
Banachable subspace of $\mathrm{c}_{0}\left( \mathbb{N},Z\right) $ that has
complexity class $\Gamma $.
\end{theorem}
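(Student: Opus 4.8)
The plan is to deduce Theorem \ref{Theorem:complexity-Banachable} from the classification of complexity classes of Polishable subgroups, refined by the Proposition just proved that the successor Solecki subgroups of a Banachable subspace are again Banachable. First I would record the \emph{upper bound}: if $V$ is a separable Fr\'{e}chet space and $X\subseteq V$ is Banachable, then the additive group of $X$ is a Polishable subgroup of $(V,+)$, so by Theorem \ref{Theorem:complexity} its complexity class is governed by its Solecki rank $\alpha=\lambda+n$; since a Banach space need not be non-Archimedean, case $(2c)$ is not excluded, and \emph{a priori} every complexity class in Theorem \ref{Corollary:complexity} is still possible. The only additional restriction is that the Solecki rank of a Banachable subspace is never a nonzero limit ordinal, which deletes exactly the classes $\boldsymbol{\Pi}_{1+\lambda}^{0}$ with $\lambda$ a nonzero limit and leaves precisely $\boldsymbol{\Pi}_{1}^{0}$ (rank $0$) together with $\boldsymbol{\Pi}_{1+\lambda+n+1}^{0}$, $D(\boldsymbol{\Pi}_{1+\lambda+n}^{0})$, $\boldsymbol{\Sigma}_{1+\lambda+1}^{0}$ for $n\geq 1$. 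To prove the restriction, suppose $X$ had Solecki rank $\lambda$ with $\lambda$ a nonzero limit ordinal, and pick successor ordinals $\lambda_i\uparrow\lambda$. By the preceding Proposition each $s_{\lambda_i}^{X}(V)$ is Banachable; fix compatible norms $\lVert\cdot\rVert_i$ on them and, rescaling, arrange $\lVert\cdot\rVert_1\leq\lVert\cdot\rVert_2\leq\cdots$ on $X$. Since $X=\bigcap_i s_{\lambda_i}^{X}(V)$ with continuous inclusions, the Polish topology of $X$ is generated by $(\lVert\cdot\rVert_i)_i$; as it is also normable, say by $\lVert\cdot\rVert$, its unit ball contains some $\{x:\lVert x\rVert_N<\varepsilon\}$, whence $\lVert\cdot\rVert\lesssim\lVert\cdot\rVert_N$, while $\lVert\cdot\rVert_N\lesssim\lVert\cdot\rVert$ because the topology of $X$ refines that of $\lVert\cdot\rVert_N$. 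Thus $\lVert\cdot\rVert\sim\lVert\cdot\rVert_N$, so the Polish topology of $X$ is the subspace topology inherited from $s_{\lambda_N}^{X}(V)$; being dense and complete there, $X=s_{\lambda_N}^{X}(V)$, contradicting minimality of $\lambda$. This is the mechanism already used to identify the Saint-Raymond and Solecki ranks.

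For the \emph{existence} half I would imitate the construction of Section \ref{Section:existence} and Theorem \ref{Theorem:complexityP}, carried out so that every subspace asserted to be Banachable is genuinely normable. The class $\boldsymbol{\Pi}_{1}^{0}$ is witnessed by any proper closed subspace of $\mathrm{c}_0(\mathbb{N},Z)$. For the rest, fix a nontrivial separable Banach space $Z$; using the canonical Banach-space isomorphism $\mathrm{c}_0(\mathbb{N},\mathrm{c}_0(\mathbb{N},Z))\cong\mathrm{c}_0(\mathbb{N},Z)$ one may freely replace $Z$ by $\mathrm{c}_0(\mathbb{N},Z)$, so all iterated $\mathrm{c}_0$-sums stay inside a copy of $\mathrm{c}_0(\mathbb{N},Z)$. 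The three basic building blocks, all sitting inside $\mathrm{c}_0(\mathbb{N},B)$ for a separable Banach space $B$, are: $\ell_1(\mathbb{N},B)$, of complexity $\boldsymbol{\Sigma}_2^0$; the space $\mathrm{cs}(\mathbb{N},B)=\{(b_n):\sum_n b_n\text{ converges in }B\}$ of convergent series, of complexity $\boldsymbol{\Pi}_3^0$, which takes over the role of $\mathrm{c}(G)$ from Lemma \ref{Lemma:complexity-sequences} but, unlike $\mathrm{c}(G)$, lies in $\mathrm{c}_0(\mathbb{N},B)$ rather than in $B^{\mathbb{N}}$; and a \emph{reweighted} copy of $\mathrm{bv}_0$, namely $\{(b_n):(n\,b_n)_n\in\mathrm{bv}_0(\mathbb{N},B)\}$, of complexity $D(\boldsymbol{\Pi}_2^0)$ --- the weight is essential here, since the unweighted $\mathrm{bv}_0(\mathbb{N},B)$ is merely $\boldsymbol{\Sigma}_2^0$ inside $\mathrm{c}_0(\mathbb{N},B)$. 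Each of the three is a separable Banach space, continuously included in $\mathrm{c}_0(\mathbb{N},B)$, of Solecki rank $1$; the complexity computations follow Lemma \ref{Lemma:complexity-sequences}: the upper bounds are immediate, the lower bound ``not $\boldsymbol{\Sigma}_2^0$'' for the $\mathrm{bv}_0$-type block follows from a Baire-category argument as in Lemma \ref{Lemma:complexity-sequences}(2), and ``not $D(\boldsymbol{\Pi}_2^0)$'' for $\mathrm{cs}$ follows by Borel-reducing the relation $E_0^{\mathbb{N}}$, of potential complexity $\boldsymbol{\Pi}_3^0$, into its coset relation as in Lemma \ref{Lemma:complexity-sequences}(3).

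The recursion then proceeds as in Section \ref{Section:existence}. Iterating $B\mapsto\mathrm{cs}(\mathbb{N},B)\subseteq\mathrm{c}_0(\mathbb{N},B)$ raises the Solecki rank by one at each step --- this is the content of the $\mathrm{c}_0$-versions of Lemmas \ref{Lemma:construct-element}, \ref{Lemma:dense}, \ref{Lemma:closure-neighborhood} and of Lemma \ref{Lemma:characterize-solecki}, yielding the analogue of Proposition \ref{Proposition:solecki-subgroups-P0} --- while using $\ell_1$ or the reweighted $\mathrm{bv}_0$ in place of $\mathrm{cs}$ at the final stage produces the $\boldsymbol{\Sigma}$-- and $D(\boldsymbol{\Pi})$--flavoured variants, exactly as $S_\alpha$ and $D_\alpha$ differ from $P_\alpha$; this realizes $\boldsymbol{\Pi}_{1+n+1}^0$, $D(\boldsymbol{\Pi}_{1+n}^0)$ and $\boldsymbol{\Sigma}_2^0$ for finite $n\geq 1$. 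At a limit ordinal $\lambda$ one cannot form the Fr\'{e}chet intersection used in $P_{<\lambda}$ --- it would fail to be normable, and indeed no Banachable subspace has Solecki rank $\lambda$ --- so instead, given Banachable subspaces $C_i$ of Solecki ranks $\lambda_i$ for a fixed cofinal sequence of successor ordinals $\lambda_i\uparrow\lambda$ (obtained inductively), one forms suitable $\mathrm{c}_0$-- and $\ell_1$--direct sums of the $C_i$ inside $\bigl(\bigoplus_i W_i\bigr)_{\mathrm{c}_0}\cong\mathrm{c}_0(\mathbb{N},Z)$, which are again separable Banach spaces, continuously included; one checks, via the $\mathrm{c}_0$-analogue of Proposition \ref{Proposition:solecki-subgroups-P0}, that these have Solecki rank $\lambda+1$, and then Theorem \ref{Theorem:complexity} reads off the complexity classes $\boldsymbol{\Pi}_{1+\lambda+2}^0$, $\boldsymbol{\Sigma}_{1+\lambda+1}^0$ and $D(\boldsymbol{\Pi}_{1+\lambda+1}^0)$. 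Stacking further $\mathrm{cs}$-steps on top of these and capping with $\mathrm{cs}$, $\ell_1$ or the reweighted $\mathrm{bv}_0$ then realizes $\boldsymbol{\Pi}_{1+\lambda+n+1}^0$, $D(\boldsymbol{\Pi}_{1+\lambda+n}^0)$ and $\boldsymbol{\Sigma}_{1+\lambda+1}^0$ for all $n\geq 1$, completing the list and, with the first half, proving the theorem.

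The main obstacle I anticipate is the Solecki-rank bookkeeping at the limit stages. Unlike for products (Lemma \ref{Lemma:characterize-solecki}), the Solecki subgroups of a $\mathrm{c}_0$-direct sum are \emph{not} simply the $\mathrm{c}_0$-direct sums of the Solecki subgroups --- the ``vanishing at infinity'' constraint interacts with $\boldsymbol{\Pi}_3^0$-membership --- so verifying that the limit-stage direct sums have Solecki rank \emph{exactly} $\lambda+1$, and not larger, requires a careful description of the first Solecki subgroup of a $\mathrm{c}_0$-sum, together with the matching hardness arguments (reductions of equivalence relations of the appropriate potential complexity into the relevant coset relations, in the spirit of Lemma \ref{Lemma:complexity-sequences} and Lemma \ref{Lemma:product}) to pin the complexity classes from below. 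Re-establishing the $\mathrm{c}_0$-versions of Lemmas \ref{Lemma:dense} and \ref{Lemma:closure-neighborhood} --- density of $\mathrm{cs}(\mathbb{N},B)$ in, and absorption of identity neighborhoods by closures of its identity neighborhoods within, the relevant intermediate Banach spaces --- is otherwise a routine transcription of the arguments of Section \ref{Section:existence}.
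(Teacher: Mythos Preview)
Your upper-bound half is correct and is essentially the paper's Proposition \ref{Proposition:Banachable-limit}; the paper's phrasing is slightly leaner, using only that $X=s_{\lambda}^{X}(V)$ itself is normable (so its unit ball is contained in a single neighborhood coming from some $s_{\beta}^{X}(V)$ with $\beta<\lambda$), without invoking Banachability of the intermediate successor Solecki subgroups.

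For existence the paper takes a different, unified route rather than your modular one. Instead of iterating $B\mapsto\mathrm{cs}(\mathbb{N},B)$ and forming $\mathrm{c}_0$-direct sums at limit stages, it defines all the spaces $X_{\gamma}$ ($\gamma\leq\alpha$) at once inside $X_{0}=\mathrm{c}_{0}(I_{0}^{\alpha},Z)$, using the same index sets $I_{\gamma}^{\alpha}$ as in Section \ref{Section:existence} but with a weight: $\boldsymbol{x}(n;\beta)=(k\,x(k,n;\gamma_{k},\beta))_{k}$. The norm on $X_{\gamma}$ is the maximum of a $\mathrm{c}_{0}$-norm over $I_{\leq\gamma}^{\alpha}$ (via a map $T_{\gamma}^{0}$) and a $\mathrm{c}_{0}$-norm over an auxiliary fibred-product index set $J_{\gamma}^{\alpha}\ast I^{\alpha}$ (via a map $T_{\gamma}^{1}$); the second term is precisely what forces $\lVert\cdot\rVert_{X_{\delta}}\leq\lVert\cdot\rVert_{X_{\gamma}}$ for $\delta<\gamma$ (Lemma \ref{Lemma:norm-estimate}), so that each $X_{\gamma}$ is genuinely a Banach space even across limit $\gamma$. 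Your reweighted $\mathrm{bv}_{0}$ block is exactly the paper's $\tau^{-1}(\mathrm{bv}_{0})$ from Lemma \ref{Lemma:complexity-S-D-P}, and your $\mathrm{cs}$ plays the role of the paper's $\tau^{-1}(\mathrm{c})$, so at the level of basic blocks you agree.

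The genuine gap is the limit stage, which you yourself flag. ``Form suitable $\mathrm{c}_{0}$- and $\ell_{1}$-direct sums of the $C_{i}$ and check rank $\lambda+1$'' is not a proof: computing $s_{\gamma}$ of a $\mathrm{c}_{0}$-direct sum is delicate because the $\mathrm{c}_{0}$-vanishing condition interacts with the $\boldsymbol{\Pi}_{3}^{0}$-minimality defining $s_{1}$, and there is no analogue of Lemma \ref{Lemma:characterize-solecki-product} available. The paper's $J_{\gamma}^{\alpha}$ bookkeeping and the monotone family of norms are engineered exactly so that the density and neighborhood-absorption lemmas (Lemmas \ref{Lemma:denseB} and \ref{Lemma:closure-neighborhoodB}) go through uniformly in $\gamma$, yielding Proposition \ref{Proposition:solecki-subgroups-X0} by the same mechanism as Proposition \ref{Proposition:solecki-subgroups-P0}; your sketch would need a comparable apparatus, and it is not evident that a bare $\mathrm{c}_{0}$-sum of the $C_{i}$ (with no further weighting linking the summands) has the Solecki subgroups you claim.
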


We begin with showing that a Banachable subspace of a separable Fr\'{e}chet
space cannot have complexity class $\boldsymbol{\Pi }_{\lambda }^{0}$ for
some countable limit ordinal $\lambda $.

\begin{proposition}
\label{Proposition:Banachable-limit}Suppose that $V$ is a separable Fr\'{e}%
chet space, and $X\subseteq V$ is a Fr\'{e}chetable subspace. Suppose that $%
s_{\alpha }^{X}\left( V\right) $ is Banachable for some limit ordinal $%
\alpha $. Then $X$ has Solecki rank\emph{\ less than} $\alpha $.
\end{proposition}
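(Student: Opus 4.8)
The plan is to show that, under the hypothesis, one in fact has $s_{\beta}^{X}(V)=s_{\alpha}^{X}(V)$ for some $\beta<\alpha$; this suffices. Indeed, writing $W:=s_{\alpha}^{X}(V)$ and recalling that the Solecki subgroups form a $\subseteq$-decreasing transfinite sequence, an equality $s_{\beta}^{X}(V)=W$ with $\beta<\alpha$ forces $s_{\gamma}^{X}(V)=W$ for all $\gamma$ with $\beta\le\gamma\le\alpha$; in particular $s_{\beta}^{X}(V)=s_{\beta+1}^{X}(V)=s_{1}^{X}(s_{\beta}^{X}(V))$, and then a straightforward transfinite induction gives $s_{\gamma}^{X}(V)=W$ for every $\gamma\ge\beta$. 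Since $s_{\gamma}^{X}(V)=X$ when $\gamma$ is the Solecki rank of $X$, that rank must be $\le\beta<\alpha$, as desired.

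So fix a limit ordinal $\alpha$ with $W:=s_{\alpha}^{X}(V)$ Banachable, fix a norm on $W$ compatible with its Polish topology with open unit ball $B$ (so $\{2^{-m}B:m\in\omega\}$ is a neighbourhood basis at $0$ in $W$), and fix an increasing sequence $(\beta_{n})_{n\in\omega}$ cofinal in $\alpha$. Since $(s_{\gamma}^{X}(V))_{\gamma}$ is decreasing, $W=\bigcap_{n\in\omega}s_{\beta_{n}}^{X}(V)$, and the inclusions $s_{\gamma'}^{X}(V)\hookrightarrow s_{\gamma}^{X}(V)$ for $\gamma\le\gamma'$ are continuous (being inclusions of Polishable subgroups). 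Realizing $W$ as the diagonal subgroup of $\prod_{n}s_{\beta_{n}}^{X}(V)$ as in the proof of Lemma~\ref{Lemma:intersection--Polishable} and using that the $s_{\beta_{n}}^{X}(V)$ are nested with continuous inclusions, every neighbourhood of $0$ in $W$ contains a set of the form $W\cap U$, where $U$ is a neighbourhood of $0$ in $s_{\beta}^{X}(V)$ for some $\beta<\alpha$. In particular, I may fix such $\beta$ and $U$ with $W\cap U\subseteq B$.

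Next I would invoke Theorem~\ref{Theorem:characterize-Solecki-Frechet}: since $X$ is Fréchetable, $s_{\beta}^{X}(V)$ is a Fréchet space, hence a topological vector space, so scalar multiplication by $2^{-m}$ is a homeomorphism of $s_{\beta}^{X}(V)$. Therefore $2^{-m}U$ is a neighbourhood of $0$ in $s_{\beta}^{X}(V)$ and $W\cap 2^{-m}U=2^{-m}(W\cap U)\subseteq 2^{-m}B$. As $\{2^{-m}B:m\in\omega\}$ is a neighbourhood basis at $0$ in $W$, this shows the subspace topology on $W$ induced from $s_{\beta}^{X}(V)$ is finer than the Banach topology of $W$; since the inclusion $W\hookrightarrow s_{\beta}^{X}(V)$ is continuous, the two topologies coincide. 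Hence $W$, being complete in its Banach topology, is a closed subgroup of $s_{\beta}^{X}(V)$; since $X\subseteq W$ and $X$ is dense in $s_{\beta}^{X}(V)$, we get $W=s_{\beta}^{X}(V)$, which by the first paragraph finishes the proof.

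The main obstacle is obtaining a \emph{single} $\beta<\alpha$ rather than one depending on the scale $m$: a priori the stages needed to see the sets $2^{-m}B$ could run cofinally in $\alpha$, and indeed for the Saint-Raymond-type examples of limit Solecki rank — which are Fréchetable but not Banachable — this is precisely what occurs. It is resolved by combining normability of $W$ (so that the countably many sets $2^{-m}B$ are all rescalings of the single set $B$) with the linear structure of the Solecki subgroup $s_{\beta}^{X}(V)$ (so that one good neighbourhood $W\cap U$ rescales to good neighbourhoods at every scale). The remaining steps are routine bookkeeping with the Solecki tower and the elementary fact that a complete subspace of a metric space is closed.
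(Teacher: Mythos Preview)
Your proof is correct and follows essentially the same approach as the paper: exploit the normability of $W=s_{\alpha}^{X}(V)$ to find a single $\beta<\alpha$ at which the subspace topology already captures the unit ball $B$, then use the linear (Fr\'echet) structure of $s_{\beta}^{X}(V)$ to rescale and conclude the topologies agree, hence $W$ is closed and dense in $s_{\beta}^{X}(V)$. The paper compresses this into a few lines after first passing, without loss of generality, to the case $X=s_{\alpha}^{X}(V)$; your version instead works directly with $W$ and supplies the short stabilization argument (once $s_{\beta}^{X}(V)=s_{\beta+1}^{X}(V)$ the tower freezes), which is a cleaner way to handle that reduction.
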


\begin{proof}
Without loss of generality, we can assume that $X=s_{\alpha }^{X}\left(
V\right) $. Since $X$ is Banachable, there exists $B\subseteq X$ such that $%
\left( 2^{-n}B\right) _{n\in \omega }$ forms a basis of neighorhoods of zero
in $X$. Since $X=\bigcap_{\beta <\alpha }s_{\beta }^{X}\left( V\right) $,
there exists $\beta <\alpha $ and a neighborhood $C$ of $0$ in $s_{\beta
}^{X}\left( V\right) $ such that $B=C\cap s_{\alpha }^{X}\left( V\right) $.
Thus, $X$ is endowed with the subspace topology inherited from $s_{\beta
}^{X}\left( V\right) $. Whence, $X$ is closed in $s_{\beta }^{X}\left(
V\right) $. Since $X$ is also dense in $s_{\beta }^{X}\left( V\right) $, we
have that $X=s_{\beta }^{X}\left( V\right) $. Hence, $X$ has Solecki rank at
most $\beta $.
\end{proof}

\begin{corollary}
Suppose that $V$ is a separable Fr\'{e}chet space, and $X\subseteq V$ is a
Banachable subspace.\ If $X$ is $\boldsymbol{\Pi }_{\lambda }^{0}$ for some
limit ordinal $\lambda <\omega _{1}$, then $X$ is $\boldsymbol{\Pi }_{\beta
}^{0}$ for some $\beta <\lambda $.
\end{corollary}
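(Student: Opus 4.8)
The plan is to read the corollary off from Theorem \ref{Theorem:characterize-Solecki-Frechet} together with Proposition \ref{Proposition:Banachable-limit}: the point is that membership in $\boldsymbol{\Pi}_\lambda^0$ with $\lambda$ a limit ordinal forces the Solecki rank of $X$ to be at most $\lambda$, while Proposition \ref{Proposition:Banachable-limit} rules out the borderline value $\lambda$ itself, leaving only strictly smaller values.

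First I would bound the Solecki rank. Since $\lambda$ is a limit ordinal we have $1+\lambda+1 = \lambda+1$ and $\boldsymbol{\Pi}_\lambda^0 \subseteq \boldsymbol{\Pi}_{\lambda+1}^0$, so $X$ is a $\boldsymbol{\Pi}_{1+\lambda+1}^0$ Fréchetable subspace of $V$ containing $X$ (it is Fréchetable because it is Banachable). By Theorem \ref{Theorem:characterize-Solecki-Frechet}, $s_\lambda^X(V)$ is the smallest $\boldsymbol{\Pi}_{1+\lambda+1}^0$ Fréchetable subspace of $V$ containing $X$, hence $s_\lambda^X(V) \subseteq X$; since $X$ is always contained in (indeed dense in) $s_\lambda^X(V)$, we get $s_\lambda^X(V) = X$, so the Solecki rank $\alpha$ of $X$ in $V$ satisfies $\alpha \le \lambda$.

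Next I would exclude the case $\alpha = \lambda$: in that case $s_\lambda^X(V) = X$ would be Banachable for the limit ordinal $\lambda$, so Proposition \ref{Proposition:Banachable-limit} (applied with $\alpha = \lambda$) would yield that $X$ has Solecki rank strictly less than $\lambda$ --- a contradiction. Therefore $\alpha < \lambda$, and now Theorem \ref{Theorem:characterize-Solecki-Frechet} gives that $X = s_\alpha^X(V)$ is $\boldsymbol{\Pi}_{1+\alpha+1}^0$ in $V$; since $\lambda$ is a limit ordinal and $\alpha < \lambda$ we have $1+\alpha+1 \le \alpha+2 < \lambda$, so $\beta := 1+\alpha+1$ does the job. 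The only substantive ingredient here is Proposition \ref{Proposition:Banachable-limit}, which has already been established; beyond that the argument is nothing more than the characterization of Solecki subgroups by their Borel class plus elementary ordinal arithmetic, so I do not anticipate any real obstacle.
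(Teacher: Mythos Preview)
Your proof is correct and follows essentially the same route as the paper's: both arguments use the characterization of $s_\lambda^X(V)$ as the smallest $\boldsymbol{\Pi}_{1+\lambda+1}^0$ (Fr\'echetable) subspace containing $X$ to obtain $s_\lambda^X(V)=X$, then invoke Proposition~\ref{Proposition:Banachable-limit} to force the Solecki rank strictly below $\lambda$, and finally read off the $\boldsymbol{\Pi}_{1+\alpha+1}^0$ bound. The only cosmetic difference is that the paper cites Theorem~\ref{Theorem:complexity} for the first and last steps while you cite Theorem~\ref{Theorem:characterize-Solecki-Frechet}; and your detour through ``exclude the case $\alpha=\lambda$'' is unnecessary, since once you have $s_\lambda^X(V)=X$ Banachable, Proposition~\ref{Proposition:Banachable-limit} already gives Solecki rank $<\lambda$ outright.
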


\begin{proof}
By Theorem \ref{Theorem:complexity} we have that $X=s_{\lambda }^{X}\left(
V\right) $ is Banachable. Thus, by Proposition \ref%
{Proposition:Banachable-limit} we have that $X$ has Solecki rank $\beta $
for some $\beta <\lambda $, and hence $X$ is $\boldsymbol{\Pi }_{1+\beta
+1}^{0}$ by Theorem \ref{Theorem:complexity} again.
\end{proof}

In order to conclude the proof of Theorem \ref{Theorem:complexity-Banachable}
it remains to prove that all the complexity classes from the statement of
Theorem \ref{Theorem:complexity-Banachable} can arise. Fix a countable
ordinal $\alpha $. We adopt the notation from Section \ref{Section:existence}%
. We regard $I_{\gamma }^{\alpha }$ as a set \emph{fibred }over $\alpha $,
with respect to the map $I^{\alpha }\rightarrow \alpha $, $\left( n;\beta
\right) \mapsto \gamma $ such that $\left( n;\beta \right) \in I_{\gamma
}^{\alpha }$.\ For $\gamma <\alpha $ we define%
\begin{equation*}
J_{\gamma }^{\alpha }=\left\{ \left( k,\sigma \right) \in \mathbb{N}\times
(\alpha +1):\sigma _{k}<\gamma <\sigma \leq \alpha \right\}
\end{equation*}%
We also regard $J_{\gamma }^{\alpha }$ as a set fibred over $\alpha $ with
respect to the function $J_{\gamma }^{\alpha }\rightarrow \alpha $, $\left(
k,\sigma \right) \mapsto \sigma $. We then define the \emph{fibred product}%
\begin{equation*}
J_{\gamma }^{\alpha }\ast I^{\alpha }=\left\{ \left( \left( k,\sigma \right)
,\left( n;\beta \right) \right) :\left( k,\sigma \right) \in J_{\gamma
}^{\alpha },\left( n;\beta \right) \in I_{\sigma }^{\alpha }\right\} \text{.}
\end{equation*}%
Notice that the projection map $J_{\sigma }^{\alpha }\ast I^{\alpha
}\rightarrow I^{\alpha }$ is finite-to-one. Indeed, suppose that $\left(
\left( k,\sigma \right) ,\left( n;\beta \right) \right) \in J_{\gamma
}^{\alpha }\ast I^{\alpha }$. Then we have that $\gamma <\sigma $, and hence 
$\left\{ k\in \mathbb{N}:\sigma _{k}<\gamma \right\} $ is finite.

Fix a nontrivial\ separable Banach space $Z$. We denote the norm of $z\in Z$
by $\left\vert z\right\vert $. We consider the Banach spaces%
\begin{equation*}
\ell _{1}\left( Z\right) =\left\{ \left( x_{n}\right) \in Z^{\mathbb{N}%
}:\sum_{n\in \mathbb{N}}\left\vert x_{n}\right\vert <+\infty \right\}
\end{equation*}%
and%
\begin{equation*}
\text{\textrm{bv}}_{0}\left( Z\right) =\left\{ \left( x_{n}\right) \in Z^{%
\mathbb{N}}:\sum_{n\in \mathbb{N}}\left\vert z_{n}-z_{n+1}\right\vert
<+\infty \text{ and }\left( z_{n}\right) _{n\in \mathbb{N}}\text{ is
vanishing}\right\} \text{.}
\end{equation*}

Define $X_{0}=\mathrm{c}_{0}\left( I_{0}^{\alpha },Z\right) $. We now define
by recursion on $\gamma \leq \alpha $,\ Banachable subspaces $X_{\gamma }$
and Fr\'{e}chetable subspaces $X_{<\gamma }$ of $X_{0}$ such that $X_{\gamma
}\subseteq X_{<\gamma }\subseteq X_{\delta }$ for $\delta <\gamma \leq
\alpha $. Furthermore, for $x\in X_{\gamma }$, we define the values $x\left(
n;\beta \right) \in Z$ for $\left( n;\beta \right) \in I_{\gamma }^{\alpha }$%
, such that the linear functional $x\mapsto x\left( n;\beta \right) $ on $%
X_{\gamma }$ is continuous. If $\gamma \geq 1$ and $\left( n;\beta \right)
\in I_{\gamma }^{\alpha }$, then we let $\boldsymbol{x}\left( n;\beta
\right) $ be the convergent sequence $\left( kx\left( k,n;\gamma _{k},\beta
\right) \right) _{k\in \omega }$ with limit $x\left( n;\beta \right) $. If $%
\left( n;\beta \right) \in I_{0}^{\alpha }$, then we let $\boldsymbol{x}%
\left( n;\beta \right) $ be the sequence constantly equal to $x\left(
n;\beta \right) $. Suppose that $1\leq \gamma \leq \alpha $, and that $%
X_{\delta }$ has been defined for $\delta <\gamma $, in such a way that $%
X_{\delta }$ is a separable Banach space with norm $\left\Vert \cdot
\right\Vert _{X_{\delta }}$.

Define $X_{<\gamma }$ to be the intersection of $X_{\delta }$ for $\delta
<\gamma $.\ Consider the continuous linear map 
\begin{equation*}
T_{\gamma }^{0}:X_{<\gamma }\rightarrow \left( Z^{\mathbb{N}}\right)
^{I_{\leq \gamma }^{\alpha }}
\end{equation*}%
defined by%
\begin{equation*}
T_{\gamma }^{0}\left( x\right) =\left( \boldsymbol{x}\left( n;\beta \right)
\right) _{\left( n;\beta \right) \in I_{\leq \gamma }^{\alpha }}\text{.}
\end{equation*}%
Consider also the continuous linear map 
\begin{equation*}
T_{\gamma }^{1}:X_{<\gamma }\rightarrow Z^{J_{\gamma }^{\alpha }\ast
I^{\alpha }}
\end{equation*}%
defined by%
\begin{equation*}
T_{\gamma }^{1}\left( x\right) =\left( kx\left( k,n;\sigma _{k},\beta
\right) \right) _{\left( \left( k,\sigma \right) ,\left( n;\beta \right)
\right) \in J_{\gamma }^{\alpha }\ast I^{\alpha }}\text{.}
\end{equation*}%
Define $X_{\gamma }\subseteq X_{<\gamma }$ to be the intersection of the
preimage of 
\begin{equation*}
\mathrm{c}_{0}\left( I_{\leq \gamma }^{\alpha },\mathrm{c}\left( \mathbb{N}%
,Z\right) \right) \subseteq \left( Z^{\mathbb{N}}\right) ^{I_{\gamma
}^{\alpha }}
\end{equation*}%
under $T_{\gamma }^{0}$ and the preimage of 
\begin{equation*}
\mathrm{c}_{0}\left( J_{\gamma }^{\alpha }\ast I^{\alpha },Z\right)
\subseteq Z^{J_{\gamma }^{\alpha }\ast I^{\alpha }}
\end{equation*}%
under $T_{\gamma }^{1}$. It follows from Lemma \ref{Lemma:norm-estimate}
below that $X_{\gamma }$ is a separable Banach space with respect to the norm%
\begin{equation*}
\left\Vert x\right\Vert _{X_{\gamma }}=\max \left\{ \left\Vert T_{\gamma
}^{0}\left( x\right) \right\Vert _{\mathrm{c}_{0}(I_{\leq \gamma }^{\alpha },%
\mathrm{c}\left( \mathbb{N},Z\right) )},\left\Vert T_{\gamma }^{1}\left(
x\right) \right\Vert _{\mathrm{c}_{0}(J_{\gamma }^{\alpha }\ast I^{\alpha
},Z)}\right\}
\end{equation*}%
for $x\in X_{\gamma }$. Observe that in particular%
\begin{equation*}
X_{\alpha }=\left\{ x\in X_{<\alpha }:\boldsymbol{x}\left( \alpha \right)
\in \mathrm{c}\left( \mathbb{N},Z\right) \right\}
\end{equation*}%
and%
\begin{equation*}
\left\Vert x\right\Vert _{X_{\alpha }}=\max \left\{ \mathrm{sup}_{\gamma
<\alpha }\left\Vert x\right\Vert _{X_{\gamma }},\left\Vert \boldsymbol{x}%
\left( \alpha \right) \right\Vert _{\mathrm{c}\left( \mathbb{N},Z\right)
}\right\}
\end{equation*}%
for $x\in X_{\alpha }$, where $\boldsymbol{x}\left( \alpha \right) :=\left(
kx\left( k;\alpha _{k}\right) \right) _{k\in \mathbb{N}}$. Define also $%
S_{\alpha }\subseteq D_{\alpha }\subseteq X_{\alpha }$ by setting%
\begin{equation*}
S_{\alpha }=\left\{ x\in X_{<\alpha }:\mathrm{sup}_{\gamma <\alpha
}\left\Vert x\right\Vert _{X_{\gamma }}<+\infty \text{ and }\boldsymbol{x}%
\left( \alpha \right) \in \ell _{1}\left( Z\right) \right\}
\end{equation*}%
and%
\begin{equation*}
D_{\alpha }=\left\{ x\in X_{<\alpha }:\mathrm{sup}_{\gamma <\alpha
}\left\Vert x\right\Vert _{X_{\gamma }}<+\infty \text{ and }\boldsymbol{x}%
\left( \alpha \right) \in \mathrm{bv}_{0}\left( Z\right) \right\}
\end{equation*}%
where%
\begin{equation*}
\boldsymbol{x}\left( \alpha \right) =\left( x\left( k;\alpha _{k}\right)
\right) _{k\in \mathbb{N}}\text{.}
\end{equation*}%
Then we have that $S_{\alpha }$ is a separable Banach space with respect to
the norm%
\begin{equation*}
\left\Vert x\right\Vert _{S_{\alpha }}=\max \left\{ \left\Vert x\right\Vert
_{X_{\alpha }},\left\Vert \boldsymbol{x}\left( \alpha \right) \right\Vert
_{\ell _{1}\left( Z\right) }\right\}
\end{equation*}%
and $D_{\alpha }$ is a separable Banach space with respect to the norm%
\begin{equation*}
\left\Vert x\right\Vert _{D_{\alpha }}=\max \left\{ \left\Vert x\right\Vert
_{X_{\alpha }},\left\Vert \boldsymbol{x}\left( \alpha \right) \right\Vert _{%
\mathrm{bv}_{0}\left( Z\right) }\right\} \text{.}
\end{equation*}%
The existence statement in Theorem \ref{Theorem:complexity-Banachable} will
be a consequence of the following result.

\begin{theorem}
\label{Theorem:complexityX}Fix $\alpha =1+\lambda +n<\omega _{1}$, where $%
\lambda $ is a limit ordinal or zero and $n<\omega $:

\begin{enumerate}
\item if $n=0$ and $\lambda $ is limit, then $X_{<\lambda }$ has Solecki
rank $\lambda $ in $X_{0}$, and complexity class $\boldsymbol{\Pi }_{\lambda
}^{0}$;

\item if $n=0$, then $S_{1+\lambda }$, $D_{1+\lambda }$, and $X_{1+\lambda }$
have Solecki rank $\lambda +1$ in $X_{0}$, and complexity class $\boldsymbol{%
\Sigma }_{1+\lambda +1}^{0}$, $D(\boldsymbol{\Pi }_{1+\lambda +1}^{0})$, and 
$\boldsymbol{\Pi }_{1+\lambda +1}^{0}$ respectively;

\item if $n\geq 1$, then $S_{1+\lambda +n}$, $D_{1+\lambda +n}$, and $%
X_{1+\lambda +n}$ have Solecki rank $\lambda +n+1$ in $X_{0}$, and
complexity class $D(\boldsymbol{\Pi }_{1+\lambda +n+1}^{0})$, $D(\boldsymbol{%
\Pi }_{1+\lambda +n+1}^{0})$, and $\boldsymbol{\Pi }_{1+\lambda +n+1}^{0}$
respectively.
\end{enumerate}
\end{theorem}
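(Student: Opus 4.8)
The proof will run parallel to the proof of Theorem~\ref{Theorem:complexityP} in Section~\ref{Section:existence}: once the Banachable analogues of Proposition~\ref{Proposition:solecki-subgroups-P0} and Corollary~\ref{Corollary:basic-complexity} are available, the statement is a one-line deduction, feeding the Solecki ranks and the relative complexities into Theorem~\ref{Theorem:complexity} (with Proposition~\ref{Proposition:FS-reduce} supplying the limit stage). So the plan is to reprove, for the spaces $X_\gamma$, $X_{<\gamma}$, $S_\alpha$, $D_\alpha$ in place of $P_\gamma$, $P_{<\gamma}$, $S_\alpha$, $D_\alpha$, the chain of lemmas from Lemma~\ref{Lemma:construct-element} through Corollary~\ref{Corollary:basic-complexity}. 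The analogues of Lemma~\ref{Lemma:construct-element}, Lemma~\ref{Lemma:dense}, and Lemma~\ref{Lemma:closure-neighborhood} go through as before: for a finite $F\subseteq I_{\le\gamma}^\alpha$ and $x\in X_\gamma$, the element $y\in X_0$ agreeing with $x$ on $F_\downarrow$ and vanishing elsewhere lies in $S_\alpha$ and satisfies the expected identities on all of $I^\alpha$, by the same induction on $\sigma<\alpha$ as in Lemma~\ref{Lemma:construct-element}; the reparametrisation $\boldsymbol{x}(n;\beta)=(k\,x(k,n;\gamma_k,\beta))_k$ does not interfere, since a finitely supported sequence is eventually $0$ whatever the scalars $k$ are. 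This yields density of $S_\alpha$ in each $X_\gamma$ with $\gamma<\alpha$, and the statement that $\overline{V}^{X_{<\gamma}}\cap X_\gamma$ contains a $0$-neighbourhood of $X_\gamma$ for every $0$-neighbourhood $V$ of $S_\alpha$, by the truncation argument of Lemma~\ref{Lemma:closure-neighborhood}, now phrased with the norms $\|\cdot\|_{X_\gamma}$ and with the auxiliary finite set enlarged using $J_\gamma^\alpha\ast I^\alpha$ so as to absorb the finitely many sub-level-$\gamma$ coordinates coming from higher levels. An induction on $\gamma<\alpha$ via Lemma~\ref{Lemma:characterize-solecki}, exactly as in Proposition~\ref{Proposition:solecki-subgroups-P0} and using that $X_{1+\delta}$ is $\boldsymbol{\Pi}_3^0$ in $X_{<(1+\delta)}$ (as the defining norm shows), then gives
\[
s_\gamma^{S_\alpha}(X_0)=s_\gamma^{D_\alpha}(X_0)=s_\gamma^{X_\alpha}(X_0)=s_\gamma^{X_{<\alpha}}(X_0)=X_{<(1+\gamma)}\qquad(\gamma<\alpha).
\]

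For the analogue of Corollary~\ref{Corollary:basic-complexity}, the fibred product $J_\gamma^\alpha\ast I^\alpha$ (whose projection to $I^\alpha$ is finite-to-one) is used as in Lemma~\ref{Lemma:proper} to produce a continuous linear surjection onto $X_{<\gamma}$ along which, for a prescribed $(n;\beta)\in I_\gamma^\alpha$, the sequence $\boldsymbol{x}(n;\beta)$ can be made divergent, whence $X_\gamma\subsetneq X_{<\gamma}$. Applying Lemma~\ref{Lemma:complexity-sequences} with $G=Z$ (a nontrivial separable, hence CLI and, being a nontrivial real vector space, non-discrete, Banach group with length function $|\cdot|$), so that $\ell_1(Z)$, $\mathrm{bv}_0(Z)$, $\mathrm{c}(\mathbb{N},Z)$ have complexity classes $\boldsymbol{\Sigma}_2^0$, $D(\boldsymbol{\Pi}_2^0)$, $\boldsymbol{\Pi}_3^0$ in $Z^{\mathbb{N}}$, and pulling these back along the continuous linear maps $x\mapsto\boldsymbol{x}(\alpha)$ from $X_{<\alpha}$ into $Z^{\mathbb{N}}$ and along the surjection just constructed, gives that $S_\alpha$, $D_\alpha$, $X_\alpha$ have complexity classes $\boldsymbol{\Sigma}_2^0$, $D(\boldsymbol{\Pi}_2^0)$, $\boldsymbol{\Pi}_3^0$ in $X_{<\alpha}$. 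Combining the displayed formula above (which pins down the Solecki ranks of $S_\alpha$, $D_\alpha$, $X_\alpha$ in $X_0$, and of $X_{<\alpha}$ when $\alpha$ is a limit) with these relative complexities and invoking Theorem~\ref{Theorem:complexity} then yields the complexity classes listed in (1)--(3).

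The routine descriptive-set-theoretic bookkeeping is not the main obstacle, being parallel to Section~\ref{Section:existence}; the real work is to check that $X_\gamma$, $S_\alpha$, $D_\alpha$ are genuinely \emph{Banachable}, i.e.\ that the formulas defining $\|\cdot\|_{X_\gamma}$, $\|\cdot\|_{S_\alpha}$, $\|\cdot\|_{D_\alpha}$ give complete norms whose balls are Borel in $X_0$. This is exactly the content of Lemma~\ref{Lemma:norm-estimate} below, and it is here that the two modifications relative to the construction of Section~\ref{Section:existence} matter: the extra factors $k$ in $\boldsymbol{x}(n;\beta)=(k\,x(k,n;\gamma_k,\beta))_k$ and the passage to the fibred product $J_\gamma^\alpha\ast I^\alpha$ are precisely what allow a \emph{single} norm, with bounds uniform in $\gamma<\alpha$, to dominate all of the (a priori countably many) seminorms that in the Fréchet setting one may keep separate. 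One must also check that each coordinate functional $x\mapsto x(n;\beta)$ is continuous on $X_\gamma$, which is immediate from the definition of $\|\cdot\|_{X_\gamma}$, so that $X_\gamma$, $S_\alpha$, $D_\alpha$ are genuine $\mathrm{c}_0$-type Banach spaces and the inclusions $S_\alpha\subseteq D_\alpha\subseteq X_\alpha$ make sense.
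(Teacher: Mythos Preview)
Your outline is correct and matches the paper's strategy almost exactly: the paper proves the Banach analogues of the density and closure-of-neighbourhood lemmas (Lemmas~\ref{Lemma:denseB} and~\ref{Lemma:closure-neighborhoodB}), deduces Proposition~\ref{Proposition:solecki-subgroups-X0}, establishes the relative complexities in $X_{<\alpha}$ (Corollary~\ref{Corollary:basic-complexity-Banach}), and then finishes by the one-line appeal to Theorem~\ref{Theorem:complexity}, just as for Theorem~\ref{Theorem:complexityP}.

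There is one point where your sketch glosses over a subtlety peculiar to the Banach setting. You obtain the hardness direction of the relative complexities by invoking Lemma~\ref{Lemma:complexity-sequences} with $G=Z$ and pulling back along ``the surjection just constructed'', i.e.\ the analogue of Lemma~\ref{Lemma:proper}. But that analogue cannot have domain $Z^{\mathbb N}$: since $X_0=\mathrm c_0(I_0^\alpha,Z)$, any continuous linear section $\Phi$ into $X_{<\gamma}$ must start from $\mathrm c_0(\mathbb N,Z)$ and must carry the damping factors $\pi_{(n;\beta)}^{(k,m;\gamma_k,\tau)}$ so as to land in $\mathrm c_0$ at every level. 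Under such a $\Phi$ one has $\Phi(t)(k,m;\gamma_k,\tau)=t_k$, hence $\boldsymbol{\Phi(t)}(m;\tau)=(kt_k)_k=\tau(t)$, and the preimages are $\tau^{-1}(\ell_1(Z))$, $\tau^{-1}(\mathrm{bv}_0(Z))$, $\tau^{-1}(\mathrm c(\mathbb N,Z))$ rather than $\ell_1(Z)$, $\mathrm{bv}_0(Z)$, $\mathrm c(\mathbb N,Z)$ themselves. The paper therefore inserts a separate Lemma~\ref{Lemma:complexity-S-D-P} computing the complexity classes of these $\tau$-preimages inside $\mathrm c_0(\mathbb N,Z)$; the proofs are close in spirit to Lemma~\ref{Lemma:complexity-sequences} (Baire category for the $\boldsymbol\Sigma_2^0$ obstruction, a reduction from $E_0^{\mathbb N}$ for the $\boldsymbol\Pi_3^0$ case) but are not a direct citation of it. Once you replace the appeal to Lemma~\ref{Lemma:complexity-sequences} by this $\mathrm c_0$-variant, your argument goes through verbatim.
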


The rest of this section contains the proof of Theorem \ref%
{Theorem:complexityX}.

\begin{lemma}
\label{Lemma:norm-estimate}We have that%
\begin{equation*}
\left\Vert x\right\Vert _{X_{\delta }}\leq \left\Vert x\right\Vert
_{X_{\gamma }}
\end{equation*}%
for $\delta <\gamma \leq \alpha $ and $x\in X_{\gamma }$.
\end{lemma}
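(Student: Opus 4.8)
The plan is to unwind the two norm formulas and show that every scalar entry contributing to $\|x\|_{X_\delta}$ already contributes to $\|x\|_{X_\gamma}$. Since $\|x\|_{X_\delta}=\max\{\|T_\delta^{0}(x)\|_{\mathrm{c}_{0}(I_{\le\delta}^\alpha,\mathrm{c}(\mathbb{N},Z))},\|T_\delta^{1}(x)\|_{\mathrm{c}_{0}(J_\delta^\alpha\ast I^\alpha,Z)}\}$, it suffices to bound each of the two terms on the right by $\|x\|_{X_\gamma}$, and I would treat them separately. No induction on $\gamma$ is needed: the argument is a direct comparison once the definitions of $T_\delta^{0}$, $T_\delta^{1}$ and of the sequences $\boldsymbol{x}(n;\beta)$ are spelled out.

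For the $T^{0}$-term this is immediate. Because $\delta<\gamma$ we have $I_{\le\delta}^\alpha\subseteq I_{\le\gamma}^\alpha$, and $\|T_\delta^{0}(x)\|$ is the supremum of $\|\boldsymbol{x}(n;\beta)\|_{\mathrm{c}(\mathbb{N},Z)}$ over $(n;\beta)\in I_{\le\delta}^\alpha$, hence at most the supremum of the same quantities over $(n;\beta)\in I_{\le\gamma}^\alpha$, which equals $\|T_\gamma^{0}(x)\|\le\|x\|_{X_\gamma}$.

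For the $T^{1}$-term, I would fix a pair $((k,\sigma),(n;\beta))\in J_\delta^\alpha\ast I^\alpha$, so that $\sigma_k<\delta<\sigma\le\alpha$ and $(n;\beta)\in I_\sigma^\alpha$, and bound the corresponding entry $k\,x(k,n;\sigma_k,\beta)$ of $T_\delta^{1}(x)$ by splitting into two cases according to whether $\sigma>\gamma$ or $\sigma\le\gamma$. If $\sigma>\gamma$, then $\sigma_k<\delta<\gamma<\sigma\le\alpha$ gives $(k,\sigma)\in J_\gamma^\alpha$, hence $((k,\sigma),(n;\beta))\in J_\gamma^\alpha\ast I^\alpha$ and $|k\,x(k,n;\sigma_k,\beta)|\le\|T_\gamma^{1}(x)\|_{\mathrm{c}_{0}(J_\gamma^\alpha\ast I^\alpha,Z)}\le\|x\|_{X_\gamma}$. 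If instead $\sigma\le\gamma$, then $\sigma>\delta\ge 0$ forces $\sigma\ge 1$, so $(n;\beta)\in I_\sigma^\alpha\subseteq I_{\le\gamma}^\alpha$, and by the definition of $\boldsymbol{x}(n;\beta)$ for $(n;\beta)\in I_\sigma^\alpha$ with $\sigma\ge 1$ the value $k\,x(k,n;\sigma_k,\beta)$ is precisely the $k$-th entry of $\boldsymbol{x}(n;\beta)$; therefore $|k\,x(k,n;\sigma_k,\beta)|\le\|\boldsymbol{x}(n;\beta)\|_{\mathrm{c}(\mathbb{N},Z)}\le\|T_\gamma^{0}(x)\|\le\|x\|_{X_\gamma}$. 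Taking the supremum over all such pairs yields $\|T_\delta^{1}(x)\|\le\|x\|_{X_\gamma}$, and combining with the $T^{0}$-bound gives $\|x\|_{X_\delta}\le\|x\|_{X_\gamma}$.

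The one step requiring a little care is the case $\sigma\le\gamma$ in the $T^{1}$-bound: one must recognize the entry $k\,x(k,n;\sigma_k,\beta)$ of $T_\delta^{1}$ as a genuine coordinate of the convergent sequence $\boldsymbol{x}(n;\beta)$ attached to $(n;\beta)\in I_\sigma^\alpha$, so that it is already controlled by the $T_\gamma^{0}$-part of $\|x\|_{X_\gamma}$. This uses the recorded remark that $(k,n;\sigma_k,\beta)\in I_{\sigma_k}^\alpha$ whenever $(n;\beta)\in I_\sigma^\alpha$ and $\sigma\ge 1$, together with $\sigma_k<\delta<\gamma$, which also ensures that $x(k,n;\sigma_k,\beta)$ is well-defined on $X_\gamma\subseteq X_{<\gamma}\subseteq X_{\sigma_k}$. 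Everything else is a routine comparison of suprema.
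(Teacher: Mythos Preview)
Your proof is correct and follows essentially the same route as the paper: the paper also notes that only the $T_\delta^{1}$-term needs work (the $T^{0}$-bound being immediate from $I_{\le\delta}^\alpha\subseteq I_{\le\gamma}^\alpha$), and then bounds the entry $k\,x(k,n;\sigma_k,\beta)$ by the same two-case split, using $\|\boldsymbol{x}(n;\beta)\|_\infty$ when $\sigma\le\gamma$ and $\|T_\gamma^{1}(x)\|$ when $\sigma>\gamma$. Your write-up is slightly more explicit in justifying why $k\,x(k,n;\sigma_k,\beta)$ is a coordinate of $\boldsymbol{x}(n;\beta)$, but the argument is identical.
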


\begin{proof}
It suffices to prove that%
\begin{equation*}
\left\Vert T_{\delta }^{1}\left( x\right) \right\Vert _{\mathrm{c}_{0}\left(
J_{\delta }^{\alpha }\ast I^{\alpha },Z\right) }\leq \left\Vert x\right\Vert
_{X_{\gamma }}\text{.}
\end{equation*}%
Suppose that $\left( \left( k,\sigma \right) ,\left( n;\beta \right) \right)
\in J_{\delta }^{\alpha }\ast I^{\alpha }$. Then we have that $\sigma
_{k}<\delta <\sigma $ and $\left( n;\beta \right) \in I_{\sigma }^{\alpha }$%
. Suppose initially that $\sigma \leq \gamma $. Then we have that $\left(
n;\beta \right) \in I_{\leq \gamma }^{\alpha }$ and hence%
\begin{equation*}
\left\vert kx\left( k,n;\sigma _{k},\beta \right) \right\vert \leq
\left\Vert \boldsymbol{x}\left( n;\beta \right) \right\Vert _{\infty }\leq
\left\Vert x\right\Vert _{X_{\gamma }}\text{.}
\end{equation*}%
Suppose now that $\gamma <\sigma $. Then we have that $\left( \left(
k,\sigma \right) ,\left( n;\beta \right) \right) \in J_{\gamma }^{\alpha
}\ast I^{\alpha }$ and hence%
\begin{equation*}
\left\vert kx\left( k,n;\sigma _{k},\beta \right) \right\vert \leq
\left\Vert T_{\gamma }^{1}\left( x\right) \right\Vert _{\mathrm{c}_{0}\left(
J_{\gamma }^{\alpha }\ast I^{\alpha },\mathbb{R}\right) }\leq \left\Vert
x\right\Vert _{X_{\gamma }}\text{.}
\end{equation*}%
This concludes the proof.
\end{proof}

\begin{lemma}
\label{Lemma:construct-elementB}Fix $\gamma <\alpha $ and $x\in X_{\gamma }$%
. Let $F\subseteq I_{\leq \gamma }^{\alpha }$ be a finite set. Define $z\in
Z^{I_{0}^{\alpha }}$ by setting, for $\left( n;\beta \right) \in
I_{0}^{\alpha }$,%
\begin{equation}
z\left( n;\beta \right) =\left\{ 
\begin{array}{ll}
x\left( n;\beta \right) & \text{if }\left( n;\beta \right) \in F_{\downarrow
}\text{;} \\ 
0 & \text{otherwise.}%
\end{array}%
\right. \text{\label{Equation:element-equation}}
\end{equation}%
Then we have that $z\in S_{\alpha }$ and \eqref{Equation:element-equation}
holds for every $\left( n;\beta \right) \in I^{\alpha }$.
\end{lemma}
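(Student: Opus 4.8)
The plan is to mimic the proof of Lemma~\ref{Lemma:construct-element}, proving by induction on $\sigma\leq\alpha$ that $z\in X_{\sigma}$ (with the conclusion strengthened to $z\in S_{\alpha}$ at the top stage, where $I_{\alpha}^{\alpha}=\{(\varnothing;\varnothing)\}$) and that \eqref{Equation:element-equation} holds for every $(n;\beta)\in I_{\sigma}^{\alpha}$. The only point that goes beyond the group-theoretic argument is checking the $\mathrm{c}_{0}$-type summability conditions built into the norms $\lVert\cdot\rVert_{X_{\sigma}}$, and for this the right observation is that $z$ is coordinatewise dominated by $x$: every value $z(n;\beta)$ is either $x(n;\beta)$ or $0$, so every entry of every sequence $\boldsymbol{z}(n;\beta)$, and every coordinate of $T_{\sigma}^{0}(z)$ and $T_{\sigma}^{1}(z)$, is either the corresponding entry for $x$ or zero. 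The base case $\sigma=0$ is then immediate: $z\in\mathrm{c}_{0}(I_{0}^{\alpha},Z)=X_{0}$ because $x\in X_{0}$ and the set of $(n;\beta)\in I_{0}^{\alpha}$ with $\lvert z(n;\beta)\rvert\geq\varepsilon$ is contained in the finite set with $\lvert x(n;\beta)\rvert\geq\varepsilon$, and \eqref{Equation:element-equation} holds by definition of $z$.

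For the inductive step, fix $\sigma$ with $1\leq\sigma\leq\alpha$; the inductive hypothesis gives $z\in\bigcap_{\delta<\sigma}X_{\delta}=X_{<\sigma}$. Given $(n;\beta)\in I_{\sigma}^{\alpha}$, analyse $\boldsymbol{z}(n;\beta)=(k\,z(k,n;\sigma_{k},\beta))_{k}$ exactly as in Lemma~\ref{Lemma:construct-element}. If $(n;\beta)\in F_{\downarrow}$, then $\sigma\leq\gamma$ (as $F\subseteq I_{\leq\gamma}^{\alpha}$ and $F_{\downarrow}$ is downward closed), and $(k,n;\sigma_{k},\beta)\leq(n;\beta)$ lies in $F_{\downarrow}\cap I_{\sigma_{k}}^{\alpha}$ for each $k$, so by the inductive hypothesis $z(k,n;\sigma_{k},\beta)=x(k,n;\sigma_{k},\beta)$; hence $\boldsymbol{z}(n;\beta)=\boldsymbol{x}(n;\beta)$, which converges to $x(n;\beta)$ since $x\in X_{\gamma}\subseteq X_{\sigma}$. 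If $(n;\beta)\notin F_{\downarrow}$, then, by the finiteness property of downward closures of finite subsets of $I^{\alpha}$ used in the proof of Lemma~\ref{Lemma:construct-element}, $(k,n;\sigma_{k},\beta)\notin F_{\downarrow}$ for all large $k$, so $\boldsymbol{z}(n;\beta)$ is eventually $0$ and converges to $0$. In both cases $\boldsymbol{z}(n;\beta)$ converges and \eqref{Equation:element-equation} holds at $(n;\beta)$.

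It remains to upgrade this to $z\in X_{\sigma}$, i.e.\ to verify $T_{\sigma}^{0}(z)\in\mathrm{c}_{0}(I_{\leq\sigma}^{\alpha},\mathrm{c}(\mathbb{N},Z))$ and $T_{\sigma}^{1}(z)\in\mathrm{c}_{0}(J_{\sigma}^{\alpha}\ast I^{\alpha},Z)$. By coordinatewise domination, $\lVert\boldsymbol{z}(n;\beta)\rVert_{\infty}\leq\lVert\boldsymbol{x}(n;\beta)\rVert_{\infty}$ and the analogous inequality holds coordinate by coordinate for $T_{\sigma}^{1}$; since $x\in X_{\gamma}$ and $\lVert x\rVert_{X_{\sigma}}\leq\lVert x\rVert_{X_{\gamma}}$ by Lemma~\ref{Lemma:norm-estimate}, both $\mathrm{c}_{0}$-conditions for $x$ transfer to $z$ (for each $\varepsilon>0$ the set of indices where $z$ has norm $\geq\varepsilon$ sits inside the corresponding finite set for $x$), giving $z\in X_{\sigma}$ with $\lVert z\rVert_{X_{\sigma}}\leq\lVert x\rVert_{X_{\gamma}}$. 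Running the induction through all $\sigma<\alpha$ yields $z\in X_{<\alpha}$ with $\sup_{\sigma<\alpha}\lVert z\rVert_{X_{\sigma}}<+\infty$; and since $\gamma<\alpha$, only finitely many $k$ have $(k;\alpha_{k})\in F_{\downarrow}$ (if $\alpha$ is a limit since $\alpha_{k}\to\alpha>\gamma$ eventually pushes $(k;\alpha_{k})$ out of $I_{\leq\gamma}^{\alpha}$, and if $\alpha$ is a successor since $(\varnothing;\varnothing)\notin F$ forces $(k;\alpha_{k})\in F_{\downarrow}$ only when $(k;\alpha_{k})\in F$), so $\boldsymbol{z}(\alpha)$ is finitely supported and lies in $\ell_{1}(Z)$, whence $z\in S_{\alpha}$. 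The main obstacle is precisely the clean formulation and use of the coordinatewise domination of $z$ by $x$ across all the auxiliary fibred index sets and their norms; the rest is a routine transcription of the proof of Lemma~\ref{Lemma:construct-element}.
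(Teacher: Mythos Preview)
Your domination argument is clean and correct for $\sigma\leq\gamma$, but it breaks down for $\gamma<\sigma\leq\alpha$. In that range $x\notin X_{\sigma}$, so the quantities $\lVert\boldsymbol{x}(n;\beta)\rVert_{\infty}$ for $(n;\beta)\in I_{\delta}^{\alpha}$ with $\gamma<\delta\leq\sigma$ are simply undefined, and Lemma~\ref{Lemma:norm-estimate} only gives $\lVert x\rVert_{X_{\delta}}\leq\lVert x\rVert_{X_{\gamma}}$ for $\delta<\gamma$, not for $\sigma>\gamma$. Concretely, take $\sigma=\gamma+1<\alpha$ and $(n;\beta)\in I_{\gamma+1}^{\alpha}$: the $k$-th entry of $\boldsymbol{z}(n;\beta)$ is $k\,z(k,n;\gamma,\beta)$ with $(k,n;\gamma,\beta)\in I_{\gamma}^{\alpha}$; when $(k,n;\gamma,\beta)\in F$ this equals $k\,x(k,n;\gamma,\beta)$, and nothing in $\lVert x\rVert_{X_{\gamma}}$ controls that $k$-fold multiple (only $\lVert\boldsymbol{x}(k,n;\gamma,\beta)\rVert_{\infty}$, i.e.\ the sequence whose \emph{limit} is $x(k,n;\gamma,\beta)$, appears in $T_{\gamma}^{0}$). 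So the claimed bound $\lVert z\rVert_{X_{\sigma}}\leq\lVert x\rVert_{X_{\gamma}}$ is false in general when $F$ meets $I_{\gamma}^{\alpha}$; that bound is exactly the content of Lemma~\ref{Lemma:z-from-x}, whose hypothesis $F\subseteq I_{<\gamma}^{\alpha}$ (strict) is essential.

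The paper handles $\sigma>\gamma$ by a different, finiteness-based mechanism: it introduces the finite \emph{upward} set $\tilde{F}=\{(m;\tau)\in I^{\alpha}:\exists (n;\beta)\in F,\ (n;\beta)\leq(m;\tau)\}$ and observes that for $(n;\beta)\in I_{\delta}^{\alpha}$ with $\delta>\gamma$, the only tail of $(k,n;\delta_{k},\beta)$ lying in $I_{\leq\gamma}^{\alpha}$ is $(k,n;\delta_{k},\beta)$ itself, so $(k,n;\delta_{k},\beta)\in F_{\downarrow}$ forces $(k,n;\delta_{k},\beta)\in F$ and hence $(n;\beta)\in\tilde{F}$. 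Thus for $(n;\beta)\notin\tilde{F}$ one has $\boldsymbol{z}(n;\beta)=0$, and the $c_{0}$-condition for $T_{\sigma}^{0}(z)$ on indices of level $>\gamma$ follows from finiteness of $\tilde{F}$ rather than from comparison with $x$; the same device handles $T_{\sigma}^{1}$. If you splice this case $\sigma>\gamma$ into your argument (and drop the norm bound), the proof goes through and coincides with the paper's.
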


\begin{proof}
We prove by induction on $\sigma \leq \alpha $ that $z\in X_{\sigma }$ and %
\eqref{Equation:element-equation} holds for every $\left( n;\beta \right)
\in I_{\sigma }^{\alpha }$. Define%
\begin{equation*}
\tilde{F}=\left\{ \left( m;\gamma \right) \in I^{\alpha }:\exists \left(
n;\beta \right) \in F,\left( n;\beta \right) \leq \left( m;\gamma \right)
\right\} \text{.}
\end{equation*}

\textbf{Case }$\sigma =0$: We have that \eqref{Equation:element-equation}
holds for $\left( n;\beta \right) \in I_{0}^{\alpha }$ by definition of $z$.
As $x\in X_{\gamma }$, for every $\varepsilon >0$ there exists a finite
subset $E$ of $I_{\leq \gamma }^{\alpha }$ such that $\left\Vert \boldsymbol{%
x}\left( n;\beta \right) \right\Vert _{\infty }<\varepsilon $ for $\left(
n;\beta \right) \in I_{\leq \gamma }^{\alpha }\setminus E$. Thus, if $\left(
n;\beta \right) \in I_{0}^{\alpha }\setminus E$, then%
\begin{equation*}
\left\Vert \boldsymbol{z}\left( n;\beta \right) \right\Vert _{\infty
}=\left\vert z\left( n;\beta \right) \right\vert \leq \left\vert x\left(
n;\beta \right) \right\vert <\varepsilon \text{.}
\end{equation*}

\textbf{Case }$1\leq \sigma \leq \gamma $: Fix $\left( n;\beta \right) \in
I_{\sigma }^{\alpha }$. If $\left( n;\beta \right) \in F_{\downarrow }$ then 
$\left( k,n;\sigma _{k},\beta \right) \in F_{\downarrow }$ for every $k\in 
\mathbb{N}$. Thus, by the inductive hypothesis,%
\begin{equation*}
kz\left( k,n;\sigma _{k},\beta \right) =kx\left( k,n;\sigma _{k},\beta
\right)
\end{equation*}%
for every $k\in \mathbb{N}$, and hence%
\begin{equation*}
\boldsymbol{z}\left( n;\beta \right) =\boldsymbol{x}\left( n;\beta \right) 
\text{.}
\end{equation*}%
Since by assumption $\boldsymbol{x}\left( n;\beta \right) $ is a convergent
sequence with limit $x\left( n;\beta \right) $, we have that $\boldsymbol{z}%
\left( n;\beta \right) $ is a convergent sequence with limit $z\left(
n;\beta \right) =x\left( n;\beta \right) $. If $\left( n;\beta \right)
\notin F_{\downarrow }$ then there exists $N\in \mathbb{N}$ such that for $%
k>N$, $\left( k,n;\sigma _{k},\beta \right) \notin F_{\downarrow }$. By the
inductive hypothesis, we have that $kz\left( k,n;\sigma _{k},\beta \right)
=0 $ for $k>N$. Thus, $\boldsymbol{z}\left( n;\beta \right) $ is a sequence
eventually zero with limit $z\left( n;\beta \right) =0$.

Fix $\varepsilon >0$. Since $x\in X_{\gamma }$, there exist a finite subset $%
E\subseteq I_{\leq \gamma }^{\alpha }$ such that $\left\Vert \boldsymbol{x}%
\left( n;\beta \right) \right\Vert _{\infty }<\varepsilon $ for $\left(
n;\beta \right) \in I_{\leq \gamma }^{\alpha }\setminus E$, and a finite
subset $E^{\prime }\subseteq J_{\gamma }^{\alpha }\ast I^{\alpha }$ such
that $\left\vert kx\left( k,n;\tau _{k},\beta \right) \right\vert
<\varepsilon $ for $\left( \left( k,\tau \right) ,\left( n;\beta \right)
\right) \in \left( J_{\gamma }^{\alpha }\ast I^{\alpha }\right) \setminus
E^{\prime }$. Define%
\begin{equation*}
E^{\prime \prime }=E^{\prime }\cup \left\{ \left( \left( k,\tau \right)
,\left( n;\beta \right) \right) \in \left( J_{\sigma }^{\alpha }\ast
I^{\alpha }\right) :\left( n;\beta \right) \in E\right\}
\end{equation*}%
If $\left( n;\beta \right) \in I_{\leq \sigma }^{\alpha }\setminus E$ then
we have that%
\begin{equation*}
\left\Vert \boldsymbol{z}\left( n;\beta \right) \right\Vert _{\infty }\leq
\left\Vert \boldsymbol{x}\left( n;\beta \right) \right\Vert _{\infty }\leq
\varepsilon \text{.}
\end{equation*}%
If $\left( \left( k,\tau \right) ,\left( n;\beta \right) \right) \in \left(
J_{\sigma }^{\alpha }\ast I^{\alpha }\right) \setminus E^{\prime \prime }$
then we have that $\tau _{k}<\sigma <\tau $ and $\left( n;\beta \right) \in
I_{\tau }^{\alpha }$. If $\tau \leq \gamma $ then we have that $\left(
n;\beta \right) \in I_{\leq \gamma }^{\alpha }\setminus E$ and hence%
\begin{equation*}
\left\vert kz\left( k,n;\tau _{k},\beta \right) \right\vert \leq \left\Vert 
\boldsymbol{x}\left( n;\beta \right) \right\Vert _{\infty }\leq \varepsilon 
\text{.}
\end{equation*}%
If $\gamma <\tau $ then $\left( \left( k,\tau \right) ,\left( n;\beta
\right) \right) \in \left( J_{\gamma }^{\alpha }\ast I\right) \setminus
E^{\prime }$ and hence%
\begin{equation*}
\left\vert kz\left( k,n;\tau _{k},\beta \right) \right\vert \leq \left\vert
kx\left( k,n;\tau _{k},\beta \right) \right\vert \leq \varepsilon \text{.}
\end{equation*}

\textbf{Case }$\sigma >\gamma $: Fix $\left( n;\beta \right) \in I_{\sigma
}^{\alpha }$. Then by the inductive assumption we have that $z\left(
k,n;\sigma _{k},\beta \right) =0$ for $k>N$. Thus, the sequence $\boldsymbol{%
z}\left( n;\beta \right) $ is eventually zero, and $z\left( n;\beta \right)
=0$.

Fix $\varepsilon >0$. Since $x\in X_{\gamma }$, there exist a finite set $%
E\subseteq I_{\leq \gamma }^{\alpha }$ such that%
\begin{equation*}
\left\Vert \boldsymbol{x}\left( n;\beta \right) \right\Vert _{\infty
}<\varepsilon
\end{equation*}%
for $\left( n;\beta \right) \in I_{\leq \gamma }^{\alpha }\setminus E$, and
a finite set $E^{\prime }\subseteq \left( J_{\gamma }^{\alpha }\ast
I^{\alpha }\right) $ such that%
\begin{equation*}
\left\vert kx\left( k,n;\tau _{k},\beta \right) \right\vert <\varepsilon
\end{equation*}%
for $\left( \left( k,\tau \right) ,\left( n;\beta \right) \right) \in \left(
J_{\gamma }^{\alpha }\ast I^{\alpha }\right) \setminus E^{\prime }$. Define%
\begin{equation*}
\tilde{E}=E\cup \tilde{F}
\end{equation*}%
\begin{equation*}
\tilde{E}^{\prime }=\tilde{F}\cup E^{\prime }\cup \left\{ \left( \left(
k,\tau \right) ,\left( n;\beta \right) \right) \in J_{\sigma }^{\alpha }\ast
I^{\alpha }:\left( n;\beta \right) \in \tilde{E}\right\} \text{.}
\end{equation*}%
Fix $\left( n;\beta \right) \in I_{\leq \sigma }^{\alpha }\setminus \tilde{E}
$. Fix $\delta \leq \sigma $ such that $\left( n;\beta \right) \in I_{\delta
}^{\alpha }$. If $\delta \leq \gamma $, then we have that $\left( m;\beta
\right) \in I_{\leq \gamma }^{\alpha }\setminus E$ and hence%
\begin{equation*}
\left\Vert \boldsymbol{z}\left( n;\beta \right) \right\Vert _{\infty }\leq
\left\Vert \boldsymbol{x}\left( n;\beta \right) \right\Vert _{\infty }\leq
\varepsilon \text{.}
\end{equation*}%
Suppose that $\delta >\gamma $, and fix $k\in \mathbb{N}$. If $z\left(
k,n;\delta _{k},\beta \right) \neq 0$ then we have that $\left( k,n;\delta
_{k},\beta \right) \in F$, whence $\left( n;\beta \right) \in \tilde{F}%
\subseteq \tilde{E}$, contradicting the hypothesis. Thus, $\boldsymbol{z}%
\left( n;\beta \right) $ is the sequence constantly equal to zero. Fix $%
\left( \left( k,\tau \right) ,\left( n;\beta \right) \right) \in \left(
J_{\sigma }^{\alpha }\ast I^{\alpha }\right) \setminus \tilde{E}^{\prime }$.
Thus, $\tau _{k}<\sigma <\tau $ and $\left( n;\beta \right) \in I_{\tau
}^{\alpha }$. If $\tau _{k}<\gamma $, then $\left( \left( k,\tau \right)
,\left( n;\beta \right) \right) \in \left( J_{\gamma }^{\alpha }\ast
I^{\alpha }\right) \setminus E^{\prime }$ and hence%
\begin{equation*}
\left\vert kz\left( k,n;\tau _{k},\beta \right) \right\vert \leq \left\vert
kx\left( k,n;\tau _{k},\beta \right) \right\vert \leq \varepsilon \text{.}
\end{equation*}%
Suppose that $\tau _{k}=\gamma $. If $z\left( k,n;\tau _{k},\beta \right) $
is nonzero, then $\left( k,n;\tau _{k},\beta \right) \in F$ and hence $%
\left( n;\beta \right) \in \tilde{F}\subseteq \tilde{E}$, contradicting the
assumption that $\left( k,n;\tau _{k},\beta \right) \notin \tilde{E}^{\prime
}$. If $\tau _{k}>\gamma $ then we have that $\left( k,n;\tau _{k},\beta
\right) \notin F_{\downarrow }$ and hence $z\left( k,n;\tau _{k},\beta
\right) =0$. This concludes the inductive proof.

Finally, to see that $z\in S_{\alpha }$ observe that if $N=\max \left\{ k\in 
\mathbb{N}:\alpha _{k}\leq \gamma \right\} $ then we have that%
\begin{equation*}
\sum_{k\in \mathbb{N}}\left\vert z\left( k;\alpha _{k}\right) \right\vert
\leq \sum_{k\leq N}\left\vert x\left( k;\alpha _{k}\right) \right\vert
<+\infty \text{.}
\end{equation*}
\end{proof}

The next lemma is similar to the previous one, with the difference that the
finite set $F$ is supposed to be a subset of $I_{<\gamma }^{\alpha }$
instead of $I_{\leq \gamma }^{\alpha }$.

\begin{lemma}
\label{Lemma:z-from-x}Fix $\gamma <\alpha $ and $x\in X_{\gamma }$. Let $%
F\subseteq I_{<\gamma }^{\alpha }$ be a finite set. Define $z\in
Z^{I_{0}^{\alpha }}$ by setting, for $\left( n;\beta \right) \in
I_{0}^{\alpha }$,%
\begin{equation}
z\left( n;\beta \right) =\left\{ 
\begin{array}{ll}
x\left( n;\beta \right) & \text{if }\left( n;\beta \right) \in F_{\downarrow
}\text{;} \\ 
0 & \text{otherwise.}%
\end{array}%
\text{\label{Equation:z}}\right.
\end{equation}%
Then we have that $z\in S_{\alpha }$, $\left\Vert z\right\Vert _{X_{\alpha
}}\leq \left\Vert x\right\Vert _{X_{\gamma }}$, and furthermore %
\eqref{Equation:z} holds for every $\left( n;\beta \right) \in I^{\alpha }$.
Furthermore 
\begin{equation*}
\left\Vert z\right\Vert _{S_{\alpha }}\leq \max \left\{ \sum_{k\leq
N}\left\Vert x\left( k;\alpha _{k}\right) \right\Vert ,\left\Vert
x\right\Vert _{X_{\gamma }}\right\}
\end{equation*}%
where $N=\max \left\{ k\in \mathbb{N}:\alpha _{k}<\gamma \right\} $.
\end{lemma}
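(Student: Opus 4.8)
The plan is to obtain the first two assertions for free from Lemma~\ref{Lemma:construct-elementB} and then to verify the two norm bounds directly. Since $I_{<\gamma}^{\alpha}\subseteq I_{\leq\gamma}^{\alpha}$, the set $F$ is in particular a finite subset of $I_{\leq\gamma}^{\alpha}$, so applying Lemma~\ref{Lemma:construct-elementB} to this $F$ shows that $z\in S_{\alpha}$ and that \eqref{Equation:z} holds for every $(n;\beta)\in I^{\alpha}$. It remains to establish that $\Vert z\Vert_{X_{\alpha}}\leq\Vert x\Vert_{X_{\gamma}}$ and that $\Vert z\Vert_{S_{\alpha}}\leq\max\{\sum_{k\leq N}|x(k;\alpha_{k})|,\Vert x\Vert_{X_{\gamma}}\}$.

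The key observation is that the order relation on $I^{\alpha}$ never increases the fibre level: if $(m;\tau)\leq(n;\beta)$ then $(m;\tau)\in I_{\delta_{0}}^{\alpha}$ and $(n;\beta)\in I_{\delta_{1}}^{\alpha}$ with $\delta_{0}\leq\delta_{1}$. Hence the downward closure of the finite set $F\subseteq I_{<\gamma}^{\alpha}$ satisfies $F_{\downarrow}\subseteq I_{<\gamma}^{\alpha}$, so every coordinate $z(m;\tau)\neq 0$ has $(m;\tau)\in I_{<\gamma}^{\alpha}$ and equals $x(m;\tau)$. I then bound, for each $\gamma'<\alpha$, the two constituents $\Vert T_{\gamma'}^{0}(z)\Vert$ and $\Vert T_{\gamma'}^{1}(z)\Vert$ of $\Vert z\Vert_{X_{\gamma'}}$. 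Both are suprema of norms of terms of the form $k\,z(k,n;\rho_{k},\beta)$; those that are not automatically $0$ have $(k,n;\rho_{k},\beta)\in F_{\downarrow}\subseteq I_{<\gamma}^{\alpha}$, so $\rho_{k}<\gamma$ and $z(k,n;\rho_{k},\beta)=x(k,n;\rho_{k},\beta)$. If the relevant $(n;\beta)$ lies at fibre level $\rho\leq\gamma$, then $(n;\beta)\in I_{\leq\gamma}^{\alpha}$ and $|k\,x(k,n;\rho_{k},\beta)|\leq\Vert\boldsymbol{x}(n;\beta)\Vert_{\infty}\leq\Vert T_{\gamma}^{0}(x)\Vert\leq\Vert x\Vert_{X_{\gamma}}$; if instead $\rho>\gamma$, then $((k,\rho),(n;\beta))\in J_{\gamma}^{\alpha}\ast I^{\alpha}$ since $\rho_{k}<\gamma<\rho\leq\alpha$, whence $|k\,x(k,n;\rho_{k},\beta)|\leq\Vert T_{\gamma}^{1}(x)\Vert\leq\Vert x\Vert_{X_{\gamma}}$. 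Thus $\Vert z\Vert_{X_{\gamma'}}\leq\Vert x\Vert_{X_{\gamma}}$ for every $\gamma'<\alpha$. Since $\Vert z\Vert_{X_{\alpha}}=\max\{\sup_{\gamma'<\alpha}\Vert z\Vert_{X_{\gamma'}},\Vert(k\,z(k;\alpha_{k}))_{k}\Vert_{\mathrm{c}(\mathbb{N},Z)}\}$ and, for each $k$, the quantity $k\,z(k;\alpha_{k})$ is a coordinate of $T_{\gamma'}^{1}(z)$ for any $\gamma'$ with $\alpha_{k}<\gamma'<\alpha$ (it is simply $0$ when $\alpha$ is a successor, as then $\alpha_{k}\geq\gamma$), the second term in the maximum is dominated by the first, and $\Vert z\Vert_{X_{\alpha}}\leq\Vert x\Vert_{X_{\gamma}}$ follows.

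For the $S_{\alpha}$-bound, recall $\Vert z\Vert_{S_{\alpha}}=\max\{\Vert z\Vert_{X_{\alpha}},\Vert(z(k;\alpha_{k}))_{k}\Vert_{\ell_{1}(Z)}\}$. If $k>N$ then $\alpha_{k}\geq\gamma$, so $(k;\alpha_{k})\notin I_{<\gamma}^{\alpha}\supseteq F_{\downarrow}$ and $z(k;\alpha_{k})=0$; if $k\leq N$ then $z(k;\alpha_{k})$ is either $x(k;\alpha_{k})$ or $0$, so $|z(k;\alpha_{k})|\leq|x(k;\alpha_{k})|$. Hence $\Vert(z(k;\alpha_{k}))_{k}\Vert_{\ell_{1}(Z)}=\sum_{k\leq N}|z(k;\alpha_{k})|\leq\sum_{k\leq N}|x(k;\alpha_{k})|$, which together with the previous paragraph yields the stated estimate. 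The step I expect to take the most care is the bookkeeping of which of the recursively defined fibred index sets a given coordinate $(k,n;\rho_{k},\beta)$ belongs to --- whether it is caught by $I_{\leq\gamma}^{\alpha}$ through $T_{\gamma}^{0}$ or by $J_{\gamma}^{\alpha}\ast I^{\alpha}$ through $T_{\gamma}^{1}$ --- but once the inclusion $F_{\downarrow}\subseteq I_{<\gamma}^{\alpha}$ is available this is routine, if notation-heavy.
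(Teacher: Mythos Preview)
Your proposal is correct and follows essentially the same approach as the paper: both invoke Lemma~\ref{Lemma:construct-elementB} for the first two conclusions, then directly bound each coordinate term $k\,z(k,n;\rho_k,\beta)$ appearing in $\Vert z\Vert_{X_\sigma}$ by case-splitting on whether the parent $(n;\beta)$ lies at fibre level $\leq\gamma$ (use $T_\gamma^0(x)$) or $>\gamma$ (use $T_\gamma^1(x)$). The paper organizes the verification as an ``induction'' on $\sigma\leq\alpha$ (which never actually invokes the inductive hypothesis), while you bound all $\Vert z\Vert_{X_{\gamma'}}$ uniformly; your explicit observation $F_\downarrow\subseteq I_{<\gamma}^\alpha$ streamlines the argument slightly. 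One minor imprecision: your claim that $\Vert T_{\gamma'}^0(z)\Vert$ is a supremum of terms of the form $|k\,z(k,n;\rho_k,\beta)|$ omits the contributions from $(n;\beta)\in I_0^\alpha$, for which $\boldsymbol{z}(n;\beta)$ is the constant sequence $z(n;\beta)$; but these satisfy $|z(n;\beta)|\leq|x(n;\beta)|\leq\Vert T_\gamma^0(x)\Vert$ immediately, so this is cosmetic.
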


\begin{proof}
It follows from Lemma \ref{Lemma:construct-elementB} that $z\in S_{\alpha }$
and \eqref{Equation:z} holds for every $\left( n;\beta \right) \in I_{\sigma
}^{\alpha }$. We now prove by induction on $\sigma \leq \alpha $ that $%
\left\Vert z\right\Vert _{X_{\sigma }}\leq \left\Vert x\right\Vert
_{X_{\gamma }}$. Suppose that the conclusion holds for every $\delta <\sigma 
$.

\textbf{Case }$\sigma =0$: If $\left( n;\beta \right) \in I_{0}^{\alpha }$,
then we have that $\left\vert z\left( n;\beta \right) \right\vert \leq
\left\vert x\left( n;\beta \right) \right\vert $. This shows that $%
\left\Vert z\right\Vert _{X_{0}}\leq \left\Vert x\right\Vert _{X_{0}}\leq
\left\Vert x\right\Vert _{X_{\gamma }}$.

\textbf{Case }$1\leq \sigma \leq \gamma $: For $\left( n;\beta \right) \in
I_{\leq \sigma }^{\alpha }$, we have%
\begin{equation*}
\left\Vert \boldsymbol{z}\left( n;\beta \right) \right\Vert _{\infty }\leq
\left\Vert \boldsymbol{x}\left( n;\beta \right) \right\Vert _{\infty }\leq
\left\Vert x\right\Vert _{X_{\gamma }}\text{.}
\end{equation*}%
Fix $\left( \left( k,\tau \right) ,\left( n;\beta \right) \right) \in
J_{\sigma }^{\alpha }\ast I^{\alpha }$. Thus, we have that $\tau _{k}<\sigma
<\tau $ and $\left( n;\beta \right) \in I_{\tau }^{\alpha }$. If $\tau \leq
\gamma $, then $\left( n;\beta \right) \in I_{\leq \gamma }^{\alpha }$ and
hence%
\begin{equation*}
\left\vert kz\left( k,n;\tau _{k},\beta \right) \right\vert \leq \left\vert
kx\left( k,n;\tau _{k},\beta \right) \right\vert \leq \left\Vert \boldsymbol{%
x}\left( n;\beta \right) \right\Vert _{\infty }\leq \left\Vert x\right\Vert
_{X_{\gamma }}\text{.}
\end{equation*}%
If $\gamma <\tau $, then $\left( \left( k,\tau \right) ,\left( n;\beta
\right) \right) \in J_{\gamma }^{\alpha }\ast I^{\alpha }$ and hence%
\begin{equation*}
\left\vert kz\left( k,n;\tau _{k},\beta \right) \right\vert \leq \left\vert
kx\left( k,n;\tau _{k},\beta \right) \right\vert \leq \left\Vert
x\right\Vert _{X_{\gamma }}\text{.}
\end{equation*}

\textbf{Case }$\sigma >\gamma $. Suppose that $\left( n;\beta \right) \in
I_{\delta }^{\alpha }$ for some $\delta \leq \sigma $. If $\delta \leq
\gamma $, then%
\begin{equation*}
\left\Vert \boldsymbol{z}\left( n;\beta \right) \right\Vert _{\infty }\leq
\left\Vert \boldsymbol{x}\left( n;\beta \right) \right\Vert _{\infty }\leq
\left\Vert x\right\Vert _{X_{\gamma }}\text{.}
\end{equation*}%
If $\gamma <\delta $ then for every $k\in \mathbb{N}$ we have that either $%
\delta _{k}<\gamma $, in which case $\left( \left( k,\delta \right) ,\left(
n;\beta \right) \right) \in J_{\gamma }^{\alpha }\ast I^{\alpha }$ and hence%
\begin{equation*}
\left\vert kz\left( k,n;\delta _{k},\beta \right) \right\vert \leq
\left\vert kx\left( k,n;\delta _{k},\beta \right) \right\vert \leq
\left\Vert x\right\Vert _{X_{\gamma }}
\end{equation*}%
or $\gamma \leq \delta _{k}$, in which case $\left( k,n;\delta _{k},\beta
\right) \notin F_{\downarrow }$ and%
\begin{equation*}
z\left( k,n;\delta _{k},\beta \right) =0\text{.}
\end{equation*}%
Thus%
\begin{equation*}
\left\Vert \boldsymbol{z}\left( n;\beta \right) \right\Vert _{\infty }\leq
\left\Vert x\right\Vert _{X_{\gamma }}\text{.}
\end{equation*}%
Suppose now that $\left( \left( k,\tau \right) ,\left( n,\beta \right)
\right) \in J_{\sigma }^{\alpha }\ast I^{\alpha }$. Thus $\tau _{k}<\sigma
<\tau $. If $\tau _{k}<\gamma $ then we have that $\left( \left( k,\tau
\right) ,\left( n,\beta \right) \right) \in J_{\gamma }^{\alpha }\ast
I^{\alpha }$ and hence%
\begin{equation*}
\left\vert kz\left( k,n;\tau _{k},\beta \right) \right\vert \leq \left\vert
kx\left( k,n;\tau _{k},\beta \right) \right\vert \leq \left\Vert
x\right\Vert _{X_{\gamma }}
\end{equation*}%
If $\gamma \leq \tau _{k}$ then $\left( k,n;\tau _{k},\beta \right) \notin
F_{\downarrow }$ and hence%
\begin{equation*}
z\left( k,n;\tau _{k},\beta \right) =0\text{.}
\end{equation*}%
This concludes the inductive proof that $\left\Vert z\right\Vert _{X_{\sigma
}}\leq \left\Vert x\right\Vert _{X_{\gamma }}$ for every $\sigma \leq \alpha 
$.

Finally, we have that%
\begin{equation*}
\sum_{k\in \mathbb{N}}\left\vert z\left( k;\alpha _{k}\right) \right\vert
\leq \sum_{k\leq N}\left\vert x\left( k;\alpha _{k}\right) \right\vert \text{%
.}
\end{equation*}%
This shows that $z\in S_{\alpha }$ and%
\begin{equation*}
\left\Vert z\right\Vert _{S_{\alpha }}=\max \left\{ \left\Vert z\right\Vert
_{X_{\alpha }},\sum_{k\in \mathbb{N}}\left\vert z\left( k;\alpha _{k}\right)
\right\vert \right\} \leq \max \left\{ \left\Vert x\right\Vert _{X_{\alpha
}},\sum_{k\leq N}\left\vert x\left( k;\alpha _{k}\right) \right\vert
\right\} \text{.}
\end{equation*}%
This concludes the proof.
\end{proof}

\begin{lemma}
\label{Lemma:denseB}For every $\gamma <\alpha $, $S_{\alpha }$ is dense in $%
X_{\gamma }$.
\end{lemma}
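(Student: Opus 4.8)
The plan is to adapt the proof of Lemma \ref{Lemma:dense} to the Banach setting. The only genuinely new point is that a basic neighborhood of a point of $X_\gamma$ is a norm ball rather than a ``box'' of finitely many coordinate conditions, so one cannot arrange that the approximant agree exactly with $x$ on the relevant coordinates; instead one exploits that $x\in X_\gamma$ already forces its tail coordinates to be small.

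Concretely, I would fix $\gamma<\alpha$, $x\in X_\gamma$, and $\varepsilon>0$, and produce $z\in S_\alpha$ with $\Vert x-z\Vert_{X_\gamma}<\varepsilon$. Since $T_\gamma^0(x)\in\mathrm{c}_0(I_{\leq\gamma}^\alpha,\mathrm{c}(\mathbb{N},Z))$ and $T_\gamma^1(x)\in\mathrm{c}_0(J_\gamma^\alpha\ast I^\alpha,Z)$, there are a finite $E\subseteq I_{\leq\gamma}^\alpha$ with $\Vert\boldsymbol{x}(n;\beta)\Vert_\infty<\varepsilon/2$ for $(n;\beta)\in I_{\leq\gamma}^\alpha\setminus E$, and a finite $E'\subseteq J_\gamma^\alpha\ast I^\alpha$ with $\vert kx(k,n;\sigma_k,\beta)\vert<\varepsilon/2$ for $((k,\sigma),(n;\beta))\in(J_\gamma^\alpha\ast I^\alpha)\setminus E'$. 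Put
\[
F=E\cup\bigl\{(k,n;\sigma_k,\beta):((k,\sigma),(n;\beta))\in E'\bigr\}.
\]
Each element $(k,n;\sigma_k,\beta)$ arising from $E'$ lies in $I_{\sigma_k}^\alpha$ with $\sigma_k<\gamma$, so $F$ is a finite subset of $I_{\leq\gamma}^\alpha$, and Lemma \ref{Lemma:construct-elementB} applies: it produces $z\in S_\alpha$ with $z(n;\beta)=x(n;\beta)$ for $(n;\beta)\in F_\downarrow$ and $z(n;\beta)=0$ otherwise, this holding for every $(n;\beta)\in I^\alpha$.

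It then remains to bound $\Vert x-z\Vert_{X_\gamma}=\max\{\Vert T_\gamma^0(x-z)\Vert,\Vert T_\gamma^1(x-z)\Vert\}$ (note $x-z\in X_\gamma$ since $z\in S_\alpha\subseteq X_\gamma$). Two observations do the work. First, for every $(n;\beta)\in I^\alpha$ each entry of the sequence $\boldsymbol{x}(n;\beta)-\boldsymbol{z}(n;\beta)$ is either $0$ or equals the corresponding entry of $\boldsymbol{x}(n;\beta)$. Second, if $(n;\beta)\in F_\downarrow$ then every index occurring along the sequence defining $\boldsymbol{z}(n;\beta)$ also lies in $F_\downarrow$ — here one uses that $F_\downarrow$ is downward closed together with the combinatorial fact that $(k,n;\sigma_k,\beta)\leq(n;\beta)$ for every $k$ — so $\boldsymbol{z}(n;\beta)=\boldsymbol{x}(n;\beta)$. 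Combining these: for $(n;\beta)\in I_{\leq\gamma}^\alpha$ we get $\Vert\boldsymbol{x}(n;\beta)-\boldsymbol{z}(n;\beta)\Vert_\infty=0$ when $(n;\beta)\in F_\downarrow$ and $\leq\Vert\boldsymbol{x}(n;\beta)\Vert_\infty<\varepsilon/2$ when $(n;\beta)\notin F_\downarrow$ (as then $(n;\beta)\notin E$), whence $\Vert T_\gamma^0(x-z)\Vert\leq\varepsilon/2$; and for $((k,\sigma),(n;\beta))\in J_\gamma^\alpha\ast I^\alpha$ we get $(x-z)(k,n;\sigma_k,\beta)=0$ if it belongs to $E'$ (since then $(k,n;\sigma_k,\beta)\in F$) and $\vert k(x-z)(k,n;\sigma_k,\beta)\vert\leq\vert kx(k,n;\sigma_k,\beta)\vert<\varepsilon/2$ otherwise, whence $\Vert T_\gamma^1(x-z)\Vert\leq\varepsilon/2$. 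Therefore $\Vert x-z\Vert_{X_\gamma}<\varepsilon$, and $S_\alpha$ is dense in $X_\gamma$.

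The main obstacle is purely bookkeeping: one must choose $F$ large enough to absorb simultaneously the finitely many ``heavy'' coordinates relevant to both $T_\gamma^0$ and $T_\gamma^1$ — in particular $F$ must be allowed to meet $I_\gamma^\alpha$, which is why it is taken inside $I_{\leq\gamma}^\alpha$ rather than inside $I_{<\gamma}^\alpha$ as in Lemma \ref{Lemma:z-from-x} — and one must verify the order-theoretic fact $(k,n;\sigma_k,\beta)\leq(n;\beta)$ so that membership in the downward-closed set $F_\downarrow$ propagates along each defining sequence. With these in place the estimates are routine, exactly parallel to Lemma \ref{Lemma:dense}.
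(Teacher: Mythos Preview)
Your proof is correct and follows essentially the same approach as the paper's own proof: choose finite exceptional sets $E\subseteq I_{\leq\gamma}^{\alpha}$ and $E'\subseteq J_{\gamma}^{\alpha}\ast I^{\alpha}$ witnessing the $\mathrm{c}_0$-decay of $T_\gamma^0(x)$ and $T_\gamma^1(x)$, set $F$ to absorb both, apply Lemma~\ref{Lemma:construct-elementB}, and estimate $\|x-z\|_{X_\gamma}$ coordinatewise using that $z(m;\tau)\in\{0,x(m;\tau)\}$. Your explicit remark that $(k,n;\sigma_k,\beta)\leq (n;\beta)$ makes the propagation of $F_\downarrow$ along the defining sequences transparent; the paper uses the same fact implicitly.
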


\begin{proof}
Suppose that $x\in X_{\gamma }$, and $\varepsilon >0$. We need to prove that
there exists $z\in S_{\alpha }$ such that $\left\Vert x-z\right\Vert
_{X_{\gamma }}\leq \varepsilon $. Since $x\in X_{\gamma }$, there exists a
finite subset $E\subseteq I_{\leq \gamma }^{\alpha }$ such that $\left\Vert 
\boldsymbol{x}\left( n;\beta \right) \right\Vert _{\infty }<\varepsilon $
for $\left( n;\beta \right) \in I_{\leq \gamma }^{\alpha }\setminus E$, and
a finite subset $E^{\prime }\subseteq J_{\gamma }^{\alpha }\ast I^{\alpha }$
such that $\left\vert kx\left( k,n;\tau _{k},\beta \right) \right\vert
<\varepsilon $ for $\left( \left( k,\tau \right) ,\left( n;\beta \right)
\right) \in \left( J_{\gamma }^{\alpha }\ast I^{\alpha }\right) \setminus
E^{\prime }$.

Suppose that $z\in X_{\alpha }$ is obtained from $x\in X_{\gamma }$ and 
\begin{equation*}
F=\left( E\cap I_{\leq \gamma }^{\alpha }\right) \cup \left\{ \left(
k,n;\gamma _{k},\beta \right) :\left( \left( k,\tau \right) ,\left( n;\beta
\right) \right) \in E^{\prime }\right\}
\end{equation*}%
as in Lemma \ref{Lemma:construct-elementB}.

Fix $\left( n;\beta \right) \in I_{\leq \gamma }^{\alpha }$. If $\left(
n;\beta \right) \in F_{\downarrow }$ then $\boldsymbol{z}\left( n;\beta
\right) =\boldsymbol{x}\left( n;\beta \right) $, while if $\left( n;\beta
\right) \notin F_{\downarrow }$, then%
\begin{equation*}
\left\Vert \boldsymbol{z}\left( n;\beta \right) -\boldsymbol{x}\left(
n;\beta \right) \right\Vert _{\infty }\leq \left\Vert \boldsymbol{x}\left(
n;\beta \right) \right\Vert _{\infty }\leq \varepsilon \text{.}
\end{equation*}%
Consider $\left( \left( k,\tau \right) ,\left( n;\beta \right) \right) \in
J_{\gamma }^{\alpha }\ast I^{\alpha }$. Thus $\tau _{k}<\gamma <\tau $ and $%
\left( n;\beta \right) \in I_{\tau }^{\alpha }$. If $\left( k,n;\tau
_{k},\beta \right) \in F$ then $kz\left( k,n;\tau _{k},\beta \right)
=kx\left( k,n;\tau _{k},\beta \right) $. If $\left( k,n;\tau _{k},\beta
\right) \notin F$, then $\left( k,n;\tau _{k},\beta \right) \in \left(
J_{\gamma }^{\alpha }\ast I^{\alpha }\right) \setminus E^{\prime }$ and hence%
\begin{equation*}
\left\vert kz\left( k,n;\tau _{k},\beta \right) -kx\left( k,n;\tau
_{k},\beta \right) \right\vert \leq \left\vert kx\left( k,n;\tau _{k},\beta
\right) \right\vert \leq \varepsilon \text{.}
\end{equation*}%
This concludes the proof that $\left\Vert z-x\right\Vert _{X_{\gamma }}\leq
\varepsilon $.
\end{proof}

\begin{lemma}
\label{Lemma:closure-neighborhoodB}Fix $\gamma <\alpha $. If $V$ is a
neighborhood of zero in $S_{\alpha }$, then $\overline{V}^{X_{<\gamma }}\cap
X_{\gamma }$ contains an open neighborhood of zero in $X_{\gamma }$.
\end{lemma}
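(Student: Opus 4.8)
The plan is to follow the proof of Lemma~\ref{Lemma:closure-neighborhood} almost verbatim, with the pseudo-length function estimates used there replaced by the norm estimates furnished by Lemma~\ref{Lemma:construct-elementB} and Lemma~\ref{Lemma:z-from-x}. Fix a neighborhood $V$ of zero in $S_{\alpha}$; without loss of generality $V=\{w\in S_{\alpha}:\left\Vert w\right\Vert _{S_{\alpha}}<\varepsilon\}$ for some $\varepsilon>0$. Set $N=\max\{k\in\mathbb{N}:\alpha_{k}<\gamma\}$, which is finite because $(\alpha_{k})_{k}$ is eventually at least $\gamma$. I would then introduce the set
\begin{equation*}
W=\left\{ x\in X_{\gamma }:\left\Vert x\right\Vert _{X_{\gamma }}<\varepsilon \text{ and }\sum_{k\leq N}\left\vert x\left( k;\alpha _{k}\right) \right\vert <\varepsilon \right\} ,
\end{equation*}
which is an open neighborhood of zero in $X_{\gamma}$: the norm $\left\Vert \cdot\right\Vert _{X_{\gamma}}$ is continuous, and for $k\leq N$ (so $\alpha_{k}<\gamma$) the linear functional $x\mapsto x(k;\alpha_{k})$ is $\left\Vert \cdot\right\Vert _{X_{\gamma}}$-continuous by Lemma~\ref{Lemma:norm-estimate}. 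Since $W\subseteq X_{\gamma}$, it will suffice to prove $W\subseteq\overline{V}^{X_{<\gamma}}$.

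To this end, fix $x\in W$ and an arbitrary open neighborhood $U$ of $x$ in $X_{<\gamma}$; the task is to produce a point of $U\cap V$. Since the topology of $X_{<\gamma}$ is generated by the norms $\left\Vert \cdot\right\Vert _{X_{\delta}}$ for $\delta<\gamma$, and these are nondecreasing in $\delta$ by Lemma~\ref{Lemma:norm-estimate}, fix $\delta_{0}<\gamma$ and $\varepsilon_{1}>0$ with $\{y\in X_{<\gamma}:\left\Vert y-x\right\Vert _{X_{\delta_{0}}}<\varepsilon_{1}\}\subseteq U$. Since $x\in X_{<\gamma}\subseteq X_{\delta_{0}}$, both $T_{\delta_{0}}^{0}(x)$ and $T_{\delta_{0}}^{1}(x)$ belong to the corresponding $\mathrm{c}_{0}$ spaces, so all but finitely many of their coordinates have norm $<\varepsilon_{1}/2$; let $A$ be a finite subset of $I_{<\gamma}^{\alpha}$ chosen large enough to contain the finitely many exceptional coordinates of $T_{\delta_{0}}^{0}(x)$ lying in $I_{\leq\delta_{0}}^{\alpha}$, together with all the tuples $(k,n;\tau_{k},\beta)$ indexed by the finitely many exceptional coordinates of $T_{\delta_{0}}^{1}(x)$. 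Let $z\in S_{\alpha}$ be the element obtained from $x\in X_{\gamma}$ and the finite set $A\subseteq I_{<\gamma}^{\alpha}$ via Lemma~\ref{Lemma:z-from-x}, so that $z(n;\beta)=x(n;\beta)$ for $(n;\beta)\in A_{\downarrow}$ and $z(n;\beta)=0$ otherwise, for every $(n;\beta)\in I^{\alpha}$.

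Lemma~\ref{Lemma:z-from-x} gives at once $\left\Vert z\right\Vert _{S_{\alpha}}\leq\max\{\sum_{k\leq N}\left\vert x(k;\alpha_{k})\right\vert ,\left\Vert x\right\Vert _{X_{\gamma}}\}<\varepsilon$, hence $z\in V$, and since $S_{\alpha}\subseteq X_{<\gamma}$ it remains only to check that $z\in U$, i.e.\ that $\left\Vert z-x\right\Vert _{X_{\delta_{0}}}<\varepsilon_{1}$. The step on which the argument turns, and the one I expect to require the most care, is the combinatorial fact that if $(n;\beta)\in A_{\downarrow}$ and $(n;\beta)\in I_{\sigma}^{\alpha}$ with $\sigma\geq 1$, then every tuple $(k,n;\sigma_{k},\beta)$ satisfies $(k,n;\sigma_{k},\beta)\leq(n;\beta)$ in the order on $I^{\alpha}$ and hence also lies in the downward-closed set $A_{\downarrow}$; consequently $\boldsymbol{z}(n;\beta)=\boldsymbol{x}(n;\beta)$ whenever $(n;\beta)\in A_{\downarrow}$, while if $(n;\beta)\notin A_{\downarrow}$ then $\boldsymbol{z}(n;\beta)$ coincides with $\boldsymbol{x}(n;\beta)$ in only finitely many entries and vanishes elsewhere, so $\left\Vert \boldsymbol{z}(n;\beta)-\boldsymbol{x}(n;\beta)\right\Vert _{\infty}\leq\left\Vert \boldsymbol{x}(n;\beta)\right\Vert _{\infty}$ (the case $\sigma=0$ is identical with $\boldsymbol{x}(n;\beta)$ the constant sequence, and the $T^{1}$-coordinates are treated the same way). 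Plugging this into the definitions of $T_{\delta_{0}}^{0}$ and $T_{\delta_{0}}^{1}$, and using that $A$ absorbs exactly the coordinates where $T_{\delta_{0}}^{0}(x)$ or $T_{\delta_{0}}^{1}(x)$ is not already $<\varepsilon_{1}/2$, yields $\left\Vert z-x\right\Vert _{X_{\delta_{0}}}\leq\varepsilon_{1}/2<\varepsilon_{1}$, so $z\in U$. Hence $U\cap V\neq\varnothing$; as $U$ and $x\in W$ were arbitrary, this gives $W\subseteq\overline{V}^{X_{<\gamma}}\cap X_{\gamma}$, completing the proof. The main obstacle is precisely this last bookkeeping: converting "$U$ is a neighborhood of $x$ in $X_{<\gamma}$" into "agreeing with $x$ on the downward closure of a sufficiently large finite set forces membership in $U$", which works only because the norms $\left\Vert \cdot\right\Vert _{X_{\delta}}$ are of $\mathrm{c}_{0}$-type and because of the downward-closure property of the tuples $(k,n;\sigma_{k},\beta)$.
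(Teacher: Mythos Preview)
The proposal is correct and follows essentially the same strategy as the paper: exhibit the open set $W\subseteq X_{\gamma}$, and for $x\in W$ and a basic neighborhood $U$ of $x$ in $X_{<\gamma}$ construct $z\in V\cap U$ by truncating $x$ to the downward closure of a suitable finite set via Lemma~\ref{Lemma:z-from-x}. Your choice $N=\max\{k:\alpha_{k}<\gamma\}$ in fact matches the hypothesis of Lemma~\ref{Lemma:z-from-x} more cleanly than the paper's $\leq$, and your use of the $\mathrm{c}_{0}$-property of $T_{\delta_{0}}^{0}(x),T_{\delta_{0}}^{1}(x)$ directly at level $\delta_{0}$ replaces the paper's case analysis (splitting on $\tau\leq\gamma$ versus $\tau>\gamma$) with a slightly shorter argument, but the two routes are minor variants of one another.
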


\begin{proof}
Define%
\begin{equation*}
N=\max \left\{ k\in \mathbb{N}:\alpha _{k}\leq \gamma \right\} \text{.}
\end{equation*}%
Suppose that $V$ is a neighborhood of zero in $X_{\alpha }$. Without loss of
generality, we can assume that 
\begin{equation*}
V=\left\{ z\in S_{\alpha }:\left\Vert z\right\Vert _{X_{\alpha }}\leq
\varepsilon ,\sum_{k\in \mathbb{N}}\left\vert z\left( k;\alpha _{k}\right)
\right\vert \leq \varepsilon \right\} \text{.}
\end{equation*}%
We claim that $\overline{V}^{X_{<\gamma }}\cap X_{\gamma }$ contains 
\begin{equation*}
W:=\left\{ x\in X_{\gamma }:\left\Vert x\right\Vert _{X_{\gamma }}\leq
\varepsilon ,\sum_{k\leq N}\left\vert x\left( k;\alpha _{k}\right)
\right\vert \leq \varepsilon \right\} \text{.}
\end{equation*}%
Indeed, suppose that $x\in W$. Let $U$ be an open neighborhood of $x$ in $%
X_{<\gamma }$. Without loss of generality, we can assume that 
\begin{equation*}
U=\left\{ z\in X_{<\gamma }:\left\Vert x-z\right\Vert _{X_{\delta }}\leq
\varepsilon _{1}\right\}
\end{equation*}%
for some $\delta <\gamma $ and $\varepsilon _{1}>0$. We need to prove that $%
U\cap V\neq \varnothing $.

Since $x\in X_{\gamma }$, there exists a finite subset $E$ of $X_{\gamma }$
such that $\left\Vert \boldsymbol{x}\left( n;\beta \right) \right\Vert
_{\infty }\leq \varepsilon _{1}$ for $\left( n;\beta \right) \in I_{\leq
\gamma }^{\alpha }\setminus E$, and a finite subset $E^{\prime }$ of $%
J_{\gamma }^{\alpha }\ast I^{\alpha }$ such that $\left\vert kx\left(
k,n;\tau _{k},\beta \right) \right\vert \leq \varepsilon _{1}$ for $\left(
\left( k,\tau \right) ,\left( n;\beta \right) \right) \in \left( J_{\gamma
}^{\alpha }\ast I^{\alpha }\right) \setminus E^{\prime }$. Define%
\begin{equation*}
E^{\prime \prime }=E^{\prime }\cup \left\{ \left( \left( k,\tau \right)
,\left( n;\beta \right) \right) \in J_{\gamma }^{\alpha }\ast I^{\alpha
}:\left( n;\beta \right) \in E\right\}
\end{equation*}%
Let $z\in X_{\alpha }$ be obtained from $x$ and 
\begin{equation*}
F:=\left( E\cap I_{\leq \delta }^{\alpha }\right) \cup \left\{ \left(
k,n;\tau _{k},\beta \right) :\left( \left( k,\tau \right) ,\left( n;\beta
\right) \right) \in E^{\prime \prime }\right\} \cup \left\{ \left( k;\alpha
_{k}\right) :k\leq N\right\}
\end{equation*}%
as in Lemma \ref{Lemma:z-from-x}. Then we have that $z\in S_{\alpha }$, 
\begin{equation*}
\left\Vert z\right\Vert _{X_{\alpha }}\leq \left\Vert x\right\Vert
_{X_{\gamma }}\leq \varepsilon \text{,}
\end{equation*}%
and%
\begin{equation*}
\sum_{k\leq N}\left\vert z\left( k;\alpha _{k}\right) \right\vert \leq
\sum_{k\leq N}\left\vert x\left( k;\alpha _{k}\right) \right\vert \leq
\varepsilon
\end{equation*}%
and hence $z\in V$.\ It remains to prove that $\left\Vert z-x\right\Vert
_{X_{\delta }}\leq \varepsilon _{1}$. For $\left( n;\beta \right) \in
I_{\leq \delta }^{\alpha }$, if $\left( n;\beta \right) \in F$ then%
\begin{equation*}
\boldsymbol{z}\left( n;\beta \right) =\boldsymbol{x}\left( n;\beta \right)
\end{equation*}%
while if $\left( n;\beta \right) \notin F$, then $\left( n;\beta \right) \in
I_{\leq \gamma }^{\alpha }\setminus E$ and we have that%
\begin{equation*}
\left\Vert \boldsymbol{z}\left( n;\beta \right) -\boldsymbol{x}\left(
n;\beta \right) \right\Vert _{\infty }\leq \left\Vert \boldsymbol{x}\left(
n;\beta \right) \right\Vert _{\infty }\leq \varepsilon _{1}
\end{equation*}%
by the choice of $E$.

For $\left( \left( k,\tau \right) ,\left( n;\beta \right) \right) \in
J_{\delta }^{\alpha }\ast I^{\alpha }$, we have that $\tau _{k}<\delta <\tau 
$ and $\left( n;\beta \right) \in I_{\tau }^{\alpha }$. Suppose that $\gamma
<\tau $, in which case we have that $\left( \left( k,\tau \right) ,\left(
n;\beta \right) \right) \in J_{\gamma }^{\alpha }\ast I^{\alpha }$. If $%
\left( \left( k,\tau \right) ,\left( n;\beta \right) \right) \in F$, then%
\begin{equation*}
kz\left( k,n;\tau _{k},\beta \right) =kx\left( k,n;\tau _{k},\beta \right) 
\text{;}
\end{equation*}%
if $\left( \left( k,\tau \right) ,\left( n;\beta \right) \right) \notin F$
then $\left( \left( k,\tau \right) ,\left( n;\beta \right) \right) \in
\left( J_{\gamma }^{\alpha }\ast I^{\alpha }\right) \setminus E^{\prime }$
and hence%
\begin{equation*}
\left\vert kz\left( k,n;\tau _{k},\beta \right) -kx\left( k,n;\tau
_{k},\beta \right) \right\vert =\left\vert kx\left( k,n;\tau _{k},\beta
\right) \right\vert \leq \varepsilon _{1}\text{.}
\end{equation*}%
Suppose now that $\tau \leq \gamma $, in which case $\tau _{k}<\delta <\tau
\leq \gamma $. If $\left( \left( k,\tau \right) ,\left( n;\beta \right)
\right) \in F$, then%
\begin{equation*}
kz\left( k,n;\tau _{k},\beta \right) =kx\left( k,n;\tau _{k},\beta \right) 
\text{;}
\end{equation*}%
while if $\left( \left( k,\tau \right) ,\left( n;\beta \right) \right)
\notin F$, then we have that $\left( n;\beta \right) \in I_{\leq \gamma
}^{\alpha }\setminus E$ and hence%
\begin{equation*}
\left\vert kz\left( k,n;\tau _{k},\beta \right) -kx\left( k,n;\tau
_{k},\beta \right) \right\vert \leq \left\vert kx\left( k,n;\tau _{k},\beta
\right) \right\vert \leq \left\Vert \boldsymbol{x}\left( n;\beta \right)
\right\Vert _{\infty }\leq \varepsilon _{1}\text{.}
\end{equation*}%
This concludes the proof that $\left\Vert z-x\right\Vert _{X_{\delta }}\leq
\varepsilon _{1}$.
\end{proof}

Using Lemma \ref{Lemma:closure-neighborhoodB} and Lemma \ref{Lemma:denseB},
one can prove Proposition \ref{Proposition:solecki-subgroups-X0}, similarly
as Proposition \ref{Proposition:solecki-subgroups-P0} is proved from Lemma %
\ref{Lemma:closure-neighborhood} and Lemma \ref{Lemma:dense}

\begin{proposition}
\label{Proposition:solecki-subgroups-X0}For $\gamma <\alpha $ we have that 
\begin{equation*}
s_{\gamma }^{S_{\alpha }}(X_{0})=s_{\gamma }^{D_{\alpha }}(X_{0})=s_{\gamma
}^{X_{\alpha }}(X_{0})=s_{\gamma }^{X_{<\alpha }}(X_{0})=X_{<\left( 1+\gamma
\right) }
\end{equation*}
\end{proposition}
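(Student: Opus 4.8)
The plan is to run the argument of Proposition \ref{Proposition:solecki-subgroups-P0} verbatim, with $X_{\gamma}$, Lemma \ref{Lemma:denseB}, and Lemma \ref{Lemma:closure-neighborhoodB} replacing $P_{\gamma}$, Lemma \ref{Lemma:dense}, and Lemma \ref{Lemma:closure-neighborhood}. First I would reduce the four-fold equality to the single identity $s_{\gamma}^{S_{\alpha}}(X_{0})=X_{<(1+\gamma)}$. Note that $S_{\alpha}\subseteq D_{\alpha}\subseteq X_{\alpha}\subseteq X_{<\alpha}\subseteq X_{<(1+\gamma)}$, the last inclusion holding because $1+\gamma\le\alpha$ whenever $\gamma<\alpha$. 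By Theorem \ref{Theorem:characterize-solecki}, $s_{\gamma}^{H}(X_{0})$ is the smallest $\boldsymbol{\Pi}_{1+\gamma+1}^{0}$ Polishable subgroup of $X_{0}$ containing $H$, so $H\mapsto s_{\gamma}^{H}(X_{0})$ is monotone. Hence, once $s_{\gamma}^{S_{\alpha}}(X_{0})=X_{<(1+\gamma)}$ is known, this group is (by Theorem \ref{Theorem:characterize-solecki}) a $\boldsymbol{\Pi}_{1+\gamma+1}^{0}$ Polishable subgroup containing each of $D_{\alpha}$, $X_{\alpha}$, $X_{<\alpha}$, so each of $s_{\gamma}^{D_{\alpha}}(X_{0})$, $s_{\gamma}^{X_{\alpha}}(X_{0})$, $s_{\gamma}^{X_{<\alpha}}(X_{0})$ is squeezed between $s_{\gamma}^{S_{\alpha}}(X_{0})$ and $X_{<(1+\gamma)}$ and therefore equals $X_{<(1+\gamma)}$.

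Second, I would prove $s_{\gamma}^{S_{\alpha}}(X_{0})=X_{<(1+\gamma)}$ by induction on $\gamma<\alpha$. For $\gamma=0$ we have $X_{<1}=X_{0}$, and $S_{\alpha}$ is dense in $X_{0}$ by Lemma \ref{Lemma:denseB}, so $s_{0}^{S_{\alpha}}(X_{0})=\overline{S_{\alpha}}^{X_{0}}=X_{0}$. If $\gamma$ is a limit ordinal, then $1+\gamma=\gamma$ and $\{1+\delta:\delta<\gamma\}$ is cofinal in $\gamma$, so using the inductive hypothesis at successor stages,
\[
s_{\gamma}^{S_{\alpha}}(X_{0})=\bigcap_{\delta<\gamma}s_{\delta+1}^{S_{\alpha}}(X_{0})=\bigcap_{\delta<\gamma}X_{1+\delta}=X_{<\gamma}=X_{<(1+\gamma)}.
\]
If $\gamma=\delta+1$, then by the inductive hypothesis $s_{\gamma}^{S_{\alpha}}(X_{0})=s_{1}^{S_{\alpha}}\big(s_{\delta}^{S_{\alpha}}(X_{0})\big)=s_{1}^{S_{\alpha}}\big(X_{<(1+\delta)}\big)$, and since $X_{<((1+\delta)+1)}=X_{1+\delta}$ it remains to show $s_{1}^{S_{\alpha}}(X_{<(1+\delta)})=X_{1+\delta}$. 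Here $X_{1+\delta}$ is a $\boldsymbol{\Pi}_{3}^{0}$ subspace of $X_{<(1+\delta)}$, being the intersection of the preimages, under the continuous linear maps $T_{1+\delta}^{0}$ and $T_{1+\delta}^{1}$, of the $\boldsymbol{\Pi}_{3}^{0}$ sets $\mathrm{c}_{0}(I_{\le 1+\delta}^{\alpha},\mathrm{c}(\mathbb{N},Z))$ and $\mathrm{c}_{0}(J_{1+\delta}^{\alpha}\ast I^{\alpha},Z)$. Moreover $S_{\alpha}\subseteq X_{1+\delta}$, $S_{\alpha}$ is dense in $X_{1+\delta}$ by Lemma \ref{Lemma:denseB}, and by Lemma \ref{Lemma:closure-neighborhoodB} every open neighborhood $V$ of $0$ in $S_{\alpha}$ satisfies that $\overline{V}^{X_{<(1+\delta)}}\cap X_{1+\delta}$ contains an open neighborhood of $0$ in $X_{1+\delta}$. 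Thus Lemma \ref{Lemma:characterize-solecki} applies with $N=S_{\alpha}$, $H=X_{1+\delta}$, and ambient group $X_{<(1+\delta)}$, giving $X_{1+\delta}\subseteq s_{1}^{S_{\alpha}}(X_{<(1+\delta)})$; since $X_{1+\delta}$ is $\boldsymbol{\Pi}_{3}^{0}$ in $X_{<(1+\delta)}$, the last clause of Lemma \ref{Lemma:characterize-solecki} upgrades this to equality.

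The argument is essentially bookkeeping once Lemma \ref{Lemma:denseB} and Lemma \ref{Lemma:closure-neighborhoodB} are available, and I do not expect any genuine obstacle beyond reproducing the template of Proposition \ref{Proposition:solecki-subgroups-P0}. The two points deserving care are the ordinal arithmetic around the index $1+\gamma$ (so that the limit step $\bigcap_{\delta<\gamma}X_{1+\delta}=X_{<\gamma}$ and the successor identity $X_{<((1+\delta)+1)}=X_{1+\delta}$ line up correctly), and the verification that $X_{1+\delta}$ sits $\boldsymbol{\Pi}_{3}^{0}$ inside $X_{<(1+\delta)}$, which is used both to invoke the final clause of Lemma \ref{Lemma:characterize-solecki} and, via Theorem \ref{Theorem:characterize-solecki}, to close the monotonicity reduction.
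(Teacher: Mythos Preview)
Your proposal is correct and matches the paper's approach exactly: the paper states only that Proposition \ref{Proposition:solecki-subgroups-X0} is proved from Lemma \ref{Lemma:denseB} and Lemma \ref{Lemma:closure-neighborhoodB} ``similarly as Proposition \ref{Proposition:solecki-subgroups-P0} is proved from Lemma \ref{Lemma:closure-neighborhood} and Lemma \ref{Lemma:dense},'' and your write-up is a faithful unpacking of that template, with the monotonicity reduction and the $\boldsymbol{\Pi}^{0}_{3}$ check made explicit.
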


Recall that, for $\left( n;\beta \right) $ and $\left( m;\tau \right) $ in $%
I^{\alpha }$, we define $\left( n;\beta \right) \leq \left( m;\tau \right) $
if and only if there exist $\gamma _{0}\leq \gamma _{1}\leq \alpha $ such
that $\left( n;\beta \right) \in I_{\gamma _{0}}^{\alpha }$, $\left( m;\tau
\right) \in I_{\gamma _{1}}^{\alpha }$, $m$ is a tail of $n$, and $\tau $ is
a tail of $\beta $, i.e.\ we have that, for some $\ell \leq d<\omega $, $%
\left( n;\beta \right) =\left( n_{0},\ldots ,n_{d};\beta _{0},\ldots ,\beta
_{d}\right) $, and $\left( m;\tau \right) =\left( n_{d-\ell },\ldots
,n_{d};\beta _{d-\ell },\ldots ,\beta _{d}\right) $. In this case, we set%
\begin{equation*}
\pi _{\left( m;\tau \right) }^{\left( n;\beta \right) }:=\frac{1}{%
n_{0}\cdots n_{d-\ell -1}}\text{.}
\end{equation*}

\begin{lemma}
Fix $\gamma \leq \alpha $ and $\left( m;\tau \right) \in I_{\gamma }^{\alpha
}$. There exists a continuous group homomorphism $\Phi :\mathrm{c}_{0}\left( 
\mathbb{N},Z\right) \rightarrow X_{<\gamma }$ such that $\Phi \left(
t\right) \left( k,m;\gamma _{k},\tau \right) =t_{k}$ for every $t\in \mathrm{%
c}_{0}\left( \mathbb{N},Z\right) $ and $k\in \mathbb{N}$.
\end{lemma}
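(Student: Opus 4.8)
The plan is to imitate the construction in the proof of Lemma~\ref{Lemma:proper}, adjusted for the scaling by the coefficients $\pi$ that is built into the spaces $X_\gamma$. Fix $(m;\tau)\in I_\gamma^\alpha$ (we may assume $\gamma\geq 1$, the case $\gamma=0$ being trivial) and set $S:=\{(k,m;\gamma_k,\tau):k\in\mathbb N\}\subseteq I_{<\gamma}^\alpha$; since for $k\neq k'$ the points $(k,m;\gamma_k,\tau)$ and $(k',m;\gamma_{k'},\tau)$ are incomparable, for each $(p;\rho)\in I_0^\alpha$ there is at most one $k$ with $(p;\rho)\leq(k,m;\gamma_k,\tau)$. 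Define $\Phi(t):=x\in Z^{I_0^\alpha}$, for $t\in\mathrm{c}_0(\mathbb N,Z)$, by
\[
x(p;\rho):=
\begin{cases}
\pi^{(p;\rho)}_{(k,m;\gamma_k,\tau)}\,t_k & \text{if }(p;\rho)\leq(k,m;\gamma_k,\tau)\text{ for some }k\in\mathbb N,\\
0 & \text{otherwise,}
\end{cases}
\]
for $(p;\rho)\in I_0^\alpha$. This is clearly $\mathbb R$-linear in $t$, hence a group homomorphism $\mathrm{c}_0(\mathbb N,Z)\to(Z^{I_0^\alpha},+)$.

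I would then prove, by induction on $\delta<\gamma$, that $x\in X_\delta$ and that the displayed formula persists for every $(p;\rho)\in I_{\leq\delta}^\alpha$ (with value $0$ off $S_{\downarrow}$). The key point is that for $(p;\rho)\in I_{\delta'}^\alpha$ with $1\leq\delta'\leq\delta$ the sequence $\boldsymbol x(p;\rho)=(j\,x(j,p;\delta'_j,\rho))_j$ is \emph{constant}. Indeed (using the remark following the definition of $I_{\delta'}^\alpha$, $(j,p;\delta'_j,\rho)\in I_{\delta'_j}^\alpha$), if $(p;\rho)\in S_{\downarrow}$ then $(j,p;\delta'_j,\rho)\leq(k,m;\gamma_k,\tau)$ for the same $k$ and the leading coordinate $j$ does not belong to the matched tail — matching it would force $\delta'_j=\gamma_j$ for all $j$, hence $\delta'=\gamma$, against $\delta'<\gamma$ — so $\pi^{(j,p;\delta'_j,\rho)}_{(k,m;\gamma_k,\tau)}=\tfrac1j\,\pi^{(p;\rho)}_{(k,m;\gamma_k,\tau)}$ and the factor $j$ cancels; if $(p;\rho)\notin S_{\downarrow}$ then no $(j,p;\delta'_j,\rho)$ lies in $S_{\downarrow}$, so $\boldsymbol x(p;\rho)$ is identically $0$ by the inductive hypothesis. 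Either way $\boldsymbol x(p;\rho)$ converges, with limit the value prescribed by the formula, and the same cancellation handles $T^1_\delta(x)$; thus $x\in X_\delta$. Letting $\delta\uparrow\gamma$ gives $x\in X_{<\gamma}=\bigcap_{\delta<\gamma}X_\delta$, and the case $(p;\rho)=(k,m;\gamma_k,\tau)\in S$ of the computation above shows $\boldsymbol x(k,m;\gamma_k,\tau)$ is the constant sequence $t_k$, whence $x(k,m;\gamma_k,\tau)=t_k$.

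It remains to verify that $\Phi$ is continuous, i.e.\ that $\Vert\Phi(t)\Vert_{X_\delta}\leq C_\delta\Vert t\Vert_{\mathrm{c}_0}$ for each $\delta<\gamma$. Every coefficient $\pi$ is a reciprocal of a product of positive integers, hence lies in $(0,1]$, so $\Vert\boldsymbol x(p;\rho)\Vert_\infty\leq|t_k|\leq\Vert t\Vert_{\mathrm{c}_0}$, which bounds the $T^0_\delta$ part; the required $\mathrm{c}_0$-decay over $I_{\leq\delta}^\alpha$ holds because $\Vert\boldsymbol x(p;\rho)\Vert_\infty\geq\varepsilon$ forces both $|t_k|\geq\varepsilon$ (finitely many $k$) and a bounded $\pi$-denominator (finitely many $(p;\rho)$ for each such $k$). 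The only terms of $T^1_\delta(x)$ in which the factor $j$ is \emph{not} cancelled arise from pairs $((k,\gamma),(m;\tau))\in J_\delta^\alpha\ast I^\alpha$, which requires $\gamma_k<\delta<\gamma$; this forces $\gamma$ to be a limit ordinal and restricts $k$ to the finite set $\{k:\gamma_k<\delta\}$, so those contributions are bounded by $\bigl(\max\{k:\gamma_k<\delta\}\bigr)\Vert t\Vert_{\mathrm{c}_0}$ and leave $\mathrm{c}_0$-membership intact. The principal difficulty throughout is purely combinatorial: tracking the tail order on $I^\alpha$ and the multipliers $\pi$, and isolating the borderline cases in which a leading coordinate lies in the matched tail — it is precisely those cases (occurring only for limit $\gamma$, and then only finitely often) that keep the otherwise-dangerous $j\,t_j$-type terms bounded.
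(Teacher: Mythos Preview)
Your proposal is correct and takes essentially the same approach as the paper: you define $\Phi(t)$ by the identical formula (values $\pi\cdot t_k$ on the downward closure of $S=\{(k,m;\gamma_k,\tau):k\in\mathbb N\}$, zero elsewhere), prove by induction on $\delta<\gamma$ that the image lies in $X_\delta$ and that the formula persists at higher levels, and use the key observation that each $\boldsymbol{x}(p;\rho)$ is a constant sequence because the leading factor $j$ is exactly cancelled by the extra $\pi$-factor. The paper's proof spends more space verifying the $\mathrm c_0$-conditions for $T^0_\sigma$ and $T^1_\sigma$ via explicit finite exceptional sets, whereas you sketch these and instead make the continuity bound $\lVert\Phi(t)\rVert_{X_\delta}\leq C_\delta\lVert t\rVert_{\mathrm c_0}$ explicit; but the underlying arguments and the handling of the borderline $T^1_\delta$ terms (arising only when $\gamma$ is limit and $\gamma_k<\delta$, hence for finitely many $k$) are the same.
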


\begin{proof}
For $\varepsilon >0$, let $K_{\varepsilon }\in \mathbb{N}$ be such that, for 
$k>K_{\varepsilon }$ one has that $\left\vert t_{k}\right\vert \leq
\varepsilon $. For $t\in \left( Z^{\mathbb{N}}\right) ^{I_{\gamma }^{\alpha
}}$, define $\Phi \left( t\right) :=x\in Z^{I_{0}^{\alpha }}$ by setting,
for $\left( n;\beta \right) \in I_{0}^{\alpha }$,%
\begin{equation}
x\left( n;\beta \right) :=\left\{ 
\begin{array}{ll}
\pi _{\left( n;\beta \right) }^{\left( k,m;\gamma _{k},\tau \right) }t_{k} & 
\text{if }\left( n;\beta \right) \leq \left( k,m;\gamma _{k},\tau \right) 
\text{ for some }k\in \mathbb{N}\text{;} \\ 
0 & \text{otherwise.}%
\end{array}%
\right. \text{\label{Equation:x}}
\end{equation}%
It is clear that $\Phi :Z^{\mathbb{N}}\rightarrow Z^{I_{0}^{\alpha }}$ is a
continuous group homomorphism. We now prove by induction on $\sigma <\gamma $
that $x\in X_{\sigma }$, and that \eqref{Equation:x} holds for every $\left(
n;\beta \right) \in I^{\alpha }$. Suppose that the conclusion holds for all $%
\delta <\sigma $.

\textbf{Case }$\sigma =0$: We need to prove that $x\in \mathrm{c}_{0}\left(
I_{0}^{\alpha },Z\right) $. Fix $\varepsilon >0$. Consider%
\begin{equation*}
F=\left\{ \left( n;\beta \right) \in I_{0}^{\alpha }:\left( n;\beta \right)
\leq \left( k,m;\gamma _{k},\tau \right) \text{ for some }k\leq
K_{\varepsilon }\right\}
\end{equation*}%
Then $F\subseteq I_{0}^{\alpha }$ is finite and for $\left( n;\beta \right)
\in I_{0}^{\alpha }\setminus F$ one has that either%
\begin{equation*}
x\left( n;\beta \right) =0
\end{equation*}%
or $\left( n;\beta \right) \leq \left( k,m;\gamma _{k},\tau \right) $ for
some $k>K_{\varepsilon }$, in which case%
\begin{equation*}
\left\vert x\left( n;\beta \right) \right\vert =\left\vert \pi _{\left(
n;\beta \right) }^{\left( k,m;\gamma _{k},\tau \right) }t_{k}\right\vert
\leq \left\vert t_{k}\right\vert \leq \varepsilon
\end{equation*}%
\textbf{Case }$1\leq \sigma <\gamma $: Fix $\left( n;\beta \right) \in
I_{\delta }^{\alpha }$ for some $\delta \leq \sigma $. If $\left( n;\beta
\right) \leq \left( k,m;\gamma _{k},\tau \right) $ for some $k\in \mathbb{N}$%
, then we have that for every $\ell \in \mathbb{N}$, $\left( \ell ,n;\delta
_{\ell },\beta \right) \leq \left( k,m;\gamma _{k},\tau \right) $. Thus,%
\begin{equation*}
x\left( \ell ,n;\delta _{\ell },\beta \right) =\pi _{\left( \ell ,n;\delta
_{\ell },\beta \right) }^{\left( k,m;\gamma _{k},\tau \right) }t_{k}
\end{equation*}%
and%
\begin{equation*}
\ell x\left( \ell ,n;\delta _{\ell },\beta \right) =\pi _{\left( n;\beta
\right) }^{\left( k,m;\gamma _{k},\tau \right) }t_{k}
\end{equation*}%
Thus, the sequence $\boldsymbol{x}\left( n;\beta \right) $ is constantly
equal to $\pi _{\left( n;\beta \right) }^{\left( k,m;\gamma _{k},\tau
\right) }t_{k}$. This shows that%
\begin{equation*}
x\left( n;\beta \right) =\pi _{\left( n;\beta \right) }^{\left( k,m;\gamma
_{k},\tau \right) }t_{k}\text{.}
\end{equation*}%
Suppose that there does not exist $k\in \mathbb{N}$ such that $\left(
n;\beta \right) \leq \left( k,m;\gamma _{k},\tau \right) $. Fix $\ell \in 
\mathbb{N}$. If $\left( \ell ,n;\delta _{\ell },\beta \right) \leq \left(
k,m;\gamma _{k},\tau \right) $ for some $k\in \mathbb{N}$, then we have that 
$\left( k,m\right) $ is a tail of $\left( \ell ,n\right) $ and $\left(
\gamma _{k},\tau \right) $ is a tail of $\left( \delta _{\ell },\beta
\right) $. If the length of $\left( k,m\right) $ is strictly less than the
length of $\left( \ell ,n\right) $, then $m$ is a tail of $n$ and $\tau $ is
a tail of $\beta $, and hence $\left( n;\beta \right) \leq \left( k,m;\gamma
_{k},\tau \right) $, contradicting the assumption. Therefore, we have that $%
\left( \ell ,n;\delta _{\ell },\beta \right) =\left( k,m;\gamma _{k},\tau
\right) $. In particular, we have that $\left( n;\beta \right) =\left(
m;\tau \right) \in I_{\gamma }^{\alpha }$ contradicting the assumption that $%
\left( n;\beta \right) \in I_{\delta }^{\alpha }$ and $\delta \leq \sigma
<\gamma $. Thus, the sequence $\boldsymbol{z}\left( n;\beta \right) $ is
constantly zero, and hence $z\left( n;\beta \right) =0$.

We now prove that $x\in X_{\sigma }$. Fix $\varepsilon >0$. Define 
\begin{equation*}
N=\max \{K_{\varepsilon },\mathrm{\max }\left\{ k\in \mathbb{N}:\gamma
_{k}\leq \sigma \right\} \}\text{.}
\end{equation*}%
Consider%
\begin{equation*}
E=\left\{ \left( n;\beta \right) \in I_{\leq \sigma }^{\alpha }:\left(
n;\beta \right) \leq \left( k,m;\gamma _{k},\tau \right) \text{ for some }%
k\leq N\right\} \text{.}
\end{equation*}%
If $\left( n;\beta \right) \in I_{\leq \sigma }^{\alpha }\setminus E$ and $%
x\left( n;\beta \right) \neq 0$ then, by the argument above, $\left( n;\beta
\right) \leq \left( k,m;\gamma _{k},\tau \right) $ for some $k>N\geq
K_{\varepsilon }$ and hence 
\begin{equation*}
\left\vert x\left( n;\beta \right) \right\vert \leq \left\vert
t_{k}\right\vert \leq \varepsilon \text{.}
\end{equation*}%
Consider the finite set%
\begin{equation*}
E^{\prime }=\left\{ \left( \left( \ell ,\rho \right) ;\left( n;\beta \right)
\right) \in J_{\sigma }^{\alpha }\ast I^{\alpha }:\left( \ell ,n;\rho _{\ell
},\beta \right) \in E\right\}
\end{equation*}%
If $\left( \ell ,\rho ;\left( n;\beta \right) \right) \in \left( J_{\sigma
}^{\alpha }\ast I^{\alpha }\right) \setminus E^{\prime }$ and $x\left( \ell
,n;\rho _{\ell },\beta \right) \neq 0$, then $\rho _{\ell }<\sigma <\rho $
and%
\begin{equation*}
\left( \ell ,n;\rho _{\ell },\beta \right) \leq \left( k,m;\gamma _{k},\tau
\right)
\end{equation*}%
for some $k\in \mathbb{N}$. Since $\left( \ell ,\rho ;\left( n;\beta \right)
\right) \notin E^{\prime }$, we have that $k>N$ and hence $\gamma
_{k}>\sigma >\rho _{\ell }$ and%
\begin{equation*}
\pi _{\left( \ell ,n;\rho _{\ell },\beta \right) }^{\left( k,m;\gamma
_{k},\tau \right) }\leq \frac{1}{\ell }\text{.}
\end{equation*}%
Thus,%
\begin{equation*}
\left\vert \ell x\left( \ell ,n;\rho _{\ell },\beta \right) \right\vert \leq
\left\vert \ell \pi _{\left( n;\beta \right) }^{\left( k,m;\gamma _{k},\tau
\right) }t_{k}\right\vert \leq \left\vert t_{k}\right\vert \leq \varepsilon
\end{equation*}%
since $k>N\geq N_{\varepsilon }$. This concludes the proof.
\end{proof}

\begin{lemma}
\label{Lemma:complexity-S-D-P}Consider the continuous function $\mathrm{c}%
_{0}(\mathbb{N},Z)\rightarrow Z^{\mathbb{N}}$, $\left( x_{n}\right) _{n\in 
\mathbb{N}}\mapsto \left( nx_{n}\right) _{n\in \mathbb{N}}$. We have that:

\begin{itemize}
\item $\boldsymbol{\Sigma }_{2}^{0}$ is the complexity class of $\tau
^{-1}\left( \ell _{1}\left( Z\right) \right) $ in $\mathrm{c}_{0}(\mathbb{N}%
,Z)$;

\item $D(\boldsymbol{\Pi }_{2}^{0})$ is the complexity class of $\tau
^{-1}\left( \mathrm{bv}_{0}(Z)\right) $ in $\mathrm{c}_{0}\left( Z\right) $;

\item $\boldsymbol{\Pi }_{3}^{0}$ is the complexity class of $\tau
^{-1}\left( \mathrm{c}_{0}\left( \mathbb{N},Z\right) \right) $ and of $\tau
^{-1}\left( \mathrm{c}\left( \mathbb{N},Z\right) \right) $ in $\mathrm{c}%
_{0}(\mathbb{N},Z)$.
\end{itemize}
\end{lemma}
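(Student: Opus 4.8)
The plan is to prove for each of the four sets a matching upper bound (membership in the claimed Borel class) and a lower bound (non-membership in the dual class, or in a smaller class), essentially by transporting the arguments of Lemma~\ref{Lemma:complexity-sequences} through the continuous linear map $\tau$. The one point requiring genuine care throughout is the interaction of two topologies: the sets $\tau^{-1}(\,\cdot\,)$ sit inside $\mathrm{c}_0(\mathbb{N},Z)$ with its norm topology, whereas the summability and convergence conditions are statements about the coarser product topology of $Z^{\mathbb{N}}$.

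For the upper bounds, write $x=(x_k)_{k\in\mathbb{N}}$. The set $\tau^{-1}(\ell_1(Z))=\{x:\sum_k k|x_k|<\infty\}$ is the countable union over $j$ of the sublevel sets $\{x:\sum_k k|x_k|\le j\}$ of the lower semicontinuous function $x\mapsto\sum_k k|x_k|$ on $\mathrm{c}_0(\mathbb{N},Z)$, hence $\boldsymbol{\Sigma}_2^0$. Since the finite-$\tau$-variation condition $\sum_k|kx_k-(k+1)x_{k+1}|<\infty$ already forces $(kx_k)_k$ to be Cauchy in $Z$, hence convergent, the set $\tau^{-1}(\mathrm{bv}_0(Z))$ equals the intersection of that $\boldsymbol{\Sigma}_2^0$ condition with the $\boldsymbol{\Pi}_2^0$ condition $\liminf_k|kx_k|=0$, hence it is $D(\boldsymbol{\Pi}_2^0)$. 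Finally $\tau^{-1}(\mathrm{c}_0(\mathbb{N},Z))$ and $\tau^{-1}(\mathrm{c}(\mathbb{N},Z))$ express that $(kx_k)_k$ converges to $0$, respectively is Cauchy, each of which is plainly $\boldsymbol{\Pi}_3^0$.

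For the lower bounds of the first two sets, note that each of $\tau^{-1}(\ell_1(Z))$ and $\tau^{-1}(\mathrm{bv}_0(Z))$ contains all finitely supported sequences, hence is dense, and is a proper subgroup of $\mathrm{c}_0(\mathbb{N},Z)$: for nonzero $z\in Z$ the sequence $x_k=z/k$ lies in $\mathrm{c}_0(\mathbb{N},Z)$ while $\tau(x)$ is the constant sequence $z$, which is neither summable nor vanishing. A proper dense subgroup of a Polish group is never $G_\delta$ (otherwise two disjoint translates of it would be dense $G_\delta$ sets, contradicting the Baire category theorem), so neither set is $\boldsymbol{\Pi}_2^0$; this already identifies the complexity class of $\tau^{-1}(\ell_1(Z))$ as $\boldsymbol{\Sigma}_2^0$. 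To see that $\tau^{-1}(\mathrm{bv}_0(Z))$ is moreover not $\boldsymbol{\Sigma}_2^0$, I would reproduce the Baire-category argument of Lemma~\ref{Lemma:complexity-sequences}(2): if it were a countable union of sets closed in $\mathrm{c}_0(\mathbb{N},Z)$, one of them, $F_0$, would contain a basic neighbourhood of $0$ in its intrinsic Polish topology; then for suitably small nonzero $z$ the truncations of $x=(z/k)_k$ all lie in that neighbourhood, hence in $F_0$, yet converge in $\mathrm{c}_0(\mathbb{N},Z)$ to $x$, so $x\in F_0\subseteq\tau^{-1}(\mathrm{bv}_0(Z))$, contradicting that $\tau(x)$ is the non-vanishing constant sequence $z$. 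Together with Lemma~\ref{Lemma:difference}, this gives complexity class $D(\boldsymbol{\Pi}_2^0)$.

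For the lower bounds of the last two sets, let $H$ be $\tau^{-1}(\mathrm{c}_0(\mathbb{N},Z))$ or $\tau^{-1}(\mathrm{c}(\mathbb{N},Z))$, a Polishable---indeed Banachable, with norm $\sup_k|kx_k|$---subgroup of the Polish group $\mathrm{c}_0(\mathbb{N},Z)$. By Theorem~\ref{Theorem:Polishable-complexity}, $H$ fails to be $\boldsymbol{\Sigma}_3^0$ in $\mathrm{c}_0(\mathbb{N},Z)$ as soon as the coset relation $E_H^{\mathrm{c}_0(\mathbb{N},Z)}$ is not potentially $\boldsymbol{\Sigma}_3^0$, and I would establish the latter by Borel-reducing $E_0^{\mathbb{N}}$ into it, using the reduction of Lemma~\ref{Lemma:complexity-sequences}(3) rescaled so as to land in $\mathrm{c}_0(\mathbb{N},Z)$: fixing a bijection $\langle\cdot,\cdot\rangle\colon\mathbb{N}\times\mathbb{N}\to\mathbb{N}$ monotone in each coordinate and a sequence $(g_n)$ in $Z$ with $0<|g_n|<2^{-(n+1)}$, send $\varphi\in 2^{\mathbb{N}\times\mathbb{N}}$ to the sequence whose $\langle n,m\rangle$-th entry is $g_n/\langle n,m\rangle$ if $\varphi(n,m)=1$ and $0$ otherwise, which is bounded in absolute value by $1/\langle n,m\rangle$ and hence lies in $\mathrm{c}_0(\mathbb{N},Z)$. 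Then $\tau$ of this sequence is exactly the element denoted $a$ in the proof of Lemma~\ref{Lemma:complexity-sequences}(3) (read additively), so that argument shows that the $\tau$-image of the difference of two such sequences lies in $\mathrm{c}(\mathbb{N},Z)$---and then in fact converges to $0$, hence lies in $\mathrm{c}_0(\mathbb{N},Z)$---precisely when the corresponding pair is $E_0^{\mathbb{N}}$-related; this yields a Borel reduction of $E_0^{\mathbb{N}}$ into $E_H^{\mathrm{c}_0(\mathbb{N},Z)}$ for both choices of $H$. Since $E_0^{\mathbb{N}}$ has potential complexity class $\boldsymbol{\Pi}_3^0$ it is not potentially $\boldsymbol{\Sigma}_3^0$, hence neither is $E_H^{\mathrm{c}_0(\mathbb{N},Z)}$, and therefore $H$ is not $\boldsymbol{\Sigma}_3^0$ in $\mathrm{c}_0(\mathbb{N},Z)$; combined with the upper bound, its complexity class is $\boldsymbol{\Pi}_3^0$. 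The step most likely to need extra care is this last one: one must verify that along the range of the reduction the limits genuinely vanish, so that belonging to $\mathrm{c}_0(\mathbb{N},Z)$ and to $\mathrm{c}(\mathbb{N},Z)$ become interchangeable and a single reduction handles both cases at once.
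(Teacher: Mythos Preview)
Your proof is correct and follows essentially the same route as the paper: upper bounds via continuity of $\tau$ (or direct lower-semicontinuity), the ``not closed'' and Baire-category arguments with the truncations of $(z/k)_k$ for parts (1) and (2), and the Borel reduction of $E_0^{\mathbb{N}}$ into the coset relation, scaled by $1/\langle n,m\rangle$ so as to land in $\mathrm{c}_0(\mathbb{N},Z)$, for part (3). The only cosmetic differences are that you argue ``proper dense subgroup is not $G_\delta$'' directly rather than invoking Proposition~\ref{Proposition:FS-reduce}, and you make the appeal to Lemma~\ref{Lemma:difference} explicit in part (2); your caution about checking that the limit in part (3) actually vanishes (so that both $\mathrm{c}$ and $\mathrm{c}_0$ are handled by one reduction) is well placed and exactly what the paper does implicitly.
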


\begin{proof}
(1) Since $\ell _{1}\left( Z\right) $ is $\boldsymbol{\Sigma }_{2}^{0}$ in $%
Z^{\mathbb{N}}$, we have that $\tau ^{-1}\left( \ell _{1}\right) $ is a $%
\boldsymbol{\Sigma }_{2}^{0}$ Polishable subgroup of $\mathrm{c}_{0}(\mathbb{%
N},Z)$ that is not closed. Thus, $\boldsymbol{\Sigma }_{2}^{0}$ is the
complexity class of $\tau ^{-1}\left( \ell _{1}\left( Z\right) \right) $.

(2) Since $\mathrm{bv}_{0}\left( Z\right) $ is $D(\boldsymbol{\Pi }_{2}^{0})$
in $Z^{\mathbb{N}}$, and $\tau ^{-1}\left( \mathrm{bv}_{0}\left( Z\right)
\right) $ is a Polishable subgroup of $\mathrm{c}_{0}\left( \mathbb{N}%
,Z\right) $, by Theorem \ref{Theorem:Polishable-complexity} it suffices to
prove that $\tau ^{-1}\left( \mathrm{bv}_{0}\left( Z\right) \right) $ is not 
$\boldsymbol{\Sigma }_{2}^{0}$ in $\mathrm{c}_{0}\left( \mathbb{N},Z\right) $%
. Suppose by contradiction that $\tau ^{-1}\left( \mathrm{bv}_{0}\left(
Z\right) \right) =\bigcup_{k\in \omega }F_{k}$ where $F_{k}\subseteq \mathrm{%
c}_{0}\left( \mathbb{N},Z\right) $ is closed. Observe that a compatible norm
on $\mathrm{bv}_{0}\left( Z\right) $ is given by%
\begin{equation*}
\left\Vert x\right\Vert _{\mathrm{bv}_{0}\left( Z\right) }=\sum_{n\in 
\mathbb{N}}\left\vert x_{n+1}-x_{n}\right\vert +\mathrm{\mathrm{sup}}_{n\in 
\mathbb{N}}\left\vert x_{n}\right\vert \text{.}
\end{equation*}%
By the Baire category theorem without loss of generality we can assume that 
\begin{equation*}
\{\boldsymbol{a}\in \tau ^{-1}\left( \mathrm{bv}_{0}\left( Z\right) \right)
:\left\Vert \tau \left( \left( a_{n}\right) \right) \right\Vert _{\mathrm{bv}%
_{0}}\leq 2\}\subseteq F_{0}.
\end{equation*}%
Define then for $N\in \mathbb{N}$, $\boldsymbol{a}^{\left( N\right) }\in
\tau ^{-1}\left( \mathrm{bv}_{0}\left( Z\right) \right) $ by setting 
\begin{equation*}
a_{n}^{\left( N\right) }=\left\{ 
\begin{array}{ll}
\frac{1}{n} & \text{if }n\leq N\text{;} \\ 
0 & \text{otherwise.}%
\end{array}%
\right.
\end{equation*}%
Then we have that $\left\Vert \tau \left( \boldsymbol{a}^{(N)}\right)
\right\Vert _{\mathrm{bv}_{0}\left( Z\right) }\leq 2$ and $\boldsymbol{a}%
^{\left( N\right) }\in F_{0}$ for every $N\in \mathbb{N}$. Furthermore, the
sequence $(\boldsymbol{a}^{\left( N\right) })_{N\in \mathbb{N}}$ converges
in $\mathrm{c}_{0}\left( \mathbb{N},Z\right) $ to the sequence $\boldsymbol{a%
}$ defined by $a_{n}=\frac{1}{n}$ for every $n\in \mathbb{N}$. Since $F_{0}$
is closed in $\mathrm{c}_{0}\left( \mathbb{N},Z\right) $, we must have that $%
\boldsymbol{a}\in F_{0}\subseteq \tau ^{-1}\left( \mathrm{bv}_{0}\left(
Z\right) \right) $. However, $\tau \left( \boldsymbol{a}\right) $ is not
vanishing, and so $\tau \left( \boldsymbol{a}\right) \notin \mathrm{bv}%
_{0}\left( Z\right) $.

(3) Since $\mathrm{c}\left( \mathbb{N},Z\right) $ is $\boldsymbol{\Pi }%
_{3}^{0}$ in $Z^{\mathbb{N}}$, and $\tau ^{-1}\left( \mathrm{c}\left( 
\mathbb{N},Z\right) \right) $ is a Polishable subgroup of $\mathrm{c}_{0}$,
by Theorem \ref{Theorem:Polishable-complexity} it suffices to prove that $%
\tau ^{-1}\left( \mathrm{c}\left( \mathbb{N},Z\right) \right) $ is not
potentially $\boldsymbol{\Sigma }_{2}^{0}$. Let $E_{0}$ be the relation of
tail equivalence in $2^{\mathbb{N}}$, and let $E_{0}^{\mathbb{N}}$ be the
corresponding product equivalence relation on $\left( 2^{\mathbb{N}}\right)
^{\mathbb{N}}=2^{\mathbb{N}\times \mathbb{N}}$.\ Then we have that $%
\boldsymbol{\Pi }_{3}^{0}$ is the potential complexity class of $E_{0}^{%
\mathbb{N}}$, for example by Lemma \ref{Lemma:product2} and Theorem \ref%
{Theorem:Polishable-complexity}.

Thus, it suffices to define a Borel function $2^{\mathbb{N}\times \mathbb{N}%
}\rightarrow \mathrm{c}_{0}\left( \mathbb{N},Z\right) $ that is a Borel
reduction from $E_{0}^{\mathbb{N}}$ to the coset relation of $\tau
^{-1}\left( \mathrm{c}\left( \mathbb{N},Z\right) \right) $ inside $\mathrm{c}%
_{0}\left( \mathbb{N},Z\right) $. Fix a bijection $\left\langle \cdot ,\cdot
\right\rangle :\mathbb{N}\times \mathbb{N}\rightarrow \mathbb{N}$ such that,
if $n\leq n^{\prime }$ and $m\leq m^{\prime }$, then $\left\langle
n,m\right\rangle \leq \left\langle n^{\prime },m^{\prime }\right\rangle $.
Define $2^{\mathbb{N}\times \mathbb{N}}\rightarrow Z^{\mathbb{N}}$, $%
x\mapsto a$ by setting $a_{\left\langle n,m\right\rangle }=\frac{1}{%
\left\langle n,m\right\rangle }2^{-n}x_{n,m}$. Then the argument in \cite[%
Lemma 8.5.3]{gao_invariant_2009} shows that $xE_{0}^{\mathbb{N}}x^{\prime }$
if and only if $\tau \left( \boldsymbol{a}\right) -\tau \left( \boldsymbol{a}%
^{\prime }\right) =\tau \left( \boldsymbol{a}-\boldsymbol{a}^{\prime
}\right) \in \mathrm{c}\left( \mathbb{N},Z\right) $, if and only if $%
\boldsymbol{a}-\boldsymbol{a}^{\prime }\in \tau ^{-1}\left( \mathrm{c}\left( 
\mathbb{N},Z\right) \right) $.

The same argument shows that $\boldsymbol{\Pi }_{3}^{0}$ is the complexity
class of $\tau ^{-1}\left( \mathrm{c}_{0}\left( \mathbb{N},Z\right) \right) $
in $\mathrm{c}_{0}\left( \mathbb{N},Z\right) $.
\end{proof}

The same proof as Corollary \ref{Corollary:basic-complexity}, where Lemma %
\ref{Lemma:complexity-S-D-P} replaces Lemma \ref{Lemma:complexity-sequences}%
, gives the proof of Corollary \ref{Corollary:basic-complexity-Banach} below.

\begin{corollary}
\label{Corollary:basic-complexity-Banach}For every $\gamma <\alpha $, $%
X_{\gamma }$ is a proper subspace of $X_{<\gamma }$. The complexity class
inside $X_{<\alpha }$ of $S_{\alpha }$, $D_{\alpha }$, $X_{\alpha }$,
respectively, is $\boldsymbol{\Sigma }_{2}^{0}$, $D(\boldsymbol{\Pi }%
_{2}^{0})$, and $\boldsymbol{\Pi }_{3}^{0}$, respectively.
\end{corollary}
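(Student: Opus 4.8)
The plan is to repeat the proof of Corollary~\ref{Corollary:basic-complexity} word for word, with Lemma~\ref{Lemma:complexity-S-D-P} playing the role of Lemma~\ref{Lemma:complexity-sequences} and the continuous linear map furnished by the lemma immediately preceding Lemma~\ref{Lemma:complexity-S-D-P} playing the role of the homomorphism furnished by Lemma~\ref{Lemma:proper}. Two kinds of continuous linear maps carry the argument. For $\gamma\leq\alpha$ and $\left(m;\tau\right)\in I_{\gamma}^{\alpha}$, that lemma gives $\Phi=\Phi_{\gamma,\left(m;\tau\right)}:\mathrm{c}_{0}\left(\mathbb{N},Z\right)\rightarrow X_{<\gamma}$ with $\Phi\left(t\right)\left(k,m;\gamma_{k},\tau\right)=t_{k}$, so that, writing $y=\Phi\left(t\right)$, the sequence $\boldsymbol{y}\left(m;\tau\right)=\left(k\,y\left(k,m;\gamma_{k},\tau\right)\right)_{k}$ equals $\tau\left(t\right)$, where $\tau:\mathrm{c}_{0}\left(\mathbb{N},Z\right)\rightarrow Z^{\mathbb{N}}$ is the map of Lemma~\ref{Lemma:complexity-S-D-P}; inspection of the proof of that preceding lemma moreover yields the uniform bound $\left\Vert\Phi\left(t\right)\right\Vert_{X_{\sigma}}\leq\left\Vert t\right\Vert_{\mathrm{c}_{0}\left(\mathbb{N},Z\right)}$ for all $\sigma<\gamma$ (consistently with the monotonicity in Lemma~\ref{Lemma:norm-estimate}), hence $\sup_{\sigma<\gamma}\left\Vert\Phi\left(t\right)\right\Vert_{X_{\sigma}}<\infty$. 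Secondly, the evaluation map $\Psi:X_{<\alpha}\rightarrow Z^{\mathbb{N}}$, $x\mapsto\boldsymbol{x}\left(\alpha\right)=\left(k\,x\left(k;\alpha_{k}\right)\right)_{k}$, is continuous and linear, since each functional $x\mapsto x\left(k;\alpha_{k}\right)$ is continuous on $X_{\alpha_{k}}\supseteq X_{<\alpha}$.

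Properness is immediate: fix $\gamma<\alpha$ and some $\left(n;\beta\right)\in I_{\gamma}^{\alpha}$, and take $\Phi=\Phi_{\gamma,\left(n;\beta\right)}$. Choosing $t\in\mathrm{c}_{0}\left(\mathbb{N},Z\right)$ with $\tau\left(t\right)=\left(k\,t_{k}\right)_{k}$ divergent --- for instance $t_{2j}=z_{0}/\left(2j\right)$ and $t_{2j-1}=0$ for a fixed nonzero $z_{0}\in Z$ --- we obtain $y=\Phi\left(t\right)\in X_{<\gamma}$ with $\boldsymbol{y}\left(n;\beta\right)=\tau\left(t\right)$ nonconvergent, hence $y\notin X_{\gamma}$; thus $X_{\gamma}$ is a proper subspace of $X_{<\gamma}$.

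For the complexity I treat $X_{\alpha}$; the cases of $S_{\alpha}$ and $D_{\alpha}$ are identical with $\mathrm{c}\left(\mathbb{N},Z\right)$ replaced by $\ell_{1}\left(Z\right)$ and $\mathrm{bv}_{0}\left(Z\right)$ respectively. For the upper bound, $X_{\alpha}=\{x\in X_{<\alpha}:\sup_{\gamma<\alpha}\left\Vert x\right\Vert_{X_{\gamma}}<\infty\}\cap\Psi^{-1}\left(\mathrm{c}\left(\mathbb{N},Z\right)\right)$; the first set is $\boldsymbol{\Sigma}_{2}^{0}$, being the union over $N$ of the closed sets $\bigcap_{\gamma<\alpha}\{x:\left\Vert x\right\Vert_{X_{\gamma}}\leq N\}$, while $\mathrm{c}\left(\mathbb{N},Z\right)$ is $\boldsymbol{\Pi}_{3}^{0}$ in $Z^{\mathbb{N}}$, so $X_{\alpha}$ is $\boldsymbol{\Pi}_{3}^{0}$ in $X_{<\alpha}$ (using that $\boldsymbol{\Pi}_{3}^{0}$ absorbs $\boldsymbol{\Sigma}_{2}^{0}$ under finite intersection); for $S_{\alpha}$ one gets $\boldsymbol{\Sigma}_{2}^{0}\cap\boldsymbol{\Sigma}_{2}^{0}=\boldsymbol{\Sigma}_{2}^{0}$ and for $D_{\alpha}$ one gets $\boldsymbol{\Sigma}_{2}^{0}\cap D(\boldsymbol{\Pi}_{2}^{0})=D(\boldsymbol{\Pi}_{2}^{0})$, since $D(\boldsymbol{\Pi}_{2}^{0})$ contains $\boldsymbol{\Sigma}_{2}^{0}$ and is closed under finite intersection. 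For the lower bound, take $\Phi=\Phi_{\alpha,\left(\varnothing;\varnothing\right)}:\mathrm{c}_{0}\left(\mathbb{N},Z\right)\rightarrow X_{<\alpha}$; combining $\boldsymbol{\Phi\left(t\right)}\left(\alpha\right)=\tau\left(t\right)$ with $\sup_{\sigma<\alpha}\left\Vert\Phi\left(t\right)\right\Vert_{X_{\sigma}}<\infty$ gives $\Phi^{-1}\left(X_{\alpha}\right)=\tau^{-1}\left(\mathrm{c}\left(\mathbb{N},Z\right)\right)$, $\Phi^{-1}\left(S_{\alpha}\right)=\tau^{-1}\left(\ell_{1}\left(Z\right)\right)$, and $\Phi^{-1}\left(D_{\alpha}\right)=\tau^{-1}\left(\mathrm{bv}_{0}\left(Z\right)\right)$. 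Since $\Phi$ is continuous, the non-membership assertions of Lemma~\ref{Lemma:complexity-S-D-P} --- that $\tau^{-1}\left(\mathrm{c}\left(\mathbb{N},Z\right)\right)$ is not $\boldsymbol{\Sigma}_{3}^{0}$, $\tau^{-1}\left(\ell_{1}\left(Z\right)\right)$ is not $\boldsymbol{\Pi}_{2}^{0}$, and $\tau^{-1}\left(\mathrm{bv}_{0}\left(Z\right)\right)$ is not $\check{D}(\boldsymbol{\Pi}_{2}^{0})$ --- transfer to $X_{\alpha}$, $S_{\alpha}$, $D_{\alpha}$, pinning down their complexity classes as $\boldsymbol{\Pi}_{3}^{0}$, $\boldsymbol{\Sigma}_{2}^{0}$, $D(\boldsymbol{\Pi}_{2}^{0})$.

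The one place where the Banach setting genuinely departs from the CLI setting of Corollary~\ref{Corollary:basic-complexity} is the boundedness requirement $\sup_{\gamma<\alpha}\left\Vert\cdot\right\Vert_{X_{\gamma}}<\infty$ in the definitions of $S_{\alpha}$ and $D_{\alpha}$: one has to check that it neither raises the complexity (it carves out only a $\boldsymbol{\Sigma}_{2}^{0}$ set) nor obstructs the reductions (it is automatically met by the range of $\Phi$, by the uniform norm bound). This small amount of bookkeeping is the only new ingredient, and I expect it to be the main --- though minor --- obstacle; everything else is a routine transcription.
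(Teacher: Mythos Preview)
Your proposal is correct and follows the same approach as the paper, which explicitly says that the proof of Corollary~\ref{Corollary:basic-complexity} carries over verbatim with Lemma~\ref{Lemma:complexity-S-D-P} replacing Lemma~\ref{Lemma:complexity-sequences}. The only additional point you spell out beyond the paper's one-line remark is the handling of the boundedness condition $\sup_{\gamma<\alpha}\lVert\cdot\rVert_{X_{\gamma}}<\infty$ in the definitions of $S_{\alpha}$ and $D_{\alpha}$: you correctly observe that it contributes only a $\boldsymbol{\Sigma}_{2}^{0}$ constraint (hence does not raise the upper bounds) and is automatically satisfied on the range of $\Phi$ by the uniform estimate $\lVert\Phi(t)\rVert_{X_{\sigma}}\leq\lVert t\rVert_{\mathrm{c}_{0}(\mathbb{N},Z)}$ (hence does not obstruct the lower-bound reductions). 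One small inaccuracy: the paper records $X_{\alpha}=\{x\in X_{<\alpha}:\boldsymbol{x}(\alpha)\in\mathrm{c}(\mathbb{N},Z)\}$ without the sup condition, so your displayed description of $X_{\alpha}$ inserts a redundant $\boldsymbol{\Sigma}_{2}^{0}$ intersection; this is harmless for the argument but worth tidying.
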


Finally, Theorem \ref{Theorem:complexityX} is proved using Corollary \ref%
{Corollary:basic-complexity-Banach} and Proposition \ref%
{Proposition:solecki-subgroups-X0} similarly as Theorem \ref%
{Theorem:complexityP} is proved using Corollary \ref%
{Corollary:basic-complexity} and Proposition \ref%
{Proposition:solecki-subgroups-P0}.

\bibliographystyle{amsalpha}
\bibliography{bibliography}

\providecommand{\bysame}{\leavevmode\hbox to3em{\hrulefill}\thinspace}
\providecommand{\MR}{\relax\ifhmode\unskip\space\fi MR }
\providecommand{\MRhref}[2]{%
  \href{http://www.ams.org/mathscinet-getitem?mr=#1}{#2}
}
\providecommand{\href}[2]{#2}
\begin{thebibliography}{HKL98}

\bibitem[Bee91]{beer_polish_1991}
Gerald Beer, \emph{A {Polish} topology for the closed subsets of a {Polish}
  space}, Proceedings of the American Mathematical Society \textbf{113} (1991),
  no.~4, 1123--1133.

\bibitem[BK96]{becker_descriptive_1996}
Howard Becker and Alexander~S. Kechris, \emph{The descriptive set theory of
  {Polish} group actions}, London {Mathematical} {Society} {Lecture} {Note}
  {Series}, vol. 232, Cambridge University Press, Cambridge, 1996.

\bibitem[DG08]{ding_separable_2008}
Longyun Ding and Su~Gao, \emph{On separable {Banach} subspaces}, Journal of
  Mathematical Analysis and Applications \textbf{340} (2008), no.~1, 746--751.

\bibitem[Din17]{ding_equivalence_2017}
Longyun Ding, \emph{On equivalence relations generated by {Schauder} bases},
  The Journal of Symbolic Logic \textbf{82} (2017), no.~4, 1459--1481.
  \MR{3743618}

\bibitem[FS06]{farah_borel_2006}
Ilijas Farah and Sławomir Solecki, \emph{Borel subgroups of {Polish} groups},
  Advances in Mathematics \textbf{199} (2006), no.~2, 499--541.

\bibitem[Gao09]{gao_invariant_2009}
Su~Gao, \emph{Invariant {Descriptive} {Set} {Theory}}, Pure and {Applied}
  {Mathematics}, vol. 293, CRC Press, Boca Raton, FL, 2009.

\bibitem[Hjo06]{hjorth_subgroups_2006}
Greg Hjorth, \emph{Subgroups of abelian {Polish} groups}, Set theory, Trends
  {Math}., Birkhäuser, Basel, 2006, pp.~297--308. \MR{2267154}

\bibitem[HKL98]{hjorth_borel_1998}
Greg Hjorth, Alexander~S. Kechris, and Alain Louveau, \emph{Borel equivalence
  relations induced by actions of the symmetric group}, Annals of Pure and
  Applied Logic \textbf{92} (1998), no.~1, 63--112.

\bibitem[Kec95]{kechris_classical_1995}
Alexander~S. Kechris, \emph{Classical descriptive set theory}, Graduate {Texts}
  in {Mathematics}, vol. 156, Springer-Verlag, New York, 1995.

\bibitem[Kur66]{kuratowski_topology_1966}
Casimir Kuratowski, \emph{Topology. {Vol}. {I}}, New edition, revised and
  augmented. {Translated} from the {French} by {J}. {Jaworowski}, Academic
  Press, New York-London; Państwowe Wydawnictwo Naukowe, Warsaw, 1966.
  \MR{0217751}

\bibitem[Lou94]{louveau_reducibility_1994}
Alain Louveau, \emph{On the reducibility order between {Borel} equivalence
  relations}, Logic, methodology and philosophy of science, {IX} ({Uppsala},
  1991), Stud. {Logic} {Found}. {Math}., vol. 134, North-Holland, Amsterdam,
  1994, pp.~151--155. \MR{1327979}

\bibitem[Mal08]{malicki_polishable_2008}
Maciej Malicki, \emph{Polishable subspaces of infinite-dimensional separable
  {Banach} spaces}, Real Analysis Exchange \textbf{33} (2008), no.~2, 317--322.
  \MR{2458249}

\bibitem[Mal11]{malicki_polish_2011}
\bysame, \emph{On {Polish} groups admitting a compatible complete
  left-invariant metric}, Journal of Symbolic Logic \textbf{76} (2011), no.~2,
  437--447. \MR{2830410}

\bibitem[Osb14]{osborne_locally_2014}
M.~Scott Osborne, \emph{Locally convex spaces}, Graduate {Texts} in
  {Mathematics}, vol. 269, Springer, Cham, 2014. \MR{3154940}

\bibitem[Pet50]{pettis_continuity_1950}
Billy~J. Pettis, \emph{On {continuity} and {openness} of {homomorphisms} in
  {topological} {groups}}, Annals of Mathematics \textbf{52} (1950), no.~2,
  293--308, Publisher: Annals of Mathematics.

\bibitem[Sol99]{solecki_polish_1999}
Sławomir Solecki, \emph{Polish group topologies}, Sets and proofs ({Leeds},
  1997), London {Math}. {Soc}. {Lecture} {Note} {Ser}., vol. 258, Cambridge
  Univ. Press, Cambridge, 1999, pp.~339--364. \MR{1720580}

\bibitem[Sol09]{solecki_coset_2009}
\bysame, \emph{The coset equivalence relation and topologies on subgroups},
  American Journal of Mathematics \textbf{131} (2009), no.~3, 571--605.
  \MR{2530848}

\bibitem[SR76]{saint-raymond_espaces_1976}
Jean Saint-Raymond, \emph{Espaces à modèle séparable}, Université de
  Grenoble. Annales de l'Institut Fourier \textbf{26} (1976), no.~3, xi,
  211--256. \MR{425561}

\bibitem[Tsa06]{tsankov_compactifications_2006}
Todor Tsankov, \emph{Compactifications of {$\mathbb{N}$} and {Polishable}
  subgroups of {$S_{\infty }$}}, Fundamenta Mathematicae \textbf{189} (2006),
  no.~3, 269--284. \MR{2213623}

\end{thebibliography}

\end{document}